\documentclass{article}
\usepackage[height=9in, width=6.5in]{geometry}
\usepackage{graphicx}
\usepackage{tikz}
\usepackage{amsfonts}
\usepackage{amsthm}
\usepackage{amsmath}
\usepackage{xfrac}
\usepackage{subcaption}
\usepackage{hyperref}

\DeclareMathAlphabet{\mathpzc}{OT1}{pzc}{m}{it}

\tikzset{
  frac arrow/.style={postaction={decorate,decoration={
        markings,
        mark=at position #1 with {\arrow[]{Stealth[round]}}
      }}},
}

\usetikzlibrary{arrows.meta, decorations.markings, decorations.pathmorphing}
\newtheorem{theorem}{Theorem}[section]
\newtheorem{corollary}{Corollary}[theorem]
\newtheorem{lemma}[theorem]{Lemma}
\newtheorem{definition}{Definition}[section]
\newtheorem{conjecture}{Conjecture}[section]

\numberwithin{equation}{section}

\newcommand{\N}{\mathbb{N}}
\newcommand{\Z}{\mathbb{Z}}
\newcommand{\R}{\mathbb{R}}
\newcommand{\C}{\mathbb{C}}

\newcommand{\dd}{\mathrm{d}}
\DeclareMathOperator{\sgn}{sgn}
\DeclareMathOperator{\csch}{csch}
\DeclareMathOperator{\sech}{sech}
\DeclareMathOperator*{\res}{res}
\newcommand{\bigO}[1]{O \left( #1 \right)}
\newcommand{\lilO}[1]{o \left( #1 \right)}
\newcommand{\norm}[1]{\left| \left| #1 \right| \right|}
\newcommand{\re}[1]{\mathrm{Re} \left[ #1 \right]}
\newcommand{\im}[1]{\mathrm{Im} \left[ #1 \right]}
\newcommand{\wlim}[1]{\mathrm{w-}\lim_{#1}}
\newcommand{\id}[1]{I_{#1}}
\newcommand{\minor}[2]{\mathrm{Minor}_{#1}\left( #2 \right)}
\newcommand{\epsN}{\epsilon_{\scriptscriptstyle N}}
\newcommand{\ii}{\mathrm{i}}
\newcommand{\ee}{\mathrm{e}}
\newcommand{\quadratrix}[1]{\mathcal{Q}_{#1}}
\newcommand{\quadplus}[1]{\mathcal{Q}_{#1 \, +}}
\newcommand{\quadminus}[1]{\mathcal{Q}_{#1 \, -}}
\newcommand{\strip}[1]{\mathcal{S}_{#1}}
\newcommand{\maxrm}{\mathrm{max}}
\newcommand{\minrm}{\mathrm{min}}

\newcommand{\Brm}{\mathrm{B}}
\newcommand{\Ai}{\mathrm{Ai}}
\newcommand{\wyl}{\mathrm{Wyl}}
\newcommand{\dbar}{\overline{\partial}}
\newcommand{\sse}{\mathrm{SSE}}

\newcommand{\Quad}[1][1]{\hspace*{#1em}\ignorespaces}

\definecolor{navy}{RGB}{51, 101, 138}
\definecolor{maize}{RGB}{246, 174, 45}
\definecolor{seafoam}{RGB}{26, 147, 111}
\definecolor{fire}{RGB}{236, 78, 32}

\title{The Small-Dispersion Limit of the Intermediate Long Wave Equation via Semiclassical Soliton Ensembles}
\author{Matthew Dominique Mitchell\thanks{current affiliation: School of Data, Mathematics and Statistical Sciences, University of Central Florida, Orlando, Florida 32816; email: matt.mit@ucf.edu}}
\date{\textit{Department of Physics, University of Michigan, Ann Arbor, Michigan 48109}}

\begin{document}

\maketitle

\begin{abstract}
    We study the small-dispersion limit of the intermediate long wave (ILW) equation, specifically on a class of well-behaved initial conditions $u_0$ where the number of solitons in the solution increases without bound.
    First, we conduct a formal WKB-style analysis on the ILW direct scattering problem, generating approximate eigenvalues and norming constants.
    We then use this to define a modified set of scattering data and rigorously analyze the associated inverse scattering problem.
    The main results include demonstrating $L^2$-convergence of the solution at $t = 0$ to the original initial condition $u_0$ and for $0 < t < t_\mathrm{c}$ to the associated solution of invicid Burgers' equation, where $t_\mathrm{c}$ is the time of gradient catastrophe.
\end{abstract}

\noindent\textbf{Keywords:} integrable systems, WKB analysis, logarithmic potential, equilibrium measure

\noindent\textbf{2020 MSC:} 35Q51 31A10 30E25

\vspace*{0.1cm}

\noindent\textbf{Acknowledgments:} 
The author would like to thank Peter Miller for his unwavering support, wealth of knowledge, and endless discussions over the years; Peter Perry for sharing his work; and Kurt Schmidt for his patient listening and helpful comments.

\section{Introduction}
\label{sec:Intro}

The intermediate long wave (ILW) equation is a nonlinear PDE serving as a model for the long wavelength, weakly nonlinear perturbations of a thin pycnocline boundary layer between two fluids, one of vanishing depth and the other of fixed depth characterized by the parameter $0 <\delta$, \cite{Joseph_1977, KubotaKoDobbs_1978, ChoiCamassa_1999, Saut_2019}
\begin{equation}
    0 = u_t + 2 u u_x + \epsilon \mathpzc{T}_{\delta \epsilon}[u_{xx}] \, .
    \label{eq:ILW}
\end{equation}
Here, $u:\R_x \times \R_t \to \R$ represents the displacement of the pycnocline from equilibrium, $x$ the spatial coordinate and $t$ time.
The specifics of the fluid system are shown in Figure \ref{fig:ILWSetUp}.
$\mathpzc{T}_{\delta \epsilon}$ in \eqref{eq:ILW} is the singular convolution operator
\begin{equation}
    \mathpzc{T}_{\delta \epsilon}[f](x) := \frac{1}{2 \delta \epsilon} \, \mathrm{P.V.} \int_\R \left[ \coth \left( \frac{\pi}{2 \delta \epsilon}(y - x) \right) - \sgn \left( y - x \right) \right] f(y) \ \dd y \, ,
    \label{eq:ILWTOp}
\end{equation}
where the principal value in \eqref{eq:ILWTOp} is with regard to the simple pole present in the kernel at $y = x$.
Since $\mathpzc{T}_{\delta \epsilon}$ is a convolution operator, for appropriate classes of functions we can also define it by its action in the Fourier domain
\begin{equation}
    \widehat{\mathpzc{T}_{\delta \epsilon}[f]}(k) = \ii \, \tau(\delta \epsilon k) \hat{f}(k) 
    \label{eq:FourierTransKernel}
\end{equation}
where the $\tau: \R \to \R$ is the Fourier multiplier
\begin{equation}
    \tau(k) := \left\{ \begin{array}{cl}
        \displaystyle \coth \left( k \right) - \frac{1}{k} & k \neq 0 \\
        0 & k = 0
    \end{array} \right. \, ,
    \label{eq:TConvolKernFT}
\end{equation}
i.e. the removable singularity in $\tau$ is plugged.

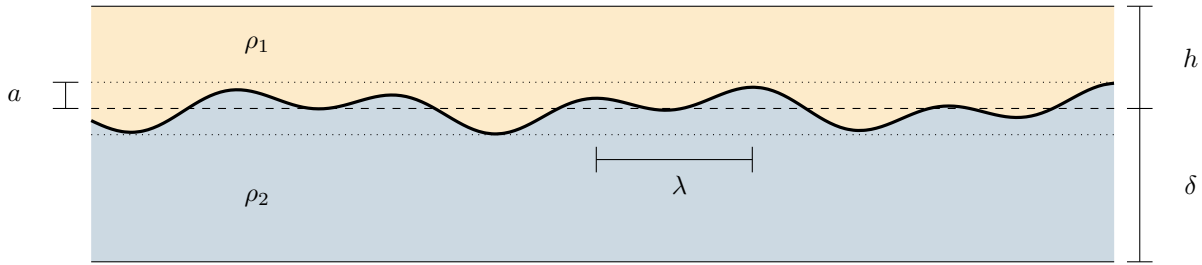
\begin{figure}
    \centering
    \begin{tikzpicture}[scale=1.7]
        \fill[navy!25!white] plot[smooth, samples=100, domain=-4:4] ({\x}, {1.2+0.1*sin(2*deg(\x))+0.1*sin(4.5*deg(\x+0.5))}) -- (4,0) -- (-4, 0) -- cycle;
        \fill[maize!25!white] plot[smooth, samples=100, domain=-4:4] ({\x}, {1.2+0.1*sin(2*deg(\x))+0.1*sin(4.5*deg(\x+0.5))}) -- (4,2) -- (-4, 2) -- cycle;
        
        \draw (-4, 2) -- (4, 2);
        \draw (-4, 0) -- (4, 0);
        \draw[dashed] (-4, 1.2) -- (4, 1.2);
        \draw[very thick, domain=-4:4, smooth, samples=100, variable=\x] plot ({\x}, {1.2+0.1*sin(2*deg(\x))+0.1*sin(4.5*deg(\x+0.5))});
        \draw (4.2, 0) -- (4.2, 2);
        \draw (4.1, 0) -- (4.3, 0);
        \draw (4.1, 1.2) -- (4.3, 1.2);
        \draw (4.1, 2) -- (4.3, 2);
        \draw (-4.2, 1.405) -- (-4.2, 1.2);
        \draw (-4.1, 1.2) -- (-4.3, 1.2);
        \draw (-4.1, 1.405) -- (-4.3, 1.405);
        \draw[dotted] (-4, 0.995) -- (4, 0.995);
        \draw[dotted] (-4, 1.405) -- (4, 1.405);
        \draw (-0.05, 0.8) -- (1.17, 0.8);
        \draw (-0.05, 0.7) -- (-0.05, 0.9);
        \draw (1.17, 0.7) -- (1.17, 0.9);
        
        \node at (-2.7, 0.5) {$\rho_2$};
        \node at (-2.7, 1.7) {$\rho_1$};
        \node at (4.6, 0.6) {$\delta$};
        \node at (4.6, 1.6) {$h$};
        \node at (-4.6, 1.3) {$a$};
        \node at (0.6, 0.6) {$\lambda$};
    \end{tikzpicture}
    \caption{Schematic for the two dimensional incompressible and irrotational fluid system from which the ILW equation is derived.
    The equilibrium position (dashed black) of the thin pycnocline (solid black) between the two fluid densities (yellow and blue) are depicted above.
    It is required that the density of the top fluid $\rho_1$ be less than that of the lower fluid $\rho_2$ so the fluids are stably separated by gravity.
    The system is confined between two rigid and flat surfaces, a ceiling and floor.
    Labeled in the schematic, $h$ and $\delta$ are the equilibrium depths of the fluids, $a$ is the maximum amplitude of the disturbance of the pycnocline and $\lambda$ is a characteristic wavelength of the disturbance.
    The nonlinearity governing the dynamics of the system is characterized by the parameter $\alpha = a / h$ as well as that for the linear dispersion by $\epsilon = h / \lambda$.
    The ILW equation obtains in the asymptotic limit where $h \to 0^+$, $\delta$ and $\epsilon / \alpha$ fixed with $\epsilon$ and $\alpha$ individually bounded.
    Here, the top layer is depicted with vanishing depth $h$ while the bottom layer has fixed depth $\delta$.
    However, the model is independent of which fluid layer has the fixed depth $\delta$ and which has the vanishing depth $h$ \cite{KubotaKoDobbs_1978}.}
    \label{fig:ILWSetUp}
\end{figure}

Additionally, in \eqref{eq:ILW}, we have introduced a second auxiliary parameter $0 < \epsilon$ via a scaling symmetry of the equation.
Taking $\epsilon \to 0^+$ is known as the small-dispersion limit, so called because the coefficient of the linear dispersive term in \eqref{eq:ILW} vanishes.
Setting $\epsilon = 0$, we see the ``zero dispersion'' ILW equation is invicid Burgers' equation \cite{Burgers_1948}
\begin{equation} \label{eq:IB}
    0 = u^\Brm_t + 2 u^\Brm u^\Brm_x \, , 
\end{equation}
whose Cauchy problem for $\mathcal{C}^1(\R)$ initial condition $u_0$ is solvable by the method of characteristics:
\begin{equation}\label{eq:IBMethOfCharSolution}
    u^\Brm(x, t) = u_0(y) \quad \text{ where } \quad x = y + 2u_0(y) t \, .
\end{equation}
It is well-known that for initial conditions with negative slope, the solution to \eqref{eq:IB} via \eqref{eq:IBMethOfCharSolution} exists globally up to a critical time
\begin{equation}
    t_\mathrm{c} := \frac{1}{\displaystyle \max_{x \in \R} - 2 u_0'(x)} \, , \label{eq:CatTime}
\end{equation}
at which a gradient catastrophe occurs and after which the method of characteristics \eqref{eq:IBMethOfCharSolution} yields a solution which is multi-valued in some regions of spacetime.

Fixing initial condition $u_0$ for \eqref{eq:ILW} and letting $\epsilon > 0$ be small but non-zero, one may expect that the solution $u(x, t) \approx u^\Brm(x, t)$ for $0 \leq t < t_\mathrm{c}$.
Indeed, simulations of the ILW equation (see Figure \ref{fig:SmallDispDSW}) indicate exactly as much.
However, near the point of gradient catastrophe, the large derivative of $u \approx u^\Brm$ in \eqref{eq:ILW} results in the linear dispersive term becoming comparable to that of the nonlinear term.
The linear term then regulates the gradient catastrophe and the result is the generation of a dispersive shock wave (DSW) (see Figures \ref{fig:SmallDispEps05T06}, and \ref{fig:SmallDispEps05T15}), an expanding spatial region of fast oscillations.
Rigorously studying the small-dispersion limit means showing, as $\epsilon \to 0^+$, convergence of the solution for small time to that of the zero dispersion equation and describing the asymptotics of the DSW generated after the critical time.
For generic nonlinear dispersive equations, Whitham's modulation theory \cite{Whitham_1965} can be used to generate formal results.
Integrable PDE make for a great case study of DSW for which rigorous descriptions can be recovered.
Previous examples in this area include work on the Korteweg-de Vries (KdV) \cite{LaxLevermoreI_1983, LaxLevermoreII_1983, DeiftVenakidesZhou_1998, ClaeysGrava_2010_SolEdge, ClaeysGrava_2010_DispEdge}, Nonlinear Schr\"odinger \cite{KamvissisMcLaughlinMiller_2003_semiclass, JenkinsMcLaughlin_2014, BuckinghamJenkinsMiller_2025} and Benjamin-Ono \cite{XuMiller_2010, MillerWetzel_2016_SmallDispersion, BlackstoneGassotGerardMiller_2024_SmallDisp, BlackstoneMillerMitchell_2024} equations.
Like these equations, the ILW equation is integrable by inverse scattering transform (IST) \cite{KodamaAblowitzSatsuma_1982}.

\begin{figure}
    \centering
    \begin{subfigure}{0.49\textwidth}
        \centering
        \includegraphics[width=\textwidth]{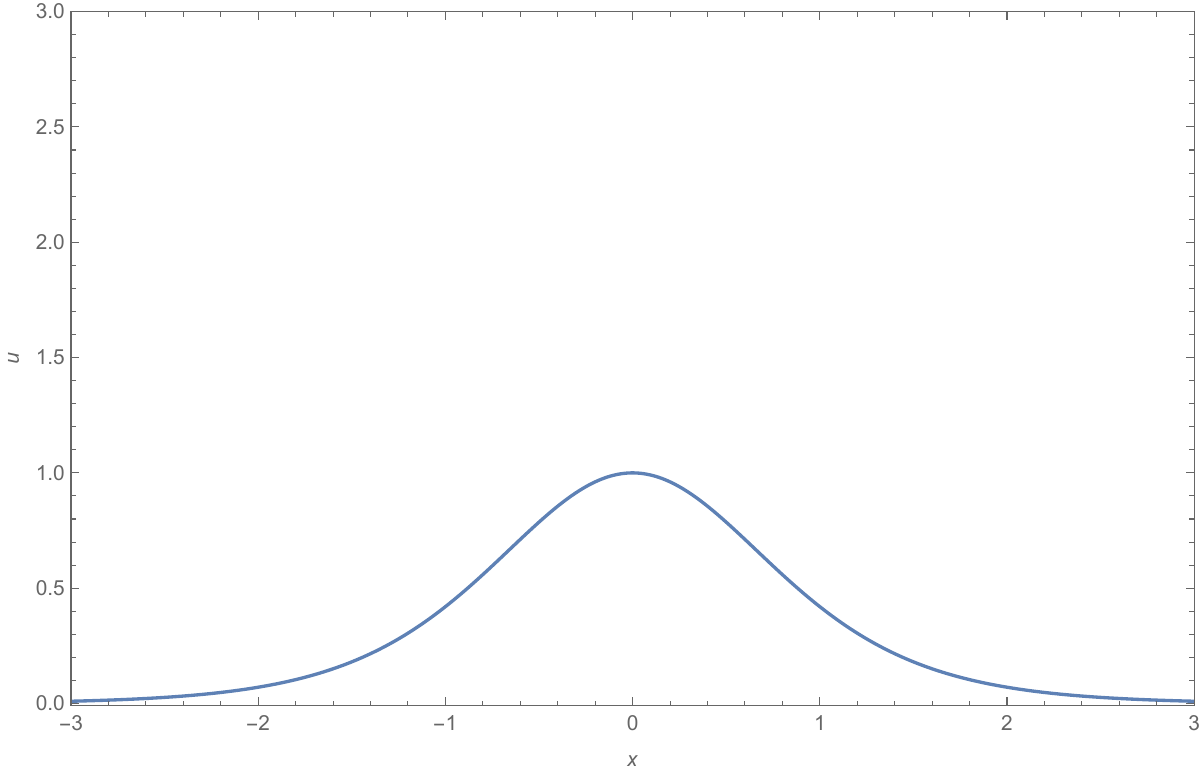}
        \subcaption{$\begin{array}{l} \epsilon = 0.05 \\ t = 0.000 \end{array}$}
        \label{fig:SmallDispEps05T00}
    \end{subfigure}
    \hfill
    \begin{subfigure}{0.49\textwidth}
        \centering
        \includegraphics[width=\textwidth]{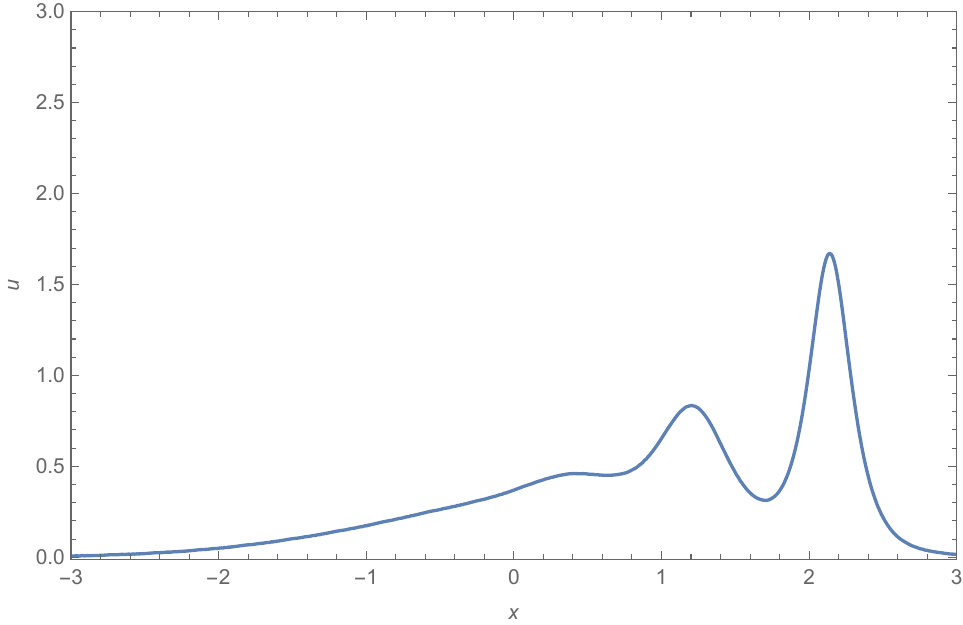}
        \subcaption{$\begin{array}{l} \epsilon = 0.20 \\ t = 1.500 \end{array}$}
        \label{fig:SmallDispEps20T15}
    \end{subfigure}
    
    \vspace*{0.1cm}
    
    \begin{subfigure}{0.49\textwidth}
        \centering
        \includegraphics[width=\textwidth]{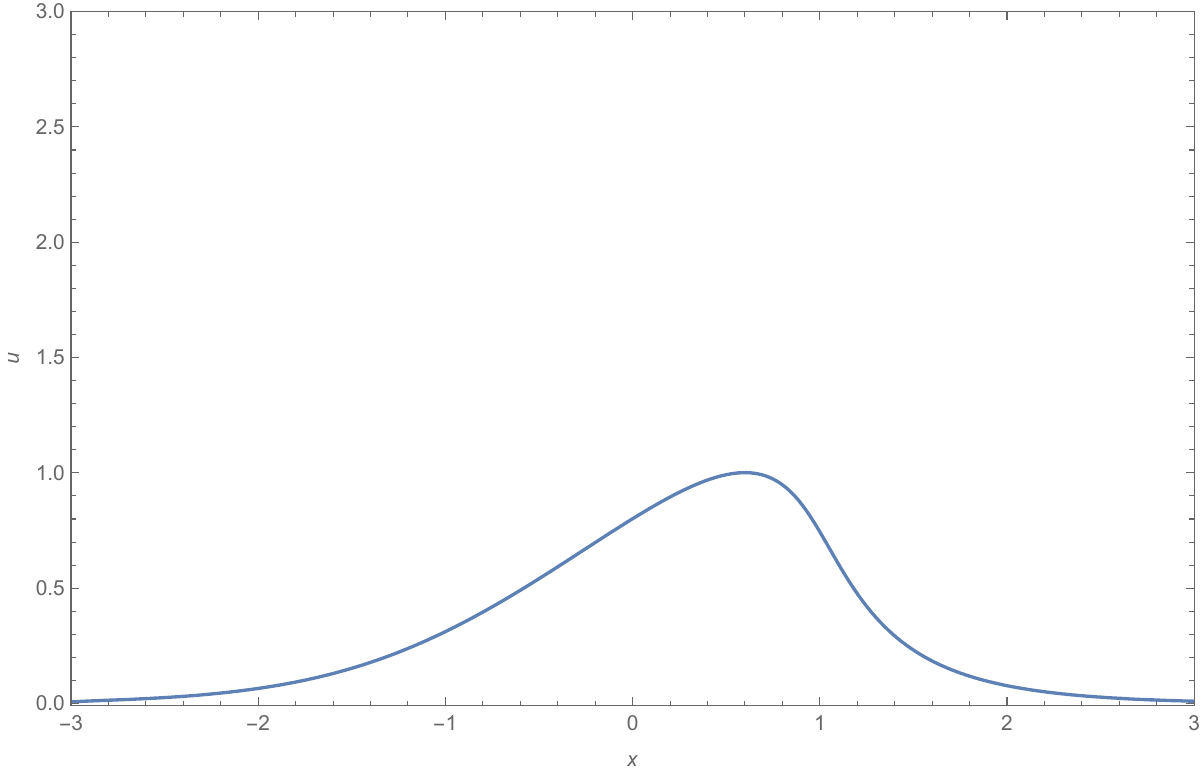}
        \subcaption{$\begin{array}{l} \epsilon = 0.05 \\ t = 0.300 \end{array}$}
        \label{fig:SmallDispEps05T03}
    \end{subfigure}
    \hfill
    \begin{subfigure}{0.49\textwidth}
        \centering
        \includegraphics[width=\textwidth]{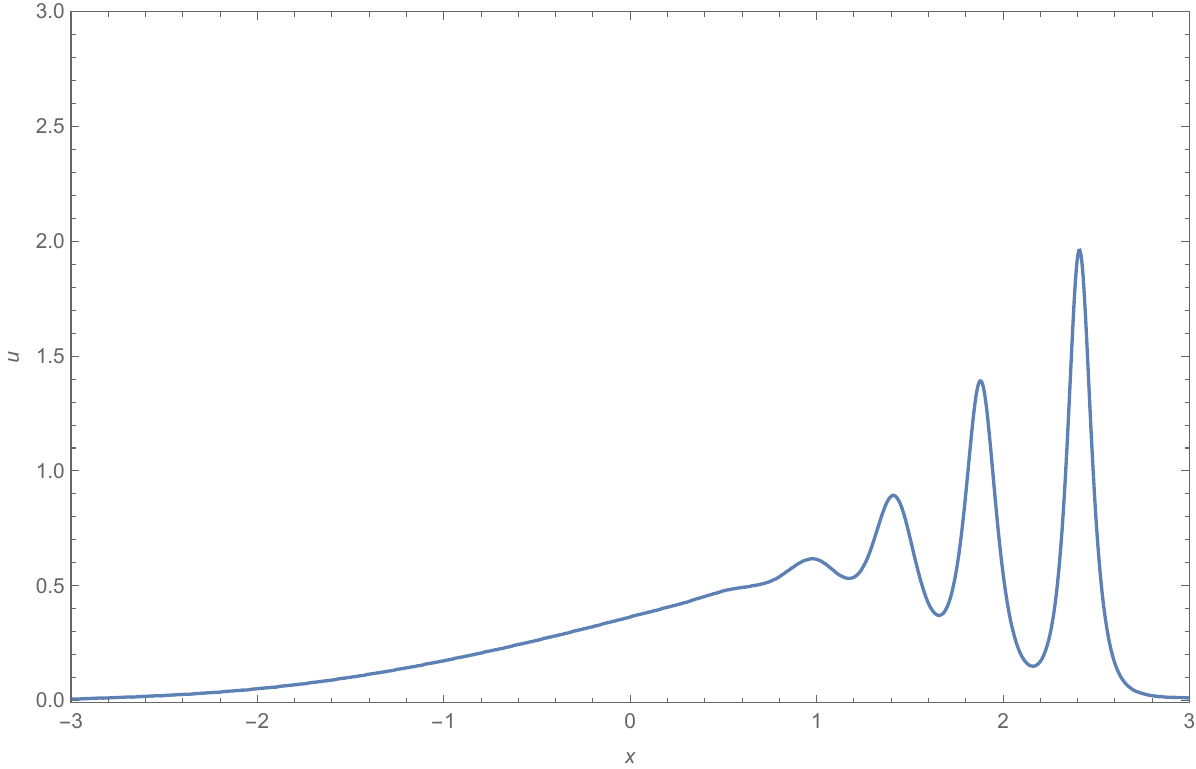}
        \subcaption{$\begin{array}{l} \epsilon = 0.10 \\ t = 1.500 \end{array}$}
        \label{fig:SmallDispEps10T15}
    \end{subfigure}
    
    \vspace*{0.1cm}
    
    \begin{subfigure}{0.49\textwidth}
        \centering
        \includegraphics[width=\textwidth]{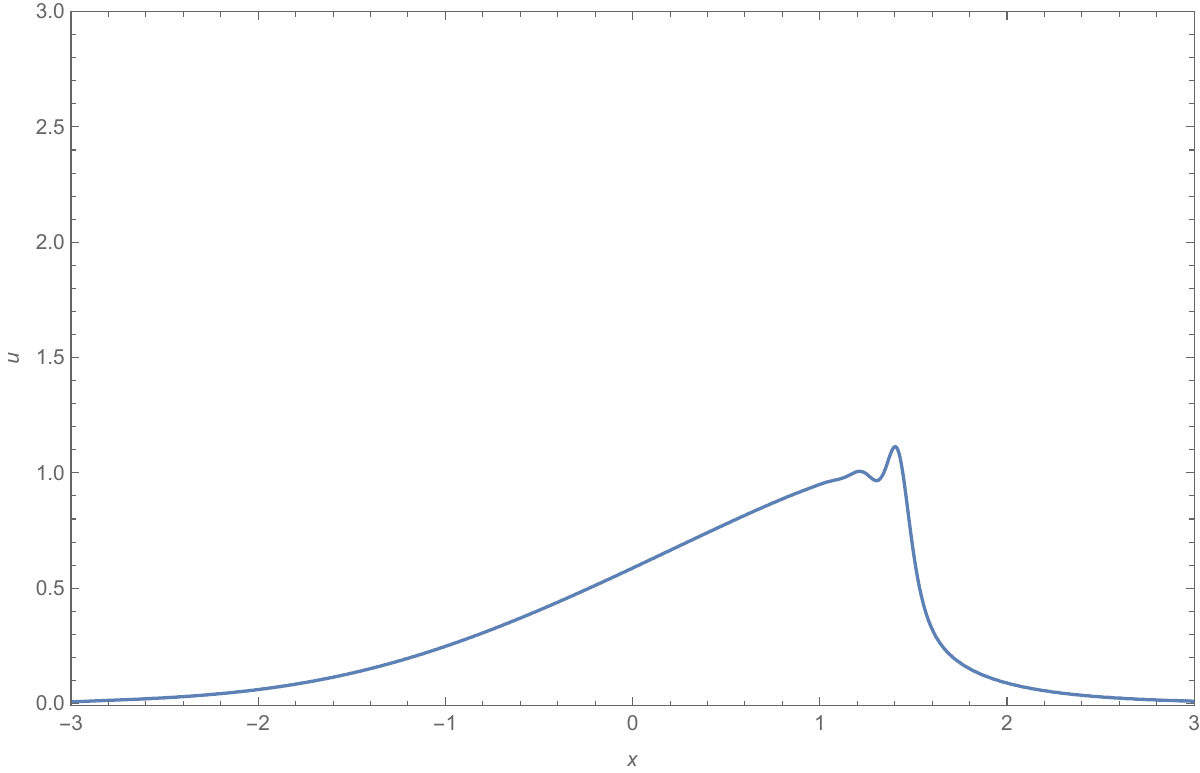}
        \subcaption{$\begin{array}{l} \epsilon = 0.05 \\ t = 0.650 \end{array}$}
        \label{fig:SmallDispEps05T06}
    \end{subfigure}    
    \hfill
    \begin{subfigure}{0.49\textwidth}
        \centering
        \includegraphics[width=\textwidth]{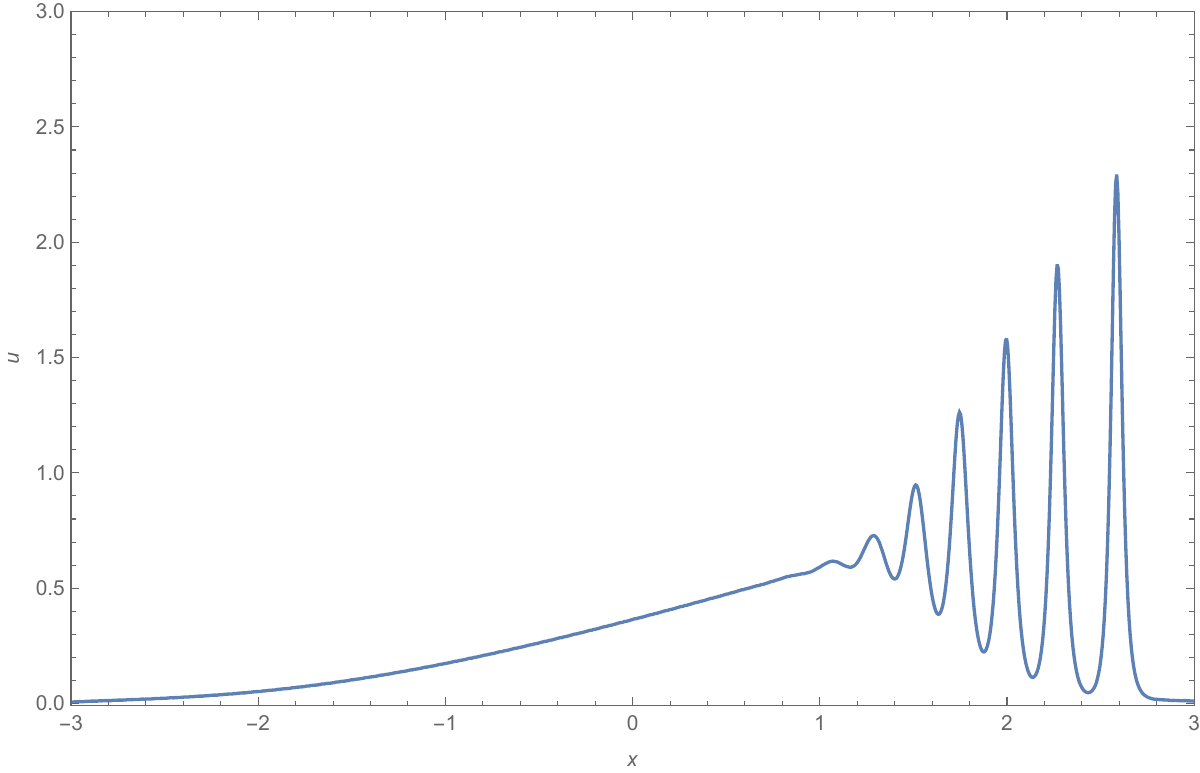}
        \subcaption{$\begin{array}{l} \epsilon = 0.05 \\ t = 1.500 \end{array}$}
        \label{fig:SmallDispEps05T15}
    \end{subfigure}
    \caption{Split-step simulation of the ILW equation in the small-dispersion limit with initial condition $u_0(x) = \sech^2(x)$ on a symmetric periodic domain of length $L=12$, $N_x=2000$ sample points in $x$ and time step $\Delta t = 0.0001$.
    The first column is at a fixed $\epsilon = 0.05$ and for different times between $t=0.00$ and the gradient catastrophe time $t_\mathrm{c} = 3\sqrt{3}/8 \approx 0.650$.
    The second column shows the effect of decreasing $\epsilon$ at fixed time $t = 1.500$.}
    \label{fig:SmallDispDSW}
\end{figure}

\subsection{Formal Direct Scattering for the ILW Equation}
\label{seq:ILWDirect}

Y. Kodama, M. J. Ablowitz and J. Satsuma laid out the formal inverse scattering transform of the ILW equation in 1982 \cite{KodamaAblowitzSatsuma_1982}.
The scattering equation with potential $u$ is
\begin{equation}
    -\ii \epsilon \psi_x^+(x) + (u(x) - \lambda(k) ) \psi^+(x) = \mu(k) \psi^-(x) \, ,
    \label{eq:ILWScatteringEq1}
\end{equation}
where the wavefunction $\psi: \strip{\delta \epsilon} \to \C$ is analytic on the interior of the closed complex strip denoted by
\begin{equation}
    \strip{\delta \epsilon} := \{ \ z \in \C \ | \ -\delta \epsilon \leq \im{z} \leq \delta \epsilon \ \}
\end{equation}
and the notation $\psi^{\pm}$ in \eqref{eq:ILWScatteringEq1} represents the boundary values of the function at the bottom and top edges of the strip:
\begin{equation}
    \psi^\pm(x) := \lim_{y \to \mp \delta \epsilon^\pm} \psi(x + i y) \, .
\end{equation}
Additionally, the spectral parameter is $k \in \C$ which shows up in the equation through the functions $\lambda$ and $\mu$:
\begin{align}
    \lambda(k) := - k \coth(2 \delta k) \quad \text{and} \quad \mu(k) := k \csch(2 \delta k) \, .
\end{align}

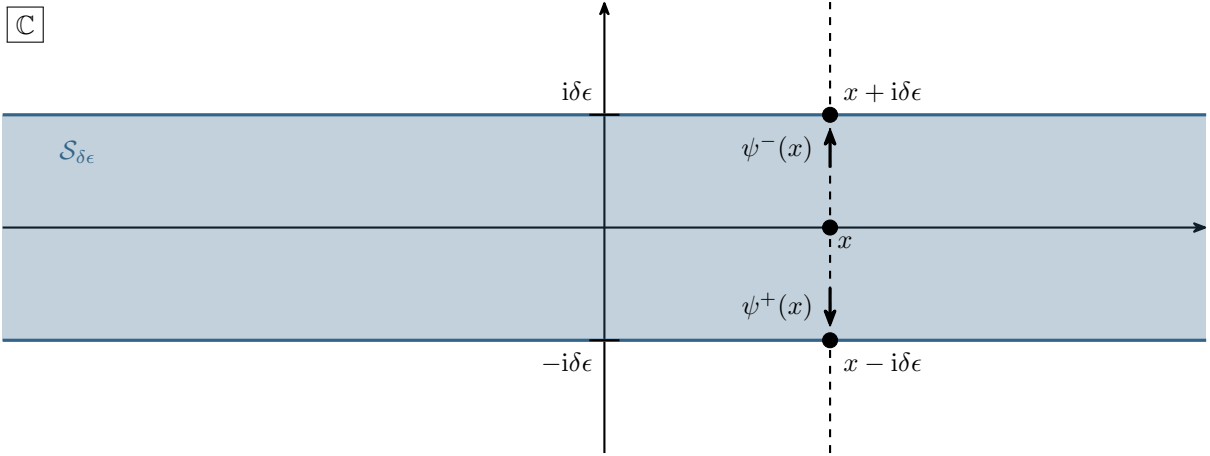
\begin{figure}
    \centering
    \begin{tikzpicture}
        \draw[-{Stealth[round]}, thick] (-8, 0) -- (8, 0);
        \draw[-{Stealth[round]}, thick] (0, -3) -- (0, 3);
        \node at (-7.7, 2.7) {$\boxed{\C}$};

        \fill[navy, opacity=0.3] (-8, 1.5) -- (8, 1.5) -- (8, -1.5) -- (-8, -1.5) -- cycle;
        \draw[very thick, navy] (-8, 1.5) -- (8, 1.5);
        \draw[very thick, navy] (-8, -1.5) -- (8, -1.5);
        \node[navy] at (-7, 1) {$\strip{\delta \epsilon}$};

        \draw[thick] (-0.2, 1.5) -- (0.2, 1.5);
        \node at (-0.5, 1.8) {$\phantom{-} \ii \delta \epsilon$};
        \draw[thick] (-0.2, -1.5) -- (0.2, -1.5);
        \node at (-0.5, -1.8) {$-\ii \delta \epsilon$};
        
        \draw[thick, dashed] (3, -3) -- (3, 3);
        \node at (3, 0) [circle, fill=black, draw=black, inner sep=2]{};
        \node at (3.2, -0.2) {$x$};
        \node at (3, -1.5) [circle, fill=black, draw=black, inner sep=2]{};
        \node at (2.3, -1.05) {$\psi^+(x)$};
        \draw[-{Stealth[round]}, very thick] (3, -0.8) -- (3, -1.3);
        \node at (3.7, -1.8) {$x - \ii \delta \epsilon$};
        \node at (3, 1.5) [circle, fill=black, draw=black, inner sep=2]{};
        \draw[-{Stealth[round]}, very thick] (3, 0.8) -- (3, 1.3);
        \node at (2.3, 1.05) {$\psi^-(x)$};
        \node at (3.7, 1.8) {$x + \ii \delta \epsilon$};
    \end{tikzpicture}
    \caption{The strip domain of analyticity of the wavefunction $\psi$. Also shown are the definitions of the continuous boundary values $\psi^\pm(x)$ via limits from inside the strip.}
    \label{fig:Strip}
\end{figure}

The spectrum for potential $u$ which decays to zero sufficiently fast as $x \to \pm \infty$ is reported to be continuous for $k \in \R$ and discrete for $k = \ii \kappa$ with $0 < \kappa < \pi / 2 \delta$.
\cite{KodamaAblowitzSatsuma_1982} gives a minimal justification that this indeed constitutes the spectrum, relying on the the condition of a particular Neumann series converging, which they show to be true when several norms of the potential and $\delta$ are small enough.
To the knowledge of this paper's author, the only other rigorous analysis of the ILW scattering equation is done in the dissertation of J. Klipfel \cite{Klipfel_2020} which sharpens the analysis in \cite{KodamaAblowitzSatsuma_1982} and shows that the direct scattering map is well-defined for a space of small-norm potentials. 

Note that the ILW scattering equation \eqref{eq:ILWScatteringEq1}, due to the analytic continuation between the boundary values $\psi^+$ and $\psi^-$, might be viewed formally as an ODE of infinite order.
To build some intuition for the solutions of \eqref{eq:ILWScatteringEq1}, we consider the spectrum of the simplest possible potential: $u(x) = 0$ identically.
This can be solved exactly by an exponential ansatz: $\psi(z) = \ee^{\ii \sigma z / \epsilon}$ where
\begin{equation}
    - \sigma - \lambda(k) = \mu(k) \ee^{- 2 \delta \sigma} \, .
    \label{eq:ILWScattAsympWaveFuncSol}
\end{equation}
There are countably infinitely many solutions for $\sigma$ to \eqref{eq:ILWScattAsympWaveFuncSol}, but only two of them are identified in \cite{KodamaAblowitzSatsuma_1982} as important by analogue with the Schr\"odinger operator: $\sigma = \pm k$.
The rest appear to be considered extraneous by the authors.
For $k > 0$, these two exponential solutions are bounded and oscillatory, like the continuous spectrum of the Schr\"odinger operator, while for $k = \ii \kappa$ and $\kappa > 0$, these solutions are real and exponential.
Note that $\lambda(k)$ and $\mu(k)$ blow up for $k = \ii n \pi / 2 \delta$ with $n \in \Z$, thus \cite{KodamaAblowitzSatsuma_1982} only includes the imaginary portion around zero up to the first blow up, $\kappa \in [0, \pi/2\delta)$, in the discrete spectrum.
Using these exponential solutions as the asymptotics for potentials which decay as $x \to \pm\infty$, the authors set up a quantum scattering problem analogous to that of the Schr\"odinger equation to define a reflection coefficient on the continuous spectrum $r: \R \to \C$, eigenvalues $\kappa_n \in [0, \pi/2\delta)$ where normalizable wavefunctions $\psi_n: \strip{\delta \epsilon} \to \C$ exist, that is
\begin{equation}
    \psi_n(z) = \left( 1 + \lilO{1} \right) \left\{\begin{array}{ll}
        \phantom{b} \, \ee^{-\kappa_n z} & \text{ as } \re{z} \to +\infty \\
        b \, \ee^{\kappa_n z} & \text{ as } \re{z} \to -\infty
    \end{array} \right.
\end{equation}
for some $b \in \C$, and their associated norming constants
\begin{equation}
    c_n := \norm{\psi_n}_{L^2(\R)}^{-2} \, .
    \label{eq:AKSNormingConstDef}
\end{equation}
These three constituent types of quantities (the eigenvalues, norming constants and reflection coefficient) formally make up the ILW scattering data.

A reformulation of the ILW scattering equation that also appears in \cite{KodamaAblowitzSatsuma_1982}, and which will be more useful in the present work, requires the substitution 
\begin{equation}
    \psi(z) = \ee^{\ii k z/\epsilon} \phi(z)
    \label{eq:KappaToZetaWaveFuncTransform}
\end{equation}
which, plugging into \eqref{eq:ILWScatteringEq1} yields, after some rearranging
\begin{equation}
    -\ii \epsilon \phi_x^+(x) - u(x) \phi^+(x) = \zeta \left( \phi^+(x) - \phi^-(x) \right)
    \label{eq:ILWScatteringEq2}
\end{equation}
where $\zeta$ is given by 
\begin{equation}
    \zeta = k - \lambda(k) = \mu(k) \ee^{2 \delta k} = \frac{k}{1 - \ee^{-4 \delta k}} \, .
\end{equation}
It turns out $\zeta$ is our preferred spectral parameter over $k$. If we use the definitions above of the discrete and continuous spectrums for $k$, we can see that, in the $\zeta$ complex plane, the continuous spectrum lies on the positive, real axis while the discrete spectrum lies on the central connected component of a $\delta$-scaled quadratrix of Hippias \cite{quadratrix} with parameterization in terms of the imaginary coordinate
\begin{equation}
    \zeta(\kappa) := \kappa \cot(2 \delta \kappa) + \ii \kappa = \kappa \csc(2 \delta \kappa) \ee^{\ii 2 \delta \kappa} \, .
    \label{eq:QuadZetaDef}
\end{equation}
The quadratrix of Hippias is the curve in the complex plane where the argument is equal to the imaginary part.
It consists of an infinite vertical stack of continuous components, one for each branch of the argument function.
The central connected component, henceforth just the quadratrix, is the one that corresponds to the principal argument.
The quadratrix crosses the real axis at the value $1$ and is depicted in Figure \ref{fig:quadratrix}.
To be accurate, the true quadratrix is parameterized by $2 \delta \zeta(\kappa)$ for $-\pi/2 \delta < \kappa < \pi/2 \delta$, but since a value of $\delta$ is always implied in an ILW context, we will use the term quadratrix to refer to both the true quadratrix and the $1/2\delta$-scaled version,
\begin{equation}
    \quadratrix{\delta} := \Big\{ \ \zeta(\kappa) \ \Big| \ -\frac{\pi}{2 \delta} < \kappa < \frac{\pi}{2 \delta} \ \Big\} \, ,
\end{equation}
on which the $\zeta$-eigenvalues of the discrete spectrum lie.
We will also need to distinguish the portion with positive imaginary part $\quadplus{\delta}$ from the portion with negative imaginary part $\quadminus{\delta}$ so that $\quadratrix{\delta} = \quadminus{\delta} \cup \{ 1/2\delta \} \cup \quadplus{\delta}$.
Lastly, we also define an interval notation for arcs along the quadratrix, i.e. for $\eta, \xi \in \quadratrix{\delta}$ such that $\im{\eta} < \im{\xi}$, 
\begin{equation}
    ( \eta, \xi )_{\mathcal{Q}_\delta} = \left\{ \ \zeta(\kappa) \ | \ \im{\eta} < \kappa < \im{\xi} \ \right\} \, ,
    \label{eq:QuadratrixIntervalNotationDef}
\end{equation}
and with the usual square bracket interchanges made to denote inclusion of the endpoints in the arc.

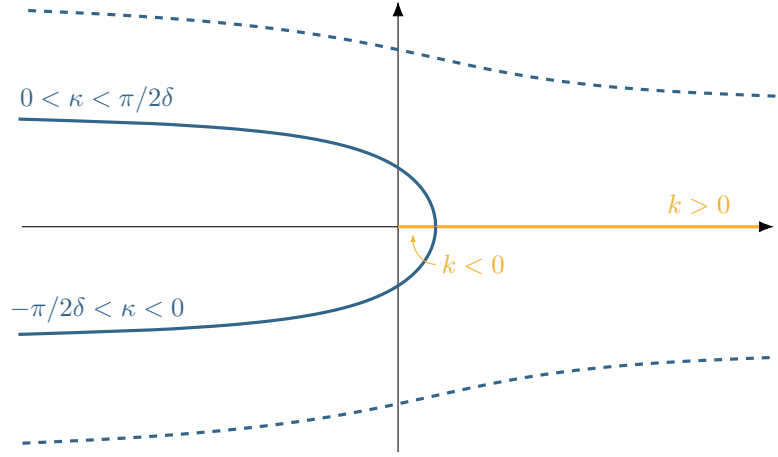
\begin{figure}
    \centering
    \begin{tikzpicture}
        \draw[-{Latex[length=2mm]}] (0, -3) -- (0, 3);
        \draw[-{Latex[length=2mm]}] (-5, 0) -- (5, 0);
        \draw[maize, very thick] (0, 0) -- (4.8, 0);
        \draw[scale=0.5, domain=3.475:5.76, smooth, variable=\y, navy, very thick, dashed]  plot ({\y*cos(\y r)/sin(\y r)}, {\y});
        \draw[scale=0.5, domain=0.01:2.865, smooth, variable=\y, navy, very thick]  plot ({\y*cos(\y r)/sin(\y r)}, {\y});
        \draw[scale=0.5, domain=-2.865:-0.01, smooth, variable=\y, navy, very thick]  plot ({\y*cos(\y r)/sin(\y r)}, {\y});
        \draw[scale=0.5, domain=-5.76:-3.475, smooth, variable=\y, navy, very thick, dashed]  plot ({\y*cos(\y r)/sin(\y r)}, {\y});
        \node[navy] at (-4, 1.7) {$0 < \kappa < \pi/2\delta$};
        \node[navy] at (-4, -1.1) {$-\pi/2\delta < \kappa < 0$};
        \node[maize] at (4, 0.3) {$k > 0$};
        \node[maize] at (1, -0.5) {$k < 0$};
        \draw[-{Latex[length=1mm]}, maize] (0.5, -0.5) to [out=180,in=-90] (0.2, -0.1);
    \end{tikzpicture}
    \caption{The proposed ILW scattering equation spectrum in the $\zeta$ plane.
    The blue contours are the visible portion of the quadratrix of Hippias, solid for the quadratrix and dashed for the other components.
    The discrete spectrum shows up on the quadratrix and the continuous spectrum is labeled in solid yellow.}
    \label{fig:quadratrix}
\end{figure}

\subsection{Inverse Scattering for the ILW Equation}
\label{seq:ILWInverse}

Y. Kodama, M. J. Ablowitz and J. Satsuma continue in \cite{KodamaAblowitzSatsuma_1982}, formally dictating the inverse scattering problem via a Gelfan-Levitan-Marchenko (GLM) equation governing the reconstruction of the potential to the ILW scattering equation given a set of scattering data.
For the case of a zero reflection coefficient and $N \in \N$ distinct eigenvalues $\{ \zeta_n \}_{n = 1}^N \subset \quadplus{\delta}$ with their associated norming constants $\{ c_n \}_{n = 1}^N \subset \R_+$, the GLM equation can be solved explicitly; the solution being the $N$-soliton formula,
\begin{equation}
    u_N(x, t; \epsilon) = -\ii \epsilon \partial_x \log \det \left( \frac{\id{N} + \Delta_N(x + \ii\delta \epsilon, t; \epsilon)}{\id{N} + \Delta_N(x - \ii\delta \epsilon, t; \epsilon)} \right) \, ,
    \label{eq:ILWNSoliton}
\end{equation}
where $I_N$ is the $N \times N$ identity matrix and $\Delta_N(z, t; \epsilon)$\footnote{The definition of $\Delta_N(z, t; \epsilon)$ in \eqref{eq:DeltaDefComponents} differs from that reported in \cite{KodamaAblowitzSatsuma_1982} by a simple matrix similarity transformation. Because of the determinants in \eqref{eq:ILWNSoliton}, the two matrix definitions yield identical ILW solutions.} is an $N \times N$ matrix defined element-wise for $z \in \C$, $t \in \R$ and $\epsilon > 0$ by
\begin{equation}
    \left[ \Delta_N(z, t; \epsilon) \right]_{n \, \ell} := c_n c_\ell \ \ii \frac{\displaystyle \exp \left( - \im{\zeta_n + \zeta_\ell} \frac{z}{\epsilon} - \im{\left( \zeta_n - \frac{1}{2 \delta} \right)^2 + \left( \zeta_\ell - \frac{1}{2 \delta} \right)^2} \frac{t}{\epsilon} \right)}{\zeta_\ell - \zeta_n^*} \text{ for } 1 \leq n, \ell \leq N \, ,
    \label{eq:DeltaDefComponents}
\end{equation}
A small note on the notation: the matrix ``division'' in \eqref{eq:ILWNSoliton} is shorthand for the multiplication by the inverse and, due to the determinant operation, the ordering of this matrix multiplication is irrelevant.
Finally, it will be a useful fact that the $\Delta_N(z, t; \epsilon)$ matrix \eqref{eq:DeltaDefComponents} has a particular factorization
\begin{equation}
    \Delta_N(z, t; \epsilon) = D_N(z, t; \epsilon) \ C_N \ D_N(z, t; \epsilon) \, ,
    \label{eq:DeltaDecomp}
\end{equation}
where the $N \times N$ matrix $D_N(z, t; \epsilon)$ is diagonal and carries all of the $z$ and $t$ dependence,
\begin{equation}
    \left[ D_N(z,t; \epsilon) \right]_{n \, n} := c_n^{1/2} \exp \left( -\im{\zeta_n} \frac{z}{\epsilon} - \im{\left(\zeta_n - \frac{1}{2 \delta}\right)^2} \frac{t}{\epsilon} \right) \text{ for } 1 \leq n \leq N \, ,
\end{equation}
and $C_N$ is an $N \times N$ Cauchy matrix,
\begin{equation}
    \left[ C_N \right]_{n \, \ell} := \frac{\ii}{\zeta_\ell - \zeta_n^*} \text{ for } 1 \leq n, \ell \leq N \, .
\end{equation}
For a proof that \eqref{eq:ILWNSoliton} is indeed an exact solution of the ILW equation, see \cite[Chapter 5]{Matsuno_1984_BiTransMeth}.

\subsection{Results and Outline}
\label{sec:Outline}

The $N$-soliton formula \eqref{eq:ILWNSoliton} provides us with a rigorous basis for analysis of the asymptotics of the small-dispersion limit of the ILW equation as a semiclassical soliton ensemble.\footnote{The term ``semiclassical'' comes from quantum mechanics. The small-dispersion limit of the KdV equation corresponds under the scattering transform exactly to a semiclassical analysis of the Schr\"odinger equation. So in this work, ``small dispersion'' and ``semiclassical'' mean the same thing on the PDE and spectral sides of the IST, respectively.}
The use of semiclassical soliton ensembles is a common technique for simplifying the analysis of small-dispersion limits and has been employed for several nonlinear integrable PDE; Examples include the KdV \cite{LaxLevermoreI_1983, LaxLevermoreII_1983, Venakides_1990, DeiftVenakidesZhou_1998}, BO \cite{XuMiller_2010}, focusing Nonlinear Schr\"odinger \cite{KamvissisMcLaughlinMiller_2003_semiclass, JenkinsMcLaughlin_2014, BuckinghamJenkinsMiller_2025}, and sine-Gordon \cite{BuckinghamMiller_2013} equations.
Those familiar with the literature on small-dispersion limits will likely also know that they predominantly involve Riemann-Hilbert problems, employing the method of nonlinear steepest descent to achieve uniformly valid asymptotics of the solution on compact subsets of spacetime.
Examples of this include for the KdV equation \cite{ClaeysGrava_2009, ClaeysGrava_2010_SolEdge, ClaeysGrava_2010_DispEdge} as well as those cited above for the focusing Nonlinear Sch\"odinger and sine-Gordon equations.
Since there is currently no established Riemann-Hilbert problem which describes the ILW equation's inverse scattering, we are motivated to return to the methods of P. D. Lax and C. D. Levermore, prior to the adoption of Riemann-Hilbert.

In this paper, we follow the structure of and make use of theorems from \cite{LaxLevermoreI_1983, LaxLevermoreII_1983} where possible.
Like these works, we restrict to a particular class of initial conditions.
\begin{definition}[Admissible initial condition.] \label{def:AdmisInitCond}
    We say an initial condition to the ILW equation $u_0: \R \to \R$ is admissible iff it is a $\mathcal{C}^1(\R)$, positive, single lobed function where $u_0(x) \to 0$ as $x \to \pm \infty$ and the first derivative is nonvanishing except at $x_{\maxrm} \in \R$ where the maximum $u_0(x_\maxrm) = u_{\maxrm} > 0$ is attained.
    Additionally, we require
    \begin{equation}
        \int_\R (x^2 + 1) \, u_0(x) \, \dd x < \infty \, . \label{eq:DecayAssumInitCond}
    \end{equation}
\end{definition}

Section \ref{sec:Direct} gives a formal WKB analysis of the ILW scattering equation in the semiclassical limit $\epsilon \to 0^+$.
We extend the formal derivation of a Weyl law for eigenvalues in the ILW scattering equation first conducted in \cite{MinzoniMiloh_1994}, making their formulas explicit using classical functions.
Additionally, we also present an entirely new formula for the formal asymptotics of the norming constants.
This section culminates in the following conjecture:

\begin{conjecture}[Semiclassical asymptotics of the ILW scattering data] \label{conj:WeylLaw}
    Let $u_0$ be an admissible initial condition according to Definition \ref{def:AdmisInitCond} and let $R^\wyl: [1/2\delta, \zeta_\maxrm]_{\mathcal{Q}_\delta} \to \R_+$ be defined by
    \begin{equation}
        R^\wyl(\zeta) := \frac{1}{4 \delta} \int_{x_-(\zeta)}^{x_+(\zeta)} W_{0}\left(-2 \delta \zeta \ee^{-2\delta(\zeta + u_0(x))} \right) - W_{-1}\left(-2 \delta \zeta \ee^{-2\delta(\zeta + u_0(x))}\right) \, \dd x \, .
        \label{eq:RWeylLawDef}
    \end{equation}
    where $x_\pm : (1/2\delta, \zeta_\maxrm]_{\quadratrix{\delta}} \to \R$ are the turning point functions defined implicitly by
    \begin{equation}
        u_0(x_\pm(\zeta)) = E(\zeta) 
        \label{eq:TurningPointFunctDef}
    \end{equation}
    with $E$ as in \eqref{eq:TurningPointFromELevel} and such that $x_-(\zeta) \leq x_\maxrm \leq x_+(\zeta)$.
    Additionally, $W_n$ for $n \in \Z$ are the standard branches of the Lambert-$W$ function \cite[Equation 4.13.1\_1]{DLMF}.
    The asymptotics of the eigenvalues satisfy the ``ILW Weyl law''
    \begin{equation}
        R^\wyl(\zeta_n) = \left( n - \frac{1}{2} \right) \pi \epsilon + \bigO{\epsilon^2}
        \label{eq:WeylLaw}
    \end{equation}
    for $n \in \N$.
    Defining the tail integrals $\theta_\pm: [1/2\delta, \ \zeta_{\maxrm}]_{\quadratrix{\delta}} \to \R$
    \begin{equation}
        \theta_\pm(\zeta) := \im{\zeta} x_\pm(\zeta) + \int^{\pm\infty}_{x_\pm(\zeta)} \im{ \zeta + \frac{1}{2 \delta} W_{-1}\left(-2\delta \zeta \ee^{-2\delta(\zeta + u_0(x))}\right)} \, \dd x \, .
        \label{eq:TailIntegralsDef}
    \end{equation}
    Then the norming constants have the asymptotics
    \begin{align}
        c_n &= \exp \Bigg( \frac{2}{\epsilon} \theta_+(\zeta_n) \Bigg) \left| \frac{(2 \delta)^2 \zeta_n}{1 -2 \delta \zeta_n} \right| \nonumber \\
        &\Quad[2] \times \left( \int_{x_-(\zeta_n)}^{x_+(\zeta_n)} \frac{2\delta W_{-1}(-2\delta \zeta_n \ee^{-2\delta(\zeta_n + u_0(x))})}{1+W_{-1}(-2\delta \zeta_n \ee^{-2\delta(\zeta_n + u_0(x))})} - \frac{2\delta W_0(-2\delta \zeta_n \ee^{-2\delta(\zeta_n + u_0(x))})}{1+W_0(-2\delta \zeta_n \ee^{-2\delta(\zeta_n + u_0(x))})} \, \dd x \right)^{-1} \nonumber \\
        &\Quad[2] \times \left( 1 + \bigO{\epsilon^{1/6}} \right) \, . \label{eq:NormingConstAsymptoticFormula}
    \end{align}
    Finally, the reflection coefficient is small beyond all orders in $\epsilon$.
\end{conjecture}

In section \ref{sec:Inverse}, we define the ILW semiclassical soliton ensemble $u_N^\sse$ (Definition \ref{def:SemiclassSoliEnsem}) generated by a modified (in an asymptotically small way) set of scattering data (Definition \ref{def:ModScattData}) and which solves the ILW equation \eqref{eq:ILW} exactly for a particular sequence $\{ \epsN \}_{N \in \N}$ \eqref{eq:ModScattEpsN} so that as $N \to +\infty$, $\epsN \to 0^+$.
Analyzing $u_N^\sse$ as $N \to +\infty$, we prove several lemmas and theorems analogous to those by P. D. Lax and C. D. Levermore for the KdV equation, surmounting some additional difficulties that are unique to the ILW equation's inverse scattering problem.
Sections \ref{sec:ExpanDet} and \ref{sec:DiscrToContMeas} establish the distributional limit of the semiclassical soliton ensemble (Theorem \ref{thm:DistLimit}) is given in terms of an equilibrium measure which solves a particular Greens' energy minimization problem (Theorem \ref{thm:DescMinEnergyToContMinEnergy}).
This equilibrium measure is shown in Section \ref{sec:Variations} to be characterized by variational conditions (Theorem \ref{thm:VariationCondDetMin}).
We solve the variational conditions, under a particular simplifying assumption, by first solving differentiated variational conditions with respect to $x$ and $t$, consistently integrating up the solutions and then verifying the result solves the original variational conditions (Theorem \ref{thm:ConstLogPotentChargeDensSol}).
Finally, the resulting limit of the soliton ensemble solution is upgraded from distributional to convergence in $L^2$-norm for small time:
\begin{theorem} \label{thm:L2Limit}
    Let $u_0$ satisfy Definition \ref{def:AdmisInitCond} and $u_N^\sse(\diamond, t)$ be the associated semiclassical soliton ensemble according to Definition \ref{def:SemiclassSoliEnsem}.
    Furthermore, let $u^\Brm$ denote the solution to invicid Burgers' equation \eqref{eq:IB} with initial condition $u^\Brm(x, 0) = u_0(x)$.
    Recall that this is only globally well-defined up until the time of gradient catastrophe $t_\mathrm{c}$ \eqref{eq:CatTime}.
    Then as $N \to +\infty$, 
    \begin{equation}
        \lim_{N \to +\infty} u_N^\sse(\diamond, t) =  u^\Brm(\diamond, t)
    \end{equation}
    converging in $L^2(\R)$ norm uniformly for all $0 \leq t < t_\mathrm{c}$.
\end{theorem}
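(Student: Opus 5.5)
The natural route is the Lax--Levermore upgrade from weak to strong convergence: weak $L^2$ convergence together with convergence of the $L^2$ norms gives strong convergence in a Hilbert space. I would carry this out in three steps, using the distributional limit and the explicit variational solution from later sections, plus the conserved quantities of the $N$-soliton formula.

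\textbf{Step 1: Weak convergence.}
Theorem \ref{thm:DistLimit} identifies the distributional limit of $u_N^\sse(\diamond,t)$. Theorem \ref{thm:ConstLogPotentChargeDensSol} shows that for $0\le t<t_\mathrm{c}$ this limit equals $u^\Brm(\diamond,t)$. To pass from distributional to weak $L^2$ convergence I need a uniform bound on $\norm{u_N^\sse(\diamond,t)}_{L^2(\R)}$. Together with density of test functions in $L^2$, the bound gives $u_N^\sse(\diamond,t)\rightharpoonup u^\Brm(\diamond,t)$ weakly in $L^2(\R)$.

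\textbf{Step 2: Convergence of the norms.}
The ILW flow conserves $\int u^2\,\dd x$, so $\norm{u_N^\sse(\diamond,t)}_{L^2}=\norm{u_N^\sse(\diamond,0)}_{L^2}$ for every $t$. For the pure $N$-soliton solution \eqref{eq:ILWNSoliton} this conserved quantity is an explicit trace formula in the eigenvalues alone. Each ILW soliton with parameter $\zeta_n$ contributes mass $\propto \im{\zeta_n}$ times a fixed function, with the factor $\epsN$ included. So $\norm{u_N^\sse}^2_{L^2}$ equals $\epsN$ times a sum over $n$ of a fixed function $F(\zeta_n)$, which I would compute by evaluating $\int u^2$ on a single soliton and adding over $n$ using the trace formula.

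By the Weyl-law spacing of the modified eigenvalues (Definition \ref{def:ModScattData}), this is a Riemann sum converging to $\frac1\pi\int F(\zeta)\,\dd R^\wyl(\zeta)$. I then need to check that this integral equals $\int_\R u_0^2\,\dd x$. The check should follow by exchanging the order of integration in \eqref{eq:RWeylLawDef}, as in the KdV identity $\int u_0^2$ versus the Weyl density. It also uses that inviscid Burgers conserves $\int (u^\Brm)^2$ for $t<t_\mathrm{c}$. This gives $\norm{u_N^\sse(\diamond,t)}_{L^2}\to\norm{u^\Brm(\diamond,t)}_{L^2}$.

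\textbf{Step 3: Conclusion and uniformity.}
Expanding $\norm{u_N^\sse-u^\Brm}^2$ and combining Steps 1 and 2 gives strong convergence. The norm convergence in Step 2 is independent of $t$. For uniformity in $t$ on $[0,t_\mathrm{c})$, I would also need equicontinuity of the weak limits, or an argument by contradiction using sequences $t_N\to t_*$. That argument would use continuity of $u^\Brm$ in $t$ in $L^2$ together with uniform control of the distributional limit on compact time sets, via the variational characterization of Theorem \ref{thm:VariationCondDetMin}.

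\textbf{Main obstacle.}
I expect Step 2 to be the hardest part: getting an explicit conserved-$L^2$ trace formula for ILW solitons and verifying the Weyl-law integral identity with the Lambert-$W$ expressions. The first thing I would try is to compute $\int u^2$ for a single soliton through $\partial_x\log\det$ residue calculus in the strip. I would then use the block structure of $\Delta_N$ and the $x\to\pm\infty$ asymptotics of $\log\det(\id{N}+\Delta_N)$ to obtain additivity over the $N$ solitons.
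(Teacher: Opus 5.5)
Your architecture matches the paper's. You get weak convergence from Theorem \ref{thm:DistLimit} together with Theorem \ref{thm:ConstLogPotentChargeDensSol}. You get convergence of norms from an exact eigenvalue formula for $\norm{u_N^\sse(\diamond,t)}_{L^2}^2$, whose Weyl-law Riemann sum is evaluated by exchanging integrals and taking a residue at infinity; this is exactly \eqref{eq:QLim}--\eqref{eq:QRhoWylThirdAns}. You then apply the Lax--Levermore weak-plus-norm upgrade, and your sequence argument for uniformity in $t$ is more explicit than the paper's citation.

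The gap is in the step you flag as the main obstacle: the mechanism you propose would not produce the trace formula. The $x\to\pm\infty$ asymptotics of $\log\det(\id{N}+\Delta_N)$, where only $S=\emptyset$ and $S=\Z_N$ survive, determine only quantities that are boundary values of $F_N$ and its derivatives. An example is the mass $\int u\,\dd x$, which is linear in $u$ and comes out proportional to $P_{\Z_N}$. By contrast, $\int u^2\,\dd x$ at fixed $t$ is quadratic in $u$ and is not by itself a boundary term of $F_N$. So neither the one-soliton value nor these asymptotics tell you that it is additive over the $N$ solitons.

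The paper closes this in Lemma \ref{lem:UNL2Lim} by using the equation to convert $u^2$ into a flux. Integrating ILW from $x$ to $+\infty$, as in \eqref{eq:FirstIntegralILW}, gives $u_N^2=-\partial_t D_{\ii\delta\epsN}F_N-\epsN\,\partial_x\mathpzc{T}_{\delta\epsN}[u_N]$. Hence $\norm{u_N^\sse(\diamond,t)}_{L^2}^2$ equals $-\int_\R D_{\ii\delta\epsN}\partial_t F_N\,\dd x$ plus the jump of $\epsN\mathpzc{T}_{\delta\epsN}[u_N]$ between $x=\pm\infty$. Everything is now a boundary quantity:
\begin{itemize}
\item A rectangle contour in $\strip{\delta\epsN}$ reduces the first term to the limits of $\partial_t F_N$ as $x\to\pm\infty$. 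These are governed by the $S=\emptyset$ and $S=\Z_N$ terms and give $\frac{4\delta}{\pi}Q_{\Z_N}$.
\item The $\mathpzc{T}$-term cancels identically.
\end{itemize}
This yields $\norm{u_N^\sse(\diamond,t)}_{L^2}^2=-\frac{4\delta}{\pi}Q_{\Z_N}$ exactly for every $t$, with no appeal to conservation or to one-soliton profiles.

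If you want to keep your conservation-law route, additivity has to come from $t\to\pm\infty$ rather than $x\to\pm\infty$. The solitons have distinct speeds $1/\delta-2\kappa_n\cot(2\delta\kappa_n)$, separate with exponentially small overlap, and conservation then carries the sum of one-soliton norms back to finite $t$. That works, but it needs a long-time asymptotic analysis of $\det(\id{N}+\Delta_N)$ that the first-integral trick avoids.
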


To conclude, we discuss in Section \ref{sec:Conclusion} the expectation for the DSW after the gradient catastrophe time $t_\mathrm{c}$ being described by modulated multi-periodic solutions to the ILW equation.

\section{Formal WKB for the ILW Scattering Equation}
\label{sec:Direct}

A. A. Minzoni and T. Miloh were the first to analyze the semiclassical asymptotics of the eigenvalues of the ILW scattering equation \eqref{eq:ILWScatteringEq1} for Klause-Shaw potentials where the number of eigenvalues increases as $\epsilon \to 0^+$ \cite{MinzoniMiloh_1994}.
Our goal in this section is to present their arguments with several key modifications.
\begin{enumerate}
    \item Their analysis reveals an infinite number of asymptotic solutions to the scattering equation of which they throw away all but two, whereas we categorize these solutions into those that can form normalizable wavefunctions (scattering solutions) and those that cannot (extraneous solutions).
    
    \item They expand the wavefunction along the lower boundary of the strip and neglect information about the solution interior to in the strip or on the upper boundary, whereas we expand the wavefunction around the center of the strip, account for the $\epsilon$-sized imaginary component of the strip, and give an expansion formally valid for the whole strip.
    
    \item Their asymptotic solutions for the wavefunction are given in terms of implicit solutions to transcendental equations, whereas we write our solutions explicitly in terms of classical functions.
    
    \item They only identify and make use of one solution to the proposed model turning point equation, whereas we provide asymptotics of the infinitely many solutions and demonstrate via a matching argument that indeed the only one which was identified in \cite{MinzoniMiloh_1994} participates in the construction of normalizable wavefunctions.
\end{enumerate}
Additionally, we also provide the entirely new result of a formal asymptotic formula for the norming constants associated to the eigenvalues in the ILW scattering data.

\subsection{Asymptotic Solutions Away from the Turning Points}
\label{sec:NonTurningPoint}

We first make the substitution in the ILW scattering equation \eqref{eq:ILWScatteringEq2}
\begin{equation}
    \phi(z) = \Phi(z) \exp \left( \ii \frac{\zeta}{\epsilon} z \right)
    \label{eq:CannonicalTransform}
\end{equation}
to put \eqref{eq:ILWScatteringEq2} into what we will call ``canonical form,''
\begin{equation}
    \ii \epsilon \Phi^+_x(x) + u(x) \Phi^+(x) = \zeta \ee^{-2 \delta \zeta} \Phi^-(x) \, .
    \label{eq:CannonFormScatteringEq}
\end{equation}
In this section, we take the potential in the ILW scattering equation $u$ to be an admissible initial condition, satisfying Definition \ref{def:AdmisInitCond}.
We wish to learn something about the solutions to \eqref{eq:CannonFormScatteringEq} as we take the semiclassical limit $\epsilon \to 0^+$.
This was done in \cite{MinzoniMiloh_1994} by conducting a WKB-style approximation similar to that for semiclassical limit of the time-independent Schr\"odinger equation.
Supposing that $\Phi(z)$ does not vanish in the strip, so that we can make the exponential substitution
\begin{equation}
    \Phi(z) = \exp \left( S(z) \right)
    \label{eq:ExponentialSForm}
\end{equation}
and, moving all the exponentials to one side, we obtain the new form of the scattering equation
\begin{equation}
    \ii \epsilon S_x^+ + u = \zeta \exp \left( -2 \delta \zeta + S^- - S^+ \right) \, .
    \label{eq:ScatEqSForm}
\end{equation}
If we begin looking for a dominant balance between the two terms on the left, this suggests a formal expansion in integer powers of $\epsilon$ with leading coefficient $1/\epsilon$,
\begin{equation}
    S(z) = -\frac{\ii}{\epsilon} \theta(z) + \alpha(z) + \ii \epsilon \beta(z) + \bigO{\epsilon^2} \, .
\end{equation}
Note though, that because the vertical width of the strip $\strip{\delta \epsilon}$ is shrinking as $\epsilon \to 0^+$, the real and imaginary parts of $z = x + \ii y$ should contribute at different orders to this series, i.e. since $-\delta \epsilon \leq y \leq \delta \epsilon$, $y$ is itself $\bigO{\epsilon}$.
If the functions are assumed analytic in the expansion, then we should Taylor expand them away from the center of the strip to be able to accurately order every term in the series:
\begin{equation}
    S(z) = -\frac{\ii}{\epsilon} \theta(x) + \left( \alpha(x) + \frac{y}{\epsilon} \theta_x(x) \right) + \ii \epsilon \left( \beta(x) + \frac{y}{\epsilon} \alpha_x(x) + \frac{y^2}{\epsilon^2} \theta_{xx}(x) \right) + \bigO{\epsilon^2} \, .
    \label{eq:AccurateOrderedSExpansion}
\end{equation}
Plugging this formal expansion into \eqref{eq:ScatEqSForm}, we note that this also brings the right side into the dominant balance,
\begin{align}
    \theta_x + u + \ii \epsilon \alpha_x - \ii \delta \epsilon \theta_{xx} + \bigO{\epsilon^2} &= \zeta \exp \left( -2 \delta \zeta +2 \delta \theta_x + \ii 2 \delta \epsilon \alpha_x + \bigO{\epsilon^2} \right) \\
    &= \zeta \ee^{-2 \delta (\zeta - \theta_x)} \left( 1 + \ii 2 \delta \epsilon \alpha_x + \bigO{\epsilon^2} \right) \, .
\end{align}
Equating order-by-order in $\epsilon$, we have the relations determining the formal functions $\theta$ and $\alpha$,
\begin{align}
    \theta_x + u &= \zeta \ee^{-2 \delta (\zeta - \theta_x)} \, , \label{eq:AsympThetaRelation} \\
   \alpha_x - \delta \theta_{xx} &= 2 \delta \zeta \ee^{-2 \delta (\zeta - \theta_x)} \alpha_x \, . \label{eq:AsympARelation}
\end{align}
We will define a function for $\eta, \zeta \in \C$ by
\begin{equation}
    G(\eta; \zeta) := -\eta + \zeta \ee^{-2 \delta (\zeta - \eta)} \, ,
    \label{eq:GDef}
\end{equation}
then the relationship \eqref{eq:AsympThetaRelation} determines $\theta_x$ by the equation
\begin{equation}
    G(\theta_x(x); \zeta) = u(x)
    \label{eq:ThetaImplicit}
\end{equation}
for $x \in \R$.
It is clear that this relationship is transcendental with infinitely many solutions due to the exponential function in $G$.
However, with some simple manipulations, we can solve this equation using the  Lambert $W$-function \cite[Section 4.13]{DLMF}, also known as the product-logarithm (See Section \ref{sec:LambertWExplained}).
We will let $U \in \R$ stand for the output of $G$ to define the branched inverse function for $G$ on its first variable:
\begin{equation}
    G^{-1}_n(U; \zeta) := -U - \frac{1}{2 \delta} W_n(-2 \delta \zeta \ee^{-2 \delta (\zeta + U)}) \, .
    \label{eq:GInvDef}
\end{equation}
where $n \in \Z$ denotes which of the standard branches of the Lambert $W$-function we use.
This means for each $n \in \Z$, there is a leading-order phase which is given by
\begin{equation}
    \theta_{x}(x) = G^{-1}_n(u(x); \zeta) \, .
    \label{eq:ThetaExplicit}
\end{equation}
Once we have a solution for $\theta_x$, now we look at the next-to-leading-order term, determined by \eqref{eq:AsympARelation}, and rearranging gives
\begin{equation}
    \alpha_x(x) = \frac{\delta \ \theta_{xx}(x)}{\displaystyle 1 - 2 \delta \zeta \ee^{-2 \delta (\zeta - \theta_x(x))}} = \partial_x  \int^{\theta_x(x)} \frac{\delta \, \dd \eta}{\displaystyle 1 - 2 \delta \zeta \ee^{-2 \delta (\zeta - \eta)}} \, .
\end{equation}
This integral can be evaluated with a change of variables $u = 2 \delta \zeta \ee^{-2 \delta (\zeta - \eta)}$ and then a partial-fraction decomposition. Using the implicit definition of $\theta_x$ \eqref{eq:ThetaImplicit} as well as its explicit equality \eqref{eq:ThetaExplicit} we find the result:
\begin{align}
    \alpha(x) = \int^{\theta_x(x)} \frac{\delta \, \dd \eta}{1 - 2 \delta \zeta \ee^{-2 \delta (\zeta - \eta)}} = \frac{1}{2} \log \left( \frac{\displaystyle - W_n \left( -2 \delta \zeta \ee^{-2 \delta (\zeta - u(x))} \right)}{\displaystyle 1 + W_n \left( -2 \delta \zeta \ee^{-2 \delta (\zeta - u(x))} \right)} \right) + C
    \label{eq:AlphaIntegralResult}
\end{align}
where $C \in \C$ is an integration constant.
The presence of the logarithm in \eqref{eq:AlphaIntegralResult} and the exponential in \eqref{eq:ExponentialSForm} motivates the definition of the amplitude functions for $n \in \Z$, $U \in \R$ and $\zeta \in \C$
\begin{equation}
    A_n(U; \zeta) := \left( \frac{\displaystyle -W_n \left( -2 \delta \zeta \ee^{-2 \delta (\zeta + U} \right)}{\displaystyle 1 + W_n \left( -2 \delta \zeta \ee^{-2 \delta (\zeta + U)} \right)} \right)^{1/2}
    \label{eq:FormalAsymptoticAmplitude}
\end{equation}
where we take the principal square-root for $U > 0$ near zero and then use the continuous branch from then on.
Should we run into a zero or singularity for $U \in \R$ inside of the square-root, we can always arbitrarily choose to extend the function around this by taking the branch that is continuous for $U$ with a small positive imaginary part.
Then, choosing a branch $n \in \Z$ of the Lambert $W$-function and taking the integration constant to be zero (a freedom we have because the scattering equation is linear), we have from \eqref{eq:AlphaIntegralResult} $\ee^{\alpha(x)} = A_n(u(x); \zeta)$.
Integrating $\theta_x$ from \eqref{eq:ThetaExplicit} in $x$, and assembling the remaining terms in \eqref{eq:AccurateOrderedSExpansion} to constant order in $\epsilon$, we can define the formal expansions for the solutions to the scattering equation for each $n \in \Z$, any spectral parameter value $\zeta \in \C$ and any initial point $x_0 \in \R$
\begin{align}
    \Phi_n(z; \zeta, x_0) &:= A_n \big( u(x); \zeta \big) \exp \Bigg( -\frac{\ii}{\epsilon} \int^x_{x_0} G^{-1}_n(u(x'); \zeta) \, \dd x' + \frac{y}{\epsilon} G^{-1}_n(u(x); \zeta) \Bigg) \big(1 + \bigO{\epsilon} \big) \, .
    \label{eq:FormalAsympScatteringSolution}
\end{align}

We note that this formal asymptotic solution \eqref{eq:FormalAsympScatteringSolution} shares many characteristics with the solution generated from WKB analysis of the time-independent Schr\"odinger equation. The fast instantaneous phase coefficient $\theta_x/\epsilon$ and the slowly varying amplitude $A = \ee^\alpha$ are both locally determined as functions of the potential.
There also appear to be quite a few differences, like the fact that there are infinitely many formal asymptotic solutions.
We also note that this limit appears to be singular, in the sense that we lose all information about the wavefunction's analyticity inside the strip: if $u$ is not analytic, $G^{-1}_n(u(x); \zeta)$, its integral, or $A_n\big(u(x); \zeta \big)$ are not either.
Of course, this was to be expected as the strip's vertical width shrinks to zero, and to keep terms well-aligned in our formal $\epsilon$-series, we had to break the analyticity at the outset.

\subsubsection{Explanation of the Standard Branches of the  Lambert $W$-function and Identifying Extraneous and Scattering Solutions}
\label{sec:LambertWExplained}

To understand the behavior of the solutions, we give a brief overview of the standard branch definitions of the Lambert $W$-function.
The Lambert $W$-function is the branched inverse of the complex analytic function $w \mapsto w\ee^w$, where the range of each branch in the $w$-plane is a maximal domain of univalence of the inverse.
The standard branch definitions have as the boundaries of their ranges the curves defined by $w \ee^w < 0$, excluding the line segment from $w = -1$ to $w = 0$.
One of the connected curves of these branch range boundaries is exactly the negative quadratrix $- \quadratrix{1} = \{ -z \, | \, z \in \quadratrix{1} \}$.
There are an infinite number of branches, due to the exponential in the mapping $w \mapsto w\ee^w$, falling into three qualitative categories:
\begin{enumerate}
    \item $\mathbf{n \neq \pm1, 0}$. These branches $W_n(z)$ all have one branch point at $z = 0$ with cuts taken along the negative real axis.
    The output on the cut axis are taken to be continuous with the values above the cut.
    For $n > 0$, $W_n(z)$ for $z$ just above the cut limit to a smooth curve from $z = + \infty + \ii(2n+3)\pi$ to $z = -\infty + \ii(2n+1)\pi$. From below, they limit to another smooth curve from $z = + \infty + \ii(2n+1)\pi$ to $z = -\infty + \ii2n\pi$. In this sense, these branch ranges behave very similar to the branch ranges of the logarithm, but right sides of the ranges have been sheared vertically by $\pi$. For $n < 0$, the right sides are sheared vertically by $-\pi$.

    \item $\mathbf{n = \pm 1}$. $W_{\pm 1}(z)$ both have two branch points, one at $z = 0$ which is logarithmic in nature, and one at $z = -\ee^{-1}$ which has a square-root behavior.
    Again, the outputs along the branch cuts along the negative reals are taken to be continuous with those just above. $W_1(z)$ for $z$ just above the cut maps to a smooth curve from $z = +\infty + \ii 3\pi$ to $z = -\infty + \ii2\pi$.
    For $z$ just below the cut, the behavior is different and depends on which side of the branch point $z = -\ee^{-1}$ we limit to: for $\re{z} < -\ee^{-1}$, this approaches the upper half of $- \quadratrix{1}$, and if $-\ee^{-1} < \re{z} < 0$, then this approaches the half line $(-\infty, -1)$.
    $W_{-1}(z)$ is similar but the behavior is flipped between the directions approaching the cut and its now the lower half of $- \quadratrix{1}$.

    \item $\mathbf{n=0}$. $W_0(z)$ has only one branch point at $z = -\ee^{-1}$ of a square-root nature and the cut extends out along the negative real axis.
    This branch maps the entire plane minus the cut to the interior of $- \quadratrix{1}$ in a one-to-one manner.
    $W_0(z)$ for $z$ just above the cut approach the upper half of $- \quadratrix{1}$ while those below map to the lower half. 
    
\end{enumerate}
For a graphical representation of the Lambert $W$-function, see Figure \ref{fig:LambertWBranches}.
\begin{figure}
    \centering
    \begin{tikzpicture}[scale=0.96]
        \node at (-8, 7) {\Large $\boxed{z}$};
        \draw[-{Latex[length=2mm]}] (0, -4) -- (0, 4);
        \draw[-{Latex[length=2mm]}] (-4, 0) -- (4, 0);
        \fill[fire, opacity=0.3, variable=\y] plot[scale=0.25, domain=-13.25:-14.96] ({-\y*cos(\y r)/sin(\y r)}, {\y}) -- plot[scale=0.25, domain=-8.92:-6.675] ({-\y*cos(\y r)/sin(\y r)}, {\y});
        \fill[navy, opacity=0.3, variable=\y] plot[scale=0.25, domain=-6.675:-8.92] ({-\y*cos(\y r)/sin(\y r)}, {\y}) -- plot[scale=0.25, domain=-2.96:-0.01] ({-\y*cos(\y r)/sin(\y r)}, {\y}) -- (-4, 0);
        \fill[maize, opacity=0.3, variable=\y] plot[scale=0.25, domain=-0.01:-2.96] ({-\y*cos(\y r)/sin(\y r)}, {\y}) -- plot[scale=0.25, domain=2.96:0.01] ({-\y*cos(\y r)/sin(\y r)}, {\y});
        \draw[scale=0.25, domain=-8.92:-6.675, smooth, variable=\y, fire, very thick]  plot ({-\y*cos(\y r)/sin(\y r)}, {\y});
        \draw[navy, very thick] (-0.2, 0) -- (-4, 0);
        \draw[scale=0.25, domain=-0.01:-2.96, smooth, variable=\y, navy, very thick]  plot ({-\y*cos(\y r)/sin(\y r)}, {\y});
        \draw[scale=0.25, domain=2.96:0.01, smooth, variable=\y, maize, very thick]  plot ({-\y*cos(\y r)/sin(\y r)}, {\y});
        \draw[scale=0.25, domain=8.92:6.675, smooth, variable=\y, black, very thick]  plot ({-\y*cos(\y r)/sin(\y r)}, {\y});
        \draw[scale=0.25, domain=13.25:14.96, smooth, variable=\y, black, very thick]  plot ({-\y*cos(\y r)/sin(\y r)}, {\y});
        \node at (3.2, -3.0) {$n = -2$};
        \node at (3.2, -1.5) {$n = -1$};
        \node at (3.1, 0.4) {$n = 0$};
        \node at (3.2, 1.5) {$n = +1$};
        \node at (3.2, 3.0) {$n = +2$};
        
        \node at (-8, 0) {\Large $\boxed{W_n(z)}$};
        \draw[-{Latex[length=2mm]}, thick] (5, 4.75) to [out=-90, in=0] (4.25, 0);
        \draw[-{Latex[length=2mm]}, thick] (0, 4.75) to [out=-135, in=90] (-2, 0.25);
        \draw[-{Latex[length=2mm]}, thick] (-5, 4.75) to [out=-90, in=180] (-4.25, -2.5);
        
        \draw[-{Latex[length=2mm]}] (5, 5) -- (5, 9);
        \draw[-{Latex[length=2mm]}] (4, 7) -- (7, 7);
        \draw[decorate, decoration={zigzag, segment length=4, amplitude=1}] (3, 7) -- (4, 7);
        \fill[maize, opacity=0.3] (3, 6.85) -- (4, 6.85) arc (-90:90:0.15) -- (3, 7.15) -- (3, 9) -- (7, 9) -- (7, 5) -- (3, 5);
        \draw[maize, very thick] (4.15, 7) arc (0:90:0.15) -- (3, 7.15);
        \draw[navy, very thick] (3, 6.85) -- (4, 6.85) arc (-90:0:0.15);
        \node at (4, 7) {\textbullet};
        
        \draw[-{Latex[length=2mm]}] (0, 5) -- (0, 9);
        \draw[-{Latex[length=2mm]}] (0, 7) -- (2, 7);
        \draw[decorate, decoration={zigzag, segment length=4, amplitude=1}] (-2, 7) -- (0, 7);
        \fill[navy, opacity=0.3] (-2, 6.85) -- (0, 6.85) arc (-90:90:0.15) -- (-2, 7.15) -- (-2, 9) -- (2, 9) -- (2, 5) -- (-2, 5);
        \draw[navy, very thick] (-2, 7.15) -- (0, 7.15) arc (90:0:0.15);
        \draw[fire, very thick] (-2, 6.85) -- (0, 6.85) arc (-90:0:0.15);
        \node at (-1, 7) {\textbullet};
        \node at (0, 7) {\textbullet};
        
        \draw[-{Latex[length=2mm]}] (-5, 5) -- (-5, 9);
        \draw[-{Latex[length=2mm]}] (-5, 7) -- (-3, 7);
        \draw[decorate, decoration={zigzag, segment length=4, amplitude=1}] (-7, 7) -- (-5, 7);
        \draw[fire, very thick] (-4.85, 7) arc (0:90:0.15) -- (-7, 7.15);
        \fill[fire, opacity=0.3] (-7, 6.85) -- (-5, 6.85) arc (-90:90:0.15) -- (-7, 7.15) -- (-7, 9) -- (-3, 9) -- (-3, 5) -- (-7, 5);
        \node at (-5, 7) {\textbullet};
        
    \end{tikzpicture}
    \caption{$z \mapsto W_n(z)$ demonstrated for three characteristic branches: $n = -2, -1,$ and $0$. The colored $z$-planes map into the same colored branch ranges indicated below. Additionally, the colored edges in the $z$-planes indicate the direction that the same colored range boundaries are approached from each side of the branch cut. The edge of the same color as the plane are the ones that are included in that branch definition. The $n = 1$ maps similar to the $n = -1$ branch while all others are similar to the $n = -2$ branch.}
    \label{fig:LambertWBranches}
\end{figure}
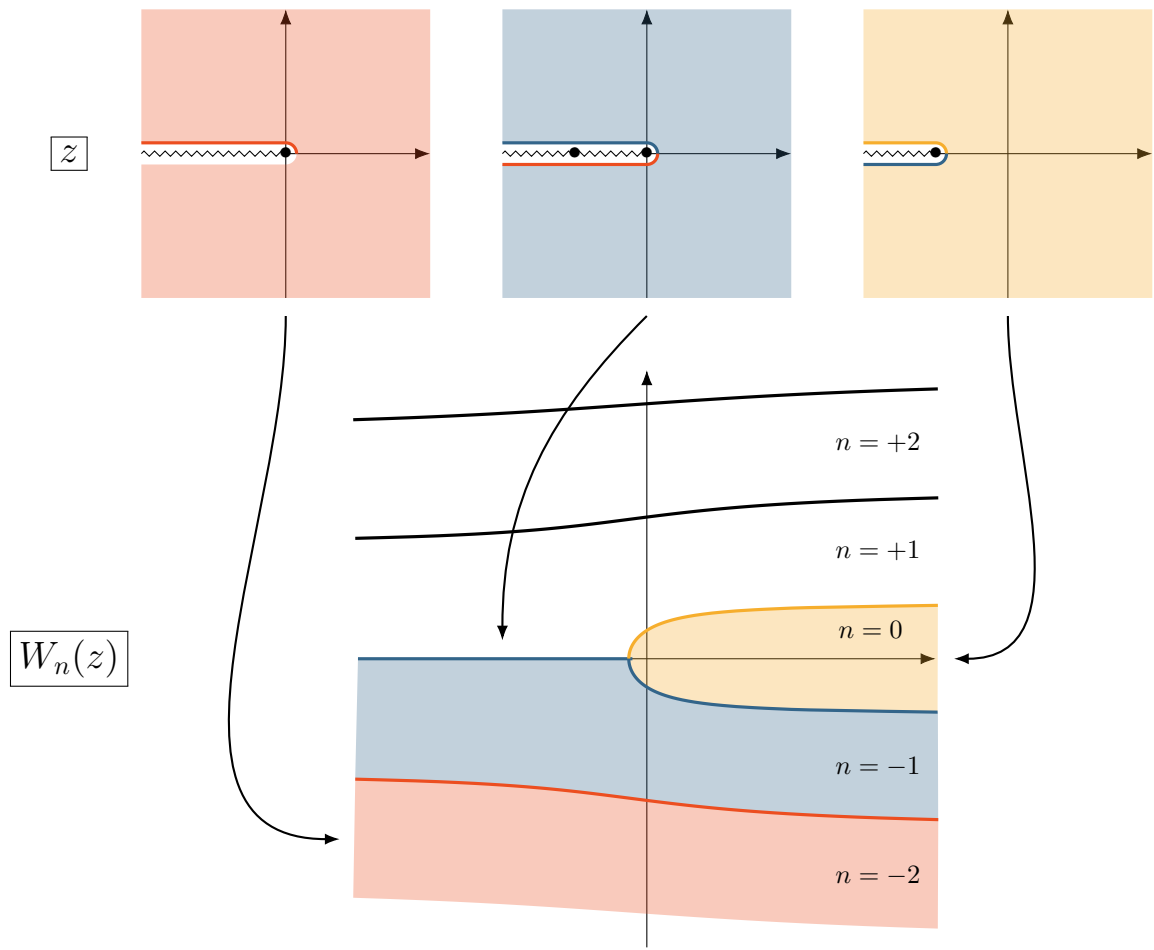
We now note that the argument of the Lambert $W$ terms in the asymptotic solution $\Phi_n(z; \zeta, x_0)$ \eqref{eq:FormalAsympScatteringSolution} are always $-2 \delta \zeta \ee^{-2 \delta (\zeta + u(x))}$.
Clearly, we should avoid $\zeta = 0$ since this makes all Lambert $W$ arguments zero for all $x \in \R$ which is a logarithmic branch point for all branches $n \neq 0$ and $W_0(0) = 0$.
Thus, for any $\zeta \in \C / \{ 0 \}$, the portion $-2\delta \zeta \ee^{-2\delta\zeta}$ is always nonzero.
The multiplying term $0 < \ee^{-2\delta u(x)} < 1$ for all $x \in \R$,
so changing $x \in \R$ just amounts to moving the argument of the Lambert $W$-function along a single complex ray, with a minimum and maximum possible magnitude since $u(x)$ is bounded,
\begin{equation} \label{eq:LambertWArgBounds}
    \left| -2 \delta \zeta \ee^{-2 \delta \zeta} \right| \ee^{-2 \delta u_\maxrm} \leq \left| -2 \delta \zeta \ee^{-2 \delta (\zeta + u(x))} \right| \leq \left| -2 \delta \zeta \ee^{-2 \delta \zeta} \right| \, .
\end{equation}
As $W_n(z)$ is continuous for any branch $n \in \Z$ along any ray bounded away from the origin, the only way the formal asymptotic solution $\Phi_n(z; \zeta, x_0)$ \eqref{eq:FormalAsympScatteringSolution} can be discontinuous is if the term in the denominator of the amplitude $A_n(x; \zeta)$ \eqref{eq:FormalAsymptoticAmplitude},
\begin{equation}
        1 + W_n(-2 \delta \zeta \ee^{-2 \delta (\zeta + u(x))}) \, ,
\end{equation}
is ever zero.
There are only two branches of the Lambert $W$-function which contain $-1$ in their ranges: $n = -1, 0$.
This motivates us to group the formal asymptotic solutions into two categories.
\begin{definition} \label{def:ScatteringCatagories}
    We refer to $\Phi_{-1}(z; \zeta, x_0)$ and $\Phi_0(z; \zeta, x_0)$ as the scattering solutions while $\Phi_n(x; \zeta, x_0)$ for any $n \in \Z$ such that $n \neq -1, 0$ will be referred to as the extraneous solutions.
\end{definition}
The names designate which solutions we expect to contribute to the WKB analysis and which ones should not.
The idea is that a blowup in an asymptotic solution for some $x_0 \in \R$ indicates a turning point where a matching procedure must be completed in order to construct a normalizable wavefunction.
If a solution does not blowup for any $x \in \R$, then this solution should not interact with any others through a matching procedure.
The conclusion drawn is that either the solutions corresponding to $n \neq -1,0$ are truly extraneous to the ILW scattering problem, or contribute beyond all orders in $\epsilon$.

In order for the amplitude term $A_n(u(x); \zeta)$ with $n = -1, 0$ to blow up at some point $x_0 \in \R$, by \eqref{eq:LambertWArgBounds} its required that
\begin{equation}
    -2 \delta \zeta \ee^{-2 \delta \zeta} \leq -e^{-1} \leq -2 \delta \zeta \ee^{-2 \delta \zeta} \ee^{-2 \delta u_\maxrm} \, .
\end{equation}
That is, $-2 \delta \zeta \ee^{-2 \delta \zeta} \in [-\ee^{2\delta u_\maxrm - 1}, -\ee^{-1}]$.
Miraculously, $-2 \delta \zeta \ee^{-2 \delta \zeta}$ is exactly the inverse of the Lambert $W$-function applied to $-2\delta\zeta$.
Thus, $-2\delta\zeta$ needs to be in any of the image sets $W_n([-\ee^{2\delta u_\maxrm - 1}, -\ee^{-1}])$.
To avoid redundancies, we can fix a branch range where $\zeta$-values are allowed to come from; for convenience, we'll fix the $n = -1$ branch range.
Then we can have blowups in the scattering solutions if and only if
\begin{equation}
    \zeta \in -\frac{1}{2\delta} W_{-1}([-\ee^{2\delta u_\maxrm - 1}, -\ee^{-1}]) \subset -\frac{1}{2\delta} W_{-1}([-\infty, -\ee^{-1}]) = \quadplus{\delta} \, ,
\end{equation}
which is in agreement with the description of \cite{KodamaAblowitzSatsuma_1982} for where eigenvalues of the ILW scattering equation should appear for the scattering variable $\zeta$.
Taking into account the maximum height $u_\maxrm$ and continuity of $u$, blowups must occur for some $x_0 \in \R$ if and only if $\zeta$ is between the quadratrix vertex and a maximal $\zeta$-value, i.e.
\begin{equation}
    \zeta \in \left( \frac{1}{2 \delta}, \zeta_\maxrm \right]_{\quadratrix{\delta}} \, ,
\end{equation}
where we are using the quadratrix interval notation \eqref{eq:QuadratrixIntervalNotationDef} and
\begin{equation}
    \zeta_\maxrm := -\frac{1}{2\delta} W_{-1}(-\ee^{2\delta u_\maxrm - 1}) \, .
\end{equation}
Blowup happens exactly when
\begin{equation}
    -2 \delta \zeta \ee^{-2\delta (\zeta + u(x_0))} = -\ee^{-1} \, .
    \label{eq:FirstTurningPointCondition}
\end{equation}
If $\zeta \in \quadplus{\delta}$, and we take a principal logarithm of \eqref{eq:FirstTurningPointCondition}, an equivalent condition is
\begin{equation}
    u(x_0) = E(\zeta) := -\zeta + \frac{1}{2\delta} \left( 1 + \log(2\delta\zeta) \right) \, .
    \label{eq:TurningPointFromELevel}
\end{equation}
When $u(x) < E(\zeta)$,
\begin{equation}
    -2 \delta \zeta \ee^{-2\delta (\zeta + u(x))} < -\ee^{-1} \, ,
    \label{eq:ExpCondition}
\end{equation}
and so the Lambert $W$ expressions with branches $n = -1, 0$ are on the quadratrix, specifically
\begin{align}
    &-\frac{1}{2 \delta} W_{-1}(-2 \delta \zeta \ee^{-2\delta (\zeta + u(x))}) \in \left( \frac{1}{2\delta}, \zeta \right)_{\mathcal{Q}_\delta} , \\
    &-\frac{1}{2 \delta} W_0(-2 \delta \zeta \ee^{-2\delta (\zeta + u(x))}) \in \left( \zeta^*,  \frac{1}{2\delta} \right)_{\mathcal{Q}_\delta} \, ,
\end{align}
Roughly, this means the formal asymptotic solutions $\Phi_{-1}(z; \zeta, x_0)$ and $\Phi_0(z; \zeta, x_0)$ \eqref{eq:FormalAsympScatteringSolution} have an exponentially growing or decaying magnitude due to the nonzero imaginary parts.

When $u(x) > E(\zeta)$,
\begin{equation}
    -2 \delta \zeta \ee^{-2\delta (\zeta + u(x))} > -\ee^{-1} \, ,
    \label{eq:OscilCondition}
\end{equation}
and so the Lambert $W$ expressions are real, specifically
\begin{align}
    &-\frac{1}{2 \delta} W_{-1}(-2 \delta \zeta \ee^{-2\delta (\zeta + u(x))}) \in \left( \frac{1}{2\delta}, +\infty \right) , \\
    &-\frac{1}{2 \delta} W_0(-2 \delta \zeta \ee^{-2\delta (\zeta + u(x))}) \in \left( 0, \frac{1}{2\delta} \right) \, .
\end{align}
Roughly, this means that the formal asymptotic solutions $\Phi_{-1}(z; \zeta, x_0)$ and $\Phi_0(z; \zeta, x_0)$ \eqref{eq:FormalAsympScatteringSolution} do not have any exponential behavior, just oscillatory.

In analogy with standard WKB analysis of the semiclassical time-independent Schr\"odinger equation, we make the following formal definitions:
\begin{definition}\label{def:TurnAllowedForbidded}
    Let $\zeta \in \quadplus{\delta}$. Points $x_0 \in \R$ where $u(x_0) = E(\zeta)$ are known as turning points. Regions of $x \in \R$ where $u(x) > E(\zeta)$ are known as classically allowed regions, and those where $u(x) < E(\zeta)$ are known as classically forbidden regions.
\end{definition}
For some specific examples of the values of $W_{n}(-2 \delta \zeta \ee^{-2\delta (\zeta + u(x))})$ in classically allowed and classically forbidden regions as well as at a turning point, see Figure \ref{fig:ExampleSolutionBehavior}.
From our work above, we see that not only do the solutions blow up at a turning point, but if $u(x)-E(\zeta)$ changes sign across the turning point, then the behavior of the scattering solutions changes so as to potentially build normalizable wavefunctions.

\begin{figure}
    \centering
    \begin{tikzpicture}[scale=0.8]
        \draw[-{Stealth[round]}, thick] (-7, 0) -- (7, 0);
        \draw[-{Stealth[round]}, thick] (-2, -1) -- (-2, 3);
        \node at (7.3, 0) {$x$};
        \node at (3.5, 3.1) {$u(x)$};

        \begin{scope}
            \clip (-7, -1) -- (7, -1) -- (7, 4) -- (-7, 4) -- cycle;
            \draw[very thick] (-7, 0.05) -- (-5, 0.1) to[out=4, in=-135] (0, 1.7) to[out=45, in=135] (5, 1.7) to[out=-45, in=-4] (10, 0.1);
        \end{scope}
        \draw[maize, very thick] (-7, 1.7) -- (7, 1.7);
        \node[maize] at (-7.5, 1.7) {$E(\zeta)$};
        
        \draw[thick, dashed] (-4.5, -1) -- (-4.5, 3);
        \draw[thick, dashed] (0, -1) -- (0, 3);
        \draw[thick, dashed] (4.5, -1) -- (4.5, 3);
        \node at (-7.1, -4) {$W_n$};
        \node at (-4.5, -1.5) {Classically Forbidden};
        \node at (0, -1.5) {Turning Point};
        \node at (4.5, -1.5) {Classically Allowed};

        \begin{scope}[shift={(-4.5, -4)}, scale=0.5]
            \draw[-{Latex[length=2mm]}] (0, -4) -- (0, 4);
            \draw[-{Latex[length=2mm]}] (-4, 0) -- (4, 0);
            \draw[scale=0.25, domain=-13.25:-14.96, smooth, variable=\y, very thick]  plot ({-\y*cos(\y r)/sin(\y r)}, {\y});
            \draw[scale=0.25, domain=-8.92:-6.675, smooth, variable=\y, very thick]  plot ({-\y*cos(\y r)/sin(\y r)}, {\y});
            \draw[very thick] (-0.2, 0) -- (-4, 0);
            \draw[scale=0.25, domain=-0.01:-2.96, smooth, variable=\y, very thick]  plot ({-\y*cos(\y r)/sin(\y r)}, {\y});
            \draw[scale=0.25, domain=2.96:0.01, smooth, variable=\y, very thick]  plot ({-\y*cos(\y r)/sin(\y r)}, {\y});
            \draw[scale=0.25, domain=8.92:6.675, smooth, variable=\y, very thick]  plot ({-\y*cos(\y r)/sin(\y r)}, {\y});
            \draw[scale=0.25, domain=13.25:14.96, smooth, variable=\y, very thick]  plot ({-\y*cos(\y r)/sin(\y r)}, {\y});
            \node at (0.75, 0.6) [circle, fill=seafoam, draw=none, inner sep=2]{};
            \node at (0.75, -0.6) [circle, fill=seafoam, draw=none, inner sep=2]{};
            \node at (0.2, 2) [circle, fill=fire, draw=none, inner sep=2]{};
            \node at (0.2, -2) [circle, fill=fire, draw=none, inner sep=2]{};
            \node at (-0.5, 3.5) [circle, fill=fire, draw=none, inner sep=2]{};
            \node at (-0.5, -3.5) [circle, fill=fire, draw=none, inner sep=2]{};
        \end{scope}

        \begin{scope}[shift={(0, -4)}, scale=0.5]
            \draw[-{Latex[length=2mm]}] (0, -4) -- (0, 4);
            \draw[-{Latex[length=2mm]}] (-4, 0) -- (4, 0);
            \draw[scale=0.25, domain=-13.25:-14.96, smooth, variable=\y, very thick]  plot ({-\y*cos(\y r)/sin(\y r)}, {\y});
            \draw[scale=0.25, domain=-8.92:-6.675, smooth, variable=\y, very thick]  plot ({-\y*cos(\y r)/sin(\y r)}, {\y});
            \draw[very thick] (-0.2, 0) -- (-4, 0);
            \draw[scale=0.25, domain=-0.01:-2.96, smooth, variable=\y, very thick]  plot ({-\y*cos(\y r)/sin(\y r)}, {\y});
            \draw[scale=0.25, domain=2.96:0.01, smooth, variable=\y, very thick]  plot ({-\y*cos(\y r)/sin(\y r)}, {\y});
            \draw[scale=0.25, domain=8.92:6.675, smooth, variable=\y, very thick]  plot ({-\y*cos(\y r)/sin(\y r)}, {\y});
            \draw[scale=0.25, domain=13.25:14.96, smooth, variable=\y, very thick]  plot ({-\y*cos(\y r)/sin(\y r)}, {\y});
            \node at (-0.25, 0) [circle, fill=seafoam, draw=none, inner sep=2]{};
            \node at (-0.6, 1.9) [circle, fill=fire, draw=none, inner sep=2]{};
            \node at (-0.6, -1.9) [circle, fill=fire, draw=none, inner sep=2]{};
            \node at (-1, 3.45) [circle, fill=fire, draw=none, inner sep=2]{};
            \node at (-1, -3.45) [circle, fill=fire, draw=none, inner sep=2]{};
        \end{scope}

        \begin{scope}[shift={(4.5, -4)}, scale=0.5]
            \draw[-{Latex[length=2mm]}] (0, -4) -- (0, 4);
            \draw[-{Latex[length=2mm]}] (-4, 0) -- (4, 0);
            \draw[scale=0.25, domain=-13.25:-14.96, smooth, variable=\y, very thick]  plot ({-\y*cos(\y r)/sin(\y r)}, {\y});
            \draw[scale=0.25, domain=-8.92:-6.675, smooth, variable=\y, very thick]  plot ({-\y*cos(\y r)/sin(\y r)}, {\y});
            \draw[very thick] (-0.2, 0) -- (-4, 0);
            \draw[scale=0.25, domain=-0.01:-2.96, smooth, variable=\y, very thick]  plot ({-\y*cos(\y r)/sin(\y r)}, {\y});
            \draw[scale=0.25, domain=2.96:0.01, smooth, variable=\y, very thick]  plot ({-\y*cos(\y r)/sin(\y r)}, {\y});
            \draw[scale=0.25, domain=8.92:6.675, smooth, variable=\y, very thick]  plot ({-\y*cos(\y r)/sin(\y r)}, {\y});
            \draw[scale=0.25, domain=13.25:14.96, smooth, variable=\y, very thick]  plot ({-\y*cos(\y r)/sin(\y r)}, {\y});
            \node at (-0.1, 0) [circle, fill=seafoam, draw=none, inner sep=2]{};
            \node at (-1.5, 0) [circle, fill=seafoam, draw=none, inner sep=2]{};
            \node at (-1.6, 1.9) [circle, fill=fire, draw=none, inner sep=2]{};
            \node at (-1.6, -1.9) [circle, fill=fire, draw=none, inner sep=2]{};
            \node at (-1.8, 3.45) [circle, fill=fire, draw=none, inner sep=2]{};
            \node at (-1.8, -3.45) [circle, fill=fire, draw=none, inner sep=2]{};
        \end{scope}
        
    \end{tikzpicture}
    \caption{An example potential $u(x)$ and level $E(\zeta)$ (top) with schematics (bottom) of the behaviors of $W_n(-2 \delta \zeta \ee^{-2\delta (\zeta + u(x))})$ at some chosen points in the classically forbidden region (left), at the turning point (middle) and in the classically allowed region (right). The values for the extraneous solutions ($n \neq -1, 0$) are shown in red and those for the scattering solutions ($n = -1, 0$) are shown in green.}
    \label{fig:ExampleSolutionBehavior}
\end{figure}
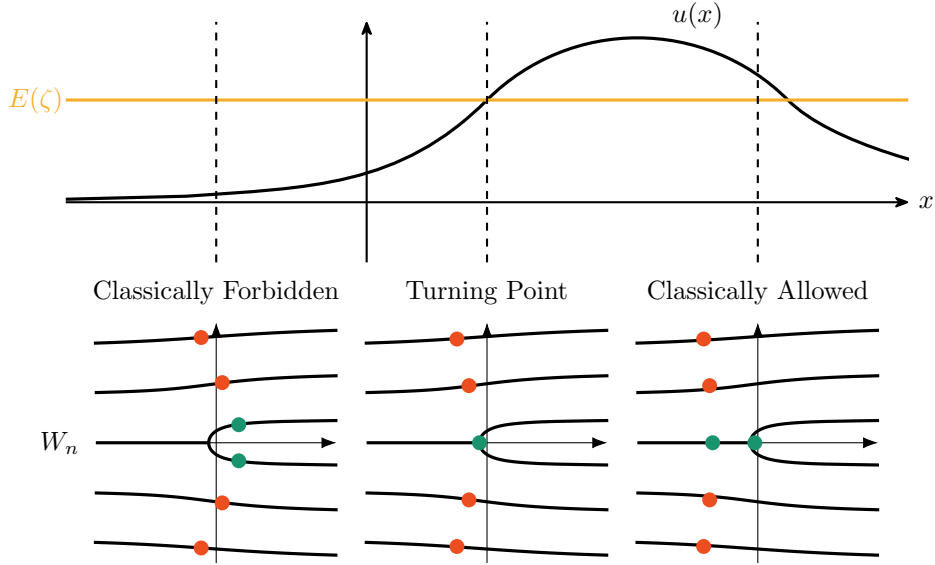

\subsection{The Model Turning Point Equation}
\label{sec:TurningPointEq}

The interpretation of the amplitude term of $\Phi_{-1}(z; \zeta, x_0)$ and $\Phi_0(z; \zeta, x_0)$ \eqref{eq:FormalAsympScatteringSolution} blowing up at a turning point is that the dominant balance which we considered in \eqref{eq:ScatEqSForm} is only appropriate for $u(x)$ away from the level $E(\zeta)$.
In the neighborhood of their equality, some terms in the scattering equation that were previously leading order, cancel, and a new dominant balance is formed.

We derive the model equation for a transition across a turning point by fixing $\zeta \in \quadratrix{\delta}$ and considering an $\epsilon$-shrinking interval in $x \in [a(\epsilon), b(\epsilon)] \subset \R$ where there is one and only one point $x_0 \in [a(\epsilon), b(\epsilon)]$ such that \eqref{eq:FirstTurningPointCondition}, or equivalently \eqref{eq:TurningPointFromELevel}, is satisfied. Then, by substituting 
\begin{equation}
    \Phi(z) = \Psi(z) \exp \left( \ii \frac{u(x_0)}{\epsilon} z \right)
\end{equation}
into the canonical scattering equation \eqref{eq:CannonFormScatteringEq}, we have after clearing exponential factors
\begin{equation}
    \ii \epsilon \Psi_x^+ + (u(x) - u(x_0)) \Psi^+ = \zeta \ee^{-2 \delta (\zeta + u(x_0))} \Psi^- = \frac{\ee^{-1}}{2 \delta} \Psi^- \, .
\end{equation}
If the rate at which the interval shrinks around the turning point is chosen appropriately, we expect that we may expand $u(x) - u(x_0) = u'(x_0) \ (x - x_0) + \bigO{(x - x_0)^2}$ and, as long as we have $u'(x_0) \neq 0$, then we may neglect the sub-leading terms in the Taylor expansion. Thus, we now have
\begin{equation}
    \ii \epsilon \Psi_x^+ + u'(x_0) \ (x - x_0) \Psi^+ = \frac{\ee^{-1}}{2 \delta} \Psi^- \, .
    \label{eq:TurningPointNeglectTerms}
\end{equation}
We may now want to remove as much dependence of the equation on the values of $u'(x_0), \epsilon, \delta$ and $x_0$ as we can via a shift and re-scaling of the independent variable.
For this, let $\psi(Z) := \Psi(z)$ where $Z := a (z - x_0)$ for some yet undetermined $a \in \R$ and $a \neq 0$.
Then the equation becomes for $X = \re{Z}$,
\begin{equation}
    \ii \epsilon a \ \psi_X^{+} + \frac{u'(x_0)}{a} X \psi^{+} = \frac{\ee^{-1}}{2 \delta} \psi^{-} \, .
    \label{eq:TurningPointExpansionNeglect}
\end{equation}
Additionally, due to the scaling $\psi$ is analytic on the strip $\strip{\delta \epsilon a}$. It is now apparent that $\epsilon$ can not be removed entirely from the equation: trying to equate any two terms, in an attempt to factor out an overall scale, will always leave the third term $\epsilon$-dependent. This simply indicates that the solutions to the scattering equation depend nontrivially on the semiclassical parameter $\epsilon$. Instead, we use this scaling to reduce the number of parameters in the equation from two, $u'(x_0)$ and $\delta$, to one. First, we will consider $u'(x_0) > 0$, then define $a$ and the new height parameter $h > 0$ by
\begin{equation}
    a = \frac{1}{2 \delta} \quad \text{and} \quad h = (2 \delta)^2 u'(x_0) \, .
\end{equation}
Then, after multiplying through by $2 \delta$ and bringing all of the terms to the same side, we have the model equation for the behavior near a turning point as
\begin{equation}
    0 = \ii \epsilon \psi_X^+ + h X \psi^+ - \ee^{-1} \psi^-
    \label{eq:ModelTuringPointEquation}
\end{equation}
where $\psi$ is analytic on the strip $\strip{\epsilon/2}$. 

For $u'(x) < 0$ we instead make the substitution $\psi(Z) := \Psi(-z^*)^*$ where $Z := (z - x_0)/2\delta$. Then
\begin{equation}
    \psi^\pm(X) = \psi\left( X \mp \ii \frac{\epsilon}{2} \right) = \Psi( -2 \delta X \mp \ii \delta)^* = \Psi^\pm( -2 \delta X)^* \, .
\end{equation}
Taking the conjugate of \eqref{eq:TurningPointExpansionNeglect}, using these boundary value identities and evaluating at $x = -2 \delta X + x_0$, we have
\begin{align}
    0 &= - \ii \epsilon \Psi^+_x(-2 \delta X)^* - 2 \delta u'(x_0) \, X \, \Psi^{+}(-2\delta X)^* - \frac{\ee^{-1}}{2 \delta} \Psi^{-}(-2\delta X)^* \\
    &= \ii \frac{\epsilon}{2 \delta} \psi^+_X(X) - 2 \delta u'(x_0) \, X \, \psi^{+}(X) - \frac{\ee^{-1}}{2 \delta} \psi^{-}(X) \, .
\end{align}
After multiplying through by $2 \delta$ and defining
\begin{equation}
    h = -(2 \delta)^2 u'(x_0) > 0
\end{equation}
we have the same form as \eqref{eq:ModelTuringPointEquation}. Thus, we can handle the analysis at both kinds of turning points in a uniform manner.

\subsubsection{Solving the Model Turning Point Equation by Laplace Transform}
\label{sec:SolvModelTurningPointEq}

Suppose there is an analytic function $\hat{\psi}: \C \to \C$ such that a solution to the model turning point equation \eqref{eq:ModelTuringPointEquation} could be formulated as
\begin{equation}
    \psi(Z) = \int_C \hat{\psi}(k) \ee^{\ii k Z / \epsilon} \dd k
\end{equation}
where $C$ is a contour on which the integral above converges rapidly enough for any $Z \in \strip{\epsilon/2}$ so that we may differentiate under the integral,
\begin{equation}
    \psi_X^+(X) = \int_C \hat{\psi}(k) \partial_X \ee^{\ii k (X / \epsilon - \ii/2)} \dd k = \int_C \ii \frac{k}{\epsilon} \ \hat{\psi}(k) \ee^{\ii k (X / \epsilon - \ii/2)} \dd k \, ;
\end{equation}
that we may integrate-by-parts with zero boundary term,
\begin{align}
    X \psi^+(X) &= \int_C \hat{\psi}(k) \left( \ii \frac{\epsilon}{2} \ee^{\ii k (X / \epsilon - \ii/2)} - \ii \epsilon \partial_k \ee^{\ii k (X / \epsilon - \ii/2)} \right) \dd k \\
    &= \int_C \ii \epsilon \left( \frac{1}{2} \hat{\psi}(k) + \hat{\psi}'(k) \right) \ee^{\ii k (X / \epsilon - \ii/2)} \dd k ;
\end{align}
and that evaluating at both the upper and lower boundaries are simultaneously well-defined
\begin{equation}
    \psi^-(X) = \int_C \hat{\psi}(k) \ee^{\ii k (X / \epsilon + \ii/2)} \, \dd k = \int_C \hat{\psi}(k) \, \ee^{-k} \, \ee^{\ii k (X / \epsilon - \ii/2)} \, \dd k \, .
\end{equation}
Plugging this form into the model turning point equation \eqref{eq:ModelTuringPointEquation},
\begin{equation}
    0 = \int_C \left( -k \hat{\psi}(k) + \ii h^2 \epsilon \left( \frac{1}{2} \hat{\psi}(k) + \hat{\psi}'(k) \right) - \ee^{-k-1} \hat{\psi}(k) \right) \ee^{\ii k (X / \epsilon - \ii/2)} \, \dd k \, ,
\end{equation}
and simply requiring the integrand be zero identically, we get a first-order differential equation for $\hat{\psi}$. This has the solution 
\begin{equation}
    \hat{\psi}(k) = A \exp \left( -\frac{\ii}{h \epsilon} \left[ \frac{k^2}{2} - \ee^{-k-1} \right] - \frac{k}{2} \right)
\end{equation}
for any $A \in \C$ independent of the variables $Z$ and $k$. This gives our solution as
\begin{equation} \label{eq:MTPEqLaplaceSolution}
    \psi(Z) = A \int_C \exp \left( -\frac{\ii}{h \epsilon} \left[ \frac{k^2}{2} - \ee^{-k-1} \right] + \ii k \left(\frac{Z}{\epsilon} +\frac{\ii}{2} \right) \right) \, \dd k \, .
\end{equation}
For large $k$ and fixed $Z \in \C$, the real part of the exponent is dominated by the behavior of the imaginary part of the function
\begin{equation}
    f(k) := \frac{k^2}{2} - \ee^{-k-1} \, .
\end{equation}
The landscape of $\im{f(k)} = \im{k} \re{k} + \ee^{-\re{k}-1} \sin(\im{k})$ determines where the contour $C$ may end in order to force rapid convergence of the integral.
In analogy with geographical topography, we'll refer to unbounded regions where $\im{f} \to -\infty$ as $k \to \infty$ ``valleys'' and unbounded regions where $\im{f} \to \infty$ as $k \to \infty$ ``mountains.''
Then finding a suitable integration contour $C$ becomes the problem of connecting two valleys.
In the west ($\re{k} < 0$), the exponential term $\ee^{-k-1}$ dominates $\im{f}$ and so we see there is an infinite alternating series of mountains and valleys spreading out in the north-south direction.
The valleys lie around the lines $\im{k} = \pi \nu$ where $\nu \in 2 \Z - 1/2$.
The mountains occur in between, around the lines $\im{k} = \pi \nu$ where $\nu \in 2 \Z + 1/2$.
In the east ($\re{k} > 0$), the quadratic term $k^2 / 2$ dominates, and we get just one large valley in the south around the argument $-\pi/4$ and one large mountain in the north around the argument $\pi/4$.
These features are easily seen in the landscape of Figure \ref{fig:ImPartFTopo}.

\subsubsection{A Basis of Solutions to the Model Turning Point Equation}

Its clear that there are infinitely many homotopy classes of integration contours $C$ which connect valleys in the landscape of $\im{f}$.
Towards constructing a basis of solutions, we define a basis of contours.
Let $C_\nu$ for $\nu \in 2\Z - 1/2$ be any piecewise-smooth, simple contour such that
\begin{equation}
    C_\nu \quad \text{begins at} \quad k = -\infty + \ii \pi \nu \quad \text{and ends at} \quad k = -\infty + \ii \pi \left( \nu + 2 \right) \, .
    \label{eq:DefNuBasisContours}
\end{equation}
These are contours which connect pairs of nearest-neighbor valleys in the west. Additionally, we define $C_{-\infty}$ to be any piecewise-smooth, simple contour such that
\begin{equation}
    C_{-\infty} \quad \text{begins at} \quad k = -\infty - \ii \frac{\pi}{2} \quad \text{and ends at} \quad k = \infty \ \ee^{-\ii \pi/4} \, .
    \label{eq:DefInftyBasisContour}
\end{equation}
An arbitrary contour $C$ which connects any valley to any other is Cauchy-equivalent to a finite sum of these basis contours $\{ C_\nu \}_{\nu \in \{-\infty\}\cup(2\Z - 1/2)}$
Hence, the solution \eqref{eq:MTPEqLaplaceSolution} associated to $C$ can be written as the sum of the fundamental solutions
\begin{align}
    \psi_\nu(Z; \epsilon, h) &:= \int_{C_\nu} \exp \left( -\frac{\ii}{h \epsilon} f(k) + \ii k \left( \frac{Z}{\epsilon} +\frac{\ii}{2} \right) \right) \dd k \quad \text{for all} \ \nu \in \{ -\infty \} \cup \left( 2\Z - \frac{1}{2} \right) \, .
    \label{eq:PsiFundamentalDef}
\end{align}

\begin{figure}
    \centering
    \includegraphics[width=0.6\linewidth]{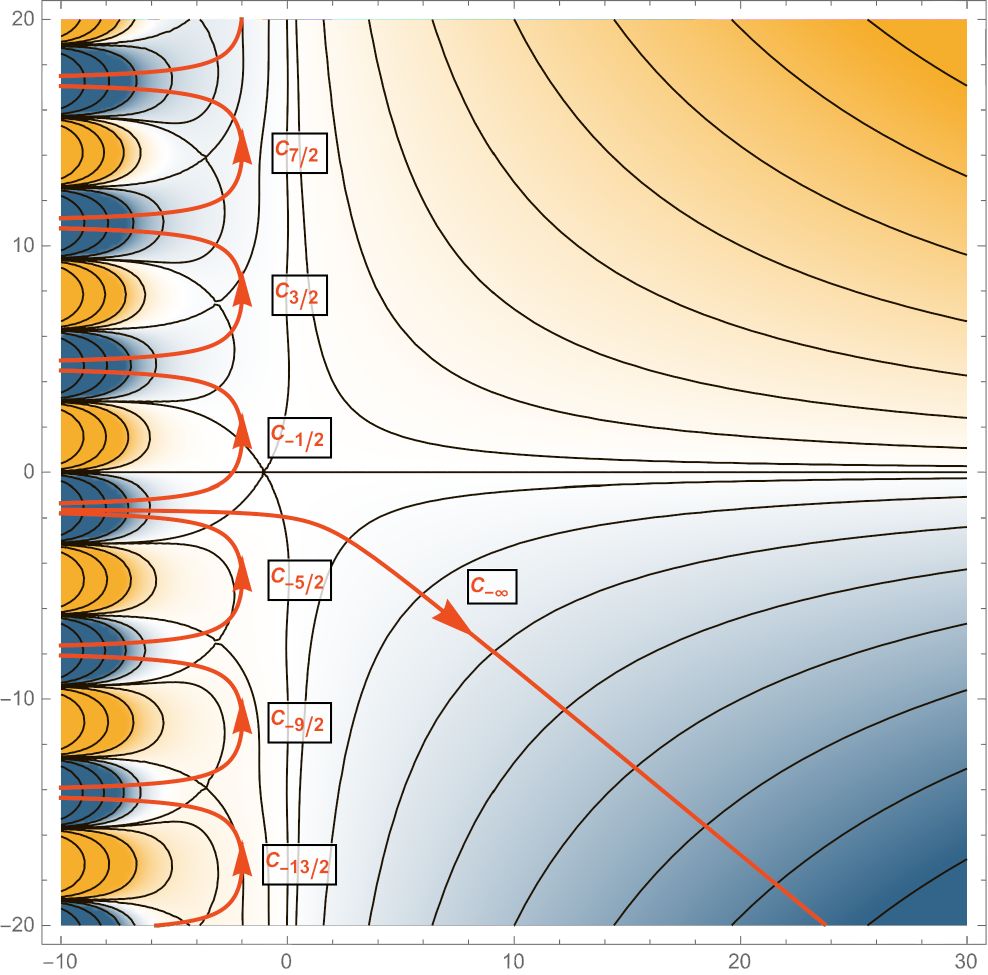}
    \caption{The landscape of $\im{f}$. Yellow shading corresponds to a positive value while blue shading corresponds to a negative value.
    Additionally, the fine black lines are some particular level contours.
    The alternating mountain and valley structure can be clearly seen on the left of the image with the valleys in blue and the mountains in yellow.
    On the right, we also see the northern mountain (yellow) and the southern valley (blue) in the first and fourth quadrants respectively.
    Some representative choices of basis contours $C_\nu$ as defined above are depicted in red with their labels.}
    \label{fig:ImPartFTopo}
\end{figure}

As $\epsilon \to 0^+$, the asymptotics of these fundamental solutions can be computed using the method of steepest descent.
For this reason, we need to understand the dependence of the saddle points of the function in the exponent of \eqref{eq:PsiFundamentalDef} on $Z$ and $h$.

\subsubsection{The Behavior of the Saddle Points}

Consider $Z = X + \ii \epsilon (\tilde{Y}-1/2) \in \strip{\epsilon/2}$ with $\tilde{Y} \in [0, 1]$, then the contribution of $\tilde{Y}$ to the exponent in the integrand of \eqref{eq:PsiFundamentalDef} is next-too-leading-order in $\epsilon$,
\begin{equation}
    \exp \left( -\frac{\ii}{h \epsilon} f(k) + \ii k \left( \frac{Z}{\epsilon} +\frac{\ii}{2} \right) \right) = \exp \left( -\frac{\ii}{h\epsilon} f(k; \tilde{X}) - k \tilde{Y} \right)
\end{equation}
where we've introduced 
\begin{equation}
    f(k; \tilde{X}) := f(k) - k \tilde{X}= \frac{k^2}{2} - \ee^{-k-1} - k\tilde{X} \quad \text{and} \quad \tilde{X} = hX
\end{equation}
to simplify notation. This means the saddle points that we are interested in are defined by
\begin{equation}
    0 = \partial_k f(k; \tilde{X}) = k + \ee^{-k - 1} - \tilde{X} \, ,
    \label{eq:SaddlePointDefintion}
\end{equation}
which can be expressed in terms of the Lambert $W$-function
\begin{equation}
    k_n(\tilde{X}) = \tilde{X} + W_n(-\ee^{-\tilde{X}-1}) \quad \text{for all} \quad n \in \Z \, .
\end{equation}
There are three qualitatively distinct configurations of these saddle points as $X \in \R$ is allowed to vary:
\begin{enumerate}
    \item For $\tilde{X} < 0$ large, we see from the asymptotics of the Lambert-W function \cite[Equation 4.13.1\_1]{DLMF}, 
    \begin{align}
        k_n(\tilde{X}) &= \tilde{X} + \big(-\tilde{X}-1 - \log \big| \tilde{X}+1 \big| + \ii (2n+1)\pi + o(1) \big) \\
        &= - \big( \log \big| \tilde{X}+1 \big| + 1 \big) + \ii (2n+1) \pi + o(1) \, .
    \end{align}
    That is, they are about evenly spaced in the imaginary direction and with real part going like a negative log of $|\tilde{X}|$. The saddle points $k_n(\tilde{X})$ for $n \in \Z$ are all simple.
    
    \item As $\tilde{X}$ increases towards zero, the saddle points below the real-line move upwards and the saddle points above the real-line move down. All of the saddle points maintain their distance from each other except for $k_0(\tilde{X})$ and $k_{-1}(\tilde{X})$, which coincide at $\tilde{X} = 0$. As $\tilde{X}$ increases past zero, the saddle points $k_0(\tilde{X})$ and $k_{-1}(\tilde{X})$ move apart again but now on the real line, in the character of a fold coalescing of saddle points.

    \item For $\tilde{X} > 0$ large, we note the asymptotics of the Lambert-W function are distinct between the 0-branch and the others.
    \begin{equation}
        k_0(\tilde{X}) = \tilde{X} + \bigO{\ee^{-\tilde{X}-1}}
    \end{equation}
    while
    \begin{align}
        k_n(\tilde{X}) &= \tilde{X} + \big(-\tilde{X}-1 - \log \big| \tilde{X}+1 \big| + \ii 2n_+\pi + o(1) \big) \quad \text{for all} \quad n \neq 0\\
        &= -\big( \log \big| \tilde{X}+1 \big| + 1 \big) + \ii 2n_+\pi + o(1) \, .
    \end{align}
    with $n_+ = n$ if $n > 0$ and $n_+ = n+1$ if $n < 0$. So there is again a vertical tower of about evenly spaced saddle points in the imaginary direction and with real part going like a negative $\log |\tilde{X}|$. However, this tower of saddle points is now centered on the real line instead of symmetric about it and there is one saddle point which has split off and is at about $\tilde{X}$ on the real-line. Once again, the saddle points $k_n(\tilde{X})$ for $n \in \Z$ are all simple.
\end{enumerate}

\subsection{Matching at the Turning Points}
\label{sec:WKB}

During the process of deriving the model turning point equation, we formally neglected the higher order terms $\bigO{(x - x_0)^2}$ from the Taylor expansion of the potential in \eqref{eq:TurningPointNeglectTerms}.
This is reasonable only if we are working in a shrinking $x$ window around a turning point.
By the transformation $\tilde{X} = h (x - x_0) / 2 \delta$, this would be a shrinking window around $\tilde{X} = 0$, exactly where the $\im{f}$ landscape has two colliding saddle points.
A careful steepest decent analysis (see \cite[Chapter 2]{Mitchell_2024}) in the style of C. Chester, B. Friedman and F. Ursell \cite{ChesterFriedmanUrsell_1957} of the definitions of the fundamental solutions \eqref{eq:PsiFundamentalDef} gives the following result.\footnote{The careful reader may note that the overall multiplicative coefficients presented here differ slightly from those presented in \cite{Mitchell_2024}.
Unfortunately, it seems these extra multiplicative factors were neglected in the cited work and have been corrected in this work.}
\begin{theorem}
    \label{thm:AsympTurnPointEqBasis}
    Let $Z = \epsilon^\alpha \chi + \ii \epsilon (\tilde{Y} -1/2)$ with $1/2 < \alpha < 2/3$ and the shorthand $W_n := W_n(-\ee^{-1})$. Then for $\chi$ in compact subsets and $\nu \in (2\Z - 1/2)\cup\{ -\infty \}$ in the following cases, we have the approximations with uniform error bounds:
    \begin{enumerate}
        \item $\nu = -\infty$,
        \begin{align}
            \psi_{-\infty}(Z; \epsilon, h) &= 2 \pi (2h\epsilon)^{1/3} \exp \left( \frac{\ii}{2h\epsilon} -\frac{\ii}{\epsilon^{1-\alpha}} \chi + \tilde{Y} \right) \nonumber \\
            &\Quad[10] \times \Bigg[ \Ai \left( - \frac{(2h)^{1/3}}{\epsilon^{2/3-\alpha}} \chi \big( 1 + \bigO{\epsilon^{\alpha}} \big) \right) + \bigO{\epsilon^{1-3\alpha/2}} \Bigg] \, ;
            \label{eq:PsiInftyFullApprox}
        \end{align}
        
        \item $\nu \in 2\Z - 1/2$ with $\nu \leq -5/2$ and let the integer $n = (\nu-3/2)/2 \leq -2$,
        \begin{align}
            \psi_{\nu}(Z; \epsilon, h) &= \left( \frac{2 \pi h \epsilon}{1 + W_n} \right)^{1/2} \exp \left( \ii \frac{1 - (1 + W_n)^2}{2h\epsilon} + \ii \frac{W_n}{\epsilon^{1-\alpha}} \chi - W_n \tilde{Y} - \ii \frac{\pi}{4} \right) \left[ 1 + \bigO{\epsilon^{\alpha}} \right] \, ;
            \label{eq:PsiNuNegFinalApprox}
        \end{align}

        \item $\nu = -1/2$,
        \begin{align}
            \psi_{-1/2}(Z; \epsilon, h) &= 2 \pi (2h\epsilon)^{1/3} \exp \left( \frac{\ii}{2h\epsilon} + \ii \frac{\pi}{3} -\frac{\ii}{\epsilon^{1-\alpha}} \chi + \tilde{Y} \right) \nonumber \\
            &\Quad[10] \times \Bigg[ \Ai \left( \frac{(2h)^{1/3}\ee^{\ii \pi/3}}{\epsilon^{2/3-\alpha}} \chi \big( 1 + \bigO{\epsilon^{\alpha}} \big) \right) + \bigO{\epsilon^{1-3\alpha/2}} \Bigg] \, ;
            \label{eq:PsiOneHalfFullApprox}
        \end{align}

        \item $\nu \in 2\Z - 1/2$ with $\nu \geq 3/2$ and let the integer $n = (\nu+1/2)/2 \geq 1$,
        \begin{align}
            \psi_{\nu}(Z; \epsilon, h) &= \left( \frac{2 \pi h \epsilon}{1 + W_n} \right)^{1/2} \exp \left( \ii \frac{1 - (1 + W_n)^2}{2h\epsilon} + \ii \frac{W_n}{\epsilon^{1-\alpha}} \chi - W_n \tilde{Y} + \ii \frac{3\pi}{4} \right) \left[ 1 + \bigO{\epsilon^{\alpha}} \right] \, .
            \label{eq:PsiNuPosFinalApprox}
        \end{align}

    \end{enumerate}
\end{theorem}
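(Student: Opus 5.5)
The plan is a direct steepest-descent analysis of the integrals \eqref{eq:PsiFundamentalDef}. Substituting $Z=\epsilon^\alpha\chi+\ii\epsilon(\tilde Y-1/2)$, the integrand becomes $\exp\big(-\tfrac{\ii}{h\epsilon}f(k;\tilde X)-k\tilde Y\big)$ with $\tilde X=h\epsilon^\alpha\chi\to0$. So in the window $1/2<\alpha<2/3$ we are perturbing around $\tilde X=0$. There the saddle points are $k_n(0)=W_n=W_n(-\ee^{-1})$, and all are simple except the double saddle $k_0=k_{-1}=-1$.

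The first step is topological. For each $\nu$, I would deform $C_\nu$ into a steepest-descent path of $\im{f(k;0)}$ through the appropriate saddle, using the valley and mountain structure described before the theorem. Small $\tilde X$ only moves saddles by $\bigO{\tilde X}$, so the deformation persists uniformly for $\chi$ in compact sets. From the asymptotic tower of saddles I expect the following assignment:
\begin{itemize}
\item[(i)] $C_\nu$ for $\nu\le-5/2$ crosses only the simple saddle $W_n$ with $n=(\nu-3/2)/2$;
\item[(ii)] $C_\nu$ for $\nu\ge3/2$ crosses only $W_n$ with $n=(\nu+1/2)/2$;
\item[(iii)] $C_{-\infty}$ and $C_{-1/2}$ both pass through the coalescing pair at $k=-1$, but leave it along different cubic steepest-descent directions.
\end{itemize}
Away from the critical points the integrand is exponentially smaller relative to the saddle contributions, which disposes of the tails.

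For the simple saddles, cases 2 and 4, I would run the standard Laplace method. Using $\ee^{-W_n-1}=-W_n$, one gets $f''(W_n)=1+W_n$ and $f(W_n)=\tfrac12\big((1+W_n)^2-1\big)$. Expanding $f(k;\tilde X)$ about $k_n(\tilde X)=W_n+\bigO{\tilde X}$ gives three pieces. The leading exponent is $\ii\big(1-(1+W_n)^2\big)/(2h\epsilon)$. The shift by $-k\tilde X$ contributes $\ii W_n\chi\epsilon^{\alpha-1}$. The factor $\ee^{-k\tilde Y}$ contributes $\ee^{-W_n\tilde Y}$. The quadratic correction from the saddle shift is $\bigO{\tilde X^2/\epsilon}=\bigO{\epsilon^{2\alpha-1}}$, which must be absorbed into the error term. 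Replacing $\ee^{-k\tilde Y}$ by its value at the saddle costs $\bigO{\epsilon^{1/2}}$. The Gaussian integral then gives $(2\pi h\epsilon/(1+W_n))^{1/2}$. The phases $-\pi/4$ and $3\pi/4$ come from the orientation of the steepest-descent direction relative to the direction in which $C_\nu$ is traversed, which is opposite in the two half-planes.

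For $\nu=-\infty$ and $\nu=-1/2$ I would follow Chester--Friedman--Ursell. Set $k=-1+s$; then $f(k)=-\tfrac12+\tfrac{s^3}{6}+\bigO{s^4}$ and $-k\tilde X=\tilde X-s\tilde X$. This produces the prefactor $\exp\big(\tfrac{\ii}{2h\epsilon}-\ii\chi\epsilon^{\alpha-1}+\tilde Y\big)$ times
\[
\int\exp\Big(-\tfrac{\ii}{h\epsilon}\big(\tfrac{s^3}{6}-s\tilde X+\bigO{s^4}\big)-s\tilde Y\Big)\,\dd s .
\]
Introduce the cubic normal-form change of variables $\tfrac{s^3}{6}-s\tilde X+\dots=\tfrac{w^3}{6}-w\,\rho(\tilde X)$ with $\rho=\tilde X(1+\bigO{\tilde X})$, and rescale $w=(2h\epsilon)^{1/3}t$. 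This turns the integral into $(2h\epsilon)^{1/3}\int\exp\big(-\ii(t^3/3-\xi t)\big)\,\dd t$ with $\xi=(2h)^{1/3}\epsilon^{\alpha-2/3}\chi\,(1+\bigO{\epsilon^\alpha})$, i.e.\ $2\pi(2h\epsilon)^{1/3}$ times an Airy function. The two contour classes give $\Ai(-\xi)$ and the rotated $\ee^{\ii\pi/3}\Ai(\ee^{\ii\pi/3}\xi)$, according to which pair of sectors of $t^3$ the deformed contour joins. The Jacobian $\dd s/\dd w-1$ and the factor $\ee^{-s\tilde Y}$ are handled by the usual Bleistein splitting into $p_0+p_1w+(w^2-\rho)q(w)$, followed by integration by parts. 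The relevant window size is $|s|\lesssim\tilde X^{1/2}\sim\epsilon^{\alpha/2}$, and after the $(2h\epsilon)^{1/3}$ normalization these corrections produce the additive $\bigO{\epsilon^{1-3\alpha/2}}$.

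I expect the main obstacle to be this uniform coalescing-saddle step. It requires checking that the normal-form map is analytic and univalent uniformly in $\tilde X$ near $0$, and pinning down exactly which Airy sectors $C_{-\infty}$ and $C_{-1/2}$ connect. That sector identification is what fixes the rotation $\ee^{\ii\pi/3}$ and the overall constants. I would settle it first by tracking the level set $\im{f(k;0)}=\im{f(-1;0)}$ emanating from $k=-1$ at angles $\pi/6+2\pi j/3$ and following each branch out to its valley.
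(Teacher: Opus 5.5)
Your route (Laplace's method at the simple saddles $W_n$, and a Chester--Friedman--Ursell cubic normal form at the coalescing pair $k_0=k_{-1}=-1$) is the one the paper invokes; the paper defers the details to the cited dissertation. The proposal nevertheless fails at the step you flagged and then waved through.

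Put $F(\tilde X):=f(k_n(\tilde X))-\tilde X\,k_n(\tilde X)$. Then $F'=-k_n$ and $F''=-1/f''(k_n)$, so with $\tilde X=h\epsilon^\alpha\chi$
\[
-\frac{\ii}{h\epsilon}F(\tilde X)=\ii\,\frac{1-(1+W_n)^2}{2h\epsilon}+\ii\,\frac{W_n}{\epsilon^{1-\alpha}}\,\chi+\frac{\ii h\chi^2}{2(1+W_n)}\,\epsilon^{2\alpha-1}+\bigO{\epsilon^{3\alpha-1}} .
\]
Since $\alpha<1$, $\epsilon^{2\alpha-1}/\epsilon^{\alpha}\to\infty$. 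For $\chi\neq0$ the third term therefore makes the true value differ from the stated formula by a relative amount of exact order $\epsilon^{2\alpha-1}$. It cannot be absorbed into $[1+\bigO{\epsilon^\alpha}]$, and no argument can give that bound as stated.

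What your computation actually proves is \eqref{eq:PsiNuNegFinalApprox} and \eqref{eq:PsiNuPosFinalApprox} with $[1+\bigO{\epsilon^{2\alpha-1}}]$, or with the quadratic phase kept explicitly. That is also the order carried by the outer expansion \eqref{eq:FormalAsympSolExpanAllBut-10} that these formulas are matched against. Once that phase is retained, everything else really is $\bigO{\epsilon^\alpha}$ or smaller. One small correction there: replacing $\ee^{-k\tilde Y}$ by its saddle value costs $\bigO{\epsilon}$ by parity, not $\bigO{\epsilon^{1/2}}$, which would itself exceed $\epsilon^\alpha$.

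The same term is hidden in the coalescing case. Your normal form $\tfrac{s^3}{6}-s\tilde X+\dots=\tfrac{w^3}{6}-\rho w$ has one parameter but must match both critical values. The CFU map therefore exists only once an additive constant $A(\tilde X)$ is included. From $A'=-\tfrac12(k_0+k_{-1})$ and $k_{0,-1}=-1\pm\sqrt{2\tilde X}+\tilde X/3+\dots$ one finds $A=-\tfrac12+\tilde X-\tfrac16\tilde X^2+\bigO{\tilde X^3}$. This puts an extra factor $\exp(\ii h\chi^2\epsilon^{2\alpha-1}/6)$ in front of the Airy function, which is not within the additive $\bigO{\epsilon^{1-3\alpha/2}}$ for $\alpha$ near $1/2$. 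Moreover, in case 3 with $\chi<0$, $\Ai(\ee^{\ii\pi/3}\xi)$ grows like $\exp(\tfrac23|\xi|^{3/2})$. There neither this factor nor your $p_1$-term (of size $\epsilon^{1/3}|\Ai'|$) admits any additive bound, so errors must be measured relative to the Airy scale.

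Two smaller points in the contour step:
\begin{enumerate}
\item Near $k=-1$ the integrand has modulus $\exp(\im{s^3}/(6h\epsilon))$, so the descent directions are $\arg s\in\{-\pi/6,\pi/2,7\pi/6\}$. The angles $\pi/6+2\pi j/3$ you propose to follow are ridges, and the level set $\im{f}=\im{f(-1)}$ leaves at angles $j\pi/3$. With the correct directions, $C_{-\infty}$ joins $7\pi/6$ to $-\pi/6$ and $C_{-1/2}$ joins $7\pi/6$ to $\pi/2$. These give $2\pi\Ai(-\xi)$ and $2\pi\ee^{\ii\pi/3}\Ai(\ee^{\ii\pi/3}\xi)$, as required.
\item Your claim (i) is false as stated. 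At $W_n$ with $n\le-2$ (e.g.\ $W_{-2}\approx-3.09-7.46\ii$) the descent direction is nearly horizontal. The path runs from the western valley at $\im{k}=\pi\nu$ into the south-eastern valley. So $C_\nu$ also contains the reversed descent path through $W_{n+1}$, or $-C_{-\infty}$ when $\nu=-5/2$. The stated leading term survives only because $\im{f(W_n)}=\re{1+W_n}\,\im{W_n}$ increases as $n$ decreases and exceeds $\im{f(-1)}=0$. That makes those extra pieces exponentially subdominant, but it has to be argued.
\end{enumerate}
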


This confirms the appearance of the Airy functions, just like for the model turning point equation for the time-independent Schr\"odinger equation, which was asserted in \cite{MinzoniMiloh_1994}. 

We now heuristically justify the construction of normalizable eigenfunctions $\psi$ of the ILW scattering equation \eqref{eq:CannonFormScatteringEq} by matching the formal asymptotic solutions \eqref{eq:FormalAsympScatteringSolution} across turning points by way of the rigorous asymptotics of our basis of solutions to the model turning point equation \eqref{eq:ModelTuringPointEquation}. 
Assuming the potential $u_0$ is an admissible initial condition satisfying Definition \ref{def:AdmisInitCond}, then for a particular level $\zeta \in (1/2\delta, \zeta_\maxrm)_{\mathcal{Q}_\delta}$ there are two distinct turning points $x_{\pm}(\zeta)$ satisfying \eqref{eq:TurningPointFromELevel} and ordered $x_{-}(\zeta) < x_{+}(\zeta)$.
These split $\R$ into three disjoint regions, the ``left classically forbidden'' region $F_{-}(\zeta)$, the ``classically allowed'' region $A(\zeta)$ and the ``right classically forbidden'' region $F_{+}(\zeta)$ define by
\begin{align}
    F_{-}(\zeta) &:= \big( -\infty, x_-(\zeta) \big) \, , \\
    A(\zeta) &:= \big( x_-(\zeta), x_+(\zeta) \big) \, , \\
    F_{+}(\zeta) &:= \big( x_+(\zeta), + \infty \big) \, .
\end{align}
In each of these three regions, we consider an infinite linear combination of asymptotic solutions \eqref{eq:FormalAsympScatteringSolution}.
\begin{align}
    \label{eq:LeftForbidRegSol}
    \Phi_{F-}(z, \zeta) &:= \sum_{n \in \Z} f^{(n)}_{-} \Phi_n(z; \zeta, x_-(\zeta)) \text{ for } x \in F_{-}(\zeta) , \\
    \label{eq:AllowedRegSol}
    \Phi_{A}(z, \zeta) &:= \sum_{n \in \Z} a^{(n)}_{-} \Phi_n(z; \zeta, x_-(\zeta)) = \sum_{n \in \Z} a^{(n)}_{+} \Phi_n(z; \zeta, x_+(\zeta)) \text{ for } x \in A(\zeta) , \\
    \label{eq:RightForbidRegSol}
    \Phi_{F+}(z, \zeta) &:= \sum_{n \in \Z} f^{(n)}_{+} \Phi_n(z; \zeta, x_+(\zeta)) \text{ for } x \in F_{+}(\zeta) \, ,
\end{align}
with $\{ f^{(n)}_{\pm}, a^{(n)}_{\pm} \}_{n \in Z} \subset \C$ are some possibly $\epsilon$-dependent but definitely $z$-independent constants.
Additionally, we wrote the same expansion in the classically allowed region two different ways, one integrating in from the left turning point and one from the right turning point which are equivalent when the coefficients satisfy the relationship
\begin{equation}
    \frac{a^{(n)}_{+}}{a^{(n)}_{-}} = \exp \left( \frac{\ii}{\epsilon} \int_{x_-(\zeta)}^{x_+(\zeta)} G^{-1}_n(u(x'); \zeta) \, \dd x' \right) \, .\label{eq:ApmRelationship}
\end{equation}
which can be required as this is a $z$-independent constant.

Now, we also introduce the $\epsilon$-shrinking transition regions around the turning points
\begin{equation}
    T_{\pm}(\zeta, \epsilon) = \left( x_{\pm}(\zeta) - \epsilon^\alpha 2\delta M^\chi_{\pm}, \  x_{\pm}(\zeta) + \epsilon^\alpha 2\delta M^\chi_{\pm} \right) \, ,
\end{equation}
for some $M^\chi_{\pm} > 0$ and $1/2 < \alpha < 2/3$.
We write $x \in T_{\pm}(\zeta, \epsilon)$ as $x = x_{\pm}(\zeta) \mp \epsilon^\alpha 2\delta \chi_{\pm}$ where $|\chi_{\pm}| \leq M^\chi_{\pm}$. Then we write the asymptotic solution in the transition regions as
\begin{equation}
    \label{eq:TransRegionSols}
    \Phi_{T\pm}(z; \zeta) = \exp \left( \ii \frac{u(x_{\pm}(\zeta))}{\epsilon} z \right) \sum_{\nu \in (2\Z-1/2)\cup\{-\infty\}} t_{\pm}^{(\nu)} \psi_{\nu}^{(*)_+}(Z_{\pm}; \epsilon, h_{\pm}) \quad \text{ for } x \in T_{\pm}(\zeta) 
\end{equation}
where $\{ t_{\pm}^{(\nu)} \}_{\nu \in (2\Z-1/2)\cup\{-\infty\}} \subset \C$ are, again, some $z$-independent constants, $Z_{\pm} = \pm\big( x - x_{\pm}(\zeta) \pm \ii y \big) /2\delta = \mp \epsilon^\alpha \chi_{\mp} + \ii y/2\delta$, $h_{\pm} = \mp 2 \delta u_0'\big( x_{\pm}(\zeta) \big)$ and the $\psi^{(*)_+}$ notation is meant to indicate a conjugate should only be present for the right turning point expansion.
With these coordinate definitions, $\chi_{\pm} > 0$ is always in the classically allowed region while $\chi_{\pm} < 0$ is always in one of the classically forbidden regions. 

First we begin in the left classical forbidden region, $x \in F_{-}(\zeta)$.
Here all of the values $G^{-1}_n(u_0(x); \zeta)$ from its definition \eqref{eq:GInvDef} have nonzero imaginary parts.
Moreover, as $x \to -\infty$ in $F_{-}(\zeta)$, $G^{-1}_n(u_0(x); \zeta) \to G^{-1}_n(0; \zeta)$.
For $n = -1$ this returns $G^{-1}_{-1}(0; \zeta) = \zeta$, and for $n = 0$, $G^{-1}_0(0; \zeta) = \zeta^*$.
In the other cases of $n$, we have for $n \leq -2$, $\im{G_n(0; \zeta)} > \im{\zeta}$ and for $n \geq 1$, $\im{G_n(0; \zeta)} < -\im{\zeta}$.
This means the integral in the exponent of the asymptotic solutions $\Phi_n(x; \zeta, x_-(\zeta))$ \eqref{eq:FormalAsympScatteringSolution} has a real part that increases without bound as $x \to - \infty$ if $n \geq -1$, that is, the magnitude of $\Phi_n(x; \zeta, x_-(\zeta))$ will increase without bound, and the overall $\Phi_{F-}(x, \zeta)$ will not be normalizable.
Thus, we immediately conclude that all 
\begin{equation}
    f^{(n)}_{-} = 0 \text{ for } n \geq 0 \, .
    \label{eq:GrowingExpSolAreKilled}
\end{equation}
Conversely, if $n \leq -1$ in the aforementioned integral, the real part will decrease without bound and there will be no problem integrating the norm-squared of the solution as $x \to -\infty$ (of course, subject to convergent coefficients $f^{(n)}_{-}$).

Now we wish to match with the solution expansion in the transition region around the turning point $x_-(\zeta)$.
We will expand the constituent parts of the asymptotic solutions $\Phi_n(x; \zeta, x_-(\zeta))$ \eqref{eq:FormalAsympScatteringSolution} with the change of variables $x = x_-(\zeta) + \epsilon^\alpha 2\delta \chi_-$, $y = 2 \delta \epsilon(\tilde{Y}-1/2)$.
Because of the different behavior for the solutions at a turning point, we will have to handle these expansions in two cases:
\begin{enumerate}
    \item \textbf{Extraneous solutions ($\boldsymbol{n \in \Z}$ but $\boldsymbol{n \neq -1, 0}$).} $G^{-1}_n(u_0(x); \zeta)$ is differentiable at the turning point so we have the expansion
    \begin{align}
        \Phi_n(z; \zeta, x_-(\zeta)) &= \exp \Bigg( \ii \left( 2 \delta u_0(x_-(\zeta)) + W_n(-\ee^{-1}) \right) \left( \frac{\chi_-}{\epsilon^{1-\alpha}} + \ii \tilde{Y} -\frac{\ii}{2} \right)  \Bigg) \nonumber \\
        &\Quad[6] \times \left[ \left( \frac{-2 \delta W_n(-\ee^{-1})}{1 + W_n(-\ee^{-1})} \right)^{1/2} + \bigO{\epsilon^{2\alpha-1}} \right] \, .
        \label{eq:FormalAsympSolExpanAllBut-10}
    \end{align}
    Because these solutions are regular across the turning point, this expansion is valid in the entire transition region $T_{-}(\zeta)$.

    \item \textbf{Scattering solutions ($\boldsymbol{n = -1, 0}$).} The difficulty in this case is handling the fact that the Lambert $W$ expression is not differentiable at the turning point; it vanishes like a square-root there (i.e. these solutions behave like the WKB solutions to the time-independent Schr\"odinger equation).
    Thus, we will have different asymptotics depending on which side of the transition region we're expanding on.
    First, let us look at the case for $\chi_- < 0$. Expanding the argument of the Lambert $W$-function,
    \begin{align}
        -2\delta \zeta \ee^{-2\delta(\zeta + u_0(x))} &= -2\delta \zeta \exp \left( -2\delta \left[ \zeta + u_0(x_-(\zeta)) + u_0'(x_-(\zeta)) 2 \delta \epsilon^\alpha \chi_- + \bigO{\epsilon^{2\alpha}} \right] \right) \nonumber \\
        &= -\exp \left( -1 - h_- \epsilon^\alpha \chi_- + \bigO{\epsilon^{2\alpha}} \right) \, .
    \end{align}
    We make use of the series expansion of the Lambert $W$-function \cite[Equation 4.13.6]{DLMF}, where we pull a negative sign out of the principle square root as a factor of $\ii$ and plug the result into the formula for the asymptotic solution \eqref{eq:FormalAsympScatteringSolution}:
    \begin{align}
        \Phi_{-1}(z; \zeta, x_-(\zeta)) &= \exp \Bigg( \ii \left( 2 \delta u_0(x_-(\zeta)) + 1 \right) \left( \frac{\chi_-}{\epsilon^{1-\alpha}} + \ii \tilde{Y} -\frac{\ii}{2} \right) - \frac{h_-^{1/2}}{3 \epsilon^{1-3\alpha/2}} (- 2\chi_-)^{3/2} \Bigg) \nonumber \\
        &\Quad[4] \times \frac{(2 \delta)^{1/2} \ee^{\ii \pi/ 4}}{(2 h_-)^{1/4} \epsilon^{\alpha/4} \left( - \chi_- \right)^{1/4}} \left[ 1 + \bigO{\epsilon^{2\alpha-1}} \right] \, , \label{eq:FormalAsympSolExpanForbid-1}\\
        \Phi_{0}(z; \zeta, x_-(\zeta)) &= \exp \Bigg( \ii \left( 2 \delta u_0(x_-(\zeta)) + 1 \right) \left( \frac{\chi_-}{\epsilon^{1-\alpha}} + \ii \tilde{Y} -\frac{\ii}{2} \right) + \frac{h_-^{1/2}}{3 \epsilon^{1-3\alpha/2}} (-2\chi_-)^{3/2} \Bigg) \nonumber \\
        &\Quad[4] \times \frac{(2 \delta)^{1/2} \ee^{-\ii \pi/ 4}}{(2 h_-)^{1/4} \epsilon^{\alpha/4} \left( -\chi_- \right)^{1/4}} \left[ 1 + \bigO{\epsilon^{2\alpha-1}} \right] \, . \label{eq:FormalAsympSolExpanForbid0}
    \end{align}
    For $\chi_- > 0$, we do not have to pull a negative out of the principle square root in \cite[Equation 4.13.6]{DLMF}, thus we don't have an extra factor of $\ii$ and corresponding algebra reveals
    \begin{align}
        \Phi_{-1}(z; \zeta, x_-(\zeta)) &= \exp \Bigg( \ii \left( 2 \delta u_0(x_-(\zeta)) + 1 \right) \left( \frac{\chi_-}{\epsilon^{1-\alpha}} + \ii \tilde{Y} -\frac{\ii}{2} \right) + \ii \frac{h_-^{1/2}}{3 \epsilon^{1-3\alpha/2}} ( 2\chi_-)^{3/2} \Bigg) \nonumber \\
        &\Quad[4] \times \frac{\ii (2 \delta)^{1/2}}{(2 h_-)^{1/4} \epsilon^{\alpha/4} \left( \chi_- \right)^{1/4}} \left[ 1 + \bigO{\epsilon^{2\alpha-1}} \right] \, , \label{eq:FormalAsympSolExpanAllow-1} \\
        \Phi_{0}(z; \zeta, x_-(\zeta)) &= \exp \Bigg( \ii \left( 2 \delta u_0(x_-(\zeta)) + 1 \right) \left( \frac{\chi_-}{\epsilon^{1-\alpha}} + \ii \tilde{Y} -\frac{\ii}{2} \right) - \ii \frac{h_-^{1/2}}{3 \epsilon^{1-3\alpha/2}} ( 2\chi_-)^{3/2} \Bigg) \nonumber \\
        &\Quad[4] \times \frac{(2 \delta)^{1/2}}{(2 h_-)^{1/4} \epsilon^{\alpha/4} \left( \chi_- \right)^{1/4}} \left[ 1 + \bigO{\epsilon^{2\alpha-1}} \right] \, . \label{eq:FormalAsympSolExpanAllow0}
    \end{align}

\end{enumerate}

Now we are finally ready to start matching in the left transition region.
The asymptotics of $\Phi_{F-}$ \eqref{eq:LeftForbidRegSol} near the turning point $x_{-}(\zeta)$ are computed using \eqref{eq:FormalAsympSolExpanAllBut-10}, \eqref{eq:FormalAsympSolExpanForbid-1} and \eqref{eq:FormalAsympSolExpanForbid0};
likewise, those of $\Phi_{A}$ \eqref{eq:AllowedRegSol} are computed using \eqref{eq:FormalAsympSolExpanAllBut-10}, \eqref{eq:FormalAsympSolExpanAllow-1}, and \eqref{eq:FormalAsympSolExpanAllow0}; and finally, those of $\Phi_{T-}$ \eqref{eq:TransRegionSols} are computed using the results of Theorem \ref{thm:AsympTurnPointEqBasis} along with the standard asymptotics of the Airy function \cite[Section 9.7(ii)]{DLMF} and the fact that under our change of variables to the model turning point equation 
\begin{equation}
    \exp \left( \ii \frac{u_0(x_-(\zeta))}{\epsilon} z \right) = \exp \left( \ii 2 \delta u_0(x_-(\zeta)) \left( \frac{\chi_-}{\epsilon^{1-\alpha}} + \ii \tilde{Y} -\frac{\ii}{2} + \frac{x_-(\zeta)}{2 \delta \epsilon} \right) \right) \, .
\end{equation}
These asymptotics are linear combinations of exponentials in $x$ with differing exponent coefficients.
The heuristic principal for matching solutions on the two sides of the transition region is that exponentials of the same exponential coefficient in the two expansions to be matched should have the same linear coefficient in their respective sums.
From this, we conclude that, to leading order in $\epsilon$, we have
\begin{align}
    f^{(n)}_{-} &= C_n(\epsilon) t_{-}^{(2n+3/2)} = a^{(n)}_{-} \text{  for  } n \leq -2 \, , \label{eq:MatchingFnNegTA} \\
    f_{-}^{(-1)} &= C_{-1}(\epsN) t_{-}^{(-\infty)} = \ii a^{(-1)}_{-} \, , \label{eq:MatchingF-1TA} \\
    0 = f_{-}^{(0)} &= C_{0}(\epsilon) t^{(-1/2)}_{-} = \ii a^{(-1)}_{-} + \ii a^{(0)}_{-} \, , \label{eq:MatchingF0TA} \\
    0 = f^{-}_{n} &= C_n(\epsilon) t^{(2n-1/2)}_{-} = a^{n}_{-} \text{  for  } n \geq 1 \, , \label{eq:MatchingFnPosTA}
\end{align}
For some explicit $\epsilon$-dependent, but $z$-independent, coefficients $C_{n}(\epsilon)$ for $n \in \Z$.

For matching at the right endpoint, we first note that in the classically forbidden region
\begin{align}
    \Phi_n(z; \zeta, x_0) &= \Phi_{-1-n}^*(-z^*; \zeta, -x_0) \text{ for } n \in \Z \, ,
\end{align}
where $u(x) = u_0(x)$ is taken in the asymptotic solutions on the left and $u(x) = u_0(-x)$ is taken in the asymptotic solutions on the right.
This is due to the Lambert $W$-function terms coming in conjugate pairs identified by $n \leftrightarrow -1-n$.
Likewise, in the classically allowed region,
\begin{align}
    \Phi_n(z; \zeta, x_0) &= \phantom{-}\Phi_{-1-n}^*(-z^*; \zeta, -x_0) \text{ for } n \neq -1, 0 \, , \\
    \Phi_{-1}(z; \zeta, x_0) &= -\Phi_{-1}^*(-z^*; \zeta, -x_0) \, , \\
    \Phi_0(z; \zeta, x_0) &= \phantom{-}\Phi_{0}^*(-z^*; \zeta, -x_0) \, ,
\end{align}
again where $u(x) = u_0(x)$ is taken in the asymptotic solutions on the left and $u(x) = u_0(-x)$ is taken in the asymptotic solutions on the right.
That is, the two real Lambert $W$-function values for $n = -1, 0$ do not swap under conjugation and, additionally, the amplitude term of $\Phi_{-1}$ \eqref{eq:FormalAsymptoticAmplitude} is purely imaginary while that of $\Phi_{0}$ is positive.
If we construct a new tilded system of approximate solutions to the potential $\tilde{u}_0(x) = u_0(-x)$, with new turning points $\tilde{x}_\pm(\zeta) = -x_\mp(\zeta)$ then, using the expansions \eqref{eq:LeftForbidRegSol}, \eqref{eq:AllowedRegSol}, \eqref{eq:RightForbidRegSol}, and \eqref{eq:TransRegionSols}, the new tilded coefficients can be related to those of the previous coefficients using
\begin{align}
    \tilde{\Phi}_{F\pm}(z, \zeta) = \Phi_{F\mp}^*(-z^*, \zeta) \quad \implies &\quad \tilde{f}_{\pm}^{(n)} = (f_{\mp}^{(-1-n)})^* \text{ for } n \in \Z \, , \\
    \tilde{\Phi}_{A}(z, \zeta) = \Phi_{A}^*(-z^*, \zeta)  \quad \implies &\quad \left\{ \begin{array}{ll} 
        &\tilde{a}_{\pm}^{(n)} = (a_{\mp}^{(-1-n)})^* \text{ for } n \neq -1, 0 \\
        & \tilde{a}_{\pm}^{(-1)} = -(a_{\mp}^{(-1)})^* \\
        & \tilde{a}_{\pm}^{(0)} = (a_{\mp}^{(0)})^*
    \end{array} \right. \, , \\
    \tilde{\Phi}_{T\pm}(z, \zeta) = \Phi_{T\mp}^*(-z^*, \zeta) \quad \implies &\quad \tilde{t}_{\pm}^{(\nu)} = (t_{\mp}^{(\nu)})^* \text{ for } \nu \in 2\Z - 1/2 \cup \{ \infty \} \, .
\end{align}
Matching at the left turning point of the tilded system $\tilde{x}_-(\zeta)$ yields conditions \eqref{eq:GrowingExpSolAreKilled}, \eqref{eq:MatchingFnNegTA}, \eqref{eq:MatchingF-1TA}, \eqref{eq:MatchingF0TA} and \eqref{eq:MatchingFnPosTA} with tilded coefficients.
From the above identifications, these become new matching conditions on the original coefficients at the right turning point $x_+(\zeta)$.
Accounting for the relation between right and left coefficients in the classically allowed region \eqref{eq:ApmRelationship}, we have the full system of coefficients matched:
\begin{align}
    f_{-}^{(n)} = a_{-}^{(n)} = a_{+}^{(n)} = f_{+}^{(-1-n)} &= 0 \quad \text{for} \quad n \leq -2 \, , \\
    f_{+}^{(-1)} &= 0 \, ,
\end{align}
\begin{equation}
    f_{-}^{(-1)} = \left\{ \begin{array}{ll|l}
        \phantom{-}\ii a_{-}^{(-1)} &= \displaystyle \phantom{-}\ii \exp \left( -\frac{\ii}{\epsilon} \int_{x_-(\zeta)}^{x_+(\zeta)} G^{-1}_{-1}(u(x'); \zeta) \, \dd x' \right) a_{+}^{(-1)} & \ii a_{+}^{(-1)} \\
        - \ii a_{-}^{(0)} &= \displaystyle - \ii \exp \left( -\frac{\ii}{\epsilon} \int_{x_-(\zeta)}^{x_+(\zeta)} G^{-1}_0(u(x'); \zeta) \, \dd x' \right) a_{+}^{(0)} & \ii a_{+}^{(0)}
    \end{array} \right\} = f_{+}^{(0)} \, ,
    \label{eq:MatchedScatteringCoefs}
\end{equation}
\begin{align}
    0 &= f_{-}^{(0)} \, , \\
    0 &= f_{-}^{(n)} = a_{-}^{(n)} = a_{+}^{(n)} = f_{+}^{(-1-n)} \quad \text{for} \quad n \geq 1 \, .
\end{align}
Here, we have visually represented the matching conditions as an infinite hierarchy.
Those that are set to zero are exactly the coefficients associated to the extraneous formal asymptotic solutions, and hence do not contribute to a normalizable solution under the matching arguments.
These coefficients were set to zero at one side or the other (indicated above by which side the zero appears on) due to exponential growth.
The argument required that their coefficients match all the way through the turning point transition regions to the other side where they exponentially decay.

Starting at $f_{-}^{(-1)}$ in \eqref{eq:MatchedScatteringCoefs} and following a circuit around the equalities across the top and then back along the bottom, we see that we end up with $f_{-}^{(-1)}$ times a phase.
In order for the solution to be nonzero, it must be that this phase is one:
\begin{equation}
    1 = -\exp \left( \frac{\ii}{\epsilon} \int_{x_-(\zeta)}^{x_+(\zeta)} G^{-1}_0(u(x); \zeta) - G^{-1}_{-1}(u(x); \zeta) \, \dd x \right) \, .
\end{equation}
We simplify the integrand and define the function $R^\wyl$ by \eqref{eq:RWeylLawDef}.
The condition above is satisfied for those $\zeta = \zeta_n$ which satisfy the ILW Weyl Law \eqref{eq:WeylLaw}.
Note, $R^\wyl$ is positive because the integral is take over the classically allowed region, and here $W_0$ is real and greater than $-1$ while $W_{-1}$ is real and less than $-1$.

\begin{lemma}[Some properties of the ILW Weyl Law] \label{lem:WylLaw}
For $u_0$ an admissible initial condition satisfying Definition \ref{def:AdmisInitCond}, the following properties of $R^\wyl$ hold.
    \begin{enumerate}
        \item $R^\wyl(1/2\delta)$ is finite if 
            \begin{equation}
                \int_{-\infty}^{+\infty} \sqrt{u(x)} \, \dd x < \infty \, .
                \label{eq:FiniteRCond}
            \end{equation}

        \item $R^\wyl(\zeta_\maxrm) = 0$.
        
        \item $R^\wyl(\zeta)$ is continuous and decreasing along $[1/2\delta, \zeta_\maxrm]_{\mathcal{Q}_\delta}$ and differentiable on $(1/2\delta, \zeta_\maxrm)_{\mathcal{Q}_\delta}$ with negative parameterized derivative 
        \begin{equation}
            \rho^\wyl(\kappa) := -\frac{\dd}{\dd \kappa} R^\wyl(\zeta(\kappa)) = \frac{1}{2} \zeta'(\kappa) \, E'(\zeta(\kappa)) \int_{x_-(\zeta(\kappa))}^{x_+(\zeta(\kappa))} r(\zeta(\kappa); u_0(x)) \, \dd x
            \label{eq:RhoWylDef}
        \end{equation}
        where
        \begin{equation}
            r (\zeta; u) := \re{\frac{W_{0}\left(-2 \delta \zeta \ee^{-2\delta(\zeta + u)} \right)}{1 + W_{0}\left(-2 \delta \zeta \ee^{-2\delta(\zeta + u)} \right)} - \frac{W_{-1}\left(-2 \delta \zeta \ee^{-2\delta(\zeta + u)} \right)}{1 + W_{-1}\left(-2 \delta \zeta \ee^{-2\delta(\zeta + u)} \right)}}
            \label{eq:WyelLawDerivIntegrand}
        \end{equation}
        defined for all $\zeta \in \quadplus{\delta}$ and $u \geq 0$.
    \end{enumerate}
\end{lemma}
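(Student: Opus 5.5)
I will work throughout with the parametrization $\zeta=\zeta(\kappa)$, $\kappa\in(0,\kappa_\maxrm]$, $\kappa_\maxrm=\im{\zeta_\maxrm}$, and with the shorthand $w_j(x)=W_j(-2\delta\zeta\ee^{-2\delta(\zeta+u_0(x))})$ for $j=0,-1$. The basic fact I will use is that on the quadratrix $-2\delta\zeta\ee^{-2\delta\zeta}$ is real and negative, since $-2\delta\zeta = W_{-1}(\,\cdot\,)$ lies on the lower half of $-\quadratrix{1}$. It follows that the Lambert argument is real and negative for all $x$. By the computation preceding \eqref{eq:TurningPointFromELevel}, its value is $-\ee^{-1}\ee^{-2\delta(u_0(x)-E(\zeta))}$. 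Hence on $A(\zeta)$ the argument lies in $(-\ee^{-1},0)$, so $w_0\in(-1,0)$ and $w_{-1}<-1$ are real, and the integrand $W_0-W_{-1}$ in \eqref{eq:RWeylLawDef} is positive and vanishes exactly at the turning points. Before the main steps I will also check that $E$ is real and strictly monotone along the arc. Indeed, $E(\zeta)=-\zeta+\frac{1}{2\delta}(1+\log 2\delta\zeta)$ has real part equal to $\frac{1}{2\delta}\big(1+W_{-1}(a)+\log(-W_{-1}(a))\big)$ with $a=-2\delta\zeta\ee^{-2\delta\zeta}$. Along the arc, $E$ increases from $0$ at $\zeta=1/2\delta$ to $u_\maxrm$ at $\zeta_\maxrm$, which is exactly the definition of $\zeta_\maxrm$. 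The turning points $x_\pm(\zeta)$ are then well defined by \eqref{eq:TurningPointFunctDef} and the single-lobe hypothesis, and they are $\mathcal{C}^1$ by the implicit function theorem because $u_0'(x_\pm)\neq0$ for $\zeta\neq\zeta_\maxrm$.

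\textbf{Item 2.} At $\zeta_\maxrm$ we have $E=u_\maxrm$, so $x_-=x_+=x_\maxrm$ and the integral is over a degenerate interval, giving $R^\wyl(\zeta_\maxrm)=0$. \textbf{Item 1.} At $\zeta=1/2\delta$ the argument becomes $-\ee^{-1}\ee^{-2\delta u_0(x)}$, and the turning points move to $\pm\infty$. Near the branch point, $W_{0}-W_{-1}\sim 2\sqrt{2(1+\ee s)}$ with $s$ the argument, and here $1+\ee s = 1-\ee^{-2\delta u_0}\approx 2\delta u_0$. Therefore the integrand behaves like $C\sqrt{u_0(x)}$ in the tails. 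It is bounded elsewhere because $u_0$ is bounded, so the integral is finite under \eqref{eq:FiniteRCond}.

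\textbf{Item 3.} I differentiate \eqref{eq:RWeylLawDef} in $\kappa$ by Leibniz's rule. The boundary terms vanish because the integrand $W_0-W_{-1}$ is zero at $x_\pm$. For the interior derivative I use $W'(s)=W/(s(1+W))$ and compute
\[
\frac{\dd}{\dd\zeta}\log\big(-2\delta\zeta\ee^{-2\delta(\zeta+u)}\big)=\frac1\zeta-2\delta=2\delta E'(\zeta).
\]
This gives $\partial_\kappa(W_0-W_{-1}) = 2\delta\,E'(\zeta)\zeta'(\kappa)\big[\tfrac{W_0}{1+W_0}-\tfrac{W_{-1}}{1+W_{-1}}\big]$, which after the prefactor $1/4\delta$ and a sign yields \eqref{eq:RhoWylDef}. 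The quantity $E'(\zeta)\zeta'(\kappa)=\frac{\dd}{\dd\kappa}E$ is real and positive by the monotonicity established above, and the bracket is real and, by the signs of $w_0$ and $w_{-1}$, negative. So, with the paper's sign convention, $R^\wyl$ is decreasing (the real part in $r$ is harmless).

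\textbf{Main obstacle.} The interior integrand blows up like $|x-x_\pm|^{-1/2}$ at the turning points, since $1+W_j$ vanishes there like a square root. Leibniz's rule therefore needs justification. I would handle this by writing the integral in the variable $E$, or by substituting $u_0(x)=E-s^2$ on each monotone branch of $u_0$, which removes the singularity. Differentiating under the integral is then legitimate, and the resulting integrable-singularity formula gives both continuity on $(1/2\delta,\zeta_\maxrm)$ and the derivative formula. Continuity at the endpoints follows from dominated convergence, using the $\sqrt{u_0}$ bound from item 1 at $\zeta=1/2\delta$ and the shrinking interval at $\zeta_\maxrm$.
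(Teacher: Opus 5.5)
Items 1 and 2 and your Leibniz computation in item 3 match the paper. That covers the $\sqrt{u_0}$ tail behaviour from the branch-point series, the degenerate interval at $\zeta_\maxrm$, the vanishing boundary terms (since $W_0=W_{-1}=-1$ at $x_\pm$), and the chain-rule factor $\zeta'(\kappa)(1/\zeta-2\delta)=2\delta\zeta'E'$.

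The real difference is how you get monotonicity. The paper compares directly. For $\kappa_1<\kappa_2$ the allowed interval of $\zeta(\kappa_2)$ is nested in that of $\zeta(\kappa_1)$. On that interval the $\kappa_1$ integrand dominates pointwise, because the argument $-\ee^{-1}\ee^{2\delta(E-u_0)}$ moves toward $-\ee^{-1}$ as $E$ grows. Outside it the $\kappa_1$ integrand is positive. No differentiability is needed. You instead read monotonicity off the sign of the derivative, so it rests on the Leibniz step. In exchange you correctly isolate that step as the real difficulty, which the paper passes over. Your pointwise identity $\partial_\kappa(W_0-W_{-1})=2\delta\,\tfrac{\dd E}{\dd\kappa}\,[\cdots]<0$ already gives the paper's comparison, and the domination you use at $\zeta=1/2\delta$, with no Leibniz rule at all.

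You assert $\tfrac{\dd E}{\dd\kappa}>0$ rather than show it. With $\theta=2\delta\kappa$ one has $2\delta E=1-\theta\cot\theta+\log(\theta/\sin\theta)$, whose $\theta$-derivative is $\big((\theta-\sin\theta\cos\theta)^2+\sin^4\theta\big)/(\theta\sin^2\theta)>0$.

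Your claim that the computation, ``after the prefactor $1/4\delta$ and a sign,'' yields \eqref{eq:RhoWylDef} is wrong. It gives exactly
\[
\frac{\dd}{\dd\kappa}R^\wyl(\zeta(\kappa))=\frac12\,\zeta'(\kappa)E'(\zeta(\kappa))\int_{x_-}^{x_+}r(\zeta(\kappa);u_0(x))\,\dd x ,
\]
so $\rho^\wyl=-\frac{\dd}{\dd\kappa}R^\wyl$ is the negative of the displayed right-hand side. Your own sign analysis ($r<0$ on $A(\zeta)$, $\zeta'E'>0$) shows that right-hand side is negative, so it cannot equal $-\frac{\dd}{\dd\kappa}R^\wyl>0$. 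The statement is off by a sign. The integrand of $r$ should be $\frac{W_{-1}}{1+W_{-1}}-\frac{W_0}{1+W_0}$, as in the norming-constant integrand \eqref{eq:NormingConstAsymptoticFormula}, and as the later use of $\rho^\wyl\ge0$ as the upper density in $\mathcal{A}$ requires. The paper's proof does not track this sign either. State the discrepancy explicitly rather than absorbing it.

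Of your two remedies for the Leibniz step, the substitution $u_0(x)=E+s^2$ fails under Definition \ref{def:AdmisInitCond}. (It must be $+$, since $u_0>E$ on $A(\zeta)$.) It moves the $E$-dependence into the Jacobian $2s/|u_0'(x_\pm(E+s^2))|$. For $u_0\in\mathcal{C}^1$ this is only continuous in $E$, so differentiating under the integral would need $u_0''$.

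Use the level variable instead. With $v=u_0(x)$ on each monotone branch,
\[
R^\wyl=\frac{1}{4\delta}\int_E^{u_\maxrm}F(E-v)\,m(v)\,\dd v ,
\]
where $F(w)=(W_0-W_{-1})(-\ee^{-1}\ee^{2\delta w})$ and $m$ is the sum of $1/|u_0'|$ over the two preimages of $v$. Then:
\begin{itemize}
\item $m$ is independent of $E$, integrable, and continuous near $v=E$;
\item $F(0)=0$ kills the boundary term;
\item $|F'(w)|\le C|w|^{-1/2}$ near $w=0$ gives the uniform integrability needed to pass to the limit in the difference quotients.
\end{itemize}
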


\begin{proof}
    The first property follows from noting that as $\zeta \to 1/2\delta$, $x_\pm(\zeta) \to \pm\infty$. Then plugging in $\zeta = 1/2\delta$ into the integrand
    \begin{equation}
        W_{0}\left( -\ee^{-1 - 2\delta u_0(x)} \right) - W_{-1}\left( -\ee^{-1 - 2\delta u_0(x)} \right) \, .
    \end{equation}
    Once more using the series \cite[Equation 4.13.6]{DLMF}, we see that for large enough $|x|$, this integrand behaves like $\sqrt{u_0(x)}$, so the condition \eqref{eq:FiniteRCond} guarantees that this integrand is integrable over $\R$, and thus $R^\wyl(1/2\delta)$ is finite.
    
    On the other hand, for $\zeta = \zeta_\maxrm$, $x_\pm(\zeta) = x_\maxrm$. Thus, the integral returns zero here.
    Let $\zeta < \xi \in [1/2\delta, \zeta_\maxrm]_{\mathcal{Q}_\delta}$, then $E(\zeta) < E(\xi)$.
    Thus, we have after adding $u_0(x)$, multiplying by $-2\delta$, exponentiating and then negating,
    \begin{equation}
        -2 \delta \zeta \ee^{-2\delta(\zeta + u_0(x))} < -2 \delta \xi \ee^{-2\delta(\xi + u_0(x))} \, .
    \end{equation}
    We also have $x_-(\zeta) < x_-(\xi) < x_+(\xi) < x_+(\zeta)$. Using the fact that $W_{-1}$ and $W_0$ are monotone on $(-\ee^{-1}, 0)$,
    \begin{align}
        W_{-1}\left( -2 \delta \zeta \ee^{-2\delta(\zeta + u_0(x))} \right) &>  W_{-1}\left( -2 \delta \xi \ee^{-2\delta(\xi + u_0(x))} \right) \, , \\
        W_0\left( -2 \delta \zeta \ee^{-2\delta(\zeta + u_0(x))} \right) &<  W_0\left( -2 \delta \xi \ee^{-2\delta(\xi + u_0(x))} \right) \, ,
    \end{align}
    for $x_-(\xi) < x < x_+(\xi)$. Additionally, the integrand of $R^\wyl(\zeta)$ is positive outside of this, we have $R^\wyl(\zeta) > R^\wyl(\xi)$.
    Moreover, everything here is continuous, along with the endpoints of the integration, and we've already shown that this is integrable for all $\zeta \in [1/2\delta, \zeta_\maxrm]_{\mathcal{Q}_\delta}$, so continuity of $R$ follows trivially.

    Lastly, differentiability is inherited from the formulas involved.
    The derivative of $R$ contains a boundary term from differentiating the endpoints and an integral term from differentiating the integrand.
    The boundary term is easily shown to be zero, due to the fact that both the $n = -1, 0$ branches return the same value at the turning points.
    What remains is the integral term.
    After plugging in the parameterization $\zeta(\kappa)$ and taking the derivative with respect to $\kappa$, we have the terms present in $r(\zeta(\kappa); u_0(x))$, along with some extra terms from chain rule and a term from the derivative of Lambert $W$:
    \begin{align}
        \frac{-2\delta + (2\delta)^2\zeta(\kappa)}{-2\delta\zeta(\kappa)} \zeta'(\kappa) &= \left( \frac{1}{\zeta(\kappa)} - 2\delta \right) \zeta'(\kappa) \\
        &= 2\delta E'(\zeta(\kappa)) \zeta'(\kappa) \, .
    \end{align}
    Since the terms in $r(\zeta(\kappa); u_0(x))$ have no worse than inverse square-roots at the end points, we collect these terms and get the formula \eqref{eq:RhoWylDef}.\footnote{Note, at this moment the real part evaluation in \eqref{eq:RhoWylDef} seems unnecessary, since for $x_-(\zeta(\kappa)) \leq x \leq x_+(\zeta(\kappa))$, the expression inside is manifestly real.
    However, for $x$ outside of this range, the quantity inside of the real part is manifestly imaginary from conjugate symmetry, and the real part results in $r$ being identically zero.
    We reserve this property for use later, and so define it this way.}
\end{proof}

Since the maximal value $R^\wyl(1/2\delta) < \infty$, for each $\epsilon > 0$, there is a maximal $N \in \Z$ such that the Weyl Law can be satisfied. This means there are always a finite number of asymptotic approximate eigenvalues, but the total number $N = \bigO{1/\epsilon}$ as $\epsilon \to 0^+$.

As mentioned at the start of this section, \cite{MinzoniMiloh_1994} first provided this Weyl Law for the eigenvalues of the ILW scattering equation by a similar matching argument much that the one proved above.
Here we will go a step further and use our constructed approximate formal normalizable eigenfunction to compute the asymptotic form of the norming constant $c_n$ associated to an approximated eigenvalue $\zeta_n$ with eigenfunction $\Phi$.
Using the definition \eqref{eq:AKSNormingConstDef}, accounting for the transformations \eqref{eq:CannonicalTransform} and \eqref{eq:KappaToZetaWaveFuncTransform}, we have
\begin{equation}
    c_n = \left( \int_{-\infty}^{+\infty} |\Phi(x)|^2 \, \dd x \right)^{-1} \, .
    \label{eq:NormingConstDef}
\end{equation}
where the normalization of $\Phi$ is chosen so that
\begin{equation}
    \lim_{x \to +\infty} \Phi(z) \ee^{\ii \zeta^* z/\epsilon} = 1 \, . \label{eq:NormingConstDefNormaliz}
\end{equation}
We can use this normalization condition to fix the final undetermined constant in our matched solution. As $x \to +\infty$, we eventually pass into the right classically forbidden region, and forever after, can use the formal asymptotic solution $\Phi_0$, which was the only one determined to contribute in this region.
\begin{align}
    &\lim_{x \to +\infty} \Phi(x+\ii y; \zeta_n, x_+(\zeta_n)) \ee^{\ii \zeta^* (x+\ii y)/\epsilon} \nonumber \\
    &\Quad[2] = f_{+}^0 \left( \frac{(2 \delta)^2 \zeta^*_n}{1 -2 \delta \zeta^*_n} \right)^{1/2} \lim_{x \to +\infty}  \exp \Bigg( \frac{\ii}{\epsilon} \left[ \zeta^*_n x_+(\zeta) + \int^x_{x_+(\zeta_n)} \zeta^*_n - G^{-1}_0(u(x'); \zeta_n) \, \dd x' \right] \Bigg) \, ,
\end{align}
where we've used $G^{-1}_0(0; \zeta_n) = \zeta^*_n$.
Because $u_0(x) \to 0$ as $x \to +\infty$, $G^{-1}_0(U; \zeta_n)$ is differentiable at $U = 0$ for $\zeta_n \in (1/2\delta, \ \zeta_\maxrm)$, it follows that the integral above is integrable in the limit $x \to +\infty$. In order for the normalization \eqref{eq:NormingConstDefNormaliz} to be satisfied for our matched solution, it must be that
\begin{equation}
    f_{+}^0 = \left( \frac{1 -2 \delta \zeta^*_n}{(2 \delta)^2 \zeta^*_n} \right)^{1/2} \exp \Bigg( -\frac{\ii}{\epsilon} \left[ \zeta^*_n x_+(\zeta_n) + \int^{+\infty}_{x_+(\zeta_n)} \zeta^*_n - G^{-1}_0(u(x'); \zeta_n) \, \dd x' \right] \Bigg)  \left( 1 + \bigO{\epsilon} \right) \, .
\end{equation}
Now, to compute the associated norming constant $c_n$ to an eigenvalue $\zeta_n$, we need to integrate the norm-squared of our matched solution. We do this by focusing on in each of the regions separately:
\begin{enumerate}
    
    \item \textbf{The classically forbidden regions.} Clearly as $\epsilon$ shrinks, the exponential in \eqref{eq:FormalAsympScatteringSolution} here goes to zero point-wise in $x$.
    The contribution to the integral of the norm-squared in \eqref{eq:NormingConstDef} out here can be bounded by integrating all the way into the turning points and applying Watson's lemma,
    \begin{align}
        \int_{-\infty}^{x_-(\zeta_n) - \epsilon^\alpha 2 \delta M^-_{\chi}} \left| \Phi_{f-}(x; \zeta_n) \right|^2 \, \dd x &= |f^{(0)}_+|^2 \bigO{\epsilon^{1/3}} \text{ and } \label{eq:LeftForbidRegionNormConstInt}\\
        \int_{x_+(\zeta_n) + \epsilon^\alpha 2 \delta M^+_{\chi}}^{+\infty} \left| \Phi_{f+}(x; \zeta_n) \right|^2 \, \dd x &= |f_+^{(0)}|^2 \bigO{\epsilon^{1/3}} \label{eq:RightForbidRegionNormConstInt}
    \end{align}
    for the left region and right regions respectively. Here, in the first equation we used the result from matching $|f^{(-1)}_{-}| = |f^{(0)}_{+}|$ \eqref{eq:MatchedScatteringCoefs} since $G^{-1}_{-1}(u(x);\zeta_n)$ and $G^{-1}_{0}(u(x);\zeta_n)$ are purely real for $x_-(\zeta_n) \leq x \leq x_+(\zeta_n)$.
    
    \item \textbf{The transition regions.} Again we expect this contribution to be small since the region itself shrinks like $\epsilon^\alpha$ and after using the expansion \eqref{eq:PsiInftyFullApprox} with the fact that $\Ai$ for a real argument is uniformly bounded, we have
    \begin{equation}
        \int_{x_\pm(\zeta_n) - \epsilon^\alpha 2 \delta M^\pm_{\chi}}^{x_\pm(\zeta_n) + \epsilon^\alpha 2 \delta M^\pm_{\chi}} \left| \Phi_{t\pm}(x; \zeta_n) \right|^2 \, \dd x = |t_{-}^{(-\infty)}|^2 \bigO{\epsilon^{2/3+\alpha}} = |f_{+}^{(0)}|^2 \bigO{\epsilon^{1/6}} \, . \label{eq:TransRegionsNormConstInt}
    \end{equation}
    Here, we also used the result from matching \eqref{eq:MatchingF-1TA} where we have explicitly computed $|C_{-1}(\epsN)|^2 = 2 h \pi \epsilon$ and used \eqref{eq:MatchedScatteringCoefs}.

    \item \textbf{The classically allowed region.} Here we have two contributing solutions. Using the matching conditions \eqref{eq:MatchedScatteringCoefs}, we have
    \begin{align}
        \int_{a(\epsilon)}^{b(\epsilon)} \left| \Phi_{a}(x; \zeta_n) \right|^2 \, \dd x = |f_{+}^{0}|^2 \int_{a(\epsilon)}^{b(\epsilon)} \left|  \Phi_{-1}(x; \zeta_n, x_+(\zeta_n)) - \Phi_0(x; \zeta_n, x_+(\zeta_n)) \right|^2 \, \dd x
    \end{align}
    where $a(\epsilon) = x_-(\zeta_n) + \epsilon^\alpha 2 \delta M^-_\chi$ and $b(\epsilon) = x_+(\zeta_n) - \epsilon^\alpha 2 \delta M^+_\chi$. We expand the norm-squared and plug in the formula \eqref{eq:FormalAsympScatteringSolution}
    \begin{align}
        &\left| \Phi_{-1}(x; \zeta_n, x_+(\zeta_n)) - \Phi_0(x; \zeta_n, x_+(\zeta_n)) \right|^2 \\
        &\Quad[2] = \frac{2\delta W_{-1}(-2\delta \zeta_n \ee^{-2\delta(\zeta_n + u_0(x))})}{1+W_{-1}(-2\delta \zeta_n \ee^{-2\delta(\zeta_n - u_0(x))})} - \frac{2\delta W_0(-2\delta \zeta_n \ee^{-2\delta(\zeta_n + u_0(x))})}{1+W_0(-2\delta \zeta_n \ee^{-2\delta(\zeta_n + u_0(x))})} \nonumber \\
        &\Quad[6] - 2 \sqrt{\frac{2\delta W_{-1}(-2\delta \zeta_n \ee^{-2\delta(\zeta_n + u_0(x))})}{1+W_{-1}(-2\delta \zeta_n \ee^{-2\delta(\zeta_n + u_0(x))})} \frac{-2\delta W_0(-2\delta \zeta_n \ee^{-2\delta(\zeta_n + u_0(x))})}{1+W_0(-2\delta \zeta_n \ee^{-2\delta(\zeta_n + u_0(x))})} } \nonumber \\
        &\Quad[9] \times \sin \left( \frac{1}{\epsilon} \int^{x}_{x_+(\zeta)} G^{-1}_{-1}(u(x'); \zeta_n) - G^{-1}_0(u(x'); \zeta_n) \, \dd x' \right) 
    \end{align}
    where we used the facts that $-1 \leq W_0 < 0$ and $W_{-1} \leq -1$, with these Lambert $W$'s evaluated as above in the classically allowed region.
    From replacing the sine term with one, we see that the integrand is bounded by an inverse square-root at the turning points.
    This means that extending the integration domain all the way into the turning points, the function is integrable and the error is $\bigO{\epsilon^{\alpha/2}} = \bigO{\epsilon^{1/4}}$.
    Additionally, the term containing the sine can be bounded with the method of stationary phase similar to the exponential term for the classically forbidden regions, yielding another error term of $\bigO{\epsilon^{1/3}} = \bigO{\epsilon^{1/4}}$. What remains is integrable and constant in $\epsilon$, so we have
    \begin{align}
        &\int_{a(\epsilon)}^{b(\epsilon)} \left| \Phi_{a}(x; \zeta_n) \right|^2 \, \dd x \nonumber \\
        &\quad = |f_{+}^{0}|^2 \left( \int_{x_-(\zeta_n)}^{x_+(\zeta_n)} \frac{2\delta W_{-1}(-2\delta \zeta_n \ee^{-2\delta(\zeta_n + u_0(x))})}{1+W_{-1}(-2\delta \zeta_n \ee^{-2\delta(\zeta_n + u_0(x))})} - \frac{2\delta W_0(-2\delta \zeta_n \ee^{-2\delta(\zeta_n + u_0(x))})}{1+W_0(-2\delta \zeta_n \ee^{-2\delta(\zeta_n + u_0(x))})} \, \dd x + \bigO{\epsilon^{1/4}} \right) \, . \label{eq:AllowRegionNormConstInt}
    \end{align}

\end{enumerate}
Assembling the results \eqref{eq:LeftForbidRegionNormConstInt}, \eqref{eq:RightForbidRegionNormConstInt} and \eqref{eq:AllowRegionNormConstInt} allow us to compute the asymptotic form of the norming constants from \eqref{eq:NormingConstDef} which results in the formula \eqref{eq:NormingConstAsymptoticFormula}.

\subsection{The Modified Scattering Data}
\label{sec:ModScattData}

After the asymptotic formulas for the eigenvalues and the norming constants have been determined, what remains is the reflection coefficient.
This is defined for $\zeta > 1/2\delta$. Since this means that $-2\delta \zeta \ee^{-2\delta \zeta} < -\ee^{-1}$, then the formal asymptotic solution for all $n \in \Z$ have regular amplitudes for all $x \in \R$ because, as $u_0(x) > 0$
\begin{equation}
    -2\delta \zeta \ee^{-2\delta (\zeta + u_0(x))} < -\ee^{-1} \, .
\end{equation}
Thus there are no turning points indicating the reflection coefficient goes to zero.
All of these heuristic results motivate the following definition:
\begin{definition}[The Modified Scattering Data] \label{def:ModScattData}
    Let $u_0: \R \to \R^+$ be an admissible initial condition satisfying Definition \ref{def:AdmisInitCond}.
    We define the sequence $\{ \epsilon_N \}_{N \in \N}$ by
    \begin{equation}
        \epsilon_N = \frac{1}{\pi N} R\left( \frac{1}{2 \delta} \right) \, .
        \label{eq:ModScattEpsN}
    \end{equation}
    Then we define the set of distinct eigenvalues $\{ \zeta_n \in \left[ \frac{1}{2\delta}, \zeta_\maxrm \right]_{\quadratrix{\delta}} \}_{n \in \Z_N}$ and associated set of norming constants $\{ c_n > 0\}_{n \in \Z_N}$ for $\epsilon_N$ to be given by
    \begin{align}
        R^\wyl(\zeta_n) &= \pi \left( n - \frac{1}{2} \right) \epsilon_N \label{eq:ModScattEigenvalues} \, , \\
        c_n &= \exp \left( -\frac{2}{\epsilon} \theta_+(\zeta_n) \right)
        \label{eq:ModScattNormingConst}
    \end{align}
    for all $1 \leq n \leq N$.
    Using the parametrization of the quadratrix by the imaginary part \eqref{eq:QuadZetaDef}, we may identify the eigenvalues $\zeta_n$ by their imaginary parts $\im{\zeta_n} = \kappa_n \in [0, \kappa_\maxrm]$ where $\kappa_\maxrm := \im{\zeta_\maxrm}$. 
    Lastly, we take the reflection coefficient to be identically zero.
\end{definition}

Note that we have dropped everything but the leading order behavior in $\epsilon$ in the definition of our modified scattering data. Because the matching argument did not handle errors in a rigorous way, we cannot make any statements about the small-dispersion limit for an exact initial condition. Instead, we content ourselves to study the inverse problem for this modified scattering data, which we expect to be close to the true scattering data as we take the small-dispersion limit. 
The sense in which we mean that the semiclassical soliton ensemble is ``close'' to the original initial condition is that the initial condition is recovered as $\epsilon \to 0^+$ for $t = 0$, as will be demonstrated in the proof of Theorem \ref{thm:L2Limit}.

\section{Inverse Scattering for the Semiclassical Soliton Ensemble}
\label{sec:Inverse}

We now turn our attention to the process of constructing a solution to the ILW equation using inverse scattering.
Since the scattering data that we derived in the previous section through the direct scattering problem is not rigorous, we resign ourselves to study an inverse scattering problem which we expect to be asymptotically close as $\epsilon \to 0^+$ to the exact inverse scattering problem associated to a given initial condition $u_0$. 
\begin{definition}[The ILW semiclassical soliton ensemble] \label{def:SemiclassSoliEnsem}
For an admissible initial condition $u_0$ satisfying Definition \ref{def:AdmisInitCond}, we define the associated semiclassical soliton ensemble for the ILW equation to be the sequence for $N \in \N$ of $N$-soliton solutions \eqref{eq:ILWNSoliton} where we take $\epsilon = \epsN$ and plug in the modified scattering data from Definition \ref{def:ModScattData}.
It is convenient to express the $N$-soliton solution for the semiclassical soliton ensemble as 
\begin{equation}
    u^\sse_N(x, t) = \partial_x D_{\ii \delta \epsN} F_N(x, t) \, ,
    \label{eq:ILWSemiclassSolEnsemb}
\end{equation}
where $D_{\ii \delta \epsN}$ is the symmetric, finite-difference operator
\begin{equation}
    D_{\ii \delta \epsN} F_N(x, t) := \frac{F_N(x + \ii \delta \epsN, t) - F_N(x - \ii \delta \epsN, t)}{\ii 2 \delta \epsN} \label{eq:FinDifOppDef}
\end{equation}
and
\begin{align}
    Z_N(z, t) &:= \det \left( \id{N} + \Delta_N( z, t; \epsN ) \right) \, ,
    \label{eq:PartitionFunctionDef} \\
    F_N(z, t) &:= 2 \delta \epsN^2 \log Z_N(z, t) \, . \label{eq:FreeEnergyFunctionDef}
\end{align}  
\end{definition}
We remind the reader that the $N$-soliton formula is one of only a few aspects of the ILW IST that rests on rigorous foundation since (see \cite{Matsuno_1984_BiTransMeth}[Chapter 5] for a proof)
Analyzing the semiclassical soliton ensemble \eqref{eq:ILWSemiclassSolEnsemb} amounts to studying the behavior of the determinant in \eqref{eq:PartitionFunctionDef} as the matrix size grows without bound.

\subsection{Expanding the Determinant}
\label{sec:ExpanDet}

We begin our analysis by conducting a ``Fredholm-type expansion'' of the determinant that appears in \eqref{eq:PartitionFunctionDef}.
To demonstrate this expansion, let $M \in \N$ and $A \in \C_{M \times M}$ and consider
\begin{equation}
    \det \left( \id{M} + A \right) = \begin{vmatrix}
        1 + A_{1 \, 1} & A_{1 \, 2} & \cdots & A_{1 \, M} \\
        A_{2 \, 1} & 1 + A_{2 \, 2} & \cdots & A_{2 \, M} \\
        \vdots & \vdots & \ddots & \vdots \\
        A_{M \, 1} & A_{M \, 2} & \cdots & 1 + A_{M \, M}
    \end{vmatrix} \, .
\end{equation}
We use linearity of the determinant in the first column to expand our determinant into a sum of two: one with the contribution from $\id{M}$ in the first column and another with the contribution from $A$ in the first column.
\begin{equation}
    \det \left( \id{M} + A \right) = \begin{vmatrix}
        1 & A_{1 \, 2} & \cdots & A_{1 \, M} \\
        0 & 1 + A_{2 \, 2} & \cdots & A_{2 \, M} \\
        \vdots & \vdots & \ddots & \vdots \\
        0 & A_{M \, 2} & \cdots & 1 + A_{M \, M}
    \end{vmatrix} + \begin{vmatrix}
        A_{1 \, 1} & A_{1 \, 2} & \cdots & A_{1 \, M} \\
        A_{2 \, 1} & 1 + A_{2 \, 2} & \cdots & A_{2 \, M} \\
        \vdots & \vdots & \ddots & \vdots \\
        A_{M \, 1} & A_{M \, 2} & \cdots & 1 + A_{M \, M}
    \end{vmatrix} \, .
\end{equation}
Now we expand on the second column of each of these determinants yielding a sum of four matrix determinants, one for each possible ways to select the first two columns independently from either $\id{M}$ or $A$.
\begin{align}
    \det \left( \id{M} + A \right) &= \begin{vmatrix}
        1 & 0 & \cdots & A_{1 \, M} \\
        0 & 1 & \cdots & A_{2 \, M} \\
        \vdots & \vdots & \ddots & \vdots \\
        0 & 0 & \cdots & 1 + A_{M \, M}
    \end{vmatrix} + \begin{vmatrix}
        1 & A_{1 \, 2} & \cdots & A_{1 \, M} \\
        0 & A_{2 \, 2} & \cdots & A_{2 \, M} \\
        \vdots & \vdots & \ddots & \vdots \\
        0 & A_{M \, 2} & \cdots & 1 + A_{M \, M}
    \end{vmatrix} \\
    &\qquad + \begin{vmatrix}
        A_{1 \, 1} & 0 & \cdots & A_{1 \, M} \\
        A_{2 \, 1} & 1 & \cdots & A_{2 \, M} \\
        \vdots & \vdots & \ddots & \vdots \\
        A_{M \, 1} & 0 & \cdots & 1 + A_{M \, M}
    \end{vmatrix} + \begin{vmatrix}
        A_{1 \, 1} & A_{1 \, 2} & \cdots & A_{1 \, M} \\
        A_{2 \, 1} & A_{2 \, 2} & \cdots & A_{2 \, M} \\
        \vdots & \vdots & \ddots & \vdots \\
        A_{M \, 1} & A_{M \, 2} & \cdots & 1 + A_{M \, M}
    \end{vmatrix} \, .
\end{align}
Continuing this procedure by expanding each resulting matrix determinant on each next column, it is clear that the result is a sum over all matrices formed by selecting every column independently from either $\id{M}$ or $A$.
Each of these matrices can be associated with a unique subset of $\Z_M$ which is the set of indices of the columns which are inherited from the matrix $A$.
The sum is then conducted over the power set of $\Z_M$,
\begin{equation}
    \det \left( \id{M} + A \right) = \sum_{S \subset \Z_M} \det(A^S)
\end{equation}
where $A^S$ is the matrix that has for each index in $S$, the associated column of $A$ and all other columns from $\id{M}$.

Now, we simplify some of these determinants once more by noting that performing a Laplace expansion of $\det(A^S)$ on a column which comes from $\id{M}$, only one principle minor contributes, which is the one where the column that we expanded on and the associated row are removed.
Iteratively Laplace expanding on the columns of $A^S$ which come from $\id{M}$, the only principle minor that survives is the minor which only contains the index elements of the set $S$, which we'll notate $\minor{S}{A}$.
Note that $\minor{\Z_M}{A} = A$ and that $\minor{\emptyset}{A}$ would be empty, but this minor would result from expanding the matrix determinant $\det(A^\emptyset) = \det(\id{M}) = 1$, so we set $\minor{\emptyset}{A} = [ \, 1 \, ]$ to write our final formula simply as:
\begin{equation}
    \det \left( \id{M} + A \right) = \sum_{S \subset \Z_M} \det \left( \minor{S}{A} \right) \, .
    \label{eq:FredholmExpan2}
\end{equation}

We may now apply this formula to $\det(\id{N} + \Delta_N( z, t; \epsN ))$ from \eqref{eq:ILWSemiclassSolEnsemb} for $z = x + \ii y \in \C$ and $t \in \R$.
We begin by noting that the decomposition of \eqref{eq:DeltaDecomp} extends to the principal minors $\minor{S}{\Delta_N(z,t;\epsN)}$ since the left and right matrices in the decomposition are diagonal.
For any $S \subset \Z_N$,
\begin{align}
    \det \minor{S}{\Delta_N( z, t; \epsN )} &= \det \minor{S}{D_N ( z, t; \epsN )}^2 \det \minor{S}{C_N} \\
    &= \left( \prod_{n \in S} c_n \exp \left( -2\im{\zeta_n} \frac{z}{\epsN} - 2\im{\left(\zeta_n - \frac{1}{2 \delta}\right)^2} \frac{t}{\epsN} \right) \right) \nonumber \\
    &\Quad[2] \times \left( \prod_{n \in S} \frac{1}{2 \im{\zeta_n}} \prod_{n < \ell \in S} \frac{\left| \zeta_\ell - \zeta_n \right|^2}{\left| \zeta_\ell - \zeta_n^* \right|^2} \right) \label{eq:DetPrincipleMinor}
\end{align}
where it is understood that a product over $S = \emptyset$ is one to be consistent with $\det (\minor{\emptyset}{\diamond}) = 1$.
Bringing the positive definite terms from the Cauchy determinant into the exponential and plugging in the modified norming constants \eqref{eq:ModScattNormingConst}, we get
\begin{align}
    \det \minor{S}{\Delta_N( z, t; \epsN )} &= \exp \left( - 2 \sum_{n \in S} \im{\zeta_n} \frac{z}{\epsN} + \im{\left(\zeta_n - \frac{1}{2 \delta}\right)^2} \frac{t}{\epsN} \right. \nonumber \\
    &\Quad[4] \left.+ 2 \sum_{n \in S} \frac{\theta_+(\zeta_n)}{\epsN} - \sum_{n, \ell \in S} g(\zeta_\ell, \zeta_n) \right) \, ,
    \label{eq:DeltaMinor2} 
\end{align}
where $g: \C^2 \to \R$ is the ``regulated'' Green's function for the Laplacian on the upper half-plane
\begin{equation}
    g(\zeta, \lambda) = \begin{cases}
        \displaystyle \log \left| \frac{\zeta - \lambda^*}{\zeta - \lambda\phantom{^*}} \right| & \text{if } \zeta \neq \lambda \\
        \displaystyle \log \left| \zeta - \lambda^* \right| & \text{if } \zeta = \lambda
    \end{cases} \, . \label{eq:DescLogKernel}
\end{equation}
Again, we note that sums over $S = \emptyset$ should be taken to be zero.
The full determinant $\det(\id{N} + \Delta_N(z/\epsN, t/\epsN))$ is a sum of terms \eqref{eq:DeltaMinor2} over over all possible $S \subset \Z_N$.
We search for a dominant balance in $\epsN$ in the exponent of the summand \eqref{eq:DeltaMinor2} and find it by collecting a factor of $\epsN$ with every sum over $S$ (i.e. $\epsN^2$ with the double sum).
As we take $N \to \infty$, we may then expect that these sums over $S$ behave like Riemann sums, and limit to integrals.
Towards this end, we define for $S \in \Z_N$, the associated point measures in one variable, which we parameterize using the imaginary part along the quadratrix,
\begin{equation}
    \dd \mu_S(\kappa) = \epsN \pi \sum_{n \in S} \delta(\kappa - \im{\zeta_n}) \, \dd \kappa
\end{equation}
for $0 \leq \kappa \leq \kappa_\maxrm < \pi/2\delta$ where 
\begin{equation}
    \kappa_\maxrm := \im{\zeta_\maxrm} \, ,
    \label{eq:KappaMaxDef}
\end{equation}
$\delta$ here is the Dirac-delta unit point measure and $\dd \kappa$ is the Lebesgue measure on the line.
Now, for ease of notation, we will also define several quantities, splitting the real from imaginary contributions. For $z \in \strip{\delta \epsN}$ and $t \in \R$
\begin{align}
    V(\zeta; z, t) &:= z \ \im{\zeta} + t \ \im{\left( \zeta - \frac{1}{2 \delta} \right)^2} - \theta_+(\zeta) \, , \label{eq:ElecExtPotential}\\
    E_{N, S}(z, t) &:= \int V(\zeta(\kappa); x, t) \, \dd \mu_S(\kappa) + \frac{1}{2 \pi} \iint g\big(\zeta(\kappa), \zeta(\eta)\big) \, \dd \mu_S(\kappa) \, \dd \mu_S(\eta) \, , 
    \label{eq:ElecEnergy} \\
    P_S &:= \int \kappa \, \dd \mu_S(\kappa) \, , 
    \label{eq:ElecDipole} \\
    Q_S &:= \int \im{\left( \zeta - \frac{1}{2\delta} \right)^2} \, \dd \mu_S(\kappa) \, .
    \label{eq:ElecQuadpole}
\end{align}
Thus, using the notation \eqref{eq:ElecExtPotential}, \eqref{eq:ElecEnergy} and \eqref{eq:ElecDipole} to simplify the result \eqref{eq:DeltaMinor2} and then plug this into \eqref{eq:FredholmExpan2}, we get a new expression for the full determinant and define a new compact notation for the result:
\begin{align}
    Z_N(z,t) = \det \left( \id{N} + \Delta_{N}\left( z, t; \epsN \right) \right) &= \sum_{S \subset \Z_N} \exp \left( -\frac{2}{\epsN^2 \pi} E_{N,S}(z, t) \right) \, .
\end{align}

Looking at \eqref{eq:PartitionFunctionDef} and briefly setting $z \in \R$, you may notice some similarities with the standard real exponential integral with large parameter $\lambda := \frac{2}{\epsN^2 \pi} > 0$, $I(\lambda) := \int_a^b \ee^{-\lambda f(x)} \, \dd x$ where $f$ is a $\mathcal{C}^2(\R)$ function on $[a, b] \subset \R$.
In the case where $f$ has a unique global minimum $x_0 \in (a,b)$, it is well-known that in the limit $\lambda \to \infty$, $\log I(\lambda) \approx -\lambda f(x_0) + \frac{1}{2}\log \left(\frac{2\pi}{\lambda f^{''}(x_0)}\right)$ by Laplace's method. 
Our sum \eqref{eq:PartitionFunctionDef} for fixed $N$ is, first off, a discrete sum over particular sets of sample measures.
As $N \to \infty$, the size of the set of sample measures grows as $2^N$, and therefore we can think of the sum in \eqref{eq:PartitionFunctionDef} as a Riemann sum-like discretization of a functional integral which is integrated over some set of admissible densities $\mathcal{A}$, $\mathcal{I}(\lambda) := \int_\mathcal{A} \ee^{- \lambda \mathpzc{E}[\rho]} \mathcal{D}\rho$.
It is then reasonable to expected that if there is a unique minimizer $\rho_0$ of the resulting functional $E$ over $\mathcal{A}$, that $\log \mathcal{I}(\lambda) \approx - \lambda \mathpzc{E}[\rho_0] + \cdots$ by a functional integral generalization of Laplace's method.
In effect, this is exactly what P. D. Lax and C. D. Levermore proved for the KdV equation in \cite{LaxLevermoreI_1983}, and what we will show in section \ref{sec:DiscrToContMeas}.

In the case of the ILW equation, we have the addition complication that the exponentials are complex, since in \eqref{eq:ILWSemiclassSolEnsemb} we need to plug $z = x \pm \ii \delta \epsN$ into \eqref{eq:PartitionFunctionDef}.
However, since the imaginary part of the exponent in \eqref{eq:PartitionFunctionDef} is $\bigO{\epsN}$, the upshot is that this does not present too much difficulty.
In preparation for the main result, we establish the following bounds on the magnitude and argument of $Z_N(z, t) \in \C$.

\begin{theorem}
    \label{thm:BoundsOnDet}
    Let $N \in \N$, $z = x + \ii y \in \strip{\delta \epsilon}$ and $t \in \R$.
    Define the minimum value
    \begin{equation}
        E^\minrm_N(x, t) := \min_{S \subset \Z_N} E_S(x, t) \, .
        \label{eq:EnergyMinDef}
    \end{equation}
    Then, $Z_N(z, t)$ has the bound in magnitude
    \begin{equation}
        0 < \left[ d \left( 2 \kappa_\maxrm \frac{|y|}{\epsN} \right) \right]^N \leq \left| Z_N(z, t) \right| \leq 2^N \exp \left( -\frac{2}{\epsN^2 \pi} E^{\minrm}_N(x, t) \right) \, ,
        \label{eq:DetIPlusDeltaMagBounds}
    \end{equation}
    where
    \begin{equation}
        d(\theta) := \left\{ \begin{array}{cc}
            \displaystyle 1 & 0 \leq \theta \leq \frac{\pi}{2} \\
            \displaystyle \sin \theta & \frac{\pi}{2} < \theta \leq \pi
        \end{array} \right. ,
        \label{eq:dDefinition}
    \end{equation}
    and the bound in argument
    \begin{align}
        0 &\leq -\sgn(y) \arg\left( Z_N(z, t) \right) \leq 2 N \kappa_{\maxrm} \frac{|y|}{\epsN} \, ,
        \label{eq:ArgDetBounds}
    \end{align}
    where the argument here is defined to be zero when $z \in \R$ and continuous for the rest of $z \in \strip{\epsN \delta}$.
\end{theorem}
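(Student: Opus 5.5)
The plan is to use two different representations of $Z_N$: the Fredholm-type sum \eqref{eq:FredholmExpan2} for the upper bound, and the matrix structure $\Delta_N = D_N C_N D_N$ \eqref{eq:DeltaDecomp} for the lower bound and the argument bound.

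\textbf{Upper bound.} Write $z = x + \ii y$. In \eqref{eq:DeltaMinor2} the only non-real part of the exponent is $-2\ii (y/\epsN)\sum_{n\in S}\kappa_n$. Hence each summand of \eqref{eq:FredholmExpan2} equals
\[
\exp\!\left(-\tfrac{2}{\epsN^2\pi}E_{N,S}(x,t)\right)\ee^{-2\ii (y/\epsN)\sum_{n\in S}\kappa_n},
\]
and its modulus is $\exp(-\frac{2}{\epsN^2\pi}E_{N,S}(x,t))$. The triangle inequality over the $2^N$ subsets $S\subset\Z_N$, together with the definition \eqref{eq:EnergyMinDef} of $E^\minrm_N$, gives the upper bound in \eqref{eq:DetIPlusDeltaMagBounds} immediately.

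\textbf{Reduction to a matrix problem.} For the lower bound and the argument bound I would work with the matrix factorization. Factor $D_N(z,t;\epsN) = D_N(x,t;\epsN)\Lambda$, where $\Lambda = \mathrm{diag}(\ee^{-\ii\kappa_n y/\epsN})$. Set $A := D_N(x,t;\epsN)\,C_N\,D_N(x,t;\epsN)$. The key structural fact is that $C_N$ is Hermitian positive definite. This follows from
\[
\frac{\ii}{\zeta_\ell-\zeta_n^*} = \int_0^\infty \ee^{\ii s(\zeta_\ell-\zeta_n^*)}\,\dd s,
\]
which is a Gram matrix because $\im{\zeta_n}>0$; equivalently, its principal minors \eqref{eq:DetPrincipleMinor} are positive. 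Since $D_N(x,t;\epsN)$ is real and positive diagonal, $A$ is also Hermitian positive definite.

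Conjugating by $\Lambda$ gives
\[
\det(\id{N}+\Lambda A\Lambda) = \det(\id{N}+A U), \qquad U := \Lambda^2 = \mathrm{diag}(\ee^{-\ii\theta_n}),
\]
with $\theta_n = 2\kappa_n y/\epsN$. The matrix $AU$ is similar to $A^{1/2}UA^{1/2}$. For any vector $v$, the quadratic form $v^*A^{1/2}UA^{1/2}v = \sum_n |w_n|^2\ee^{-\ii\theta_n}$, where $w=A^{1/2}v$. This lies in the closed convex cone spanned by the directions $\ee^{-\ii\theta_n}$, and every $\theta_n$ has the sign of $y$ and modulus at most $\theta := 2\kappa_\maxrm|y|/\epsN$. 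Because $|y|\le\delta\epsN$ and $\kappa_\maxrm<\pi/2\delta$, we have $\theta<\pi$, so this cone is a genuine sector of opening at most $\pi$. Therefore every eigenvalue $\mu_j$ of $AU$ lies in the sector $\{r\ee^{-\ii\sgn(y)\alpha} : r\ge0,\ 0\le\alpha\le\theta\}$, and $Z_N = \prod_{j=1}^N(1+\mu_j)$.

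\textbf{Lower bound and argument bound.} An elementary planar estimate bounds the distance from $-1$ to this sector. It is at least $1$ when $\theta\le\pi/2$, and at least $\sin\theta$ when $\pi/2<\theta\le\pi$. This gives $|1+\mu_j|\ge d(\theta)$, hence the lower bound $d(\theta)^N$ in \eqref{eq:DetIPlusDeltaMagBounds}; the bound is positive because $\theta<\pi$. Next, $1+\mu_j$ lies in the same sector shifted by $1$, so $-\sgn(y)\arg(1+\mu_j)\in[0,\theta]$. Summing over $j$ gives $0\le -\sgn(y)\sum_j\arg(1+\mu_j)\le N\theta$, which is \eqref{eq:ArgDetBounds}.

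\textbf{Main obstacle.} I expect the delicate step to be identifying this eigenvalue-wise sum of principal arguments with the continuous branch of $\arg Z_N$ normalized to vanish at $y=0$. I would handle it by deforming $y$ continuously from $0$. Along the deformation no factor $1+\mu_j$ vanishes, by the lower bound. At $y=0$ the $\mu_j$ are real and nonnegative, so every factor has argument $0$. Since each factor stays in a sector of half-angle less than $\pi$ that avoids the negative real axis, the principal arguments vary continuously. The eigenvalues themselves may not be individually continuous, but their symmetric functions are, and $\arg Z_N$ is continuous because $Z_N\neq0$; its values are confined to $[0,N\theta]$ up to sign, so it agrees with the sum of principal arguments.
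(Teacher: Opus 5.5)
Your proof is correct and follows essentially the paper's route: the triangle inequality on the Fredholm-type expansion gives the upper bound, and the lower and argument bounds come from positive-definiteness of $\Delta_N(x,t;\epsN)$ together with the fact that the relevant eigenvalues lie in a sector of opening $2\kappa_\maxrm|y|/\epsN<\pi$. Your numerical-range argument for $A^{1/2}UA^{1/2}$ is a cleaner form of the paper's identity $\lambda\,u^\dagger v=u^\dagger U u$, and your Gram representation of $C_N$ replaces Sylvester's criterion. One point to tighten in the branch identification: $\arg Z_N=\sum_j\arg(1+\mu_j)$ holds because the sum is a continuous symmetric function of the eigenvalue multiset (each $1+\mu_j$ stays off the closed negative real axis), agrees with $\arg Z_N$ modulo $2\pi$, and vanishes at $y=0$ --- not because the values are confined to an interval of length $N\theta$, which generally exceeds $2\pi$ --- and this formulation is in fact more careful than the paper's appeal to a continuous indexing of the eigenvalues.
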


\begin{proof}
    The right side of \eqref{eq:DetIPlusDeltaMagBounds} follows from an application of the triangle inequality to \eqref{eq:PartitionFunctionDef} and the fact that $E^\minrm_N(x, t)$ bounds all of the $E_{N,S}(x, t)$ from below.
    The left side of \eqref{eq:DetIPlusDeltaMagBounds} and the argument bounds \eqref{eq:ArgDetBounds} are quite a bit more involved and require an a analysis of the eigenvalues of the matrix $\Delta_N( z, t; \epsN )$.
    
    We begin by using the factorization \eqref{eq:DeltaDecomp} to peel off the matrix multiplicative contributions dependent on the imaginary part $y$,
    \begin{equation}
        \Delta_N( z, t; \epsN ) = \exp \left( -\ii K y \right) \ \Delta_N( x, t; \epsN ) \ \exp \left( -\ii K y \right) \, ,
    \end{equation}
    where $K$ is the $N \times N$ diagonal matrix with $K_{nn} := \im{\zeta_n} = \kappa_n$ for all $1 \leq n \leq N$.
    Let $\lambda$ be an eigenvalue of $\Delta_N( z, t; \epsN )$, then it follows by matrix similarity that it is also an eigenvalue of the matrix
    \begin{equation}
        A := \exp \left( -\ii K \frac{y}{\epsN} \right) \ \Delta_N( z, t; \epsN ) \ \exp \left( \ii K \frac{y}{\epsN} \right) = \exp \left( -\ii 2 K \frac{y}{\epsN} \right) \ \Delta_N( x, t; \epsN ) \, .
        \label{eq:SimilarDelta}
    \end{equation}
    It is clear that the matrix $\exp \left( -\ii 2 K \frac{y}{\epsN} \right)$ is unitary.
    Now we want to show $\Delta_N( x, t; \epsN )$ is positive-definite.
    Looking back at \eqref{eq:DetPrincipleMinor} with $z = x \in \R$, we see that the principle minors $\Delta_N( x, t; \epsN )$ are easily seen to be manifestly positive, so positive-definiteness follows immediately by Sylvester's criterion.
    
    For each distinct eigenvalue $\lambda$, we have at least one eigenvector $v \in \C^N$ of the matrix $A$.
    By the calculation,
    \begin{align}
         |\lambda| \, \norm{v} &= \norm{A \ v} \\
         &= \norm{\exp \left( -\ii 2 K \frac{y}{\epsN} \right) \ \Delta_N( x, t; \epsN ) \ v} \\
         &= \norm{\Delta_N( x, t; \epsN ) \ v} \, ,
         \label{eq:LambdaNormRelation}
    \end{align}
    we have that
    \begin{equation}
        |\lambda| = \frac{\norm{\Delta_N(x, t; \epsN) \ v}}{\norm{v}} > 0
    \end{equation}
    since $\norm{v} \neq 0$.
    So all of the eigenvalues of $\Delta_N( z, t; \epsN )$ are nonzero and their arguments are well-defined.
    
    Now, for simplicity of calculation, let $u = \Delta_N( x, t; \epsN ) \, v$ and consider
    \begin{align}
         \lambda \, u^\dagger v &= u^\dagger \, A \, v  \\
         &= u^\dagger \, \exp \left( -\ii 2 K \frac{y}{\epsN} \right) \, \Delta_N( x, t; \epsN ) \ v \\
         &= u^\dagger \, \exp \left( -\ii 2 K \frac{y}{\epsN} \right) \, u \, .\label{eq:LambdaPhaseRelation}
    \end{align}
    Since $\Delta_N(x, t; \epsN)$ is, again, positive-definite,
    \begin{equation}
        u^\dagger v = v^\dagger \ \Delta_N( x, t; \epsN ) \ v > 0 \, .
    \end{equation}
    Thus, the principal argument of $\lambda$ must coincide with the principal argument of the right side of \eqref{eq:LambdaPhaseRelation}.
    We divide by the positive $\norm{u}^2$ and then can simplify this using the diagonal form of $K$ and the components of $u$, $u_n \in \C$ for $1 \leq n \leq N$, 
    \begin{equation}
        \arg ( \lambda ) = \arg \left( \frac{\displaystyle u^\dagger \ \exp \left( -\ii 2 K \frac{y}{\epsN} \right) \ u}{\norm{u}^2} \right) = \arg \left( \sum_{n = 1}^N \exp \left( -\ii 2 \kappa_n \frac{y}{\epsN} \right) \frac{|u_n|^2}{\norm{u}^2} \right) \, .
        \label{eq:LambdaArgument1}
    \end{equation}
    We note from the conjugate symmetry of the phases across $y = 0$, we have
    \begin{equation}
        \arg ( \lambda ) = - \sgn(y) \arg \left( \sum_{n = 1}^N \exp \left( \ii 2 \kappa_n \frac{|y|}{\epsN} \right) \frac{|u_n|^2}{\norm{u}^2} \right) \, .
        \label{eq:LambdaArgument2}
    \end{equation}
    
    From the definition of the modified scattering data \ref{def:ModScattData}, $\kappa_n \leq \kappa_\maxrm$ for all $1 \leq n \leq N$,
    \begin{equation}
        0 \leq 2 \kappa_n \frac{|y|}{\epsN} \leq 2 \kappa_\maxrm \frac{|y|}{\epsN} \leq 2 \delta \kappa_\maxrm < \pi \, .
    \end{equation}
    This confines all of the phases of the eigenvalues of $\Delta_N( z, t; \epsN )$ to a sector of angle less than $\pi$.
    Since the sum in \eqref{eq:LambdaArgument2} is a weighted average of these phases, by convexity of sectors with angle less than $\pi$, it follows that the weighted average of the phases is also contained in the sector and therefore shares the principal argument bounds.
    Finally, relating this to the principal argument of $\lambda$ by \eqref{eq:LambdaArgument2}, we have arrived at the uniform bound for any eigenvalue $\lambda$ of $\Delta_N( z, t; \epsN )$,
    \begin{equation}
        \label{eq:LambdaBounds}
        0 \leq - \sgn(y) \arg(\lambda) \leq 2 \kappa_\maxrm \frac{|y|}{\epsN} < \pi \, .
    \end{equation}

    Now we are ready to resolve the lower bound for $\left| Z_N(z, t) \right|$ in \eqref{eq:DetIPlusDeltaMagBounds}.
    Noting that this is the characteristic polynomial of the matrix $\Delta_{N}( z, t; \epsN )$ evaluated at one, we can let $\{ \lambda_n \}_{n = 1}^N$ be the set of eigenvalues (not necessarily distinct, duplicates accounting for algebraic multiplicity of the root) and we have 
    \begin{equation}
        |Z_N(z, t)| = \left| \det \left( \id{N} + \Delta_{N}( z, t; \epsN ) \right) \right| = \prod_{n = 1}^N \left| 1 + \lambda_n \right| \, .
        \label{eq:DetMagFromCharPoly}
    \end{equation}
    For each $1 \leq n \leq N$,
    \begin{align}
        \left| 1 + \lambda_n \right|^2 &= \Big| 1 + |\lambda_n| \ee^{ \ii \arg(\lambda_n)} \Big|^2 = 1 + 2 |\lambda_n| \cos \left(\arg(\lambda_n)\right) + |\lambda_n|^2.
    \end{align}
    Subject to \eqref{eq:LambdaBounds}, the expression is always minimized for fixed $|\lambda_n| \geq 0$ when $|\arg(\lambda_n)|$ is greatest:
    \begin{align}
        \left| 1 + \lambda_n \right|^2 &\geq 1 + 2 |\lambda_n| \cos \left( 2 \kappa_\maxrm \frac{|y|}{\epsN} \right) + |\lambda_n|^2 \\
        &= \left( |\lambda_n| + \cos \left( 2 \kappa_\maxrm \frac{|y|}{\epsN} \right) \right)^2 + \sin^2 \left( 2 \kappa_\maxrm \frac{|y|}{\epsN} \right) \, .
    \end{align}
    If $2 \kappa_\maxrm |y| / \epsN \leq \pi/2$, then the cosine term is non-negative and the quadratic is strictly increasing for $|\lambda_n| \geq 0$, thus the minimum is obtained at $|\lambda_n| = 0$, which has the value one.
    If instead $\pi/2 < 2 \kappa_\maxrm |y| / \epsN < \pi$, then the cosine term is negative and the quadratic has its minimum at for some $|\lambda_n| > 0$, which has the value $\sin^2 \left( 2 \kappa_\maxrm |y|/\epsN \right)$. 
    Putting these cases together and taking the positive square-root, we have from the definition \eqref{eq:dDefinition}
    \begin{equation}
        \left| 1 + \lambda_n \right| \geq d \left( 2 \kappa_\maxrm \frac{|y|}{\epsN} \right) \, .
    \end{equation}
    Using this bound for each factor in \eqref{eq:DetMagFromCharPoly}, we get the desired lower bound in \eqref{eq:DetIPlusDeltaMagBounds}.
    The positivity of this lower bound (left-most inequality in \eqref{eq:DetIPlusDeltaMagBounds}) follows from the fact that $d(\theta) > 0$ for all $0 \leq \theta < \pi$ and $2 \kappa_\maxrm |y| / \epsN \leq 2 \delta \kappa_\maxrm < \pi$.
    \\

    Lastly, because of the positivity of this lower bound argument of $Z_N(z, t)$ can be defined to be continuous throughout the strip $\strip{\delta \epsN}$.
    Moreover, since $\Delta_{N}( x, t; \epsN )$ is positive definite, $Z_N(x, t) > 0$, and so the argument can be taken to be zero on the real line.
    Because the elements of $\Delta_{N}( z, t; \epsN )$ are continuous functions of $z$ and $t$, there exists an indexing where each of the eigenvalues $\lambda_n$ are continuous in $z$ and $t$.
    With this enumeration, we can compute the continuous argument of $Z_N(z, t)$ according to 
    \begin{equation}
        \arg \big( Z_N(z, t) \big) = \arg \left( \det \left( \id{N} + \Delta_{N}( z, t; \epsN ) \right) \right) = \sum_{n = 1}^N \arg(1 + \lambda_n) \, .
        \label{eq:DetArgCalc}
    \end{equation}
    From a simple geometric argument involving sector where the phases of each $\lambda_n$ are restricted to \eqref{eq:LambdaBounds}, we can demonstrate that
    \begin{equation}
        0 \leq -\sgn(y) \arg(1 + \lambda_n) \leq -\sgn(y) \arg(\lambda_n) \leq 2 \kappa_\maxrm \frac{|y|}{\epsN} \, ,
    \end{equation}
    which, with \eqref{eq:DetArgCalc}, gives the bound \eqref{eq:ArgDetBounds}.
\end{proof}

\subsection{Characterizing the Minimizer}
\label{sec:DiscrToContMeas}

We're now going to focus on characterizing the minimal measure which we expect to dominate the determinant \eqref{eq:PartitionFunctionDef}.
This will be done by relating $E^\minrm_N(x,t)$, the minimizer over the discrete point-mass measures $\{ \dd \mu_S \}_{S \subset \Z_N}$, to a minimizer over a larger set of admissible measure densities $\mathcal{A}$.
The set $\mathcal{A}$ is exactly the set of densities associate to measures that are well-approximated by $\{ \dd \mu_S \}_{S \subset \Z_N}$ as $N \to \infty$ in the weak sense.
We will use the very same mathematical machinery that was developed by P. D. Lax and C. D. Levermore for working out the limit of the the KdV equation \cite[Section 2]{LaxLevermoreI_1983} with a few small modifications, for the ILW equation's small-dispersion limit. 

\begin{theorem} \label{thm:BoundedVarWeakConv}
    The measures $\dd \mu_S(\kappa)$ for all $S \subset \Z_N$ and $N \in \N$ have uniform bounded variation. Moreover $\{ \dd \mu_{\Z_N}(\kappa) \}_{N=1}^\infty$, i.e. the measures will all indices included, have the weak limit
    \begin{equation}
        \wlim{N \to +\infty} \dd \mu_{\Z_N}(\kappa) = \rho^\wyl(\kappa) \, \dd \kappa
        \label{eq:WeakLimitAllSitesPop}
    \end{equation}
    where $\rho^\wyl$ is defined by the ILW Weyl law \eqref{eq:RhoWylDef}.
\end{theorem}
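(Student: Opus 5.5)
The proof has two parts: a uniform bound on total mass, and a Riemann-sum argument using monotonicity of $R^\wyl$ from Lemma \ref{lem:WylLaw}.

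\textbf{Uniform bounded variation.} Each $\dd\mu_S$ is a positive measure supported in $[0,\kappa_\maxrm]$. Its total variation is its total mass, $\epsN \pi |S| \le \epsN \pi N$. By the choice of the sequence \eqref{eq:ModScattEpsN}, $\epsN \pi N = R^\wyl(1/2\delta)$, and this is finite by Lemma \ref{lem:WylLaw}(1). Here I use that admissibility together with \eqref{eq:DecayAssumInitCond} is taken to give \eqref{eq:FiniteRCond}, or equivalently that $R(1/2\delta)$ in Definition \ref{def:ModScattData} is implicitly assumed finite. Hence every $\mu_S$ has total variation at most $R^\wyl(1/2\delta)$, uniformly in $S$ and $N$.

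\textbf{Weak limit.} Because the masses are uniformly bounded, it suffices to test against continuous $\varphi$ on the compact interval $[0,\kappa_\maxrm]$. By Lemma \ref{lem:WylLaw}, $\kappa \mapsto R^\wyl(\zeta(\kappa))$ is continuous and strictly decreasing from $R^\wyl(1/2\delta)$ at $\kappa=0$ to $0$ at $\kappa_\maxrm$. The eigenvalue condition \eqref{eq:ModScattEigenvalues} says the points $\kappa_n$ are exactly the preimages of the equally spaced levels $\pi(n-\tfrac12)\epsN$, $1 \le n \le N$.

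Set $\tilde R(\kappa) := R^\wyl(\zeta(\kappa))$ and let $\tilde R^{-1} : [0, R^\wyl(1/2\delta)] \to [0,\kappa_\maxrm]$ be its continuous inverse. Then
\begin{equation*}
\int \varphi \, \dd\mu_{\Z_N} = \sum_{n=1}^N \pi\epsN \, \varphi\!\left(\tilde R^{-1}\big(\pi(n-\tfrac12)\epsN\big)\right).
\end{equation*}
This is a midpoint Riemann sum, with mesh $\pi\epsN \to 0$, for $\int_0^{R^\wyl(1/2\delta)} \varphi(\tilde R^{-1}(s))\,\dd s$. The integrand is continuous on a compact interval, hence uniformly continuous, so the sum converges to that integral.

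Next, change variables $s = \tilde R(\kappa)$. The map $\tilde R$ is monotone and continuous, and absolutely continuous on compact subintervals of $(0,\kappa_\maxrm)$ with derivative $-\rho^\wyl(\kappa)$ by \eqref{eq:RhoWylDef}. The change of variables therefore gives $\int_0^{\kappa_\maxrm} \varphi(\kappa)\rho^\wyl(\kappa)\,\dd\kappa$, which is \eqref{eq:WeakLimitAllSitesPop}.

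\textbf{Main obstacle.} The delicate step is justifying this change of variables at the endpoints. There $\rho^\wyl$ may be unbounded: at $\kappa_\maxrm$ the turning points merge, and at $\kappa=0$ they escape to infinity. I handle this by performing the substitution on $[a,b] \subset (0,\kappa_\maxrm)$ and letting $a\to 0$, $b\to\kappa_\maxrm$. The convergence follows by monotone convergence, since $\rho^\wyl \ge 0$ and $\int_a^b \rho^\wyl = \tilde R(a)-\tilde R(b)$. By continuity of $\tilde R$ up to the endpoints, this tends to $R^\wyl(1/2\delta)$, so $\rho^\wyl \in L^1$ with the correct total mass. Splitting $\varphi$ into positive and negative parts, dominated convergence then finishes the argument.
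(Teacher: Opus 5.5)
Your proof is correct and follows the same route as the paper: you bound the total mass by $\epsN \pi N = R^\wyl(1/2\delta)$, read $\int \varphi \, \dd\mu_{\Z_N}$ as a Riemann sum in the variable $r = R^\wyl(\zeta(\kappa))$, and change variables back using $\rho^\wyl = -\frac{\dd}{\dd\kappa} R^\wyl(\zeta(\kappa))$; your midpoint identification and your treatment of the improper endpoints are somewhat more careful than the paper's, which calls it a right-endpoint sum despite the $n - \frac{1}{2}$ in the eigenvalue condition. The finiteness of $R^\wyl(1/2\delta)$ that you hedge on does follow from admissibility, since by Cauchy--Schwarz $\int_\R \sqrt{u_0}\,\dd x \le \sqrt{\pi}\,\big(\int_\R (1+x^2)\,u_0\,\dd x\big)^{1/2} < \infty$.
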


\begin{proof}
    First, the measures $\dd \mu_S(\kappa)$ are nonnegative, and we have the clear bound
    \begin{equation}
        \int \dd \mu_S(\kappa) \leq \int \dd \mu_{\Z_N}(\kappa) = \epsN \pi N \leq R\left( \frac{1}{2\delta} \right) < \infty \, ,
    \end{equation}
    where we used the definition of the modified scattering data \eqref{eq:ModScattEigenvalues} and the properties of the Weyl law in Lemma \ref{lem:WylLaw}.
    Now let $f:[0, \kappa_\maxrm] \to \R$ be a continuous function.
    We integrate $f$ against $\dd \mu_{\Z_N}$ and change variables with $r = R^\wyl(\zeta(\kappa))$, letting $R^{-1}(r) = \kappa$ denote the inverse.
    \begin{align}
        \int f(\kappa) \, \dd \mu_{\Z_N}(\kappa) &= \epsN \pi \sum_{n = 1}^N f(\kappa_n) \\
        &= \epsN \pi \sum_{n = 1}^N f(R^{-1}(\epsN \pi n)) \, .
    \end{align}
    This is exactly the right endpoint Riemann sum with uniform $N^{\textrm{th}}$ partition approximate to the integral
    \begin{equation}
        \int_0^{R^\wyl(1/2\delta)} f(R^{-1}(r)) \dd r = - \int_0^{\kappa_\maxrm} f(\kappa) \frac{\dd}{\dd \kappa} R^\wyl(\zeta(\kappa)) \, \dd \kappa = \int_0^{\kappa_\maxrm} f(\kappa) \rho^\wyl(\kappa) \, \dd \kappa \, .
    \end{equation}
    Because this is integrable and the integrand is continuous, we know that the right endpoint Riemann sum limits to the integral, and thus,
    \begin{equation}
        \lim_{N \to +\infty} \int f(\kappa) \, \dd \mu_{\Z_N}(\kappa) = \int_0^{\kappa_\maxrm} f(\kappa) \rho^\wyl(\kappa) \, \dd \kappa \, .
    \end{equation}
    This demonstrates \eqref{eq:WeakLimitAllSitesPop}.
\end{proof}

Knowing that the discrete measures $\dd \mu_S$ have lower and upper bounds which have weak limits as $N \to +\infty$, we expect the measures $\dd \mu_S$ to well-approximate in the limit measures with densities belonging to the admissible set of densities
\begin{equation}
    \mathcal{A} := \left\{ \ \rho \ \left| \ \rho \text{ is a real, measurable function on $[0, \kappa_\maxrm]$ and } 0 \leq \rho \leq \rho^\wyl \ \right. \right\} \, .
\end{equation}
Since $\rho^\wyl \, \dd \kappa$ as a measure has trivially bounded variation
\begin{equation}
    \norm{\rho^\wyl}_{L^1} = \int_0^{\kappa_\maxrm} \rho^\wyl(\kappa) \, \dd \kappa = R\left( \frac{1}{2 \delta} \right) < \infty \, ,
\end{equation}
each $\rho \in \mathcal{A}$ has an associated measure $\dd \mu = \rho \, \dd \kappa$ which have uniformly bounded variation.
We can define now functionals which are versions of the quantities $E_{N,S}(x,t)$ \eqref{eq:ElecEnergy}, $P_S$ \eqref{eq:ElecDipole}, and $Q_S$ \eqref{eq:ElecQuadpole} but which can be computed using the densities $\rho \in \mathcal{A}$,
\begin{align}
    \mathpzc{E}[\rho](x, t) &:= \int_0^{\kappa_\maxrm} \left( V(\zeta(\kappa); x, t) \ \rho(\kappa) + \frac{1}{2} \mathpzc{L}[\rho](\zeta(\kappa)) \right) \ \rho(\kappa) \, \dd \kappa \, ,
    \label{eq:EContDef} \\
    \mathpzc{L}[ \rho ](\lambda) &:= \frac{1}{\pi} \int_0^{\kappa_\maxrm} \log \left| \frac{\lambda - \zeta(\eta)^*}{\lambda - \zeta(\eta)\phantom{^*}} \right| \rho(\eta) \, \dd \eta \, ,
    \label{eq:LContDef} \\
    \mathpzc{P}[\rho] &:= \int_0^{\kappa_\maxrm} \kappa \ \rho(\kappa) \, \dd \kappa \, , \label{eq:PContDef} \\
    \mathpzc{Q}[\rho] &:= \int_0^{\kappa_\maxrm} \im{\left( \zeta - \frac{1}{2\delta} \right)^2} \ \rho(\kappa) \, \dd \kappa \, , \label{eq:QContDef}
\end{align}
and where $V$ is still the same as in \eqref{eq:ElecExtPotential}.

We note that for the discrete version \eqref{eq:DescLogKernel} of the logarithmic kernel of $\mathpzc{L}$ \eqref{eq:LContDef} had to have a modified diagonal, i.e. when $\kappa = \eta$, for finiteness.
For the functional version, the logarithmic kernel of $\mathpzc{L}$ \eqref{eq:LContDef} is positive and has no worse than a logarithmic singularity on the diagonal $\kappa = \eta$ and that
\begin{theorem} \label{thm:LContAndBound}
    $\mathpzc{L}[\rho^\wyl]$ is continuous and takes the value
    \begin{equation}
        \mathpzc{L}[\rho^\wyl](\zeta(\kappa)) = \theta_+(\kappa) + \theta_-(\kappa)
    \end{equation}
    for $\kappa \in [0, \kappa_\maxrm]$, i.e. on the interval along the quadratrix $[1/2\delta, \zeta_\maxrm]_{\quadratrix{\delta}}$.
    Consequently, $\mathpzc{L}[\rho]$ is continuous and satisfies
    \begin{equation}
        0 \leq \mathpzc{L}[\rho](\zeta) \leq \mathpzc{L}[\rho^\wyl](\zeta)
    \end{equation}
    for all $\rho \in \mathcal{A}$ and $\zeta \in \C_+$.
\end{theorem}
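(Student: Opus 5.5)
The plan is to reduce the identity for $\mathpzc{L}[\rho^\wyl]$ to a pointwise-in-$x$ identity, in the spirit of the Lax--Levermore computation for KdV, and then to read off the bounds for general $\rho\in\mathcal{A}$ from positivity of the kernel. Write $\lambda=\zeta(\kappa)$ and set $k(\lambda,\eta):=\log\left|(\lambda-\zeta(\eta)^*)/(\lambda-\zeta(\eta))\right|$. Since $\rho^\wyl(\eta)=-\frac{\dd}{\dd\eta}R^\wyl(\zeta(\eta))$ by Lemma \ref{lem:WylLaw}, I integrate by parts in $\eta$. The boundary terms vanish for two reasons: $R^\wyl(\zeta_\maxrm)=0$, and at $\eta=0$ the point $\zeta(0)=1/2\delta$ is real, so $k(\lambda,0)=0$. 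The logarithmic singularity of $k$ at $\eta=\kappa$ is integrable, so it is harmless (it can be handled by excising a small interval around $\eta=\kappa$ and letting its length go to zero). This gives
\begin{equation*}
\mathpzc{L}[\rho^\wyl](\lambda)=\frac{1}{\pi}\int_0^{\kappa_\maxrm}R^\wyl(\zeta(\eta))\,\partial_\eta k(\lambda,\eta)\,\dd\eta \, .
\end{equation*}
Next I substitute the $x$-integral definition \eqref{eq:RWeylLawDef} of $R^\wyl$ and apply Fubini. For fixed $x$, the integrand $W_0-W_{-1}$ is nonzero only when $x\in A(\zeta(\eta))$, i.e.\ for $\eta<\kappa_x$, where $u_0(x)=E(\zeta(\kappa_x))$. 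Hence
\begin{equation*}
\mathpzc{L}[\rho^\wyl](\lambda)=\int_\R I(\lambda;x)\,\dd x \, , \qquad I(\lambda;x):=\frac{1}{4\pi\delta}\int_0^{\kappa_x}\big(W_0-W_{-1}\big)\big(-2\delta\zeta(\eta)\ee^{-2\delta(\zeta(\eta)+u_0(x))}\big)\,\partial_\eta k(\lambda,\eta)\,\dd\eta \, .
\end{equation*}

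The core step, which I expect to be the main obstacle, is to evaluate $I(\lambda;x)$ in closed form for fixed $U=u_0(x)$. The target is
\begin{equation*}
I(\zeta(\kappa);x)=\begin{cases}\kappa & \text{if } \kappa<\kappa_x \ (\text{$x$ classically allowed}),\\[2pt] \im{\zeta(\kappa)+\frac{1}{2\delta}W_{-1}\big(-2\delta\zeta(\kappa)\ee^{-2\delta(\zeta(\kappa)+U)}\big)} & \text{if } \kappa>\kappa_x \ (\text{$x$ classically forbidden}).\end{cases}
\end{equation*}
Integrating this over $x$ and comparing with the definition \eqref{eq:TailIntegralsDef} gives exactly $\theta_+(\kappa)+\theta_-(\kappa)$: the allowed region contributes $\kappa\,(x_+-x_-)$, and the two forbidden tails give the two tail integrals. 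Here one must track orientation carefully, since the $\int_{x_-}^{-\infty}$ in $\theta_-$ comes with the correct sign.

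To prove the formula for $I$, I would interpret it as a harmonic-function statement in $\lambda\in\C_+$. Define $h(\lambda):=\im{\lambda+\frac{1}{2\delta}W_{-1}(-2\delta\lambda\ee^{-2\delta(\lambda+U)})}$ on a suitable branch. This function is harmonic off the quadratrix arc $[1/2\delta,\zeta(\kappa_x)]_{\quadratrix{\delta}}$, vanishes on the real axis, and is bounded at infinity. On the arc, the two boundary values correspond to the $W_{-1}$ and $W_0$ sheets meeting along the turning-point branch cut, so the jump of $h$ across the arc is $\frac{1}{2\delta}\,\mathrm{Re}(W_0-W_{-1})$ up to sign. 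Green's representation with the upper-half-plane Green's function $\frac{1}{\pi}k(\lambda,\cdot)$ then expresses $h$ as the double-layer potential of this jump, and this double-layer potential is $I$. The value $\kappa$ on the arc itself comes from averaging the two sides, using that $\im{W_{0,-1}}=0$ there. If this route becomes unwieldy, the fallback is to verify the identity directly: differentiate both sides in $U$, using $\partial_U W=-2\delta W/(1+W)$, check that the derivatives agree, and match the two sides at $U=E(\lambda)$.

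Once the identity is established, continuity of $\mathpzc{L}[\rho^\wyl]$ follows from the continuity of $\theta_\pm$, which holds by dominated convergence using the admissibility decay \eqref{eq:DecayAssumInitCond}. For general $\rho\in\mathcal{A}$, the kernel $k(\lambda,\eta)\ge 0$ for $\lambda,\zeta(\eta)\in\overline{\C_+}$, so $0\le\rho\le\rho^\wyl$ gives $0\le\mathpzc{L}[\rho]\le\mathpzc{L}[\rho^\wyl]$ pointwise. Continuity of $\mathpzc{L}[\rho]$ follows because $\mathpzc{L}[\rho]$ and $\mathpzc{L}[\rho^\wyl-\rho]$ are both lower semicontinuous, being logarithmic potentials of nonnegative densities with positive kernel, and their sum $\mathpzc{L}[\rho^\wyl]$ is continuous, which forces each to be continuous.
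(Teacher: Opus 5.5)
Your plan is the paper's argument with the order of operations swapped. The paper writes the potential of the whole family $\rho^\minrm(\cdot;x,0)$ as a Cauchy integral whose density is the jump of $g(\zeta)=\ii\int_x^{+\infty}F_{u_0(x')}(\zeta)\,\dd x'$, where $F_U(\lambda):=\lambda+\frac{1}{2\delta}W\big(-2\delta\lambda\,\ee^{-2\delta(\lambda+U)}\big)$. It evaluates that integral by the residue at infinity and then lets $x\to-\infty$. You apply Fubini first and do the contour argument for each fixed $U$, and that reordering is fine.

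The justification you give for the core step is wrong, however. The function $h_U=\im{F_U}$ does not jump across the arc. Both of its boundary values equal $\im{\lambda}=\kappa$, because $W_0$ and $W_{-1}$ are real there, as you yourself observe. A double-layer potential with the nonzero density $\frac{1}{2\delta}(W_0-W_{-1})$ would instead be discontinuous across the arc. Nor is $I$ a double-layer potential: it contains $\partial_\eta k$, the derivative along the arc in the source point. Undoing the integration by parts shows $I$ is a logarithmic (single-layer) potential of a nonnegative density, hence continuous. What actually jumps is the conjugate function $\re{F_U}$.

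The argument that works is this. $F_U$ is analytic off $[\beta^*,\beta]_{\quadratrix{\delta}}$ with $\beta=\beta(2\delta U)$, and it is Schwarz-symmetric. Its jump across the arc is real, equal to $\pm\frac{1}{2\delta}(W_0-W_{-1})$, and vanishes at the endpoints. By \eqref{eq:AnalyticLambertWExp}, $F_U(\lambda)=-U-\frac{U}{2\delta\lambda}+O(\lambda^{-2})$. So $F_U+U$ is the Cauchy integral of its jump, and taking imaginary parts and folding the two conjugate halves of the arc gives $h_U=I$. This is exactly the paper's residue computation for $g$, done before integrating in $x$.

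Equivalently, you can stay in your harmonic-function framing. Green's representation for the continuous function $h_U$ gives the logarithmic potential of the jump of $\partial_n h_U$. By Cauchy--Riemann, that jump is the arclength derivative of the jump of $\re{F_U}$. This version avoids the integration by parts altogether and lets you use Tonelli, since all integrands are nonnegative.

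There are three further problems.

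\textbf{The sign of the tail terms.} Your bookkeeping does not produce $\theta_++\theta_-$. By \eqref{eq:TailIntegralsDef}, $\theta_-=\kappa x_--\int_{-\infty}^{x_-}\im{\zeta+\frac{1}{2\delta}W_{-1}(\cdots)}\,\dd x$. Hence $\kappa(x_+-x_-)+\int_{x_+}^{+\infty}+\int_{-\infty}^{x_-}$ equals $\theta_+-\theta_-$. The sign in the statement is itself inconsistent with \eqref{eq:TailIntegralsDef}. Under $u_0(\cdot)\mapsto u_0(\cdot-a)$, the potential $\mathpzc{L}[\rho^\wyl]$ is unchanged while $\theta_\pm\mapsto\theta_\pm+\kappa a$. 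The paper's own route (the third case of \eqref{eq:MinFrechetT0} as $x\to-\infty$) also gives $\theta_+-\theta_-$. You should flag this rather than assert that the orientations work out.

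\textbf{Fubini on the arc.} For $\lambda=\zeta(\kappa)$ on the arc, $\partial_\eta k(\lambda,\eta)$ behaves like $(\eta-\kappa)^{-1}$. After integrating by parts you therefore have a principal value, and Fubini does not apply. Prove the identity for $\lambda$ off $[\zeta_\maxrm^*,\zeta_\maxrm]_{\quadratrix{\delta}}$, where $\partial_\eta k$ is bounded and $W_0-W_{-1}\ge 0$. Then pass to the arc by a limit (the paper uses monotone convergence from outside), or use the Tonelli version above.

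\textbf{Continuity of $\mathpzc{L}[\rho^\wyl]$.} Continuity of $\theta_\pm$ only gives continuity of $\mathpzc{L}[\rho^\wyl]$ along the arc. Your semicontinuity argument needs it on a full neighbourhood. Obtain it instead from three facts:
\begin{itemize}
\item the representation $\mathpzc{L}[\rho^\wyl](\lambda)=\int_\R h_{u_0(x)}(\lambda)\,\dd x$ for all $\lambda$;
\item continuity of each $h_U$ across the arc;
\item dominated convergence, with $|h_U|\le C\sqrt{U}$ locally uniformly and $\int_\R\sqrt{u_0}\,\dd x<\infty$ (Cauchy--Schwarz applied to \eqref{eq:DecayAssumInitCond}).
\end{itemize}
Given that, your lower-semicontinuity argument for general $\rho\in\mathcal{A}$ is correct and equivalent to the paper's generalized dominated convergence.
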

The proof of the first statement is presented with the proof of Corollary \ref{cor:LRhoWyl}.
The bounds in the second statement follow trivially from noticing that the kernel of $\mathpzc{L}$ is positive on the upper half-plane and using the bounds from the definition of $\rho \in \mathcal{A}$.
Continuity of $\mathpzc{L}[\rho]$ follows from considering a sequence $\{ \zeta_m \}_{m \in \N}$ in the upper half-plane with a limit and applying the generalized dominated convergence theorem to the integrands of $\mathpzc{L}[\rho](\zeta_m)$ and $\mathpzc{L}[\rho^\wyl](\zeta_m)$.
The continuity of $\mathpzc{L}[\rho]$ then extends to the whole plane by odd symmetry.
From these properties, making the appropriate adjustments to the proof of Theorem 2.2 from \cite{LaxLevermoreI_1983}, we have
\begin{theorem}[The limit of the discrete minimizers $E_N^\minrm$.] \label{thm:DescMinEnergyToContMinEnergy}
    For all $(x, t) \in \R^2$, there exists a minimum of the functional $\mathpzc{E}[\diamond](x,t)$ \eqref{eq:EContDef} attained on the admissible set $\mathcal{A}$, 
    \begin{equation}
        E^\minrm_\infty(x, t) := \min_{\rho \in \mathcal{A}} \mathpzc{E}[\rho](x, t) \, .
        \label{eq:MinEnergyInfDef}
    \end{equation}
    Furthermore
    \begin{equation}
        \lim_{N \to +\infty} E_N^\minrm(x, t) = E^\minrm_\infty(x, t)
        \label{eq:LimitOfDescMinEnergy}
    \end{equation}
    uniformly on compact subsets of $(x,t) \in \R^2$.
\end{theorem}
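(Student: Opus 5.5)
The plan follows the Lax--Levermore argument (Theorem 2.2 of \cite{LaxLevermoreI_1983}). It has three parts: existence of a minimizer on $\mathcal{A}$, an upper bound $\limsup_N E^\minrm_N \le E^\minrm_\infty$ by discretizing a minimizer, and a lower bound $\liminf_N E^\minrm_N \ge E^\minrm_\infty$ by taking weak limits of discrete minimizers.

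\emph{Existence.} Regard $\mathcal{A}$ as a subset of $L^2$ with weight $1/\rho^\wyl$, or simply as a weak-$*$ compact, convex subset of $L^\infty$-dominated densities. Weak-$*$ compactness holds because $0\le\rho\le\rho^\wyl$ and $\rho^\wyl\in L^1$ (Theorem \ref{thm:BoundedVarWeakConv}). The linear term $\int V\rho\,\dd\kappa$ is weakly continuous because $V(\zeta(\kappa);x,t)$ is continuous and bounded on $[0,\kappa_\maxrm]$. The quadratic term $\frac12\int \mathpzc{L}[\rho]\rho$ has the Green's function of the upper half-plane as kernel. This kernel is a positive definite kernel, so the quadratic term is convex and weakly lower semicontinuous. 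In fact it is weakly continuous on $\mathcal{A}$, since the kernel is dominated using Theorem \ref{thm:LContAndBound}: $\mathpzc{L}[\rho^\wyl]$ is bounded and continuous, so the logarithmic singularity is uniformly integrable against $\rho\otimes\rho\le\rho^\wyl\otimes\rho^\wyl$. A minimizer therefore exists, and $E^\minrm_\infty$ is finite.

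\emph{Upper bound.} Take a minimizer $\rho_0$ and build sets $S_N\subset\Z_N$ greedily. Index $n$ is included when the running count $\epsN\pi\#\{m\le n: m\in S_N\}$ lags behind $\int_{\kappa_n}^{\kappa_\maxrm}\rho_0$, where the indices run in order of decreasing $\kappa$. This construction is possible because $\rho_0\le\rho^\wyl$ and the sites have spacing with weight $\epsN\pi$ in $\rho^\wyl$-mass. Then $\dd\mu_{S_N}\rightharpoonup\rho_0\,\dd\kappa$, and the linear term converges. The double sum splits into off-diagonal and diagonal parts. The diagonal part contributes $\epsN^2\sum_{n}\log|2\ii\kappa_n|=O(\epsN\log\epsN)\to0$; this needs a lower bound on $\kappa_1$, which is $\gtrsim\epsN^2$ by the Weyl law, so the sum is still $o(1)$. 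The off-diagonal part converges to $\iint g\,\rho_0\rho_0$. Here the near-diagonal contributions are controlled by comparing with the same sum for $\mu_{\Z_N}$ against $\rho^\wyl$, and that sum converges via the Riemann-sum argument of Theorem \ref{thm:BoundedVarWeakConv} together with continuity of $\mathpzc{L}[\rho^\wyl]$.

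\emph{Lower bound.} Take minimizing sets $S_N$ and extract a weakly convergent subsequence $\mu_{S_N}\rightharpoonup\mu$. The limit has $\mu\le\rho^\wyl\dd\kappa$ as a consequence of $\mu_{S_N}\le\mu_{\Z_N}$, so $\mu=\rho\,\dd\kappa$ with $\rho\in\mathcal{A}$. For the quadratic term, truncate the kernel as $g_M=\min(g,M)$; the truncated kernel is continuous off the diagonal in a controlled way. Since $g\ge g_M$ and $g\ge0$ off the diagonal, the discrete energy is bounded below by the truncated energy. The truncated energy converges to the continuum truncated energy, up to diagonal terms of size $O(\epsN M)$. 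Then letting $M\to\infty$ by monotone convergence gives $\liminf E_{N,S_N}\ge\mathpzc{E}[\rho]\ge E^\minrm_\infty$. Uniformity on compact $(x,t)$ sets follows because $E_{N,S}$ and $\mathpzc{E}$ are affine in $(x,t)$ with coefficients $P_S,Q_S$ that are uniformly bounded by total variation. The family is therefore equi-Lipschitz, and pointwise convergence upgrades to uniform convergence.

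\emph{Main obstacle.} The hardest step is the upper-bound discretization near $\kappa=0$ (the vertex $1/2\delta$). There $\rho^\wyl$ may be singular and the eigenvalues accumulate, so the diagonal and near-diagonal kernel terms $\log|\zeta_\ell-\zeta_n^*|/|\zeta_\ell-\zeta_n|$ must be shown uniformly small. I would first try to dominate these terms by the corresponding sums for the full configuration $\Z_N$ and show directly that $\epsN^2\sum_{n\neq\ell}g(\zeta_\ell,\zeta_n)\to\frac{1}{\pi}\int\mathpzc{L}[\rho^\wyl]\rho^\wyl$, using the monotone spacing given by the Weyl law.
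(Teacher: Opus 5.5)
Your proposal is correct and takes essentially the paper's route. The paper gives no independent argument; it defers to the proof of Theorem~2.2 in \cite{LaxLevermoreI_1983}, relying on the boundedness and continuity of $\mathpzc{L}[\rho^\wyl]$ (Theorem~\ref{thm:LContAndBound}) and the weak limit of Theorem~\ref{thm:BoundedVarWeakConv}, and your argument writes that proof out: compactness, a recovery-sequence upper bound, a truncated-kernel lower bound, and equi-Lipschitz uniformity.

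There is one small slip. In the paper's indexing the eigenvalue nearest the vertex is $\kappa_N$, not $\kappa_1$. Also, admissibility does not give $\kappa_N\gtrsim\epsN^2$: for $u_0\sim|x|^{-p}$ with $3<p<4$ one gets $\rho^\wyl(\kappa)\asymp\kappa^{-2/p}$ and hence $\kappa_N\asymp\epsN^{p/(p-2)}\ll\epsN^2$. The moment condition \eqref{eq:DecayAssumInitCond} does still give $\kappa_N\gtrsim\epsN^{3}$, and that polynomial bound is all you need for the diagonal term to be $O(\epsN\log(1/\epsN))$.
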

That is to say, the limit as $N \to +\infty$ of the discrete minima $E^\minrm_N(x,t)$ is equal to the minimum of the functional $\mathpzc{E}[\diamond](x,t)$ \eqref{eq:EContDef} over the admissible set of densities $\mathcal{A}$.
Should the reader be interested in the content of these proofs, we direct their attention first to \cite{LaxLevermoreI_1983} and then, for a more in-depth explanation, to C. D. Levermore's dissertation \cite[Chapter 2]{Levermore_1982_Diss}.

\subsection{The Distributional Limit of the Semiclassical Soliton Ensemble Solution}
\label{sec:DistLimSol}

\begin{theorem} \label{thm:DistLimit}
    For any $t \in \R$, the distributional limit of \eqref{eq:ILWSemiclassSolEnsemb} is given by
    \begin{equation}
        \dd-\lim_{N \to \infty} u^\sse_N(\diamond, t) = -\frac{4 \delta}{ \pi} \partial_x^2 E^\minrm_\infty(\diamond, t)
        \label{eq:DistribLimitOfSol}
    \end{equation}
    where $E^\minrm_\infty$ is defined in \eqref{eq:MinEnergyInfDef}.
\end{theorem}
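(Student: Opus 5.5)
Write $u^\sse_N = \partial_x D_{\ii\delta\epsN} F_N$ as in \eqref{eq:ILWSemiclassSolEnsemb} and test against $\varphi \in C_c^\infty(\R)$. After integrating by parts,
\[
\int u_N^\sse \varphi \, \dd x = -\int D_{\ii\delta\epsN}F_N(x,t)\,\varphi'(x)\,\dd x .
\]
The plan is to show that $D_{\ii\delta\epsN}F_N(x,t) \to -\frac{4\delta}{\pi}\partial_x E^\minrm_\infty(x,t)$ in the sense of distributions. Since $F_N(\cdot,t)$ is real on $\R$, is the restriction of a function analytic in the strip, and is defined by a positive determinant there, we can write
\[
D_{\ii\delta\epsN}F_N(x) = \frac{1}{2\delta\epsN}\,2\,\im{F_N(x+\ii\delta\epsN)}
\]
using Schwarz reflection, since $F_N(\bar z)=\overline{F_N(z)}$. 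Equivalently, by the fundamental theorem of calculus in $y$,
\[
D_{\ii\delta\epsN}F_N(x) = \frac{1}{2\delta\epsN}\int_{-\delta\epsN}^{\delta\epsN}\partial_y \im{F_N(x+\ii y)}\,\dd y
= \frac{1}{2\delta\epsN}\int_{-\delta\epsN}^{\delta\epsN}\partial_x \re{F_N(x+\ii y)}\,\dd y ,
\]
by Cauchy--Riemann. After pairing with $\varphi'$ and integrating by parts once more in $x$, everything reduces to controlling $\re{F_N(x+\ii y)} = 2\delta\epsN^2\log|Z_N(x+\ii y,t)|$ uniformly for $|y|\le\delta\epsN$ and $x$ in compact sets.

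The key step is to show
\[
2\delta\epsN^2\log|Z_N(x+\ii y,t)| \to -\frac{4\delta}{\pi}E^\minrm_\infty(x,t)
\]
uniformly on compacts, uniformly in $|y|\le\delta\epsN$. The upper bound in \eqref{eq:DetIPlusDeltaMagBounds} gives
\[
2\delta\epsN^2\log|Z_N| \le 2\delta\epsN^2 N\log 2 - \frac{4\delta}{\pi}E^\minrm_N(x,t).
\]
The first term is $O(\epsN)\to0$ because $N\epsN$ is bounded, and Theorem \ref{thm:DescMinEnergyToContMinEnergy} gives $E^\minrm_N\to E^\minrm_\infty$. For the lower bound at $y=0$, every term in the sum for $Z_N(x,t)$ is positive, so $Z_N(x,t)\ge \exp(-\frac{2}{\pi\epsN^2}E^\minrm_N)$, which gives matching asymptotics on the real line.

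To pass from $y=0$ to $y\neq0$, I would use $\log|Z_N(x+\ii y)| - \log Z_N(x)$. Since $\Delta_N(z)$ differs from $\Delta_N(x)$ only by the unitary factors in \eqref{eq:SimilarDelta}, one can compare eigenvalues. Each eigenvalue has modulus controlled by the real case and argument bounded by $2\kappa_\maxrm|y|/\epsN<\pi$ from \eqref{eq:LambdaBounds}. The aim is $|\log|1+\lambda_n(z)|-\log(1+\lambda_n(x))|=O(1)$ per eigenvalue, so the total difference is $O(N)$ and contributes $O(\epsN^2N)=O(\epsN)\to0$ after scaling. Alternatively, one can use the argument bound \eqref{eq:ArgDetBounds} directly. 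It gives $|\im{F_N}|\le 2\delta\epsN^2\cdot 2N\kappa_\maxrm|y|/\epsN = O(\epsN^2 N)$, and since $\im{F_N}$ is harmonic this controls $\partial_y\re{F_N}$-type differences once paired with test functions.

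\textbf{Main obstacle.} The hardest step is the lower bound on $|Z_N|$ off the real axis. The bound in \eqref{eq:DetIPlusDeltaMagBounds} alone only gives $|Z_N|\ge d^N$, which is too weak near the minimizer scale. I would first try the eigenvalue-by-eigenvalue comparison sketched above, using the positive-definiteness of $\Delta_N(x)$. If that fails, I would fall back on harmonicity: $\log|Z_N|$ is harmonic in the strip because $Z_N$ has no zeros there (Theorem \ref{thm:BoundsOnDet}). With the argument bound \eqref{eq:ArgDetBounds}, whose harmonic conjugate has the same $y$-derivative size, this gives
\[
|\partial_x\log|Z_N|| = |\partial_y\arg Z_N| \lesssim N/\epsN ,
\]
and so the variation in $y$ across the strip is $O(N)$.

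Granting this, $F_N(x\pm\ii\delta\epsN)$ and $F_N(x)$ share the limit $-\frac{4\delta}{\pi}E^\minrm_\infty$. Dominated convergence then shows that the $y$-averaged $\re{F_N}$ converges to $-\frac{4\delta}{\pi}E^\minrm_\infty$ in $L^1_{\mathrm{loc}}$. Tracing back through the integrations by parts gives
\[
\int u_N^\sse\varphi\,\dd x \to -\frac{4\delta}{\pi}\int E^\minrm_\infty\varphi''\,\dd x ,
\]
which is \eqref{eq:DistribLimitOfSol}.
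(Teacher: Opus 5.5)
Your reduction is correct and differs from the paper's. By Schwarz symmetry and Cauchy--Riemann you obtain the exact identity
\[
\int_\R u^\sse_N\varphi\,\dd x=\frac{1}{2\delta\epsN}\int_{-\delta\epsN}^{\delta\epsN}\int_\R \re{F_N(x+\ii y,t)}\,\varphi''(x)\,\dd x\,\dd y .
\]
The gap is in what you then call the key step: uniform convergence of $\re{F_N(x+\ii y,t)}$ for $|y|\le\delta\epsN$. Neither argument you give for the off-axis lower bound works as written, so your final paragraph rests on ``granting this''.

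\begin{itemize}
\item The eigenvalue comparison is only an aim. The modulus information in \eqref{eq:LambdaNormRelation} merely places $|\lambda|$ between the extreme eigenvalues of $\Delta_N(x,t;\epsN)$. That says nothing about $|1+\lambda_n(x+\ii y)|/(1+\lambda_n(x))$ term by term.
\item The harmonicity fallback has two problems. First, a sup bound on $\arg Z_N$ does not bound its derivatives up to the edges $y=\pm\delta\epsN$. Second, the $y$-variation of $\log|Z_N|$ is governed by $\partial_y\log|Z_N|=-\partial_x\arg Z_N$, not by the quantity $\partial_x\log|Z_N|=\partial_y\arg Z_N$ that you bound.
\end{itemize}
The claim itself is true. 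Apply Ostrowski--Taussky to a unimodular multiple of $I_N+\Delta_N^{1/2}\ee^{-2\ii Ky/\epsN}\Delta_N^{1/2}$, with $K$ as in the proof of Theorem \ref{thm:BoundsOnDet}. This gives $|Z_N(x+\ii y,t)|\ge\cos^N(\delta\kappa_\maxrm)\,Z_N(x,t)$. But you do not need it.

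Your own ``Alternatively'' remark closes the proof once you make it the main line. Since $\partial_s\re{F_N(x+\ii s,t)}=-\partial_x\im{F_N(x+\ii s,t)}$, one more integration by parts gives
\[
\int_\R\big(\re{F_N(x+\ii y,t)}-F_N(x,t)\big)\varphi''(x)\,\dd x=\int_0^y\int_\R\im{F_N(x+\ii s,t)}\,\varphi'''(x)\,\dd x\,\dd s .
\]
By \eqref{eq:ArgDetBounds}, $|\im{F_N}|\le4\delta^2\kappa_\maxrm N\epsN^2=\bigO{\epsN}$ on the strip. So the $y$-average differs from $\int_\R F_N(x,t)\varphi''\,\dd x$ by $\bigO{\epsN^2}\norm{\varphi'''}_{L^1}$. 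Convergence is then needed only on $\R$. You already have that from three ingredients: the positive-sum lower bound, the upper bound in \eqref{eq:DetIPlusDeltaMagBounds}, and Theorem \ref{thm:DescMinEnergyToContMinEnergy}.

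The paper also needs convergence only on $\R$, but gets there by a $\dbar$-Cauchy argument on rectangles. It uses the near-analytic extensions $\phi_\pm(z)=\chi_x(x)+\ii(y\mp\delta\epsN)\chi_{xx}(x)$, which costs an error of $\bigO{\epsN}\max_{\text{strip}}|F_N|$. It must therefore bound $|F_N|$ on the whole strip, using both magnitude bounds (including the $d^N$ lower bound) and the argument bound. Your repaired route rests on an exact identity. Off the axis it needs only the nonvanishing of $Z_N$ and \eqref{eq:ArgDetBounds}, and its error is smaller.
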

\begin{proof}
    We will prove the distributional limit from definition.
    Let $\chi \in \mathcal{C}^\infty_0(\R)$ and let $K \subset \R$ be a compact support of $\chi$, and consider the solution $u^\sse_N(x, t)$ integrated against $\chi$ in $x$:
    \begin{equation}
        \int_{\R} \chi(x) \ u^\sse_N(x, t) \, \dd x = \int_{\R} \chi(x) \ \partial_x D_{\ii \delta \epsN} F_N(x, t) \, \dd x \, .
    \end{equation}
    Moving the derivative onto $\chi$ is a standard invoking of integration-by-parts:
    \begin{align}
        \int_{\R} \chi(x) \ u^\sse_N(x, t) \, \dd x = - \int_{\R} \chi_x(x) \ D_{\ii \delta \epsN} F_N(x, t) \, \dd x \, ,
        \label{eq:IntegrationByParts}
    \end{align}
    where the boundary term disappears due to the compact support of $\chi$.
    Dealing with the finite-difference operator $D_{\ii \delta \epsN}$ is much trickier.
    We will exploit the fact that the difference is shrinking with $\epsN$ and using a near-analytic extension of $\chi$ to get our result.
    
    First, define $\phi_+: \strip{\delta \epsN} \to \C$ by
    \begin{equation}
        \phi_+(z) := \chi_x(x) + \ii(y - \delta \epsN) \chi_{xx}(x) \, ,
        \label{eq:PhiNearAnalyExt}
    \end{equation}
    where again we always identify $z = x + \ii y$.
    Then, from the $\dbar$-generalization of Cauchy's theorem,
    \begin{equation}
        \int_{C} \phi_+(z) \ F_N(z, t) \, \dd z = \int_a^b \int_0^{\delta \epsN} \dbar_z \big( \phi_+(z) \ F_N(z, t) \big) \, \dd x \, \dd y 
        \label{eq:GeneralizedCauchyThm}
    \end{equation}
    with $a < b$ such that $K \subset [a, b]$ and $C$ is the boundary of the rectangle $(x,y) \in [a, b] \times [0, \delta \epsN]$, i.e. the integration region on the left-hand side, oriented positively.
    Since $F_N$ is analytic, which follows from Theorem \ref{thm:BoundsOnDet} where the determinant was shown to be nonzero for all $|y| \leq \delta \epsN$, we have that $\dbar_z F_N(z, t) = 0$, so product rule gives
    \begin{align}
        \dbar_{z} \big( \phi_+(z) \, F_N(z, t) \big) &= F_N(z, t) \ \left( \frac{\partial}{\partial x} + \ii \frac{\partial}{\partial y} \right) \phi_+(x + \ii y) \\
        &= \ii (y - \delta \epsN) \chi_{xxx}(x) \, F_N(z, t) \, .
        \label{eq:DBarIntegrand}
    \end{align}
    Since $\chi$ is $\mathcal{C}^\infty$, there is a maximum magnitude of $\chi_{xxx}$ on $[a,b]$, call it $M_{\chi,3} \geq 0$.
    Let us also introduce notation for the maximum modulus of $F_N$ on a $y$-symmetric rectangular region 
    \begin{equation}
        M_{F_N} := \max_{\substack{x \in [a,b] \\ y \in [-\delta \epsN, \delta \epsN]}} |F_N(z, t)| \, .
    \end{equation}
    Thus, inserting \eqref{eq:DBarIntegrand} into the integrand of the double integral in \eqref{eq:GeneralizedCauchyThm}, we have the bound
    \begin{equation}
        \left| \int_{a}^{b} \int_0^{\delta \epsN} \dbar_{z} \big( \phi_+(z) \ F_N(z, t) \big) \, \dd x \, \dd y \right| \leq \frac{\delta^2 \epsN^2}{2} (b-a) M_{\chi,3} M_{F_N} \, .
        \label{eq:RectangleIntegralBound}
    \end{equation}
    Now let us consider the contour integral on the left side of \eqref{eq:GeneralizedCauchyThm} by parameterization:
    \begin{align}
        \int_{C} \phi_+(z) \, F_N(z, t) \, \dd z &= \int_{a}^{b} \phi_+(x) \, F_N(x, t) \, \dd x + \int_0^{\delta \epsN} \phi_+(b + \ii y) \, F_N(b + \ii y, t) \, \ii \, \dd y \nonumber \\
        &\qquad + \int_{b}^{a} \phi_+(x + \ii \delta \epsN) \, F_N(x + \ii \delta \epsN, t) \, \dd x \nonumber \\
        &\qquad + \int_{\delta \epsN}^0 \phi_+(a + \ii y) \, F_N(a + \ii y, t) \, \ii \, \dd y
    \end{align}
    We plug in the form of $\phi_+$ \eqref{eq:PhiNearAnalyExt} and simplify,
    \begin{align}
        \int_{C} \phi_+(z) \, F_N(z, t) \, \dd z &= \int_{a}^{b} \big( \chi_x(x) - \ii \delta \epsN \chi_{xx}(x) \big) \, F_N(x, t) \, \dd x  - \int^0_{\delta \epsN} (y - \delta \epsN) \chi_{xx}(b) \ F_N(b + \ii y, t) \, \dd y \nonumber \\
        &\qquad - \int_{a}^{b} \chi_x(x) \, F_N(x + \ii \delta \epsN, t) \, \dd x \nonumber \\
        &\qquad + \int_0^{\delta \epsN} (y - \delta \epsN) \chi_{xx}(a) \, F_N(a + \ii y, t) \, \dd y
        \label{eq:SecondRectangularBoundaryIntegral}
    \end{align}
    where the $\chi_x$ terms disappeared in the vertical segments due the endpoints being outside of its support: $\chi_x(a, t) = \chi_x(b, t) = 0$.
    Because of this vanishing, the integrals over the vertical segments can be bounded easily by $\frac{\delta^2 \epsN^2}{2} M_{\chi, 2} M_{F_N}$, 
    where $M_{\chi, 2}$ is a uniform bound on $|\chi_{xx}|$ over $x \in [a, b]$.
    We have succeeded in bounding the difference between the integrals along the horizontal segments of $C$ which involve $F_N(x,t)$ and $F_N(x + \ii \delta \epsN, t)$.
    This forms half of the finite-difference operator \eqref{eq:FinDifOppDef} on $F_N$.
    For the other half, we do the exact same exercise but for a rectangle in the lower half-plane and for a function $\phi_-: \strip{\delta \epsN} \to \C$ defined by
    \begin{equation}
        \phi_-(z) := \chi_x(x) + \ii(y + \delta \epsN) \chi_{xx}(x) \, .
    \end{equation}
    This gives identical bounds since the previous bounds on $\chi$ were $y$-independent and the bound on $F_N$ was chosen to be on all of $\strip{\delta \epsN}$.
    
    Subtracting the resulting lower half-plane horizontal integrals from the those of the upper half-plane in \eqref{eq:SecondRectangularBoundaryIntegral} and dividing by $\ii 2 \delta \epsN$ lets us assemble the finite-difference in \eqref{eq:IntegrationByParts} along with a term integrated on the real axis:
    \begin{align}
        &\frac{1}{\ii 2 \delta \epsN} \left( \int_{a}^{b} \chi_x(x) \, F_N(x + \ii \delta \epsN, t) \, \dd x - \int_{a}^{b} \Big( \chi_x(x) - \ii \delta \epsN \chi_{xx}(x) \Big) \, F_N(x, t) \, \dd x \right. \nonumber \\
        &\left. \qquad \qquad - \int_{a}^{b} \chi_x(x) \, F_N(x - \ii \delta \epsN, t) \, \dd x + \int_{a}^{b} \Big(\chi_x(x) + \ii \delta \epsN \chi_{xx}(x)\Big) \, F_N(x, t) \, \dd x \right) \nonumber \\
        &\qquad \qquad \qquad \qquad = \int_{a}^{b} \chi_x(x) \, D_{\ii \delta \epsN} F_N(x, t) \, \dd x + \int_{a}^{b} \chi_{xx}(x) \,  F_N(x, t) \, \dd x \, .
        \label{eq:FiniteDiffApprox}
    \end{align}
    The bounds on the double integral \eqref{eq:RectangleIntegralBound} and those of the vertical components along with the rearranging the equalities \eqref{eq:GeneralizedCauchyThm} and \eqref{eq:SecondRectangularBoundaryIntegral} lets us assemble a bound on the left-hand side of equality \eqref{eq:FiniteDiffApprox} by na\"ive triangle identity.
    Nearly there, we combine our first integration-by-parts result \eqref{eq:IntegrationByParts} with \eqref{eq:FiniteDiffApprox} and its bound to get our first approximation to the integral of $u^\sse_N(\diamond, t)$ against $\chi$:
    \begin{align}
        &\left| \int_{a}^{b} \chi(x) \, u^\sse_N(x, t) \, \dd x - \int_{a}^{b} \chi_{xx}(x) \, F_N(x, t) \, \dd x \right| \nonumber \\
        &\Quad[8] = \left| \int_{a}^{b} \chi_x(x) \, D_{\ii \delta \epsN} F_N(x, t) \, \dd x + \int_{a}^{b} \chi_{xx}(x) \, F_N(x, t) \, \dd x \right| \label{eq:FirstApproxIntSolChi} \\
        &\Quad[8] \leq \frac{\delta \epsN}{2} \Big( (b-a) M_{\chi,3} + M_{\chi,2} \Big) M_{F_N} \, .
        \label{eq:FirstApproxIntSolChiBound}
    \end{align}
    
    To demonstrate the limit, we need to show that the right-hand side of \eqref{eq:FirstApproxIntSolChi} indeed goes to zero.
    For this, we split $F_N$ into real and imaginary parts
    \begin{align}
        \re{F_N(z, t)} &= 2 \delta \epsN^2 \ln \left| Z_N(z, t) \right| \, , \\
        \im{F_N(z, t)} &=  2 \delta \epsN^2 \arg \left( Z_N(z, t) \right) \, ,
    \end{align}
    and then applying Theorem \ref{thm:BoundsOnDet} to the right sides of the above equalities, we have the bounds on the real and imaginary parts of $F_N$
    \begin{align}
        2 \delta \epsN^2 N \ln d( 2 \delta \kappa_\maxrm) &\leq \re{F_N(z, t)} \leq - \frac{4 \delta}{\pi} E^{\minrm}_N(x, t) + 2 \delta \epsN^2 N \ln 2 \, , \label{eq:ReFBounds} \\
        0 &\leq \left| \im{F_N(z, t)} \right| \leq 4 \delta^2 \epsN^2 N \kappa_\maxrm \, . \label{eq:ImFBounds}
    \end{align}
    As $N \to +\infty$, all of terms in these bounds limit to zero independently of $x, t$ except for the term \\ $- 4 \delta/\pi \, E^{\minrm}_N(x, t)$.
    Indeed, the terms $\epsN^2 N$ tend to zero, by the construction of $\epsN$ \eqref{eq:ModScattEpsN}.
    Additionally, $E^{\minrm}_N(x, t)$, due to Theorem \ref{thm:DescMinEnergyToContMinEnergy}, has a uniform limit on $[a, b]$.
    Since its real and imaginary parts have uniform bounds, so too does $|F_N|$ and hence $M_{F_N}$.
    This shows that the bound in \eqref{eq:FirstApproxIntSolChiBound} will tend to zero as $N \to +\infty$.
    
    Lastly, since we now only have $F_N$ evaluated at $x \in \R$ at the beginning of \eqref{eq:FirstApproxIntSolChiBound}, we can establish a tighter lower bound than \eqref{eq:ReFBounds}, relying directly on the fact that $Z_N(x, t)$ \eqref{eq:PartitionFunctionDef} is a sum of positive terms, which is greater than any one of its terms:
    \begin{align}
        F_N(x, t) &= 2 \delta \epsN^2 \log \big( Z_N(x, t) \big) \\
        &= 2 \delta \epsN^2 \log \left[ \sum_{S \subset \Z_N} \exp \left( -\frac{2}{\epsN^2 \pi} E_S(x, t) \right) \right] \\
        &\geq 2 \delta \epsN^2 \log \left[ \exp \left( -\frac{2}{\epsN^2 \pi} E_N^\minrm(x, t) \right) \right] \\
        &\geq -\frac{4 \delta}{\pi} E_N^\minrm(x, t) \, .
    \end{align}
    This with the upper bound from \eqref{eq:ReFBounds} yields the three-sided inequality
    \begin{align}
        0 \leq F_N(x, t) + \frac{4 \delta}{\pi} E_N^\minrm(x, t) &\leq 2 \delta \epsN^2 N \ln 2 \, .
        \label{eq:TightFReBounds}
    \end{align}
    By the sandwich theorem, we have that $F_N(x, t) \to -4 \delta/\pi \ E_N^\minrm(x, t)$ uniformly in $x$ and $t$ as $N \to +\infty$. Considering
    \begin{align}
        &\left| \int_{a}^{b} \chi(x) \, u^\sse_N(x, t) \, \dd x + \int_{a}^{b} \chi_{xx}(x) \, \frac{4 \delta}{\pi} E^\minrm_\infty(x, t) \, \dd x \right| \nonumber \\
        &\qquad \qquad \leq \left| \int_{a}^{b} \chi(x) \, u^\sse_N(x, t) \, \dd x - \int_{a}^{b} \chi_{xx}(x) \, F_N(x, t) \, \dd x \right| + \left| \int_{a}^{b} \chi_{xx}(x) \, \left( F_N(x, t) + \frac{4 \delta}{\pi} E^\minrm_N(x, t) \right) \, \dd x \right| \nonumber \\
        &\qquad \qquad \qquad \qquad + \left| \frac{4 \delta}{\pi} \int_{a}^{b} \chi_{xx}(x, t) \ \big( E^\minrm_N(x, t) - E^\minrm_\infty(x, t) \big) \, \dd x \right| \label{eq:FinalApproxIntSolChi} \\
        &\qquad \qquad \leq \frac{\delta \epsN}{2} \Big( R M_{\chi,3} + M_{\chi,2} \Big) M_{F_N} + (b-a) M_{\chi,2} 2 \delta \epsN^2 N \ln 2 \nonumber \\
        &\qquad \qquad \qquad \qquad + (b-a) M_{\chi,2} \frac{4 \delta}{\pi} \max_{x \in [a,b]} \big| E^\minrm_N(x, t) - E^\minrm_\infty(x, t) \big| \, , 
        \label{eq:FinalApproxIntSolChiBound}
    \end{align}
    where in the first step we added and subtracted terms and split using the triangle identity, then in the second step we bounded the first term on the right in \eqref{eq:FinalApproxIntSolChi} using \eqref{eq:FirstApproxIntSolChiBound} and the second two by the na\"ive maximum on the integration range.
    Lastly, we have that the first and second term in \eqref{eq:FinalApproxIntSolChiBound} tends to zero from the arguments above while the third term tends to zero by the uniformity of this limit in Theorem \ref{thm:DescMinEnergyToContMinEnergy}.
    Since $\chi \in \mathcal{C}^\infty_0$ was arbitrary, we have proved that the limit of $u^\sse_N(x, t)$ in a distributional sense coincides with $-4 \delta/\pi \ \partial_{x}^2 E^\minrm_\infty(x, t)$.
\end{proof}

Theorem \ref{thm:DistLimit} demonstrates two important qualities of the ILW small dispersion solution.
First, that the finite difference in the $N$-soliton solution between the two edges of the strip $\strip{\delta \epsN}$ can be approximated in a distributional sense by $\epsN$ times a derivative on the center of the strip as $\epsN \to 0^+$, as one might expect.
And second, the limit of $F_N(x,t)$ as a large exponential sum is behaving exactly as described after \eqref{eq:PartitionFunctionDef}, that is that thinking of this as a Riemann-sum discretizations of a functional integral, we have just demonstrated a generalization of Laplace's method for functional exponential integrals.
As it turns out, this minimal value $E^\minrm_\infty(x,t)$ directly determines the solution of the ILW semiclassical soliton ensemble.

\subsection{Characterizing the Minimizing Density}
\label{sec:MinMeas}

Theorem \ref{thm:DescMinEnergyToContMinEnergy} guarantees that the that there exists a minimizer of $\mathpzc{E}[\diamond](x, t)$ in $\mathcal{A}$, now we wish to characterize and construct it.
Towards this end, we will show that this minimizer is uniquely determined by a set of variational conditions, the result of Theorem \ref{thm:ConstLogPotentChargeDensSol}. This will follow from

\begin{lemma} \label{lem:PositiveDefL}
    For any $\rho_1, \rho_2 \in \mathcal{A}$ let $\rho = \rho_1 - \rho_2$, then
    \begin{equation}
        \int_0^{\kappa_\maxrm} \mathpzc{L}[ \rho ](\zeta(\kappa)) \, \rho(\kappa) \, \dd \kappa \geq 0 \, ,
        \label{eq:PositiveDefL}
    \end{equation}
    with equality iff $\rho_1 = \rho_2$.
\end{lemma}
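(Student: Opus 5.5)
We will prove \eqref{eq:PositiveDefL} by representing the Green's kernel as an integral of squares. This is the same strategy Lax and Levermore use for the KdV kernel $\log|(\eta+\kappa)/(\eta-\kappa)|$. The key identity expresses the upper half-plane Green's function through a Fourier-type integral. For $\zeta, \lambda \in \C_+$ one has
\begin{equation*}
    \log \left| \frac{\zeta - \lambda^*}{\zeta - \lambda} \right| = \re{ \int_0^\infty \frac{\ee^{\ii s (\zeta - \lambda^*)} - \ee^{\ii s(\zeta-\lambda)}}{s} \, \dd s }\, ,
\end{equation*}
which is the Frullani representation of $\log(\zeta-\lambda^*) - \log(\zeta - \lambda)$. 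One could also work with the harmonic representation $g(\zeta,\lambda) = \int_{\C_+} \nabla_w G(w,\zeta)\cdot\nabla_w G(w,\lambda)\,\dd A(w)/(2\pi)$, where $G$ is the Dirichlet Green's function. Of the two, the cleanest choice is the Dirichlet-energy version: set
\begin{equation*}
    \Phi_\rho(w) := \frac{1}{\pi}\int_0^{\kappa_\maxrm} \log\left|\frac{w-\zeta(\eta)^*}{w-\zeta(\eta)}\right| \rho(\eta)\,\dd\eta = \mathpzc{L}[\rho](w) \, .
\end{equation*}
This is the Green potential in $\C_+$ of the signed measure $\rho(\eta)\,\dd\eta$ pushed onto the arc $[1/2\delta,\zeta_\maxrm]_{\quadratrix{\delta}}$. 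It vanishes on $\R$ and is harmonic off the arc. Its distributional Laplacian is $-2\,\rho\,\dd\eta$ on the arc, with the normalization fixed by the $1/\pi$ and the $\log$.

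The argument then proceeds in three steps. First, check that $\Phi_\rho$ has finite Dirichlet energy. Since $\rho_1,\rho_2\in\mathcal{A}$, we have $|\rho|\le\rho^\wyl$, so $\mathpzc{L}[|\rho|]\le\mathpzc{L}[\rho^\wyl]$, which is bounded and continuous by Theorem~\ref{thm:LContAndBound}. Moreover $\Phi_\rho(w)=\bigO{|w|^{-1}}$ at infinity, because the total dipole-like decay of $\log|(w-\bar a)/(w-a)|$ is $\bigO{1/|w|}$. Second, apply Green's identity on $\C_+$ minus a thin neighbourhood of the arc, which yields
\begin{equation*}
    \int_0^{\kappa_\maxrm}\mathpzc{L}[\rho](\zeta(\kappa))\,\rho(\kappa)\,\dd\kappa = \frac{1}{2}\int_{\C_+}|\nabla\Phi_\rho|^2\,\dd A \ \ (\text{up to a positive constant}) \, .
\end{equation*}
The boundary term on $\R$ vanishes since $\Phi_\rho=0$ there, and the term at infinity vanishes by the decay. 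The jump of the normal derivative across the arc reproduces $\rho$, weighted by the arclength factor $|\zeta'(\kappa)|$; this factor cancels because $\rho\,\dd\kappa$ is the measure itself. Third, nonnegativity is now immediate. If equality holds, then $\nabla\Phi_\rho\equiv0$, so $\Phi_\rho$ is constant, and hence $\Phi_\rho\equiv 0$ by its boundary values on $\R$. Taking the distributional Laplacian gives $\rho\,\dd\kappa=0$, so $\rho_1=\rho_2$ almost everywhere.

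The main obstacle is justifying the Green's identity, specifically the limit as the excised neighbourhood shrinks to the arc. The arc is smooth: $\zeta(\kappa)$ is analytic and $\zeta'\neq0$ on $[0,\kappa_\maxrm]$, since $\kappa_\maxrm<\pi/2\delta$. The density, however, is only bounded by $\rho^\wyl$, which may blow up like an inverse square root near $\kappa_\maxrm$ and is only integrable near $\kappa=0$. To handle this, I would first prove the identity for continuous compactly supported $\rho$, where single-layer potential theory gives a continuous $\Phi_\rho$ and a normal-derivative jump equal to $2\rho/|\zeta'|$. I would then pass to general $\rho$ by $L^1$ approximation. Dominated convergence applies on the left using the bound $\mathpzc{L}[\rho^\wyl]<\infty$ from Theorem~\ref{thm:LContAndBound}. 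On the right, lower semicontinuity of the Dirichlet integral gives the inequality, and for strictness we use the fact that the energy norm controls $\rho$ weakly. An alternative, fully elementary route avoids these limits: use the Frullani/Laplace representation $\log|(\zeta-\lambda^*)/(\zeta-\lambda)|=\int_0^\infty \re{\ee^{\ii s(\zeta-\lambda^*)}-\ee^{\ii s(\zeta-\lambda)}}\,\dd s/s$. This turns the quadratic form into $\int_0^\infty |\hat\rho(s)|^2$-type integrals after a Fourier transform in $\re{w}$ of the Poisson extension, with Fubini justified by the same $\rho^\wyl$ bound.
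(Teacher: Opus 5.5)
Your main argument is correct, but it takes a different route from the paper. Write $J(\rho):=\int_0^{\kappa_\maxrm}\mathpzc{L}[\rho](\zeta(\kappa))\,\rho(\kappa)\,\dd\kappa$.

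The paper's proof goes as follows:
\begin{itemize}
\item It reflects $\rho$ oddly across $\R$, which turns the half-plane kernel into the free kernel $-\log|\zeta-\lambda|$ acting on a neutral measure in $\C$.
\item It Gaussian-mollifies and applies Parseval, then removes the mollifier (continuity of $\mathpzc{L}[\rho]$ on the spatial side, monotone convergence on the Fourier side). This gives $J(\rho)=\frac{1}{4\pi}\int_{\R^2}|\hat\rho|^2/(\ell^2+k^2)\,\dd\ell\,\dd k$.
\item Strictness comes from $\hat\rho\equiv0$ and a moment argument.
\end{itemize}

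You stay in $\C_+$ instead. You prove $J(\rho)=\frac12\int_{\C_+}|\nabla\Phi_\rho|^2\,\dd A$ by Green's identity and single-layer jump relations for nice densities, and then approximate. Your normalization ($\Delta\Phi_\rho=-2\rho\,\dd\eta$ on the arc, hence the factor $\frac12$) is right.

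The Fourier route buys an explicit, manifestly nonnegative formula with no boundary analysis. The reflection enforces the Dirichlet condition on $\R$ automatically, even though the arc touches $\R$ at $1/2\delta$, and no jump relations are needed. Your route is more local and geometric. Its strictness step ($\nabla\Phi_\rho\equiv0\Rightarrow\Phi_\rho\equiv0\Rightarrow\rho\,\dd\kappa=-\frac12\Delta\Phi_\rho=0$) needs neither Fourier uniqueness nor moments; the price is the potential-theoretic regularity of the approximants. Both proofs rest on the same input: the finite energy $\int_0^{\kappa_\maxrm}\mathpzc{L}[\rho^\wyl]\rho^\wyl\,\dd\kappa<\infty$ supplied by Theorem~\ref{thm:LContAndBound}.

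Two points to tighten.

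First, the limiting step. Nonnegativity of $J(\rho)$ follows directly from $J(\rho_k)\ge0$ and dominated convergence. For this, take the approximants smooth (not merely continuous, so the classical jump relations apply), supported in $(0,\kappa_\maxrm)$, and with a uniform finite-energy dominator such as $|\rho_k|\le\rho^\wyl+1$. Lower semicontinuity plays a different role. Since $\nabla\Phi_{\rho_k}\to\nabla\Phi_\rho$ locally uniformly off the arc, it yields $\frac12\norm{\nabla\Phi_\rho}_{L^2}^2\le J(\rho)$, and that inequality is exactly what your strictness step requires.

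Second, the Frullani formula in your first display is false as stated when $\im{\zeta}<\im{\lambda}$: then $\ee^{\ii s(\zeta-\lambda)}$ grows exponentially in $s$. The correct form is $\log|(\zeta-\lambda^*)/(\zeta-\lambda)|=\int_0^\infty\big(\ee^{-s|\im{\zeta-\lambda}|}-\ee^{-s\,\im{\zeta+\lambda}}\big)\cos\big(s\,\re{\zeta-\lambda}\big)\,\dd s/s$. Expanding the bracket as a sine transform in the vertical variable then gives a half-plane version of the paper's Fourier proof. Your main argument does not use this formula, so the error does not affect correctness.
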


\begin{proof}
    Let $\rho$ be as in the hypothesis, then $\rho \in L^1[0, \kappa_\maxrm]$, so we can define the two-dimensional Fourier transform for measures restricted along the quadratrix by
    \begin{equation}
        \hat{\rho}(\ell, k) := \int_K \ee^{ -\ii \ell \xi(\kappa) - \ii k \kappa } \ \rho(\kappa) \, \dd \kappa
        \label{eq:RhoFourier}
    \end{equation}
    where $K = [-\kappa_\maxrm, \kappa_\maxrm]$ and we extend $\rho$ to $[-\kappa_\maxrm, 0)$ with odd symmetry.
    Then we define the Gaussian-mollified sequence of two-dimensional densities $\{ \rho_n:\C \to \R \}_{n = 1}^\infty$,
    \begin{equation}
        \rho_{n}(\zeta) := \int_K G_n(\zeta-\zeta(\kappa')) \, \rho(\kappa') \, \dd \kappa' \, , \text{ where } G_n(\zeta) := \frac{n^2}{2 \pi} \ee^{-n^2 \left| \zeta \right|^2 / 2} \label{eq:MollifiedRhoDef}
    \end{equation}
    The net result of this mollification is that the charge density $\rho$, which can be thought of as a distribution supported on the quadratrix, is spread out into the plane by convolution with a radially symmetric Gaussian of width $1/n$. 
    
    We exploit the fact that $\rho_{n}$ are Schwartz class and $\rho_{n}(\zeta) = -\rho_{n}(\zeta^*)$ to establish an equality between the ``spatial'' integral \eqref{eq:PositiveDefL} and a Fourier integral using Parseval's theorem and using the tempered distribution for the Fourier transform of the $\log$-kernel
    \begin{align}
        \ell_{n}^{\textrm{Spat}} :=& -\frac{1}{2} \int_{\C^2} \log \left| \zeta - \lambda \right| \rho_{n}(\zeta) \rho_{n}(\lambda) \, \dd^2 \zeta \, \dd^2 \lambda \label{eq:LSpatial} \\
        &= \frac{1}{4\pi} \int_{\R^2} \frac{\left| \hat{\rho}_{n}(\ell, k) \right|^2}{\ell^2 + k^2} \, \dd \ell \, \dd k =: \ell_{n}^{\textrm{Four}} \, , \label{eq:LFourier}
     \end{align}
    where we have the shorthand for the real two-dimensional Lebesgue measure on the complex plane $\dd^2 z = \dd \re{z} \dd \im{z}$ and $\hat{\rho}_{n}$ is the standard two dimensional Fourier transform of $\rho_{n}$.
    Because of the form of \eqref{eq:LFourier}, the lemma statement applied instead to $\rho_n$ is now apparent.
    We now wish to show that as $n \to \infty$, that is as the width of the Gaussian in \eqref{eq:MollifiedRhoDef}, $\ell_{n}^{\textrm{Spat}}$ tends to the integral in the lemma statement.
    
    Towards this end, we note that the entire integrand of \eqref{eq:LSpatial} with \eqref{eq:MollifiedRhoDef} plugged in is absolutely integrable, use $\rho \in L^1[-\kappa_\maxrm, \kappa_\maxrm]$ and $\displaystyle \max_{\kappa' \in [-\kappa_\maxrm, \kappa_\maxrm]} G_n(\zeta - \zeta(\kappa'))$ is $C^2(\C)$ which decays beyond all orders.
    From Fubini, we rearrange the order of integration and isolate the terms which constitute pure convolutions on the plane.
    Then we use the commutation of convolutions and conduct the convolutions of the two Gaussians explicitly:
    \begin{align}
        \ell_{n}^{\textrm{Spat}} &= -\frac{1}{2} \int_{K^2} \rho(\kappa) \rho(\kappa') \int_{\C^2} G_{n}(\zeta(\kappa') - \zeta) \log \left| \zeta - \lambda \right| G_{n}(\lambda-\zeta(\kappa)) \, \dd^2 \zeta \, \dd^2 \lambda \, \dd \kappa' \, \dd \kappa \\
        &= -\frac{1}{2} \int_{K^2} \rho(\kappa) \rho(\kappa') \int_{\C^2} \log \left| \zeta(\kappa') - \lambda \right| G_{n}(\lambda - \zeta') G_{n}(\zeta'-\zeta(\kappa)) \, \dd^2 \zeta' \, \dd^2 \lambda \, \dd \kappa' \, \dd \kappa \\
        &= -\frac{1}{2} \int_{K^2} \rho(\kappa) \rho(\kappa') \int_{\C} \log \left| \zeta(\kappa') - \lambda \right| G_{n/\sqrt{2}}(\lambda-\zeta(\kappa)) \, \dd^2 \lambda \, \dd \kappa' \, \dd \kappa \, .
    \end{align}
    Now we split the $\kappa'$ integral over zero and use the oddness of $\rho$ to write the integral over just $[0, \kappa_\maxrm]$,
    \begin{equation}
        \ell_{n}^{\textrm{Spat}} = \frac{1}{2} \int_{K} \rho(\kappa) \int_{0}^{\kappa_\maxrm} \rho(\kappa') \int_{\C} \log \left| \frac{\zeta(\kappa')^* - \lambda}{\zeta(\kappa')^{\phantom{*}} - \lambda} \right| G_{n/\sqrt{2}}(\lambda-\zeta(\kappa)) \, \dd^2 \lambda \, \dd \kappa' \, \dd \kappa \, .
    \end{equation}
    where we combined the inner-most integral with $\kappa' \to -\kappa'$ and an overall sign difference between them.
    Fubini's theorem again let's us reorder the inner two integrals since the integrand is dominated by an integrable function: use $|\rho(\kappa')| \leq \rho^\wyl(\kappa')$ and that the new $\log$-kernel is positive everywhere on the integration domain.
    The integral over $\kappa'$ is then exactly the integral computing $\mathpzc{L}[\rho](\lambda)$, so we choose to compute this one first, 
    \begin{align}
        \ell_{n}^{\textrm{Spat}} &= \frac{1}{2} \int_{K} \rho(\kappa) \int_{\C} G_{n/\sqrt{2}}(\zeta(\kappa)-\lambda) \mathpzc{L}[\rho](\lambda) \, \dd^2 \lambda \, \dd \kappa \\
        &= \int_{0}^{\kappa_\maxrm} \rho(\kappa) (G_{n/\sqrt{2}} * \mathpzc{L}[\rho])(\zeta(\kappa)) \, \dd \kappa \, ,
    \end{align}
    where we introduced a notation for the convolution and used the even symmetry of the integrand in $\kappa$ so as to show that we have almost recovered exactly the integral \eqref{eq:PositiveDefL}.
    The hope is that as the width of the Gaussian in the convolution goes to zero ($n \to \infty$), we recover just $\mathpzc{L}[\rho]$, and thus the integral in \eqref{eq:PositiveDefL}.
    We set this up by computing the norm of the difference
    \begin{align}
        \left| \int_0^{\kappa_\maxrm} \rho(\kappa) \mathpzc{L}[ \rho ](\zeta(\kappa)) \, \dd \kappa - \ell_{n}^{\textrm{Spat}} \right| &= \left| \int_0^{\kappa_\maxrm} \rho(\kappa) \Big( \mathpzc{L}[ \rho ](\zeta(\kappa)) - (G_{n/\sqrt{2}} * \mathpzc{L}[\rho])(\zeta(\kappa)) \Big) \, \dd \kappa \right| \\
        &\leq \norm{\rho}_{L^1[0,\kappa_\maxrm]} \norm{\mathpzc{L}[ \rho ](\zeta(\diamond)) - (G_{n/\sqrt{2}} * \mathpzc{L}[\rho])(\zeta(\diamond))}_{L^\infty[0,\kappa_\maxrm]} \\
        &\leq \norm{\rho^\wyl}_{L^1} \norm{\mathpzc{L}[ \rho ] - (G_{n/\sqrt{2}} * \mathpzc{L}[\rho])}_{L^\infty(\C)} \, .\label{eq:SpatL1LInftyNormDiffBound}
    \end{align}
    
    It suffices to show that $G_{n/\sqrt{2}} * \mathpzc{L}[\rho] \to \mathpzc{L}[\rho]$ uniformly as $n \to \infty$.
    For this, we need a partition of unity $\chi: \C \to [0,1]$
    \begin{equation}
        \chi(\zeta) := \begin{cases}
            1 &\text{if } |\zeta| \leq \frac{3}{2}|\zeta_\maxrm| \\
            0 &\text{if } |\zeta| \geq 2|\zeta_\maxrm|
        \end{cases}
    \end{equation}
    and which is $C^2(\C)$ with bounded second partial derivatives.
    Employing the partition and linearity of the the convolution operation
    \begin{equation}
        G_{n/\sqrt{2}} * \mathpzc{L}[\rho] = G_{n/\sqrt{2}} * (\chi \mathpzc{L}[\rho]) + G_{n/\sqrt{2}} * \big( (1-\chi) \mathpzc{L}[\rho]) \big) \, . \label{eq:GaussConvPartOfUnit}
    \end{equation}
    In the first term of \eqref{eq:GaussConvPartOfUnit}, $\chi \mathpzc{L}[\rho]$ is continuous by Theorem \ref{thm:LContAndBound} and compactly supported, by Heine-Cantor it is uniformly continuous.
    A simple argument using the definitions of uniform continuity and the convolution gives that $\{ G_{n/\sqrt{2}} * (\chi \mathpzc{L}[\rho]) \}_{n=1}^\infty$ is a uniformly equi-continuous sequence.
    By the Arzel\`a-Ascoli theorem, there is a subsequence $\{n_m\}_{m=1}^\infty$ for which $G_{n_m/\sqrt{2}} * (\chi \mathpzc{L}[\rho]) \to \chi \mathpzc{L}[\rho]$ uniformly as $m \to \infty$.

    Now we work on the second term of \eqref{eq:GaussConvPartOfUnit}.
    Noticing that there is a non-zero distance between the support of $1-\chi$ and $[\zeta_\maxrm^*, \zeta_\maxrm]_{\quadratrix{\delta}}$, we see immediately from the differentiability of the log-kernel that the second partials of $(1-\chi) \mathpzc{L}[\rho]$ are uniformly bounded on $\C$, say by the constant $C > 0$.
    Using an two dimensional extension of Taylor's remainder theorem, it follows that for any $\zeta \in \C$
    \begin{align}
        &\left| \big[ G_{n/\sqrt{2}} * \big( (1-\chi) \mathpzc{L}[\rho] \big) \big](\zeta) - (1-\chi(\zeta)) \mathpzc{L}[\rho](\zeta) \right| \\
        &\qquad \leq \int_{\C} G_{n/\sqrt{2}}(\zeta - \lambda) \Big| \big( (1-\chi) \mathpzc{L}[\rho] \big)(\lambda) - \big((1-\chi) \mathpzc{L}[\rho]\big)(\zeta) \Big| \, \dd \lambda \\
        &\qquad \leq \int_{\C} G_{n/\sqrt{2}}(\zeta - \lambda) C |\lambda - \zeta|^2 \, \dd \lambda \\
        &\qquad = \frac{2\sqrt{2}C}{n} \, .
    \end{align}
    Thus, we have uniform convergence of $G_{n/\sqrt{2}} * \big((1-\chi) \mathpzc{L}[\rho] \big) \to (1-\chi) \mathpzc{L}[\rho]$ as $n \to \infty$.

    In light of \eqref{eq:SpatL1LInftyNormDiffBound}, using the partition of unity and the triangle identity for the $L^\infty$-norm, passing to the subsequence we have shown
    \begin{equation}
        \lim_{m \to \infty} \ell_{n_m}^{\textrm{Spat}} = \int_{0}^{\kappa_\maxrm} \rho(\kappa) \mathpzc{L}[\rho](\zeta(\kappa)) \, \dd \kappa \, .
    \end{equation}

    Now we work on the Fourier side \eqref{eq:LFourier}.
    An application of Fubini demonstrates that
    \begin{align}
        \hat{\rho}_n(\ell, k) &= \int_{\C} \ee^{-\ii k \re{\zeta} - \ii \ell \im{\zeta}} \rho_n(\zeta) \, \dd^2 \zeta \\
        &= \int_{\C} \int_K \ee^{-\ii k (\re{\zeta} - \xi(\kappa)) - \ii \ell (\im{\zeta} - \kappa)} G_{n}(\zeta - \zeta(\kappa)) \ee^{-\ii k \xi(\kappa) - \ii \ell \kappa} \rho(\zeta) \, \dd \kappa \, \dd^2 \zeta \\
        &= \ee^{-(\ell^2 + k^2)/2n^2} \hat{\rho}(\ell, k) \, .
    \end{align}
    We see that the integrand of
    \begin{equation}
        \ell_{n}^{\textrm{Four}} = \frac{1}{4\pi} \int_{\R^2} \ee^{-(\ell^2 + k^2)/n^2} \frac{\left| \hat{\rho}(\ell, k) \right|^2}{\ell^2 + k^2} \, \dd \ell \, \dd k
    \end{equation}
    is, due to the Gaussian term, a monotonically increasing sequence of functions which have a clear point-wise limit almost everywhere and whose integrals have a limit on the subsequence $\{ n_m \}_{m=1}^\infty$ from our work on the spatial side.
    So by monotone convergence theorem, we have the result
    \begin{equation}
        \int_{0}^{\kappa_\maxrm} \rho(\kappa) \mathpzc{L}[\rho](\zeta(\kappa)) \, \dd \kappa = \lim_{m\to\infty} \ell_{n_m}^{\textrm{Spat}} = \lim_{m\to\infty} \ell_{n_m}^{\textrm{Four}} = \frac{1}{4\pi} \int_{\R^2} \frac{\left| \hat{\rho}(\ell, k) \right|^2}{\ell^2 + k^2} \, \dd \ell \, \dd k \, . \label{eq:SpatialFourierResult}
    \end{equation}
    The Fourier integral in \eqref{eq:SpatialFourierResult} is clearly always non-negative and zero if $\rho$ is almost everywhere zero.
    For the strict positivity, we rely on the fact that $\hat{\rho}$ is $\C^\infty(\C)$ since $\rho$ is compactly supported.
    Thus, the Fourier integral in \eqref{eq:SpatialFourierResult} is zero, iff $\hat{\rho}$ is zero everywhere.
    Additionally, we realize that if $\hat{\rho}$ is identically zero then
    \begin{equation}
        0 = \ii^n \frac{\partial^n \hat{\rho}}{\partial \ell^n}(0, 0) = \int_K \kappa^n \rho(\kappa) \, \dd \kappa \, .
    \end{equation}
    Determining $\rho$ from these moments can be recast in the form of the Hausdorff moment problem, for which the criteria of a unique solution for the measure $\rho(\kappa) \, \dd \kappa$ from these moments is satisfied.
    And, we already know $\rho = 0$ has these moments, so we're done.
\end{proof}

Now we are ready to characterize the minimizing density.

\begin{theorem} \label{thm:VariationCondDetMin}
    For each $(x, t)$, the minimizing density $\rho^\minrm(\kappa) = \rho^\minrm(\kappa; x, t) \in \mathcal{A}$ is uniquely determined by the variational conditions
    \begin{equation}
        \left\{ \begin{array}{clc}
            \rho^\minrm(\kappa) = 0 & \mathrm{if} \ \kappa \in I_+, & \textrm{(voids)} \\
            0 \leq \rho^\minrm(\kappa) \leq \rho^\wyl(\kappa) & \mathrm{if} \ \kappa \in I_0, & \textrm{(bands)} \\
            \rho^\minrm(\kappa) = \rho^\wyl(\kappa) & \mathrm{if} \ \kappa \in I_-, \qquad &\textrm{(saturations)} 
        \end{array} \right. \, ,
        \label{eq:VariatConds}
    \end{equation}
    where the sets $I_-, I_0,$ amd $I_+$ partition $[0, \kappa_\maxrm]$ according to
    \begin{align}
        I_- = I_-(x,t) &:= \left\{ \kappa \in [0, \kappa_\maxrm] \ \Bigg| \ \frac{\delta E}{\delta \rho}\{\rho^\minrm\}\big(\zeta(\kappa); x, t\big) < 0 \ \right\} \, , \label{eq:VoidDef} \\
        I_0 = I_0(x,t) &:= \left\{ \kappa \in [0, \kappa_\maxrm] \ \Bigg| \ \frac{\delta E}{\delta \rho}\{\rho^\minrm\}\big(\zeta(\kappa); x, t\big) = 0 \ \right\} \, , \label{eq:BandDef} \\
        I_+ = I_+(x,t) &:= \left\{ \kappa \in [0, \kappa_\maxrm] \ \Bigg| \ \frac{\delta E}{\delta \rho}\{\rho^\minrm\}\big(\zeta(\kappa); x, t\big) > 0 \ \right\} \, .
        \label{eq:SatDef}
    \end{align}
    and the Fr\'echet derivative of the functional $E$ is given by
    \begin{equation}
        \frac{\delta E}{\delta \rho}\{\rho\}(\zeta) = \frac{\delta E}{\delta \rho}\{\rho\}(\zeta; x, t) := V(\zeta; x, t) + \mathpzc{L}[\rho](\zeta) \, .
        \label{eq:FrecetDeriv}
    \end{equation}
\end{theorem}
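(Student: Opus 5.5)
The plan is the standard convex-variational argument, following Lax--Levermore. Fix $(x,t)$. Existence of a minimizer $\rho^\minrm \in \mathcal{A}$ is given by Theorem \ref{thm:DescMinEnergyToContMinEnergy}. I first establish \emph{strict convexity} of $\mathpzc{E}[\diamond](x,t)$ on the convex set $\mathcal{A}$. For $\rho_1, \rho_2 \in \mathcal{A}$ and $s \in [0,1]$, the linear part $\int V \rho$ is affine. By bilinearity and symmetry of the kernel of $\mathpzc{L}$, the quadratic part satisfies
\begin{equation*}
    s\,\mathpzc{E}[\rho_1] + (1-s)\,\mathpzc{E}[\rho_2] - \mathpzc{E}[s\rho_1 + (1-s)\rho_2] = \frac{s(1-s)}{2} \int_0^{\kappa_\maxrm} \mathpzc{L}[\rho_1 - \rho_2](\zeta(\kappa)) \, (\rho_1 - \rho_2)(\kappa) \, \dd \kappa \, .
\end{equation*}
Lemma \ref{lem:PositiveDefL} makes this nonnegative, and zero only if $\rho_1 = \rho_2$. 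Hence the minimizer is unique. All integrals involved are finite: $0 \le \mathpzc{L}[\rho] \le \mathpzc{L}[\rho^\wyl]$ is continuous by Theorem \ref{thm:LContAndBound}, $V$ is continuous on $[0,\kappa_\maxrm]$, and $\rho \le \rho^\wyl \in L^1$.

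Next I derive the necessary conditions. Take $\rho \in \mathcal{A}$ and set $\rho_s = \rho^\minrm + s(\rho - \rho^\minrm) \in \mathcal{A}$. Expanding,
\begin{equation*}
    \mathpzc{E}[\rho_s] = \mathpzc{E}[\rho^\minrm] + s \int_0^{\kappa_\maxrm} \frac{\delta E}{\delta \rho}\{\rho^\minrm\}(\zeta(\kappa)) \, (\rho - \rho^\minrm)(\kappa) \, \dd \kappa + \frac{s^2}{2} \int_0^{\kappa_\maxrm} \mathpzc{L}[\rho - \rho^\minrm] \, (\rho - \rho^\minrm) \, \dd \kappa \, ,
\end{equation*}
where I again use symmetry of the kernel to combine the two cross terms into $\mathpzc{L}[\rho^\minrm]$, giving the Fr\'echet derivative \eqref{eq:FrecetDeriv}. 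Minimality at $s=0^+$ then yields the variational inequality $\int \frac{\delta E}{\delta \rho}\{\rho^\minrm\} \, (\rho - \rho^\minrm) \, \dd \kappa \ge 0$ for every $\rho \in \mathcal{A}$.

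To turn this into the pointwise conditions, I choose $\rho = \rho^\minrm$ off a measurable set $B$, and on $B$ take $\rho = 0$ or $\rho = \rho^\wyl$. With $B \subset I_+$ and $\rho = 0$ on $B$, I get $\int_B \frac{\delta E}{\delta \rho}\,\rho^\minrm \le 0$, which forces $\rho^\minrm = 0$ a.e.\ on $I_+$. Symmetrically, taking $\rho = \rho^\wyl$ on $B \subset I_-$ forces $\rho^\minrm = \rho^\wyl$ a.e.\ on $I_-$. On $I_0$ only the admissibility bounds remain. Since $\frac{\delta E}{\delta \rho}\{\rho^\minrm\}$ is continuous, $I_\pm$ are relatively open and the conditions hold as stated, up to null sets, which are irrelevant for densities.

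For the converse, suppose some $\tilde\rho \in \mathcal{A}$ satisfies \eqref{eq:VariatConds} with the sets $I_\pm, I_0$ defined through $\tilde\rho$ itself. Pointwise, $\frac{\delta E}{\delta \rho}\{\tilde\rho\}\,(\rho - \tilde\rho) \ge 0$ for every $\rho \in \mathcal{A}$: on $I_+$ we have $\rho - \tilde\rho = \rho \ge 0$, on $I_-$ we have $\rho - \tilde\rho \le 0$, and on $I_0$ the factor vanishes. Inserting this in the expansion with $s = 1$ and using Lemma \ref{lem:PositiveDefL} gives $\mathpzc{E}[\rho] > \mathpzc{E}[\tilde\rho]$ for $\rho \ne \tilde\rho$. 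So $\tilde\rho = \rho^\minrm$, which is the uniqueness-by-conditions claim.

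The main obstacle is the rigor of the expansion rather than the logic. I must justify the symmetric exchange $\int \mathpzc{L}[\rho_1]\rho_2 = \int \mathpzc{L}[\rho_2]\rho_1$ by Fubini, which works because the positive kernel is integrable against $\rho^\wyl \otimes \rho^\wyl$, a consequence of the finiteness in Theorem \ref{thm:LContAndBound}. I must also use the $\tfrac12$ normalization in \eqref{eq:EContDef} consistently, so that the derivative is exactly $V + \mathpzc{L}[\rho]$.
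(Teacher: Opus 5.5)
Your proposal is correct and follows the paper's proof essentially step for step: strict convexity via Lemma \ref{lem:PositiveDefL}, the first-variation expansion along $\rho^\minrm + s(\rho - \rho^\minrm)$, test densities set to $\rho^\wyl$ on $I_-$ or to $0$ on $I_+$, and positive-definiteness for the converse. The only cosmetic difference is in the converse. You use the exact quadratic expansion at $s=1$ to show directly that $\tilde\rho$ is the strict minimizer, whereas the paper adds the two variational inequalities for $\rho^*$ and $\rho^\minrm$ so that the $V$-terms cancel; the two arguments are equivalent.
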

\begin{proof}
    Uniqueness of the minimizer $\rho^\minrm$ follows from the convexity of the admissible set of charge densities $\mathcal{A}$ and the strict convexity of the functional $\mathpzc{E}[ \diamond ](x, t)$, which in turn follows trivially from the quadratic form of $E$ and the positive-definiteness of the quadratic term from Lemma \ref{lem:PositiveDefL}. 
    The variational conditions can then be derived using these properties.
    Take $\rho \in \mathcal{A}$ and let $\rho^\minrm$ be a minimizer of consider $\rho_\theta = (1-\theta) \rho^\minrm + \theta \rho \in \mathcal{A}$ for $0 \leq \theta \leq 1$. Then, we have
    \begin{align}
        \mathpzc{E}[ \rho_\theta ](x, t) - \mathpzc{E}[ \rho^\minrm ](x, t) &= \theta \int_0^{\kappa_\maxrm} \Big[ V\left( \zeta(\kappa); x, t \right) + \mathpzc{L}[ \rho^\minrm ](\zeta(\kappa)) \Big] \left( \rho(\kappa) - \rho^\minrm(\kappa) \right) \, \dd \kappa \\
        &\quad + \frac{\theta^2}{2} \int_0^{\kappa_\maxrm} \mathpzc{L}[ \rho - \rho^\minrm ](\zeta(\kappa)) \left( \rho(\kappa) - \rho^\minrm(\kappa) \right) \, \dd \kappa \, .
    \end{align}
    Since $\rho^\minrm$ is the unique minimizer, this difference is nonnegative for all $0 \leq \theta \leq 1$, and thus, the coefficient $\theta$ must be nonnegative.
    The integrand of the linear term without the difference $\rho - \rho^\minrm$ we can identify as the Fr\'echet derivative \eqref{eq:FrecetDeriv}.
    
    Now we'll show that the variational conditions do indeed characterize the unique minimizer $\rho^\minrm$.
    We begin by constructing the particular $\rho_{-} \in \mathcal{A}$ which is
    \begin{equation}
        \rho_{-}(\kappa) = \left\{ \begin{array}{cl}
            \rho^\wyl(\kappa) & \text{if } s \in I_-(x, t) \\
            \rho^\minrm(\kappa; x, t)  & \text{if } s \notin I_-(x, t)
        \end{array} \right. \, .
    \end{equation}
    We then have for the coefficient of $\theta$,
    \begin{align}
        &\int_0^{\kappa_\maxrm} \frac{\delta E}{\delta \rho}\{ \rho^\minrm \}(\zeta(\kappa)) \left( \rho_{-}(\kappa) - \rho^\minrm(\kappa) \right) \, \dd \kappa = \int_{I_-} \frac{\delta E}{\delta \rho}\{ \rho^\minrm \}(\zeta(\kappa)) \left( \rho^\wyl(\kappa) - \rho^\minrm(\kappa) \right) \, \dd \kappa \, .
    \end{align}
    So if $\rho^\minrm$ differs from the upper bound $\rho^\wyl$ on $I_-$ then $\rho^\wyl - \rho^\minrm$ is positive while the Fr\'echet derivative is negative yielding a negative total value if the difference is supported on a nonzero measure set.
    Thus, we conclude that for $\rho^\minrm$ to truly be the minimizer, it can differ from the upper bound $\rho^\wyl$ by no more than a set of measure zero on $I_-$.
    Of course, sets of measurable functions are only distinguished up to sets of measure zero, and thus we may choose the set the output on the potentially deviating set to the upper bound and get the third condition in \ref{eq:VariatConds}.
    By an identical argument with the lower bound $0$ on $I_+$, we arrive at the first condition in \ref{eq:VariatConds}.
    And by exhaustion of cases, we have the second condition in \ref{eq:VariatConds}.
    We have shown that the unique minimizer $\rho^\minrm$ satisfies the variational conditions.
    
    Now suppose $\rho^* \in \mathcal{A}$ satisfies the variational conditions.
    Since $\rho^*$ satisfies the variational conditions and the fact that $\rho^\minrm$ is the minimizer, we have
    \begin{align}
        &\int_0^{\kappa_\maxrm} \frac{\delta E}{\delta \rho} \{ \rho^* \} \left( \zeta(\kappa); x, t \right) \left( \rho^\minrm(\kappa) - \rho^*(\kappa) \right) \, \dd \kappa \geq 0 \, , \\
        &\int_0^{\kappa_\maxrm} \frac{\delta E}{\delta \rho} \{ \rho^\minrm \} \left( \zeta(\kappa); x, t \right) \left( \rho^*(\kappa) - \rho^\minrm(\kappa) \right) \, \dd \kappa \geq 0 \, .
    \end{align}
    Adding these two expressions together, the $V$-terms cancel and we can combine the $\mathpzc{L}$-terms, accounting for an overall negative sign, to get
    \begin{equation}
        \int_0^{\kappa_\maxrm} \mathpzc{L}[ \rho - \rho^\minrm ](\kappa) \left( \rho^*(\kappa) - \rho^\minrm(\kappa) \right) \, \dd \kappa \leq 0.
    \end{equation}
    By lemma \ref{lem:PositiveDefL}, it must be that this integral is zero and $\rho^* = \rho^\minrm$.
    This shows that the variational conditions determine the minimizer uniquely.
\end{proof}

\subsection{Solving the Variational Conditions}
\label{sec:Variations}

Instead of attempting to solve the variational conditions directly to find an explicit form for the minimizing density $\rho^\minrm$, we will assume first that we may differentiate the minimizing density with respect to $x$ and $t$ and that the sets $I_-(x,t)$, $I_0(x,t)$ and $I_+(x,t)$ change continuously as $x$ and $t$ do. Then, we can differentiate the variational conditions:
\begin{equation}
    \left\{ \begin{array}{cl}
        \mathpzc{L}[\rho^\minrm_x](\kappa) = -\im{\zeta(\kappa)} & \text{if } s \in I_0(x, t), \\
        \rho^\minrm_x(\kappa) = 0 & \text{if } s \notin I_0(x, t).
   \end{array} \right. \, ,
   \label{eq:XDiffVarConds}
\end{equation}
\begin{equation}
    \left\{ \begin{array}{cl}
        \displaystyle \mathpzc{L}[\rho^\minrm_t](\kappa) = -\im{\left( \zeta(\kappa) - \frac{1}{2\delta} \right)^2} & \text{if } s \in I_0^{\mathrm{o}}(x, t), \\
        \rho^\minrm_t(\kappa) = 0 & \text{if } s \notin I_0^{\mathrm{o}}(x, t).
   \end{array} \right. \, .
   \label{eq:TDiffVarConds}
\end{equation}
Solving these differentiated variational conditions, \eqref{eq:XDiffVarConds} and \eqref{eq:TDiffVarConds}, then integrating in $x$ and $t$ consistently (mixed derivatives are equal) and verifying the result satisfies the full variational conditions \eqref{eq:VariatConds}, then by Theorem \ref{thm:VariationCondDetMin}, we have constructed the unique minimizing density $\rho^\minrm$.
Once we have $\rho^\minrm$, then the distributional limit of the ILW semiclassical soliton ensemble can be given expressed explicitly using Theorem \ref{thm:DistLimit}.

Looking at the differentiated variational conditions \eqref{eq:XDiffVarConds} and \eqref{eq:TDiffVarConds} and the form of $\mathpzc{L}$ \eqref{eq:LContDef}, we realize these are two independent problems in logarithm potential theory for a measure with density supported on a Schwartz symmetric subset of $[\zeta_\maxrm^*, \zeta_\maxrm]_{\quadratrix{\delta}}$ and which has odd symmetry across the real axis.
On its support, we have a prescribed value for the logarithmic potential $\mathpzc{L}[ \rho^\minrm ]$.

To continue, we will make one more assumption, which is that $I_0(x, t)$ consists of a single interval: $I_0 = [0, \beta]$ where $0 < \beta \leq \kappa_\maxrm$.
This is consistent with the KdV small-dispersion limit \cite{LaxLevermoreI_1983} and, as it will turn out, it will constrain us to the simplest case possible, that of ``small time'' before the formation of the DSW (see Figures \ref{fig:SmallDispEps05T00}, \ref{fig:SmallDispEps05T03} and \ref{fig:SmallDispEps05T06}).

\subsubsection{The One Band Ansatz}
\label{sec:OneBand}

As it turns out, this problem can be solved by inspection and then verifying that our ansatz has all of the properties required to establish it as the unique solution.
This is because the logarithmic potential for the solution to the differentiated variational conditions with a prescribed symmetric interval $I$ is determined uniquely by the conditions enumerated in the following theorem.

\begin{lemma} \label{lem:LogPotentialUniqueSol}
    If $\rho'$ is H\"older continuous with power $-1/2$ and has logarithmic potential $\mathpzc{L}[\rho']$ which satisfies the differentiated variational conditions \eqref{eq:XDiffVarConds} or \eqref{eq:TDiffVarConds}, then the logarithmic potential $\mathpzc{L}[\rho']$ is uniquely identified by the properties:
    \begin{enumerate}
        \item harmonic on $\C \setminus I$ and continuous on $\C$,
        \item the value on $I$ is dictated by the differentiated variational conditions \eqref{eq:XDiffVarConds} or \eqref{eq:TDiffVarConds},
        \item odd symmetry under reflection across the real axis: $\mathpzc{L}[\rho'](\zeta) = -\mathpzc{L}[\rho'](\zeta^*)$,
        \item decay at infinity $\mathpzc{L}[\rho'](\zeta) = \bigO{\sfrac{1}{\zeta}}$ as $\zeta \to \infty$.
    \end{enumerate}
    Additionally, the density $\rho'$ can be extracted by the formula
    \begin{equation}
        \rho'(\kappa) = \im{ \zeta'(\kappa) \Big( \partial_{\zeta} \mathpzc{L}[\rho']^-(\zeta(\kappa)) - \partial_{\zeta} \mathpzc{L}[\rho']^+(\zeta(\kappa)) \Big) }
        \label{eq:RecoverChargeDensityFromPotential}
    \end{equation}
    where the $+$ and $-$ indicate the left and right boundary values to the quadratrix respectively and $\partial_\zeta = (\partial_{\re{\zeta}} - \ii \partial_{\im{\zeta}})/2$ is the holomorphic derivative in $\zeta$.
\end{lemma}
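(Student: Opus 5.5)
The plan has two parts: first check that $\mathpzc{L}[\rho']$ has properties 1--4, then prove uniqueness by a maximum principle applied to the difference of two candidates, and finally read off the density from the jump of the normal derivative across the arc. Here $I$ denotes the Schwarz-symmetric arc $[\zeta(-\beta),\zeta(\beta)]_{\quadratrix{\delta}}$, and $\rho'$ is extended oddly to $[-\beta,0)$ as in the proof of Lemma \ref{lem:PositiveDefL}. Then
\begin{equation*}
\mathpzc{L}[\rho'](\lambda) = -\frac{1}{\pi}\int_{-\beta}^{\beta}\log|\lambda-\zeta(\eta)|\,\rho'(\eta)\,\dd\eta ,
\end{equation*}
which is minus $1/\pi$ times the logarithmic potential of a signed measure carried by the smooth arc $I$.

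\textbf{Properties 1--4.} Harmonicity on $\C\setminus I$ holds because $\log|\lambda-\zeta|$ is harmonic in $\lambda$ off the support. For continuity on $\C$, the H\"older-type hypothesis means the singularity of $\rho'$ is at worst an inverse square root, so $\rho'\in L^p$ for some $p>1$. Then the log kernel, which lies in every $L^q$ in $\eta$ locally uniformly in $\lambda$, gives a continuous potential by dominated convergence; this is the same argument the paper already uses for Theorem \ref{thm:LContAndBound}. Property 2 is the hypothesis. Property 3 follows from the kernel form $\log|(\lambda-\zeta^*)/(\lambda-\zeta)|$, which changes sign under $\lambda\mapsto\lambda^*$. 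For property 4, the total charge of the odd extension vanishes, so expanding $\log|\lambda-\zeta| = \log|\lambda| + \re{-\zeta/\lambda} + \bigO{|\lambda|^{-2}}$ kills the $\log|\lambda|$ term and leaves $\bigO{1/\lambda}$.

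\textbf{Uniqueness.} Suppose $h_1$ and $h_2$ both have properties 1--4, and set $h=h_1-h_2$. Then $h$ is continuous on $\C$, harmonic on $\C\setminus I$, vanishes on $I$, and tends to $0$ at infinity. Since $\C\setminus I$ is connected (the arc is compact and does not separate the plane), apply the maximum principle on the disk $|\zeta|<R$ with $I$ removed and let $R\to\infty$; this gives $\sup|h|\le\max(0,\sup_{|\zeta|=R}|h|)\to0$, so $h\equiv0$. The odd symmetry is not even needed for this step, but it fixes the value on $I\cap\R$, namely $0$ at $\zeta=1/2\delta$, which is consistent with the right-hand sides $-\kappa$ and $-\im{(\zeta-1/2\delta)^2}$ vanishing there. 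The main subtlety is continuity at the endpoints of $I$, which the $L^p$ argument handles. One could instead allow a bounded harmonic function on $\C\setminus I$ and invoke removability of points (log capacity zero) at the endpoints; I would do this if the continuity proof at the endpoints becomes delicate.

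\textbf{Recovery formula.} Use Plemelj--Sokhotski for the complex logarithmic potential. Away from the endpoints, $\rho'$ is H\"older continuous on the interior of the smooth arc, and so $\partial_\zeta\mathpzc{L}[\rho'](\lambda) = -\frac{1}{2\pi}\int \frac{\rho'(\eta)}{\lambda-\zeta(\eta)}\,\dd\eta$ is a Cauchy-type integral with density $\rho'(\eta)/\zeta'(\eta)$ against $\dd\zeta$. Its boundary values from the two sides differ by $\pm\ii\,\rho'(\kappa)/\zeta'(\kappa)$, with a factor to be checked. Multiplying by $\zeta'(\kappa)$ and taking the imaginary part isolates $\rho'(\kappa)$. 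I expect the main obstacle to be tracking orientation and sign conventions, that is, which side counts as $+$ and the factor of $2$ from $\partial_\zeta$, so that \eqref{eq:RecoverChargeDensityFromPotential} comes out with exactly the stated sign. I would fix this by checking the formula on a straight segment with constant density.
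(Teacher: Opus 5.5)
Your proposal is correct and follows the paper's argument, with the omitted details filled in. Properties 1--4 come from the integral formula for $\mathpzc{L}$: your $L^p$ continuity argument and your use of the zero net charge of the odd extension to get the $\bigO{1/\zeta}$ decay supply what the paper calls ``clear from the formula.'' Uniqueness is by the maximum principle applied to the difference of two candidates, exactly as in the paper. For the density, you derive the jump of $\partial_\zeta \mathpzc{L}$ via Plemelj--Sokhotski, where the paper simply cites Saff--Totik.

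Two corrections are needed.

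First, \eqref{eq:XDiffVarConds} and \eqref{eq:TDiffVarConds} prescribe $\mathpzc{L}$ only for $\kappa \in I_0 \subset [0, \kappa_\maxrm]$, that is, on the upper half of $I$. It is property 3 that transfers this data to the lower half. Your remark that odd symmetry is not needed for uniqueness therefore holds only if property 2 is read as prescribing the (odd) right-hand side on the whole arc.

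Second, the sign check you deferred will not reproduce the statement as written. Take $\kappa$ increasing along the arc and let $+$ be the left side, which is the inner side; this is how the paper uses $\pm$ later. Your Cauchy-integral representation then gives $\partial_\zeta \mathpzc{L}[\rho']^+ - \partial_\zeta \mathpzc{L}[\rho']^- = \ii \rho'/\zeta'$. Hence $\im{\zeta'(\kappa) \big( \partial_\zeta \mathpzc{L}[\rho']^- - \partial_\zeta \mathpzc{L}[\rho']^+ \big)} = -\rho'(\kappa)$, and your straight-segment test will confirm this. So \eqref{eq:RecoverChargeDensityFromPotential} is correct only if $+$ denotes the outer side, or if the difference is reversed. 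This is a sign slip in the statement, not a defect in your method.
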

\begin{proof}
    First, it is clear from the formula for $\mathpzc{L}[\rho']$ \eqref{eq:LContDef} that it satisfies these properties.
    Now, if we subtract some other logarithmic potential with these properties, the result is a function of $\zeta \in \C$ which is harmonic away from the quadratrix interval, zero on it and decays to zero as $\zeta \to \infty$.
    By the extremum principal of harmonic functions, any maximum or minima of this difference of functions must occur on the interval, or at infinity, or the difference of functions is constant.
    In all cases, it must be identically zero.
    Thus, $\mathpzc{L}[\rho']$ is uniquely determined by the conditions. Lastly, from \cite[Theorem 1.3]{SaffTotic_1997_LogPotExtFields}, the density $\rho'$ can be recovered from the log-potential by \eqref{eq:RecoverChargeDensityFromPotential}.
\end{proof}

Inspecting the solution process of \cite{LaxLevermoreI_1983, LaxLevermoreII_1983} to the analogous problem for the KdV equation, we see that underlying their solution to the $x$-differentiated variational condition for a single symmetric band is an extension of the function which determines the derivative of the WKB phases for the Shr\"odinger equation, one that is analytic away from the band in the complex plane.
The properties of this analytic function are exactly what leads to it playing a direct role in satisfying the variational conditions due to the asymptotic scattering data being derived from the same function.
Thus, to look for the solution to the ILW differentiated variational conditions, we should try to determine the analytic continuation of the derivative of the fast phase.

Since the Lambert $W$-function was used to compute the derivative of the fast phase in Section \ref{sec:WKB}, we should use it to analytically extend off the quadratrix.
The central map involved will be $z \mapsto W(-z \ee^{-z-y})$ where we use the branch of Lambert $W$ which corresponds to the input $z \in \C$, i.e. if we define a function $n: \C \to \Z$ where $n(z)$ indicates which branch range region $z$ belongs to, then
\begin{equation}
    W(-z \ee^{-z-y}) := W_{n(z)}(-z \ee^{-z-y}) \, .
\end{equation}
Furthermore, we consider only $y \geq 0$.
First off, we realize that in each branch region $z \mapsto W(-z \ee^{-z-y})$ is the composition of three individually analytic maps: $z_0 \mapsto z_1 := -z_0 \ee^{-z_0}$, $z_1 \mapsto z_2 := z_1 \ee^{-y}$, and then $z_2 \mapsto z_3 := W_{n(z_0)}(z_2)$.
This means, we should trace the behavior of the boundary points of each branch region to see where the resulting map is analytic or discontinuous.
Follow along the steps in Figure \ref{fig:ZeroPhaseWMap} to get a visual understanding of how each of these maps acts of the three characteristic branch regions: $n=0$, $n=-1$ and $n=-2$.
\begin{enumerate}
    \item In each of these regions the map $z_0 \mapsto z_1 = -z_0 \ee^{-z_0}$ is onto the whole complex plane.
    For the $0$-region, the boundary points along the inside of the quadratrix are mapped to the two sides of the negative real axis to the left of the branch point value $-\ee^{-1}$.
    For the $\pm 1$-regions, there are two branch point values: $0$ and $-\ee^{-1}$.
    The outside boundary points of the quadratrix are also mapped to top or bottom of the negative real axis to the left of $-\ee^{-1}$ while the boundary points along the positive real axis to the right of $1$ are mapped between $-\ee^{-1}$ and $0$.
    The other continuous boundary curves, the ones bordering the $\pm 2$-regions, are mapped around the other side of the negative real axis.
    Then, for all other branch regions $|n| \geq 2$, the boundaries map to complementary sides of the negative real axis, around the single branch point value of $0$.
    
    \item For $y \geq 0$, $\ee^{-y} \leq 1$ and so $z_1 \mapsto z_2 \ee^{-y}$ is a shrinking of the plane.
    In the $0$-region, under the shrinking the mapped quadratrix inner boundary points ``slide inwards'' along the negative real axis, resulting in an interval of the quadratrix inner boundary points mapping to negative real numbers greater than $-\ee^{-1}$.
    In the $\pm1$-regions, an identical thing happens to the quadratrix outer boundary points along the same interval.
    Note that the boundary points shared with the $\pm 2$-regions are still mapped to the negative real line.
    This interval which has ``slid past'' the point $-\ee^{-1}$ is the image of a symmetric interval along the quadratrix.
    For all the other branches $|n| \geq 2$, complementary boundary points between regions map to complementary sides of the negative real line since all other branches have only the branch point $0$, which is invariant under the dilation.
    
    \item Lastly, $z_2 \mapsto z_3 = W_{n(z_0)}(z_2)$, because of the chosen branch evaluations, returns each complex plane mapped to in the first step to the negative of the original region.
    Since the scaling of step 2 never moved points across the branch cuts, the full mapping must be analytic on the interiors of each region.
    For the boundary points in the $0$-region, the points which were scaled past the branch point now map to the positive real line interior to the quadratrix.
    All of the other points which are still below the branch point map along the quadratrix, just a different point from where they started.
    A similar result is true of the quadratrix boundary points in the $\pm1$-regions, but they now map to the positive real line exterior to the quadratrix.
    For all other boundaries, they only respected the branch point value of $0$, which is invariant under the scaling in step 2, they all still map along their same boundaries, again just to different points along these boundaries.
\end{enumerate}
For any boundary points which still map to their original boundary, the full map has to be continuous at these points since taking the same boundary value in two adjacent regions must give the same value under $z_0 \mapsto z_1 = -z_0 \ee^{-z_0}$ and $z_1 \mapsto z_2 = -z_1 \ee^{-y}$.
Then, choosing the correct branch for the region, $z_2 \mapsto z_3 = W_{n(z_0)}(z_2)$ gives the same value as well since the points are still on the shared boundary between the two regions.
It is easy to show that this continuity implies analyticity using  Morera's theorem.
This means the only discontinuity in the map $z \mapsto W(-z \ee^{-z-y})$ is along a symmetric interval of the quadratrix where on one side the values lie on the positive real line interior to the negated quadratrix, and on the other side the values lie on the positive real line exterior to the quadratrix.

We note that the branch points of the map $z \mapsto W(-z \ee^{-z-y})$ are the points on the quadratrix which satisfy $-z \ee^{-z-y} = -\ee^{-1}$.
Since $-z \ee^{-z}$ is monotone increasing from $-\ee^{-1}$ to $+\infty$ on the upper and lower quadratrix, there is a monotonic, increasing relationship between the width of the branch cut in the map $z \mapsto W(-z \ee^{-z-y})$ and $y$.
In particular, for the upper branch point, it is
\begin{equation}
    \beta(y) := - \frac{1}{2 \delta} W_{-1}(-\ee^{y - 1}) \in \quadplus{\delta}  \label{eq:BetaEndPntDef}
\end{equation}
while the lower branch point is always the conjugate.

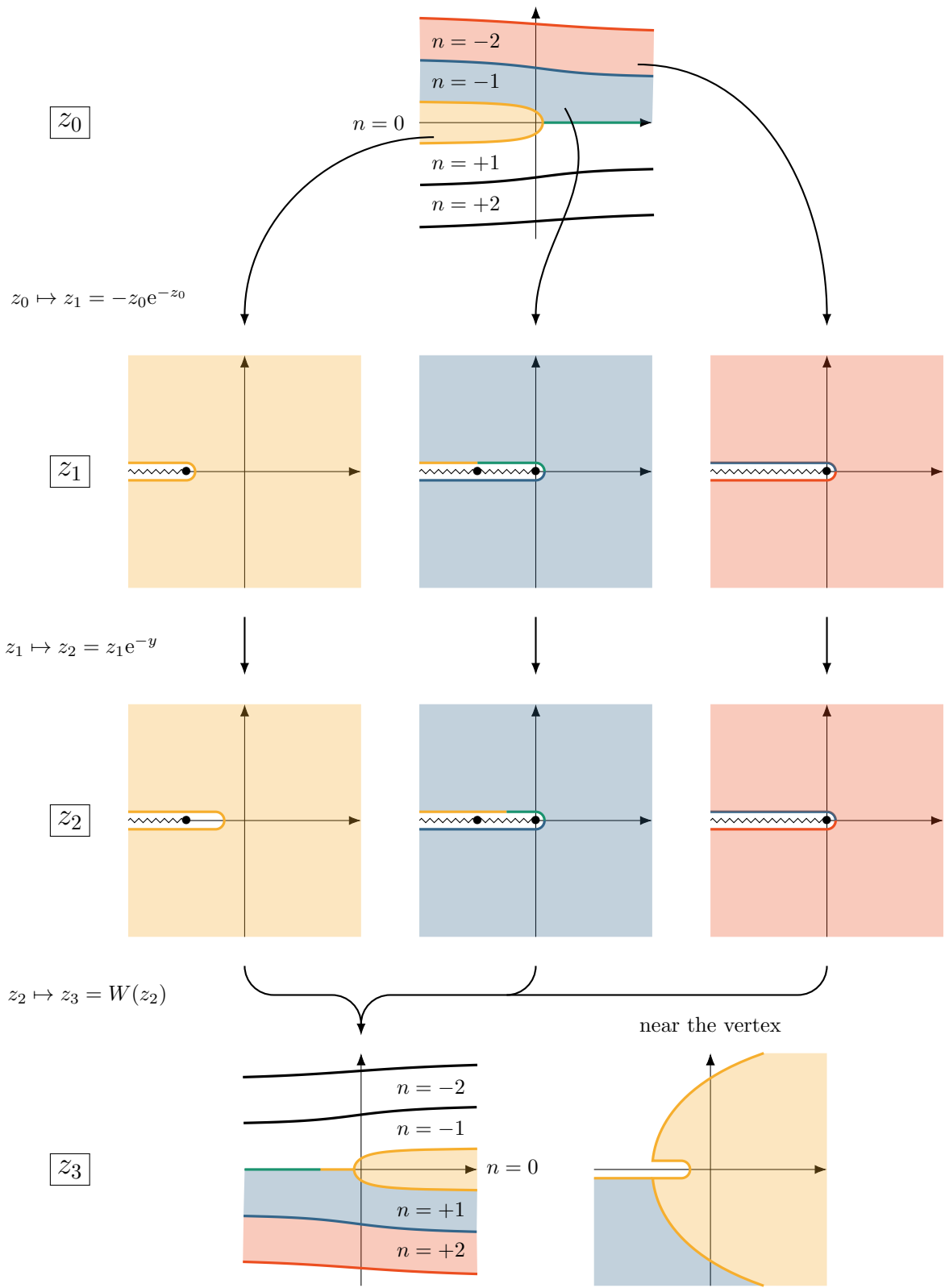
\begin{figure}
    \centering
    \begin{tikzpicture}[scale=0.96]
        \node at (-8, 0) {\Large $\boxed{z_0}$};
        \node at (-8, -6) {\Large $\boxed{z_1}$};
        \node at (-8, -12) {\Large $\boxed{z_2}$};
        \node at (-8, -18) {\Large $\boxed{z_3}$};

        \node at (-7.7, -15) {$z_2 \mapsto z_3 = W(z_2)$};
        
        \draw[-{Latex[length=2mm]}] (0, -2) -- (0, 2);
        \draw[-{Latex[length=2mm]}] (-2, 0) -- (2, 0);
        \fill[fire, opacity=0.3, variable=\y] plot[scale=0.12, domain=13.23:14.975] ({\y*cos(\y r)/sin(\y r)}, {\y}) -- plot[scale=0.12, domain=8.93:6.66] ({\y*cos(\y r)/sin(\y r)}, {\y});
        \fill[navy, opacity=0.3, variable=\y] plot[scale=0.12, domain=6.66:8.93] ({\y*cos(\y r)/sin(\y r)}, {\y}) -- plot[scale=0.12, domain=2.965:0.01] ({\y*cos(\y r)/sin(\y r)}, {\y}) -- (2, 0);
        \fill[maize, opacity=0.3, variable=\y] plot[scale=0.12, domain=0.01:2.965] ({\y*cos(\y r)/sin(\y r)}, {\y}) -- plot[scale=0.12, domain=-2.965:-0.01] ({\y*cos(\y r)/sin(\y r)}, {\y});
        \draw[scale=0.12, domain=13.23:14.975, smooth, variable=\y, fire, very thick]  plot ({\y*cos(\y r)/sin(\y r)}, {\y});
        \draw[scale=0.12, domain=8.93:6.66, smooth, variable=\y, navy, very thick]  plot ({\y*cos(\y r)/sin(\y r)}, {\y});
        \draw[seafoam, very thick] (0.12, 0) -- (1.8, 0);
        \draw[scale=0.12, domain=2.965:0.01, smooth, variable=\y, maize, very thick]  plot ({\y*cos(\y r)/sin(\y r)}, {\y});
        \draw[scale=0.12, domain=-2.965:-0.01, smooth, variable=\y, maize, very thick]  plot ({\y*cos(\y r)/sin(\y r)}, {\y});
        \draw[scale=0.12, domain=-8.93:-6.66, smooth, variable=\y, black, very thick]  plot ({\y*cos(\y r)/sin(\y r)}, {\y});
        \draw[scale=0.12, domain=-13.23:-14.975, smooth, variable=\y, black, very thick]  plot ({\y*cos(\y r)/sin(\y r)}, {\y});
        \node at (-1.2,  1.4) {$n = -2$};
        \node at (-1.2,  0.7) {$n = -1$};
        \node at (-2.7,  0.0) {$n = 0$};
        \node at (-1.2, -0.7) {$n = +1$};
        \node at (-1.2, -1.4) {$n = +2$};
        
        \node at (-7.5, -3) {$z_0 \mapsto z_1 = -z_0 \ee^{-z_0}$};
        \draw[-{Latex[length=2mm]}, thick] (-1.75, -0.25) to [out=180, in=90] (-5, -3.5);
        \draw[-{Latex[length=2mm]}, thick] (0.5, 0.25) to [out=-60, in=90] (0, -3.5);
        \draw[-{Latex[length=2mm]}, thick] (1.75, 1) to [out=0, in=90] (5, -3.5);
        
        \draw[-{Latex[length=2mm]}] (-5, -8) -- (-5, -4);
        \draw[-{Latex[length=2mm]}] (-6, -6) -- (-3, -6);
        \draw[decorate, decoration={zigzag, segment length=4, amplitude=1}] (-7, -6) -- (-6, -6);
        \fill[maize, opacity=0.3] (-7, -5.85) -- (-6, -5.85) arc (90:-90:0.15) -- (-7, -6.15) -- (-7, -8) -- (-3, -8) -- (-3, -4) -- (-7, -4);
        \draw[maize, very thick] (-7, -5.85) -- (-6, -5.85) arc (90:-90:0.15) -- (-7, -6.15);
        \node at (-6, -6) {\textbullet};
        
        \draw[-{Latex[length=2mm]}] (0, -8) -- (0, -4);
        \draw[-{Latex[length=2mm]}] (0, -6) -- (2, -6);
        \draw[decorate, decoration={zigzag, segment length=4, amplitude=1}] (-2, -6) -- (0, -6);
        \fill[navy, opacity=0.3] (-2, -5.85) -- (0, -5.85) arc (90:-90:0.15) -- (-2, -6.15) -- (-2, -8) -- (2, -8) -- (2, -4) -- (-2, -4);
        \draw[maize, very thick] (-2, -5.85) -- (-1, -5.85);
        \draw[seafoam, very thick] (-1, -5.85) -- (0, -5.85) arc (90:0:0.15);
        \draw[navy, very thick] (0.15, -6) arc (0:-90:0.15) -- (-2, -6.15);
        \node at (-1, -6) {\textbullet};
        \node at (0, -6) {\textbullet};
        
        \draw[-{Latex[length=2mm]}] (5, -8) -- (5, -4);
        \draw[-{Latex[length=2mm]}] (5, -6) -- (7, -6);
        \draw[decorate, decoration={zigzag, segment length=4, amplitude=1}] (3, -6) -- (5, -6);
        \draw[navy, very thick] (3, -5.85) -- (5, -5.85) arc (90:0:0.15);
        \draw[fire, very thick] (5.15, -6) arc (0:-90:0.15) -- (3, -6.15);
        \fill[fire, opacity=0.3] (3, -5.85) -- (5, -5.85) arc (90:-90:0.15) -- (3, -6.15) -- (3, -8) -- (7, -8) -- (7, -4) -- (3, -4);
        \node at (5, -6) {\textbullet};
        
        \node at (-7.8, -9) {$z_1 \mapsto z_2 = z_1 \ee^{-y}$};
        \draw[-{Latex[length=2mm]}, thick] (-5, -8.5) -- (-5, -9.5);
        \draw[-{Latex[length=2mm]}, thick] (0, -8.5) -- (0, -9.5);
        \draw[-{Latex[length=2mm]}, thick] (5, -8.5) -- (5, -9.5);
        
        \draw[-{Latex[length=2mm]}] (-5, -14) -- (-5, -10);
        \draw[-{Latex[length=2mm]}] (-6, -12) -- (-3, -12);
        \draw[decorate, decoration={zigzag, segment length=4, amplitude=1}] (-7, -12) -- (-6, -12);
        \fill[maize, opacity=0.3] (-7, -11.85) -- (-5.5, -11.85) arc (90:-90:0.15) -- (-7, -12.15) -- (-7, -14) -- (-3, -14) -- (-3, -10) -- (-7, -10);
        \draw[maize, very thick] (-7, -11.85) -- (-5.5, -11.85) arc (90:-90:0.15) -- (-7, -12.15);
        \node at (-6, -12) {\textbullet};
        
        \draw[-{Latex[length=2mm]}] (0, -14) -- (0, -10);
        \draw[-{Latex[length=2mm]}] (0, -12) -- (2, -12);
        \draw[decorate, decoration={zigzag, segment length=4, amplitude=1}] (-2, -12) -- (0, -12);
        \fill[navy, opacity=0.3] (-2, -11.85) -- (0, -11.85) arc (90:-90:0.15) -- (-2, -12.15) -- (-2, -14) -- (2, -14) -- (2, -10) -- (-2, -10);
        \draw[maize, very thick] (-2, -11.85) -- (-0.5, -11.85);
        \draw[seafoam, very thick] (-0.5, -11.85) -- (0, -11.85) arc (90:0:0.15);
        \draw[navy, very thick] (0.15, -12) arc (0:-90:0.15) -- (-2, -12.15);
        \node at (-1, -12) {\textbullet};
        \node at (0, -12) {\textbullet};
        
        \draw[-{Latex[length=2mm]}] (5, -14) -- (5, -10);
        \draw[-{Latex[length=2mm]}] (5, -12) -- (7, -12);
        \draw[decorate, decoration={zigzag, segment length=4, amplitude=1}] (3, -12) -- (5, -12);
        \draw[navy, very thick] (3, -11.85) -- (5, -11.85) arc (90:0:0.15);
        \draw[fire, very thick] (5.15, -12) arc (0:-90:0.15) -- (3, -12.15);
        \fill[fire, opacity=0.3] (3, -11.85) -- (5, -11.85) arc (90:-90:0.15) -- (3, -12.15) -- (3, -14) -- (7, -14) -- (7, -10) -- (3, -10);
        \node at (5, -12) {\textbullet};
        
        \draw[-{Latex[length=2mm]}, thick] (-5, -14.5) arc(180:270:0.5) -- (-3.5, -15) arc(90:0:0.5) -- (-3, -15.7);
        \draw[thick] (0, -14.5) arc(0:-90:0.5) -- (-0.5, -15) -- (-2.5, -15) arc(90:180:0.5);
        \draw[thick] (5, -14.5) arc(0:-90:0.5) -- (-0.5, -15);
       
        \begin{scope}[shift={(-3, -18)}]
            \draw[-{Latex[length=2mm]}] (0, -2) -- (0, 2);
            \draw[-{Latex[length=2mm]}] (-2, 0) -- (2, 0);
            \fill[fire, opacity=0.3, variable=\y] plot[scale=0.12, domain=13.23:14.975] ({-\y*cos(\y r)/sin(\y r)}, {-\y}) -- plot[scale=0.12, domain=8.93:6.66] ({-\y*cos(\y r)/sin(\y r)}, {-\y});
            \fill[navy, opacity=0.3, variable=\y] plot[scale=0.12, domain=6.66:8.93] ({-\y*cos(\y r)/sin(\y r)}, {-\y}) -- plot[scale=0.12, domain=2.965:0.01] ({-\y*cos(\y r)/sin(\y r)}, {-\y}) -- (-2, 0);
            \fill[maize, opacity=0.3, variable=\y] plot[scale=0.12, domain=0.01:2.965] ({-\y*cos(\y r)/sin(\y r)}, {-\y}) -- plot[scale=0.12, domain=-2.965:-0.01] ({-\y*cos(\y r)/sin(\y r)}, {-\y});
            \draw[scale=0.12, domain=13.23:14.975, smooth, variable=\y, fire, very thick]  plot ({-\y*cos(\y r)/sin(\y r)}, {-\y});
            \draw[scale=0.12, domain=8.93:6.66, smooth, variable=\y, navy, very thick]  plot ({-\y*cos(\y r)/sin(\y r)}, {-\y});
            \draw[maize, very thick] (-0.12, 0) -- (-0.7, 0);
            \draw[seafoam, very thick] (-0.7, 0) -- (-2, 0);
            \draw[scale=0.12, domain=2.965:0.01, smooth, variable=\y, maize, very thick]  plot ({-\y*cos(\y r)/sin(\y r)}, {-\y});
            \draw[scale=0.12, domain=-2.965:-0.01, smooth, variable=\y, maize, very thick]  plot ({-\y*cos(\y r)/sin(\y r)}, {-\y});
            \draw[scale=0.12, domain=-8.93:-6.66, smooth, variable=\y, black, very thick]  plot ({-\y*cos(\y r)/sin(\y r)}, {-\y});
            \draw[scale=0.12, domain=-13.23:-14.975, smooth, variable=\y, black, very thick]  plot ({-\y*cos(\y r)/sin(\y r)}, {-\y});
            \node at (1.2,  1.4) {$n = -2$};
            \node at (1.2,  0.7) {$n = -1$};
            \node at (2.6,  0.05) {$n = 0$};
            \node at (1.2, -0.7) {$n = +1$};
            \node at (1.2, -1.4) {$n = +2$};
        \end{scope}
        
        \begin{scope}[shift={(3, -18)}]
            \draw[-{Latex[length=2mm]}] (0, -2) -- (0, 2);
            \draw[-{Latex[length=2mm]}] (-2, 0) -- (2, 0);
            \fill[maize, opacity=0.3, variable=\y] plot[domain=2:0.15] ({-\y*cos(\y r)/sin(\y r)}, {\y}) -- (-0.5, 0.15) arc (90:-90:0.15) -- plot[domain=-0.15:-2] ({-\y*cos(\y r)/sin(\y r)}, {\y}) -- (2, -2) -- (2, 2);
            \fill[navy, opacity=0.3, domain=-0.15:-2, smooth, variable=\y] (-2, -0.15) -- plot ({-\y*cos(\y r)/sin(\y r)}, {\y}) -- (-2, -2);
            \draw[maize, very thick] (-2, -0.15) -- (-1, -0.15);
            \draw[maize, very thick, variable=\y] plot[domain=2:0.15] ({-\y*cos(\y r)/sin(\y r)}, {\y}) -- (-0.5, 0.15) arc (90:-90:0.15) -- plot[domain=-0.15:-2] ({-\y*cos(\y r)/sin(\y r)}, {\y});
            \node at (0, 2.5) {near the vertex};
        \end{scope}
        
    \end{tikzpicture}
    \caption{$z \mapsto W(-z \ee^{-z-y})$ demonstrated via three intermediate mappings for three characteristic branch regions. The colored regions and boundaries describe where the points in the top image are mapped to in the complex plane at each step.}
    \label{fig:ZeroPhaseWMap}
\end{figure}

Lastly, we can compute the asymptotic behavior of $z \mapsto W(-z\ee^{-z-y})$ using the asymptotics of Lambert $W$ \cite[Eq. 4.13.10]{DLMF}.
The result is 
\begin{equation}
    W(-z \ee^{-z - y}) = -z - y - \frac{y}{z} + \frac{y^2/2 - y}{z^2} + \bigO{\frac{1}{z^2}} \, ,
    \label{eq:AnalyticLambertWExp}
\end{equation}
which can also be derived heuristically by letting $w = W(-z\ee^{-z-y})$ and considering the equation $w \ee^{w} = -z \ee^{-z-y}$ while requiring $w = \bigO{z}$ as $z \to \infty$.
The properties that we've concluded directly lead us to an expression for logarithmic potentials which satisfy the properties in Lemma \ref{lem:LogPotentialUniqueSol}.

\begin{corollary} \label{cor:SolToDiffVarConds}
    For any sized interval there is a $y \leq 0$ such that the expressions
    \begin{align}
        \mathpzc{L}[\rho^\minrm_x](\zeta) &= -\im{\zeta + \frac{y}{2 \delta} + \frac{1}{2 \delta} W \left( -2 \delta \zeta \ee^{-2 \delta \zeta - y} \right)} \, , \label{eq:LogPotentXDeriv} \\
        \mathpzc{L}[\rho^\minrm_t](\zeta) &= -\im{\left( \zeta - \frac{1}{2 \delta} \right)^2 + \frac{y}{2 \delta^2} - \left( \frac{y + 1 + W \left( -2 \delta \zeta \ee^{-2 \delta \zeta - y} \right)}{2 \delta} \right)^2} \, , 
        \label{eq:LogPotentTDeriv}
    \end{align}
    solve the differentiated variational conditions and the associated densities for $\kappa \in [0, \kappa_\maxrm]$ are given by
    \begin{align}
        \rho_x^\minrm(\kappa) &= \frac{\zeta'(\kappa) E'(\zeta)}{2} r\left(\zeta(\kappa); \frac{y}{2 \delta}\right) \, , \label{eq:RhoXDensity} \\
        \rho^\minrm_t(\kappa) &= \frac{\zeta'(\kappa) E'(\zeta)}{2\delta} \re{ W_0 \left( -2 \delta \zeta \ee^{-2 \delta \zeta - y} \right) - W_{-1} \left( -2 \delta \zeta \ee^{-2 \delta \zeta - y} \right) } \Bigg|_{\zeta = \zeta(\kappa)} - \frac{y}{\delta} \rho_x^\minrm(\kappa) \, ,
        \label{eq:RhoTDensity}
    \end{align}
    where $r$ is as in \eqref{eq:WyelLawDerivIntegrand}.
\end{corollary}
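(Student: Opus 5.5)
The plan is to verify the four properties of Lemma \ref{lem:LogPotentialUniqueSol} for the proposed right-hand sides of \eqref{eq:LogPotentXDeriv} and \eqref{eq:LogPotentTDeriv}, and then extract the densities via \eqref{eq:RecoverChargeDensityFromPotential}. First I will fix the band. Given $\beta \in (0,\kappa_\maxrm]$, I choose the parameter $y$ so that the branch points of $\zeta \mapsto W(-2\delta\zeta\ee^{-2\delta\zeta-y})$ are $\zeta(\beta)$ and its conjugate. After the rescaling $z = 2\delta\zeta$, this is \eqref{eq:BetaEndPntDef}. Solving $-2\delta\zeta(\beta)\ee^{-2\delta\zeta(\beta)-y} = -\ee^{-1}$ gives $y = 1+\log(2\delta\zeta(\beta)) - 2\delta\zeta(\beta)$, which equals $2\delta E(\zeta(\beta))$ minus a real constant. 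The sign convention for $y$ is the one the text's discussion (with $y \geq 0$) dictates, and by the monotonicity noted before \eqref{eq:BetaEndPntDef} every band size is attained. With this choice, the preceding analysis of Figure \ref{fig:ZeroPhaseWMap} shows that $\zeta \mapsto W(-2\delta\zeta\ee^{-2\delta\zeta-y})$ is analytic on $\C$ minus the symmetric arc $[\zeta(\beta)^*,\zeta(\beta)]_{\quadratrix{\delta}}$ and continuous up to it from each side. Consequently the imaginary parts in the two formulas are harmonic off $I$, which gives property 1 away from $I$.

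Next I will check the remaining properties. On the arc, the two boundary values are $W_0$ and $W_{-1}$ evaluated at the same real point in $(-\ee^{-1},0)$, so both are real. Hence $\im{W}=0$ from both sides, which gives continuity across $I$. The potential there reduces to $-\im{\zeta}$, because $\im{y/2\delta}=0$. For the $t$-formula, the squared bracket is real on $I$, so the value is $-\im{(\zeta-1/2\delta)^2}$. This is property 2. Property 3, odd symmetry, follows from $W_n(\bar w)=\overline{W_{-1-n}(w)}$, the conjugate pairing noted in Section \ref{sec:WKB}, together with the branch-region labelling $n(\bar z) = -1-n(z)$. Property 4, decay, follows by inserting \eqref{eq:AnalyticLambertWExp} with $z=2\delta\zeta$, $y \mapsto y$. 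In the $x$-formula, $\zeta + y/2\delta + W/2\delta = -\frac{y}{4\delta^2\zeta} + \bigO{\zeta^{-2}}$; the constant terms cancel, which is exactly why the $y/2\delta$ shift is included. In the $t$-formula, $(y+1+W)/2\delta = -\zeta + 1/2\delta - \frac{y}{4\delta^2\zeta}+\dots$. Squaring and subtracting from $(\zeta-1/2\delta)^2$ leaves the constant $-y/2\delta^2$ plus $\bigO{1/\zeta}$. The added $y/2\delta^2$ term cancels that constant, and the imaginary part of the remainder is $\bigO{1/\zeta}$. This expansion bookkeeping, especially checking that the order-$\zeta^{-1}$ coefficients have vanishing imaginary part at infinity or decay appropriately, is where I expect sign or factor errors. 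If needed, I will carry \eqref{eq:AnalyticLambertWExp} one order further. The H\"older hypothesis holds because the boundary values vanish like square roots at the branch points, so the density has inverse-square-root endpoint behaviour.

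Finally, I will compute the densities with \eqref{eq:RecoverChargeDensityFromPotential}. Writing $\mathpzc{L} = -\im{h}$ with $h$ analytic off $I$ gives $\partial_\zeta \mathpzc{L} = \frac{\ii}{2}h'$. The jump of $h'$ across the arc is obtained from implicit differentiation, $W' = \frac{W}{1+W}\cdot\frac{\dd}{\dd\zeta}\log(-2\delta\zeta\ee^{-2\delta\zeta-y}) = \frac{W}{1+W}\left(\frac{1}{\zeta}-2\delta\right)$, and $\frac{1}{\zeta}-2\delta = 2\delta E'(\zeta)$ as in the proof of Lemma \ref{lem:WylLaw}. The jump for $\rho_x$ is therefore $\frac{1}{2\delta}\cdot 2\delta E'(\zeta)\left(\frac{W_0}{1+W_0}-\frac{W_{-1}}{1+W_{-1}}\right)$. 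Taking $\im{\zeta'\cdot\ii(\cdot)/2}$ yields \eqref{eq:RhoXDensity}, with $r(\zeta; y/2\delta)$ matching the argument $\ee^{-2\delta(\zeta+u)}$ at $u=y/2\delta$. The real part in $r$ automatically kills the density outside $I$. For $\rho_t$, the derivative of $-\big((y+1+W)/2\delta\big)^2$ gives $-\frac{(y+1+W)W'}{2\delta^2}$, and $(1+W)W' = 2\delta E' W$. The jump thus splits into a term $\propto E'(W_0-W_{-1})/\delta$ and a term $y\cdot(\text{the }x\text{-jump})/\delta$, which produces \eqref{eq:RhoTDensity} with its $-\frac{y}{\delta}\rho_x^\minrm$ correction. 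The signs and orientation of the $\pm$ sides will be fixed by consistency with $\rho_x^\minrm\ge0$.
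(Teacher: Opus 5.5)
Your route is the paper's: check the four properties of Lemma \ref{lem:LogPotentialUniqueSol} for the two candidate potentials, then extract the densities with \eqref{eq:RecoverChargeDensityFromPotential}. Your decay bookkeeping is correct. Your $y=1+\log(2\delta\zeta(\beta))-2\delta\zeta(\beta)$ is exactly $2\delta E(\zeta(\beta))\geq 0$, with no extra constant, which confirms that the ``$y\leq 0$'' in the statement should read $y\geq 0$. For property 3, the correct relations are $W_n(\bar w)=\overline{W_{-n}(w)}$ off the cut and $n(\bar\zeta)=-n(\zeta)$ for the branch regions; the $n=0$ region is self-conjugate. Your two $-1-n$ relations are each false and only cancel by accident, because the $n\leftrightarrow -1-n$ pairing of Section \ref{sec:WKB} concerns values on the cut.

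The step that fails is the sign of the densities. You propose to fix the orientation by demanding $\rho^\minrm_x\geq 0$, but that is impossible. On the band $\mathpzc{L}[\rho^\minrm_x]=-\im{\zeta}<0$, while the kernel of $\mathpzc{L}$ is positive on $\C_+$; this is consistent with $\rho^\minrm$ dropping from saturation to void as $x$ increases. Indeed, on the band $W_0\in(-1,0)$ and $W_{-1}<-1$ give $r<0$, and $\zeta'E'=\frac{\dd}{\dd\kappa}E(\zeta(\kappa))>0$. So \eqref{eq:RhoXDensity} is itself nonpositive, and your criterion would return its negative.

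The orientation must come from the Plemelj jump. Traversing the arc upward, the interior side (where the boundary value is $W_0$) is on the left. Write $\mathpzc{L}[\rho^\minrm_x]=-\im{h_x}$, $\mathpzc{L}[\rho^\minrm_t]=-\im{h_t}$ and $J[h]:=h'_{\mathrm{in}}-h'_{\mathrm{out}}$. Then $\rho=\tfrac12\re{\zeta' J[h]}$. This is the opposite of the sign displayed in \eqref{eq:RecoverChargeDensityFromPotential}, so applying that formula literally also flips the sign. It gives $\rho^\minrm_x=\tfrac12\zeta'E'r$, as stated.

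However, your own split, $J[h_t]=-\frac{E'}{\delta}(W_0-W_{-1})-\frac{y}{\delta}J[h_x]$, then yields $\rho^\minrm_t=-\frac{\zeta'E'}{2\delta}\re{W_0-W_{-1}}-\frac{y}{\delta}\rho^\minrm_x$. That is the first term of \eqref{eq:RhoTDensity} with the opposite sign. Because this relative sign is fixed algebraically, no choice of orientation reproduces both printed formulas. There is also an orientation-free check: with $u=y/2\delta$ solving $u_t=-2uu_x$ and $\partial_u\re{W_0-W_{-1}}=-2\delta r$, the compatibility $\partial_t\rho^\minrm_x=\partial_x\rho^\minrm_t$ holds only with the minus sign.

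So your last step cannot deliver \eqref{eq:RhoTDensity} as written; the paper's ``direct application'' glosses over the same point. What the argument actually proves is the corrected $\rho^\minrm_t$.
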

\begin{proof}
    First, the $x$-derivative case.
    For fixed $y$, we see that the expression of \eqref{eq:LogPotentXDeriv} on the inside of the imaginary part as a function of $\zeta \in \C$ is
    \begin{enumerate}
        \item analytic on $\C \setminus [\beta(y)^*, \beta(y)]_{\quadratrix{\delta}}$ and has a real jump in boundary values across $[\beta(y)^*, \beta(y)]_{\quadratrix{\delta}}$, 
        \item the imaginary part on $[\beta(y)^*, \beta(y)]_{\quadratrix{\delta}}$ coincides with $-\im{\zeta}$ since the Lambert $W$ expression has real boundary values,
        \item Schwartz symmetric,
        \item and decays like $\bigO{1/\zeta}$ as $\zeta \to \infty$ as a direct result of \eqref{eq:AnalyticLambertWExp}.
    \end{enumerate}
    After accounting for the imaginary part, \eqref{eq:LogPotentXDeriv} satisfies the conditions of Lemma \ref{lem:LogPotentialUniqueSol} which uniquely determine the solution to the $x$-derivative variational conditions \eqref{eq:XDiffVarConds}.
    A direct application of \eqref{eq:RecoverChargeDensityFromPotential} to \eqref{eq:LogPotentXDeriv} yields the density \eqref{eq:RhoXDensity}.\footnote{
    Note that for $\zeta \in (\beta(y), \zeta_\maxrm]_{\quadratrix{\delta}}$, the branches used in \eqref{eq:WyelLawDerivIntegrand} do not coincide with the boundary values in the extraction formula \eqref{eq:RecoverChargeDensityFromPotential}.
    However, because $\mathpzc{L}[\rho^\minrm_x]$ \eqref{eq:LogPotentXDeriv} is harmonic here, and the real part evaluation in \eqref{eq:WyelLawDerivIntegrand} kills the manifestly imaginary difference of values due to conjugate symmetry, the formula \eqref{eq:RhoXDensity} is incidentally correct for all $\kappa \in [0, \kappa_\maxrm]$.}
    
    For the $t$-derivative \eqref{eq:LogPotentTDeriv}, all of the same properties can be shown regarding the expression inside of the imaginary part, with the one difference being that the value of \eqref{eq:LogPotentTDeriv} on $[\beta(y)^*, \beta(y)]_{\quadratrix{\delta}}$ coincides with $-\im{\left( \zeta - 1/2 \delta \right)^2}$.
    And again, \eqref{eq:RhoTDensity} is a simple application of \eqref{eq:RecoverChargeDensityFromPotential} to \eqref{eq:LogPotentTDeriv}.
\end{proof}

Due to Corollary \ref{cor:SolToDiffVarConds}, we now have formulas for the logarithmic potentials and the prospective densities which solve the differentiated variational conditions \eqref{eq:TDiffVarConds} and \eqref{eq:XDiffVarConds} in the case where there is only one band which is a continuous interval, $[0, \beta(y)]_{\quadratrix{\delta}}$.
These are in terms of the the auxiliary parameter $y \geq 0$, which controls the size of the band interval.
Since in general the band interval may change size as we change $x$ and $t$, we regard $y(x, t)$ as a dynamical parameter and suppose it has at least first derivatives.
The task now is to integrate these logarithmic potentials \eqref{eq:LogPotentXDeriv} and \eqref{eq:LogPotentXDeriv} in $x$ and $t$ in a consistent manner and show that the result solves the variational conditions.
By consistency, we mean that logarithmic potential $\mathpzc{L}[\rho]$ satisfies Clairaut's theorem regarding the cross-partial derivatives in  $x$ and $t$.
From a direct computation of the derivatives of \eqref{eq:LogPotentXDeriv} and \eqref{eq:LogPotentTDeriv}, we require that
\begin{equation}
    0 = \partial_x \mathpzc{L}[ \rho_t^\minrm ](\zeta) - \partial_t \mathpzc{L}[ \rho_x^\minrm ](\zeta) 
    = \left( \frac{y_t}{2 \delta} + 2 \frac{y_x}{2 \delta} \frac{y}{2 \delta} \right) \im{\frac{1}{1 + W \left( -2 \delta \zeta \ee^{-2 \delta \zeta - y} \right)}} \, . \label{eq:CompatCond}
\end{equation}
Since the the expression with the imaginary part in \eqref{eq:CompatCond} is not zero for all $\zeta \in \C$, in order for the logarithmic potentials to be consistently integrated in $x$ and $t$, we must require the scaled auxiliary parameter $y/2\delta$ solve invicid Burgers' equation \eqref{eq:IB}.
This aligns with the expectation that the small-dispersion limit for small time should be described by invicid Burgers' equation. 
This evidence pairs nicely with the distributional limit of the soliton ensemble from Theorem \ref{thm:DistLimit} in terms of the auxiliary parameter.
\begin{align}
    \dd-\lim_{N \to \infty} u_N^\sse(x, t) &= -\frac{4 \delta}{\pi} \partial_x \int_0^{\kappa_\maxrm} \Bigg[ \left( \im{\zeta(\kappa)} + \frac{1}{2} \mathpzc{L}[\rho^\minrm_x](\zeta(\kappa)) \right) \rho^\minrm(\kappa) \\
    &\Quad[8] + \left( V(\zeta(\kappa); x, t) + \frac{1}{2} \mathpzc{L}[\rho^\minrm](\zeta(\kappa)) \right) \rho^\minrm_x(\kappa) \Bigg] \, \dd \kappa \nonumber \\
    &= -\frac{4 \delta}{\pi} \partial_x \int_0^{\kappa_\maxrm} \Bigg[ \im{\zeta(\kappa)} \rho^\minrm(\kappa) + \frac{\delta E}{\delta \rho}\{\rho^\minrm\}(\zeta(\kappa); x, t) \rho^\minrm_x(\kappa) \Bigg] \, \dd \kappa \\
    &= -\frac{4 \delta}{\pi} \im{ \int_0^{\kappa_\maxrm} \zeta(\kappa) \ \rho^\minrm_x(\kappa) \, \dd \kappa } \, .
\end{align}
where in the first application of an $x$-derivative we have used that the resulting integrand is absolutely integrable, which will be justified \textit{a posteriori}.
We then rearranged using the symmetry of the $\mathpzc{L}$ operator and to collect the Fr\'echet derivative multiplied by $\rho^\minrm_x$.
Since the Fr\'echet derivative is zero on the band and $\rho^\minrm_x$ is zero on the rest of $[0, \kappa_\maxrm]$, that term is eliminated.
On what remains, we can use the formula for $\rho^\minrm_x$ \eqref{eq:RhoXDensity}.
We notice that, because of its derivation through \eqref{eq:RecoverChargeDensityFromPotential}, the resulting integrand is a parameterization of the difference of the boundary values of a function analytic on $\C \setminus [\beta(y)^*, \beta(y)]_{\quadratrix{\delta}}$ across its branch cut along $[0, \beta(y)]_{\quadratrix{\delta}}$.
Additionally, we can divide by two and extend the integral to the $[\beta(y)^*, 0]_{\quadratrix{\delta}}$ due to the even symmetry of the integrand.
We then employ the usual trick, writing this as a contour integral over $C$ which traces along the right and left of $[\beta(y)^*, \beta(y)]_{\quadratrix{\delta}}$ with positive orientation
\begin{equation}
    \dd-\lim_{N \to \infty} u_N^\sse(x, t) = -\frac{\delta}{\pi} \im{ \int_C \zeta E'(\zeta) \frac{W \left( -2 \delta \zeta \ee^{-2 \delta \zeta - y} \right)}{1 + W \left( -2 \delta \zeta \ee^{-2 \delta \zeta - y} \right)} \, \dd \zeta } \, .
\end{equation}
Because the function has inverse square-root singularities at the branch points, we may use Cauchy's theorem to pull the contour away from the cut.
Noticing everything besides the leading $\zeta$ factor is a total derivative, we use integration-by-parts to simplify the integrand,
\begin{equation}
    \dd-\lim_{N \to \infty} u_N^\sse(x, t) = \frac{\delta}{\pi} \im{ \int_C \frac{1}{2 \delta} W \left( -2 \delta \zeta \ee^{-2 \delta \zeta - y} \right) \, \dd \zeta } \, .
\end{equation}
Finally, blowing-up the contour to infinity, we pick up the residue at infinity (see expansion \eqref{eq:AnalyticLambertWExp}),
\begin{align}
    \dd-\lim_{N \to \infty} u_N^\sse(x, t) &= \frac{1}{2 \pi} \im{ \int_C \left( 2 \delta \zeta - y + \frac{y}{2 \delta \zeta} + \bigO{\frac{1}{\zeta^2}} \right) \, \dd \zeta } \\
    &= \frac{1}{2 \pi} \im{ 2 \pi \ii \frac{y}{2 \delta} } \\
    &= \frac{y}{2 \delta} \, .
    \label{eq:DistLimValueFromAnsatz}
\end{align}
At $t = 0$, we should expect this limit to recover the initial condition.
Combining this fact with the observation of $x$ and $t$ integrable consistency from \eqref{eq:CompatCond}, we have the ansatz
\begin{equation}
    y(x, t) := 2 \delta u^\Brm(x, t) \label{eq:IBChoiceForYParam}
\end{equation}
where $u^\Brm$ solves invicid Burgers' equation \eqref{eq:IB} with initial condition $u^\Brm(x, 0) = u_0(x)$.
Recall that invicid Burgers' equation will only have a global solution up to a critical time $t_\mathrm{c}$ given by \eqref{eq:CatTime}.

Now, we are ready to show that our ansatz, after integrating, satisfies the variational conditions.
\begin{theorem} \label{thm:ConstLogPotentChargeDensSol}
    For admissible initial condition $u_0$ (Definition \ref{def:AdmisInitCond}), the variational conditions are satisfied for $x \in \R$ and $0 \leq t \leq t_\mathrm{c}$ by the logarithmic potential for $\zeta \in \C$
    \begin{equation}
        \mathpzc{L}[\rho^\minrm(\diamond; x, t)](\zeta) = \int_x^{+\infty} \im{ \zeta + \frac{1}{2 \delta} W \left( -2 \delta \zeta \ee^{-2 \delta ( \zeta + u^\Brm(x', t) )} \right) } \, \dd x'
        \label{eq:ConsMinL}
    \end{equation}
    and energy minimizing density for $\zeta \in \quadplus{\delta}$
    \begin{align}
        \rho^\minrm(\kappa; x, t) &= \frac{\zeta'(\kappa) E'(\zeta(\kappa))}{2} \int_x^{+\infty} r\big(\zeta(\kappa); u^\Brm(x', t)\big) \, \dd x' \, .
        \label{eq:MinChargeDensity}
    \end{align}
    where $r$ is as in \eqref{eq:WyelLawDerivIntegrand}.
\end{theorem}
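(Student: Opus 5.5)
The plan is to verify directly that the candidate density \eqref{eq:MinChargeDensity} lies in $\mathcal{A}$ and that, together with \eqref{eq:ConsMinL}, it satisfies the variational conditions \eqref{eq:VariatConds}. Uniqueness then comes from Theorem \ref{thm:VariationCondDetMin}, so no separate minimization argument is needed.

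\textbf{Step 1: the potential \eqref{eq:ConsMinL} is $\mathpzc{L}$ of the density \eqref{eq:MinChargeDensity}.} For each fixed $x'$, Corollary \ref{cor:SolToDiffVarConds} with $y = 2\delta u^\Brm(x',t)$ identifies the integrand of \eqref{eq:ConsMinL} as $-\mathpzc{L}[\rho_x]$ for the one-band density $\tfrac12 \zeta'(\kappa)E'(\zeta(\kappa))\, r(\zeta(\kappa);u^\Brm(x',t))$, after reconciling the sign conventions of $y$ with \eqref{eq:BetaEndPntDef}. Since $\mathpzc{L}$ is linear, I would integrate in $x'$ over $(x,+\infty)$ and exchange the $x'$ and $\eta$ integrals by Fubini/Tonelli. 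The justification: $r\ge 0$ on the band, it has only inverse square-root singularities at the band endpoint, and it vanishes identically off the band.

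For convergence at $x'\to+\infty$ I would use the decay of $u^\Brm(\cdot,t)$, which is inherited from $u_0$ through characteristics for $t<t_\mathrm{c}$, together with the square-root behavior $W_0-W_{-1}\sim\sqrt{u}$ from Lemma \ref{lem:WylLaw}. This requires $\int\sqrt{u_0}<\infty$, which I expect follows from \eqref{eq:DecayAssumInitCond}. Consistency in $t$ is automatic, since $u^\Brm$ solves \eqref{eq:IB}; this is exactly the compatibility condition \eqref{eq:CompatCond}.

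\textbf{Step 2: admissibility, $0\le\rho^\minrm\le\rho^\wyl$.} Nonnegativity follows from $r\ge0$ and $\zeta'E'\ge0$ along the quadratrix. For the upper bound, observe that at fixed $\kappa$ the integrand $r(\zeta(\kappa);u^\Brm(x',t))$ is nonzero only where $u^\Brm(x',t)>E(\zeta(\kappa))$. Because Burgers' flow preserves the distribution function of $u$ (characteristics carry values, and $dx/dy=1+2u_0'(y)t>0$ for $t<t_\mathrm{c}$), I would change variables to $\int_{\R} r(\zeta;u^\Brm(x',t))\,\dd x'=\int_{\R} r(\zeta;u_0(y))(1+2tu_0'(y))\,\dd y$. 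The key identity needed is
\begin{equation}
\int r(\zeta;u_0(y))\,u_0'(y)\,\dd y=0 .
\end{equation}
It holds because the integrand is an exact derivative in $y$ of a function of $u_0$ that takes equal values at the two turning points. The total integral therefore equals the Weyl integral \eqref{eq:RhoWylDef}, and truncating to $(x,\infty)$ only decreases it. Hence $\rho^\minrm\le\rho^\wyl$. The same reasoning shows $\rho^\minrm=\rho^\wyl$ exactly when $x$ lies left of all the support, which identifies the saturated set.

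\textbf{Step 3: the sign conditions on $\frac{\delta E}{\delta\rho}=V+\mathpzc{L}[\rho^\minrm]$ (main obstacle).} I would write
\begin{equation}
V(\zeta;x,t)=\kappa x + t\,\im{(\zeta-\tfrac1{2\delta})^2}-\theta_+(\zeta),
\end{equation}
use \eqref{eq:TailIntegralsDef} and Theorem \ref{thm:LContAndBound} ($\mathpzc{L}[\rho^\wyl]=\theta_++\theta_-$), and compare with \eqref{eq:ConsMinL}. The $x$-derivative of the Fréchet derivative is $\kappa-\im{\zeta+\frac1{2\delta}W(\cdot)}$ evaluated at $u^\Brm(x,t)$, which vanishes on the band $\kappa<\beta$ by Corollary \ref{cor:SolToDiffVarConds}. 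The $t$-derivative behaves similarly. So it suffices to check the value at a single reference point for each $\kappa$: take $t=0$ and $x\to+\infty$, where both $\rho^\minrm$ and $\mathpzc{L}$ vanish and $V-\kappa x\to$ the tail-matching of $\theta_+$.

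Then I would integrate the $x$-derivative from that reference point. For $\kappa$ with $E(\zeta(\kappa))>u^\Brm(x,t)$ (outside the band), the derivative has a definite sign, which gives the void and saturation inequalities. Which one applies depends on which side of the hump $x$ lies; the expected result is a void for $x$ right of the region where $u^\Brm>E$, and a saturation for $x$ left of it.

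The delicate points are the following:
\begin{enumerate}
\item tracking the branch of $W$ off the band;
\item confirming that for $t<t_\mathrm{c}$ the superlevel set $\{u^\Brm(\cdot,t)>E\}$ remains a single interval (so the one-band ansatz is consistent), which uses monotonicity of characteristics and the single-lobe hypothesis of Definition \ref{def:AdmisInitCond};
\item handling the endpoint $t=t_\mathrm{c}$ by continuity.
\end{enumerate}
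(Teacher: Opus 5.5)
Your argument is correct in outline, modulo a sign slip that you share with the paper (point (i) below), but it is organized differently from the paper's.

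\textbf{The paper's route.} It first treats $t=0$. It verifies \eqref{eq:ConsMinL} directly: analytic continuation of the potential, integration by parts into a Cauchy integral, the residue at infinity, and monotone convergence onto the quadratrix. It then evaluates the Fr\'echet derivative piecewise in the three regions relative to $x_\pm(\zeta)$, using the definition of $\theta_+$. For $0<t<t_\mathrm{c}$ it re-poses the whole problem with $u^{\Brm}(\diamond,t)$ as a new admissible initial condition. It proves $R^{\wyl}(\zeta;t)=R^{\wyl}(\zeta)$ and $\theta_\pm(\zeta;t)=\theta_\pm(\zeta)-t\,\im{(\zeta-\tfrac{1}{2\delta})^2}$, and observes that the $t=0$ Fr\'echet derivative for the new data coincides with the original one at time $t$.

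\textbf{Your route.} You never leave the original data. Step~1 superposes the one-band potentials of Corollary~\ref{cor:SolToDiffVarConds}. Tonelli is legitimate there because the kernel is positive on $\C_+$ and the densities are sign-definite. Also, $\int\sqrt{u_0}<\infty$ does follow from \eqref{eq:DecayAssumInitCond}, by Cauchy--Schwarz against $(1+x^2)^{-1}$. Step~2 is the density-level counterpart of the paper's $R_t=0$ computation, and it is cleaner. Step~3 replaces the explicit formula for $\theta_+(\zeta;t)$ by the vanishing of $\partial_x\Phi$ and $\partial_t\Phi$ on the band, where $\Phi$ denotes the Fr\'echet derivative \eqref{eq:FrecetDeriv} at fixed $\zeta=\zeta(\kappa)$.

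\textbf{Trade-off.} The paper's route exhibits a structural fact of independent interest: the modified scattering data of $u^{\Brm}(\diamond,t)$ are the time-evolved data, with $R^\wyl$ isospectral and $\theta_+$ evolving linearly. Yours is a self-contained verification that stays inside the original variational problem.

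\textbf{(i) The sign in Step~1 is not a $y$-convention issue.} With $r$ as in \eqref{eq:WyelLawDerivIntegrand}, one has $r\le 0$ on the band, since there $W_0/(1+W_0)<0<W_{-1}/(1+W_{-1})$. Meanwhile $\zeta'E'=\frac{\dd}{\dd\kappa}E(\zeta(\kappa))>0$. The pair \eqref{eq:LogPotentXDeriv}, \eqref{eq:RhoXDensity} is mutually consistent: both are nonpositive, as the positive kernel requires. Superposing them literally therefore shows that the right-hand side of \eqref{eq:ConsMinL} equals $\mathpzc{L}\big[-\tfrac12\zeta'E'\int_x^{+\infty}r\,\dd x'\big]$. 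So the nonnegative minimizer is $-\tfrac12\zeta'E'\int_x^{+\infty}r\,\dd x'$. The same slip sits in \eqref{eq:MinChargeDensity}, in \eqref{eq:RhoWylDef}, and in the paper's proof. Your ``$r\ge0$'' in Step~2 should read $-r\ge 0$.

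\textbf{(ii) The $t$-transport in Step~3 must happen inside the band region.} That region is $\{x_-(\zeta;t)\le x\le x_+(\zeta;t)\}$, and it is connected because the turning points move continuously along characteristics. This is where your single-interval point is used. Combining \eqref{eq:LogPotentTDeriv} with $y=2\delta u^{\Brm}$ gives
\[
\partial_t\Phi=\im{\big(u^{\Brm}+\tfrac{1+W}{2\delta}\big)^2},
\]
which vanishes on the band but not off it. If you changed $t$ near $x=+\infty$, you would again need the paper's formula for $\theta_+(\zeta;t)$. The correct order is as follows.
\begin{itemize}
\item At $t=0$, integrate $\partial_x\Phi$ from $+\infty$ down to $x_+(\zeta)$; this gives $\Phi=0$ there by \eqref{eq:TailIntegralsDef}.
\item Carry $\Phi=0$ through the band region in $t$.
\item At fixed $t$, integrate $\partial_x\Phi=-\tfrac{1}{2\delta}\im{W_{-1}}>0$ outward to get the void and saturation signs.
\end{itemize}
Note that $\Phi(x_+(\zeta;t),t)=0$ is exactly the paper's $\theta_+(\zeta;t)$ identity. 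Finally, the claim $\partial_t\mathpzc{L}[\rho^{\minrm}]=\mathpzc{L}[\rho^{\minrm}_t]$ needs \eqref{eq:CompatCond} together with the vanishing of \eqref{eq:LogPotentTDeriv} as $y\to0$, that is, as $x'\to+\infty$.
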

\begin{proof}
    We split into two cases. \\
    \textit{Case 1}: $t = 0$.
    Fix $x \in \R$ and replace $u^\Brm(x, 0) = u_0(x)$.
    The formulas we then see are just integrating the supposed $x$-derivatives $\rho_x^\minrm$ and $\mathpzc{L}[\rho^\minrm_x]$ with ``initial condition'' of zero at $+\infty$.
    First, we need to show that \eqref{eq:ConsMinL} indeed matches $\mathpzc{L}[\rho^\minrm]$ where $\rho^\minrm$ is as in \eqref{eq:MinChargeDensity}.
    Consider the natural analytic extension of $\mathpzc{L}[\rho^\minrm]$, $f : \C \setminus [\zeta^*_\maxrm, \zeta_\maxrm]_{\quadratrix{\delta}} \to \C$, defined by
    \begin{equation}
        f(\zeta) := \frac{1}{\pi} \int_0^{\kappa_\maxrm} \log\left( \frac{\zeta - \zeta(\kappa)^*}{\zeta - \zeta(\kappa)\phantom{^*}} \right) \rho^\minrm(\kappa) \, \dd \kappa
        \label{eq:PotentAnalyticExtensionDef}
    \end{equation}
    where $\rho^\minrm$ is as given in \eqref{eq:MinChargeDensity} and we take the branch cut of the log-kernel for each $\kappa \in [0, \kappa_\maxrm]$ to coincide with $[\zeta(\kappa)^*, \zeta(\kappa)]_{\quadratrix{\delta}}$.
    Using the odd symmetry of $\rho^\minrm$ \eqref{eq:MinChargeDensity}, integrating-by-parts in \eqref{eq:PotentAnalyticExtensionDef} allows us to conclude
    \begin{equation}
        f(\zeta) = \frac{1}{2 \pi \ii} \int_{\zeta_\maxrm^*}^{\zeta_\maxrm} \frac{1}{\zeta'-\zeta} \frac{\ii}{2 \delta} \int_x^{+\infty}W_{-1}\left(-2 \delta \zeta' \ee^{-2\delta(\zeta' + u_0(x'))} \right) - W_{0}\left(-2 \delta \zeta' \ee^{-2\delta(\zeta' + u_0(x'))} \right) \, \dd x' \, \dd \zeta'
        \label{eq:PotentAnalyticExtensionCauchy}
    \end{equation}
    where the $\dd \zeta'$ is complex contour integration along $[\zeta_\maxrm^*, \zeta_\maxrm]_{\quadratrix{\delta}}$ traced vertically.
    This is a Cauchy-type integral applied to a density which can be shown to be Holder continuous with power $3/2$.
    Noting that the density in \eqref{eq:PotentAnalyticExtensionCauchy} is $g^-(\zeta')-g^+(\zeta')$ for $\zeta' \in [\zeta_\maxrm^*, \zeta_\maxrm]_{\quadratrix{\delta}}$ with $g : \C \setminus [\zeta^*_\maxrm, \zeta_\maxrm]_{\quadratrix{\delta}} \to \C$ the analytic function with continuous boundary values
    \begin{equation}
        g(\zeta) := \ii \int_x^{+\infty} \zeta + \frac{1}{2 \delta} W \left( -2 \delta \zeta \ee^{-2 \delta ( \zeta + u_0(x') )} \right) \, \dd x' \, .
        \label{eq:PotentialFormualAnalyticExtens}
    \end{equation}
    Note, $g$ has these properties since we know the integrand is uniformly $\bigO{\sqrt{u_0(x')}}$ in magnitude as $x' \to \pm \infty$.
    For any $\zeta \in \C \setminus [\zeta^*_\maxrm, \zeta_\maxrm]_{\quadratrix{\delta}}$, \eqref{eq:PotentialFormualAnalyticExtens} allows us to deform the complex contour integration in \eqref{eq:PotentAnalyticExtensionCauchy} into a positively oriented loop around $[\zeta^*_\maxrm, \zeta_\maxrm]_{\quadratrix{\delta}}$ and evaluate by residues:
    \begin{equation}
        f(\zeta) = -\res_{\zeta' = \zeta} \frac{1}{\zeta'-\zeta}g(\zeta') + \res_{\zeta' \to \infty} \frac{1}{\zeta'-\zeta}g(\zeta') = -g(\zeta) \, . \label{eq:PotentialEqualsFormulaAnalyticExtens}
    \end{equation}
    The residue at infinity was zero since $g(\zeta) = \bigO{1/\zeta}$ as $\zeta \to \infty$ by \eqref{eq:AnalyticLambertWExp}.
    Taking the real part on each side \eqref{eq:PotentialEqualsFormulaAnalyticExtens}, we have shown \eqref{eq:ConsMinL} is true for $\zeta \notin [\zeta^*_\maxrm, \zeta_\maxrm]_{\quadratrix{\delta}}$.
    
    For $\zeta \in [1/2\delta, \zeta_\maxrm]_{\quadratrix{\delta}}$, we take a sequence $\{\zeta_n\}_{n \in \N}$ which approaches $\zeta$ perpendicular to $\quadratrix{\delta}$ and from the $-$ side (outside).
    It can be shown that the integrand of $\mathpzc{L}[\rho^\minrm](\zeta_n)$ as in \eqref{eq:LContDef} will eventually be a non-decreasing sequence of functions ($\rho^\minrm \geq 0$ as shown independently in \eqref{eq:ConsMinRho} below) and thus, by monotone convergence theorem, we have
    \begin{equation}
        \mathpzc{L}[\rho^\minrm](\zeta) = \lim_{n \to \infty} \mathpzc{L}[\rho^\minrm](\zeta_n) = \lim_{n \to \infty} -\re{g(\zeta_n)} = \int_x^{+\infty} \im{ \zeta + \frac{1}{2 \delta} W_{-1} \left( -2 \delta \zeta \ee^{-2 \delta ( \zeta + u_0(x') )} \right) } \, \dd x' \, ,
    \end{equation}
    which coincides with \eqref{eq:ConsMinL} in this case.
    The final case of $\zeta \in [\zeta_\maxrm^*, 1/2\delta)_{\quadratrix{\delta}}$ can be done by conjugate symmetry.

    Now, we will identify the voids, bands, and saturations of $\rho^\minrm$ \eqref{eq:MinChargeDensity} by considering the logarithmic potential formula \eqref{eq:ConsMinL}.
    As $x$ decreases from $+\infty$, the cut $[\beta(2 \delta u_0(x))^*, \beta(2 \delta u_0(x))]_{\quadratrix{\delta}}$ opens from the vertex of the quadratrix.
    Due to the Klaus-Shaw character of $u_0$, for a given $\zeta \in (1/2\delta, \zeta_\maxrm)_{\mathcal{Q}_\delta}$, the edge of the cut $\beta(2 \delta u_0(x))$ will pass this point exactly when $x = x_+(\zeta)$, so long as $x > x_\maxrm$.
    As $x$ decreases past $x_\maxrm$ (where the cut's size is maximal $\beta(2 \delta u_0(x_\maxrm)) = \zeta_\maxrm$), the size of the cut will begin to decrease and, at $x = x_-(\zeta)$, the endpoint of the cut $\beta(2 \delta u_0(x))$ again passes $\zeta$.
    The size of the cut continues to shrink to zero as $x \to -\infty$.
    For this reason, we split up the evaluation of the integral into cases and simplify using the values in these cases for $\zeta \in [1/2 \delta, \zeta_\maxrm]_{\mathcal{Q}_\delta}$
    \begin{equation}
        \mathpzc{L}[\rho^\minrm](\zeta) = \left\{ \begin{array}{ll}
            \displaystyle \int_x^{+\infty} \im{\zeta + \frac{1}{2 \delta} W_{-1} \left( -2 \delta \zeta \ee^{-2 \delta ( \zeta + u_0(x') )} \right) } \, \dd x' & \begin{aligned}
                \text{if } &\zeta \in \big( \beta(2 \delta u_0(x)), \zeta_\maxrm \big]_{\quadratrix{\delta}} \\
                &\text{and } x_+(\zeta) < x
            \end{aligned} \\[3em]
            \displaystyle \begin{aligned}
                &\int_x^{x_+(\zeta)} \im{ \zeta } \, \dd x' \\
                & + \int_{x_+(\zeta)}^{+\infty} \im{ \zeta + \frac{1}{2 \delta} W_{-1} \left( -2 \delta \zeta \ee^{-2 \delta ( \zeta + u_0(x') )} \right) } \, \dd x' 
            \end{aligned}
            & \begin{aligned}
                \text{if } &\zeta \in \big[ 1/2\delta, \beta(2 \delta u_0(x)) \big]_{\quadratrix{\delta}} \\
                &\text{that is } x_-(\zeta) \leq x \leq x_+(\zeta)
            \end{aligned} \\[4em]
            \displaystyle \begin{aligned}
                &\int_x^{x_-(\zeta)} \im{ \zeta + \frac{1}{2 \delta} W_{-1} \left( -2 \delta \zeta \ee^{-2 \delta ( \zeta + u_0(x') )} \right) } \, \dd x' \\
                & + \int_{x_-(\zeta)}^{x_+(\zeta)} \im{ \zeta } \, \dd x' \\
                & + \int_{x_+(\zeta)}^{+\infty} \im{ \zeta + \frac{1}{2 \delta} W_{-1} \left( -2 \delta \zeta \ee^{-2 \delta ( \zeta + u_0(x') )} \right) } \, \dd x' 
            \end{aligned}
            & \begin{aligned}
                \text{if } &\zeta \in \big( \beta(2 \delta u_0(x)), \zeta_\maxrm\big]_{\quadratrix{\delta}} \\
                &\text{and } x < x_-(\zeta)
            \end{aligned}
        \end{array} \right. \, .
    \end{equation}
    We can then use the definition of $\theta_+$ \eqref{eq:TailIntegralsDef} from the modified scattering data, as well as directly evaluating some of the integrals and adding $V(\zeta; x, 0) = \im{\zeta} x - \theta_+(\zeta)$ to compute the value of the Fr\'echet derivative in these three cases,
    \begin{align} 
        \frac{\delta E}{\delta \rho}\{\rho^\minrm\}(\zeta; x, 0) &= \left\{ \begin{array}{cl}
            \displaystyle -\frac{1}{2 \delta} \int_{x_+(\zeta)}^x \im{ W_{-1} \left( -2 \delta \zeta \ee^{-2 \delta ( \zeta + u_0(x') )} \right) } \, \dd x'
            & \begin{aligned}
                \text{if } &\zeta \in \big( \beta(2 \delta u_0(x)), \zeta_\maxrm \big]_{\quadratrix{\delta}} \\
                &\text{and } x_+(\zeta) < x
            \end{aligned} \\[3em]
            \displaystyle 0 
            & \begin{aligned}
                \text{if } &\zeta \in \big[ 1/2\delta, \beta(2 \delta u_0(x)) \big]_{\quadratrix{\delta}} \\
                &\text{that is } x_-(\zeta) \leq x \leq x_+(\zeta)
            \end{aligned} \\[3em]
            \displaystyle \phantom{-}\frac{1}{2 \delta} \int_x^{x_-(\zeta)} \im{ W_{-1} \left( -2 \delta \zeta \ee^{-2 \delta ( \zeta + u_0(x') )} \right) } \, \dd x'
            & \begin{aligned}
                \text{if } &\zeta \in \big( \beta(2 \delta u_0(x)), \zeta_\maxrm\big]_{\quadratrix{\delta}} \\
                &\text{and } x < x_-(\zeta)
            \end{aligned}
        \end{array} \right. \, .
        \label{eq:MinFrechetT0}
    \end{align}
    In the first case above, the imaginary part in the integrand is negative for $\zeta$ above $\beta(2 \delta u_0(x))$ on the quadratrix with $x > x_+(\zeta)$.
    This means the Fr\'echet derivative in the first case is always positive in this case.
    A similar analysis reveals that the Fr\'echet derivative in the third case is negative.
    From these facts and the definitions \eqref{eq:VoidDef}, \eqref{eq:BandDef} and \eqref{eq:SatDef}, we identify the intervals along the quadratrix in the three cases as a void, band and saturation, respectively.

    Now we look at \eqref{eq:MinChargeDensity}.
    Here, we make use of the fact that $r(\zeta; u_0(x))$ \eqref{eq:WyelLawDerivIntegrand} is only non-zero when $\zeta \in [1/2\delta, \beta(2 \delta u_0(x)))_{\quadratrix{\delta}}$, that is $x_-(\zeta) < x < x_+(\zeta)$.
    So, in the same three cases as above,
    \begin{align} 
        \rho^\minrm(\kappa) &= \left\{ \begin{array}{cl}
            \displaystyle 0
            & \begin{aligned}
                \text{if } &\kappa \in \big( \im{\beta(2 \delta u_0(x))}, \kappa_\maxrm \big] \\
                &\text{and } x_+(\zeta(\kappa)) < x
            \end{aligned} \\[3em]
            \displaystyle \frac{\zeta'(\kappa) E'(\zeta(\kappa))}{2} \int_x^{x_+(\zeta(\kappa))} r\big( \zeta(\kappa); u_0(x') \big) \, \dd x' 
            & \begin{aligned}
                \text{if } &\kappa \in \big[ 0, \im{\beta(2 \delta u_0(x))} \big] \\
                &\text{that is } x_-(\zeta(\kappa)) \leq x \leq x_+(\zeta(\kappa))
            \end{aligned} \\[3em]
            \displaystyle \displaystyle \frac{\zeta'(\kappa) E'(\zeta(\kappa))}{2} \int_{x_-(\zeta(\kappa))}^{x_+(\zeta(\kappa))} r\big( \zeta(\kappa); u_0(x') \big) \, \dd x' 
            & \begin{aligned}
                \text{if } &\kappa \in \big( \im{\beta(2 \delta u_0(x))}, \kappa_\maxrm\big] \\
                &\text{and } x < x_-(\zeta(\kappa))
            \end{aligned}
        \end{array} \right. \, .
        \label{eq:ConsMinRho}
    \end{align}
    Noting that the integrands above are both nonnegative, and identifying the third case as being exactly $\rho^\wyl$ \eqref{eq:RhoWylDef}, we have the second case takes on a value strictly between those of the first and third.
    Combining with the labeling of the cases above as a void, band or saturation, we see our constructed $\rho^\minrm$ \eqref{eq:ConsMinRho} exactly satisfies variational conditions \eqref{eq:VariatConds}.
    This completes the proof for the case of $t = 0$.
    
    \textit{Case 2}: $0 < t < t_{\mathrm{c}}$.
    Fix $t$ and consider the modified scattering data (Definition \ref{def:ModScattData}) for the potential $u^\Brm(\diamond, t)$.\footnote{
    This is a well-posed question since it is clear that if $u_0$ is an admissible initial condition, then so too will $u^\Brm(\diamond, t)$ for $0 < t < t_{\mathrm{c}}$.}
    For this we, use the fact that $u^\Brm$ satisfies invicid Burgers' equation \eqref{eq:IB} to derive the dependence of the scattering data on the parameter $t$.
    
    First, the turning points $x_\pm(\zeta)$.
    We use the characteristics \eqref{eq:IBMethOfCharSolution} and the turning points condition $E(\zeta) = u^\Brm(x_\pm(\zeta; t), t)$ where $E$ is as in \eqref{eq:TurningPointFromELevel}, yielding the simple linear evolution
    \begin{equation}
        x_\pm(\zeta; t) := x_\pm(\zeta) - 2 E(\zeta) t \, . \label{eq:TDepTurningPoints}
    \end{equation}
    
    Next, we see how the Weyl law \eqref{eq:RWeylLawDef} depends on the parameter $t$,
    \begin{equation}
        R^\wyl(\zeta; t) := \int_{x_-(\zeta; t)}^{x_+(\zeta; t)} \Bigg( W_{-1} \left( -2 \delta \zeta \ee^{-2 \delta (\zeta + u^\Brm(x', t))} \right) - W_0 \left( -2 \delta \zeta \ee^{-2 \delta (\zeta + u^\Brm(x', t))} \right) \Bigg) \, \dd x' \, . \label{eq:TDepRWeylLawDef}
    \end{equation}
    We do this by computing the $t$-derivative and exploiting that $u^\Brm$ solves invicid Burgers' equation.
    Differentiating in the limits of integration of \eqref{eq:TDepRWeylLawDef} yields two terms which cancel each other due to the turning point condition $E(\zeta) = u^\Brm(x_\pm(\zeta; t), t)$.
    So we need only consider the $t$-derivative of the integrand, for which 
    \begin{align}
        R_t(\zeta; t) &= \int_{x_-(\zeta; t)}^{x_+(\zeta; t)} \Bigg( \frac{W_{-1} \left( -2 \delta \zeta \ee^{-2 \delta (\zeta + u^\Brm(x', t))} \right)}{1 + W_{-1} \left( -2 \delta \zeta \ee^{-2 \delta (\zeta + u^\Brm(x', t))} \right)} - \frac{W_0 \left( -2 \delta \zeta \ee^{-2 \delta (\zeta + u^\Brm(x', t))} \right)}{1 + W_0 \left( -2 \delta \zeta \ee^{-2 \delta (\zeta + u^\Brm(x', t))} \right)} \Bigg) u^\Brm_t(x', t) \, \dd x' \\
        &= -2 \int_{x_-(\zeta; t)}^{x_+(\zeta; t)} r\big( \zeta; u^\Brm(x', t) \big) u^\Brm(x', t) u^\Brm_x(x', t) \, \dd x' \, ,
    \end{align}
    where we identified the formula \eqref{eq:WyelLawDerivIntegrand} and used that $u^\Brm$ solved invicid Burgers' equation \eqref{eq:IB}.
    Next, we change variables: $u = u^\Brm(x', t)$, $\dd u = u^\Brm_x(x', t) \, \dd x'$ on two separate intervals $x' \in [x_-(\zeta; t), x_\maxrm + 2u_\maxrm t]$ and $x' \in [x_\maxrm + 2u_\maxrm t, x_+(\zeta; t)]$ where $u^\Brm(\diamond, t)$ is respectively increasing and decreasing.
    The result:
    \begin{align}
        R_t(\zeta; t) &= -2 \int_{-E(\zeta)}^{u_\maxrm} r\big( \zeta; u \big) u \, \dd u - 2 \int_{u_\maxrm}^{-E(\zeta)} r\big( \zeta; u \big) u \, \dd u = 0 \, ,
    \end{align}
    This shows $R^\wyl(\zeta; t)$ is independent of $t$, and thus
    \begin{equation}
        R^\wyl(\zeta; t) = R^\wyl(\zeta) \, .
        \label{eq:TDepRWeylLawResult}
    \end{equation}
    In particular, this means that the maximal density $\rho^\wyl$ is also $t$-invariant.
    
    Lastly, we consider the $t$-dependence of the tail integrals \eqref{eq:TailIntegralsDef},
    \begin{equation}
        \theta_\pm(\zeta; t) := \im{\zeta} x_\pm(\zeta; t) + \int_{x_\pm(\zeta; t)}^{\pm\infty} \im{\zeta + \frac{1}{2 \delta} W_{-1} \left( -2 \delta \zeta \ee^{-2 \delta ( \zeta + u^\Brm(x', t) )} \right) } \, \dd x' \, . \label{eq:TDepTailIntegralsDef}
    \end{equation}
    Again, we compute the $t$-derivative of this to determine the dependence,
    \begin{align}
        \theta_{\pm, t}(\zeta; t) &= -2 \im{\zeta} E(\zeta) +2 E(\zeta) \ \im{\zeta + \frac{1}{2 \delta} W_{-1} \left( -2 \delta \zeta \ee^{-2 \delta ( \zeta + u^\Brm(x_\pm(\zeta; t), t) )} \right) } \nonumber \\
        &\qquad + \int_{x_\pm(\zeta; t)}^{\pm\infty} \im{ \frac{W_{-1} \left( -2 \delta \zeta \ee^{-2 \delta ( \zeta + u^\Brm(x', t) )} \right)}{1 + W_{-1} \left( -2 \delta \zeta \ee^{-2 \delta ( \zeta + u^\Brm(x', t) )} \right)} } u^\Brm_t(x', t) \, \dd x' \\
        &= - 2 \int_{x_\pm(\zeta; t)}^{\pm\infty} \im{ \frac{W_{-1} \left( -2 \delta \zeta \ee^{-2 \delta ( \zeta + u^\Brm(x', t) )} \right)}{1 + W_{-1} \left( -2 \delta \zeta \ee^{-2 \delta ( \zeta + u^\Brm(x', t) )} \right)} } u^\Brm(x', t) u^\Brm_x(x', t) \, \dd x' \, ,
    \end{align}
    where we again have used the turning point condition $E(\zeta) = u^\Brm(x_\pm(\zeta; t), t)$ and that $u^\Brm$ satisfies invicid Burgers' equation \eqref{eq:IB}.
    Again, we change variables $u = u^\Brm(x', t)$,$\dd u = u^\Brm_x(x', t) \dd x'$, using that $u^\Brm(\diamond, t)$ is monotonic on $(-\infty, x_-(\zeta; t)]$ and $[x_+(\zeta; t), +\infty)$.
    Then, after pulling the imaginary part evaluation outside of the integral, since what remains inside the integral is a total derivative times $u$, we can integrate by parts:
    \begin{align}
        \theta_{\pm, t}(\zeta; t) &= -2 \im{ \int^0_{E(\zeta)} \frac{W_{-1} \left( -2 \delta \zeta \ee^{-2 \delta ( \zeta + u )} \right)}{1 + W_{-1} \left( -2 \delta \zeta \ee^{-2 \delta ( \zeta + u )} \right)} u \, \dd u } \\
        &= \im{\left[ \frac{1}{\delta} W_{-1} \left( -2 \delta \zeta \ee^{-2 \delta ( \zeta + u )} \right) u \right]^0_{E(\zeta)}} - \frac{1}{\delta} \im{ \int^0_{E(\zeta)} W_{-1} \left( -2 \delta \zeta \ee^{-2 \delta ( \zeta + u )} \right) \, \dd u } \, .
    \end{align}
    The evaluations at the endpoints of the first term give zero and a real number, so after taking the imaginary part this term dies.
    Now we change variables once more to conduct the final integral with new variable $w = W_{-1}(-2\delta \zeta \ee^{-2 \delta(\zeta + u)})$ which results in $(1 + w) \, \dd w = -2 \delta w \, \dd u$.
    The contour of integration is along the negative, unit quadratrix from $w = -1$ to $w = -2 \delta \zeta$:
    \begin{align}
        \theta_{\pm, t}(\zeta; t) &= -\frac{1}{2 \delta^2} \im{ \int^{-2 \delta \zeta}_{-1} (1 + w) \, \dd w } \\
        &= -\frac{1}{2 \delta^2} \im{ \left[ \frac{(1 + w)^2}{2} \right]^{-2 \delta \zeta}_{-1} } \\
        &= -\im{ \left(\zeta - \frac{1}{2 \delta} \right)^2 } \, .
    \end{align}
    This with our initial value at $t = 0$, gives the result:
    \begin{equation}
        \theta_\pm(\zeta; t) =  \theta_\pm(\zeta) - \im{ \left(\zeta - \frac{1}{2 \delta} \right)^2 } t \, .
    \end{equation}
    
    Now that we have the modified scattering data for $u^\Brm(\diamond, t)$, we realize that, because $u^\Brm(\diamond, t)$ is itself an admissible initial condition for $0 < t < t_\mathrm{c}$, we use the result of the first case to guarantee that the constructed formulas \eqref{eq:ConsMinL} and \eqref{eq:MinChargeDensity} are the solutions to the variational conditions \eqref{eq:VariatConds} \textit{but posed for the new $t$-dependent modified scattering data and with $V(\zeta; x, 0)$}.
    As it turns out, the Fr\'echet derivative for this $t$-dependent problem is identical to the Fr\'echet derivative for the true variational conditions \eqref{eq:VariatConds} \textit{posed with the original modified scattering data and with $V(\zeta; x, t)$}:
    \begin{align}
        \frac{\delta E}{\delta \rho}\{\rho\}(\zeta; x, t) &= \im{\zeta} x + \im{ \left(\zeta - \frac{1}{2 \delta} \right)^2 } t - \theta_+(\zeta) + \mathpzc{L}[\rho](\zeta) \\
        &= \im{\zeta} x - \theta_+(\zeta; t) + \mathpzc{L}[\rho](\zeta) \, .
    \end{align}
    This coincidence means that we have actually solved the variational conditions \eqref{eq:VariatConds} for our chosen $0 < t < t_\mathrm{c}$, finishing the second case.
\end{proof}

From this we make two major conclusions.
First, from the calculation \eqref{eq:DistLimValueFromAnsatz} and our choice \eqref{eq:IBChoiceForYParam}, Theorem \ref{thm:ConstLogPotentChargeDensSol} guarantees by way of Theorem \ref{thm:DistLimit} that the distributional limit of the semiclassical soliton ensemble is indeed given by the invicid Burgers' solution with the prescribed initial condition for $0 \leq t < t_\mathrm{c}$,
\begin{equation}
    \dd-\lim_{N \to \infty} u^\sse_N(\diamond, t) = u^\Brm(x,t) \, . \label{eq:SemiSolitonEnsembConstDistLim}
\end{equation}
Second, we have the following corollary.
\begin{corollary} \label{cor:LRhoWyl}
    For all $\zeta \in \C$,
    \begin{equation}
        \mathpzc{L}[\rho^\wyl](\zeta) = \int_{-\infty}^{+\infty} \im{\zeta + \frac{1}{2\delta} W\left(-2\delta \zeta^{-2\delta(\zeta + u_0(x))} \right)} \, \dd x \, .
    \end{equation}
\end{corollary}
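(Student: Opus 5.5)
The natural route is to read the corollary off Theorem \ref{thm:ConstLogPotentChargeDensSol} at $t=0$ by sending $x\to-\infty$. For fixed $\kappa\in[0,\kappa_\maxrm]$ with $\kappa>0$, the turning point $x_-(\zeta(\kappa))$ is finite. Hence for every $x<x_-(\zeta(\kappa))$ we are in the third case of \eqref{eq:ConsMinRho}, so $\rho^\minrm(\kappa;x,0)=\rho^\wyl(\kappa)$. Equivalently, in \eqref{eq:MinChargeDensity} the integrand $r(\zeta(\kappa);u_0(x'))$ vanishes outside $[x_-,x_+]$, so
\begin{equation*}
\rho^\minrm(\kappa;x,0)\to\rho^\wyl(\kappa)
\end{equation*}
pointwise as $x\to-\infty$. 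It is also monotone non-decreasing in the decreasing direction of $x$ and bounded by $\rho^\wyl$.

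The first step is to pass this limit inside the potential. The kernel $\log|(\zeta-\zeta(\eta)^*)/(\zeta-\zeta(\eta))|$ is nonnegative for $\zeta$ in the upper half-plane, and $0\le\rho^\minrm\le\rho^\wyl$. Monotone convergence therefore gives
\begin{equation*}
\mathpzc{L}[\rho^\minrm(\diamond;x,0)](\zeta)\to\mathpzc{L}[\rho^\wyl](\zeta)
\end{equation*}
for every $\zeta\in\C_+$. This includes points on the quadratrix arc, where the limit may a priori be $+\infty$; there the claim is finite by the bound $\mathpzc{L}[\rho^\wyl]\le$ the right side that we obtain below. The lower half-plane follows from odd symmetry, and the real axis is trivial since both sides vanish there.

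The second step is the right-hand side. By \eqref{eq:ConsMinL} at $t=0$, $\mathpzc{L}[\rho^\minrm](\zeta)=\int_x^{+\infty}\im{\zeta+\frac{1}{2\delta}W(\cdots)}\,\dd x'$. We need to show this integral converges as $x\to-\infty$ to the integral over all of $\R$. The integrand is $\bigO{\sqrt{u_0(x')}}$ uniformly on compact sets of $\zeta$ as $|x'|\to\infty$. This is the same expansion around $u=0$ used for $g$ in \eqref{eq:PotentialFormualAnalyticExtens}, and in Lemma \ref{lem:WylLaw}, where $W(-2\delta\zeta \ee^{-2\delta\zeta})=-2\delta\zeta$ kills the zeroth order term. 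Under the hypothesis \eqref{eq:FiniteRCond}, dominated convergence then yields the full-line integral. (Since $u_0$ need not be $L^{1/2}$ a priori, I would state this finiteness as inherited from Lemma \ref{lem:WylLaw}; for $\zeta$ away from the vertex the integrand is actually $\bigO{u_0}$, which is integrable by \eqref{eq:DecayAssumInitCond}.) Equating the two limits proves the corollary for all $\zeta$.

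The main obstacle is the uniformity near the vertex $\zeta=1/2\delta$, i.e.\ $\kappa=0$. There the Lambert-$W$ argument sits at the branch point $-\ee^{-1}$, so the integrand has only $\sqrt{u_0}$ decay, and $x_\pm\to\pm\infty$. I would handle this point separately by continuity, as follows. Theorem \ref{thm:LContAndBound} gives continuity of $\mathpzc{L}[\rho^\wyl]$ once finiteness is known. The right-hand side is continuous in $\zeta$ by dominated convergence with the $\sqrt{u_0}$ majorant. Two continuous functions that agree off a point agree everywhere. As a consistency check, on the arc the right side evaluates, by splitting at $x_\pm$ as in the proof of Theorem \ref{thm:ConstLogPotentChargeDensSol}, to $\theta_+(\zeta)+\theta_-(\zeta)$, recovering the first claim of Theorem \ref{thm:LContAndBound}.
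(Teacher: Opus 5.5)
Your proposal is correct and is essentially the paper's own argument: you set $t=0$ in \eqref{eq:ConsMinL} and let $x\to-\infty$, using that $\rho^\minrm(\diamond;x,0)$ increases to $\rho^\wyl$ (third case of \eqref{eq:ConsMinRho}), and your monotone-convergence and integrability arguments make explicit the limit interchanges that the paper's one-line proof leaves implicit. The separate vertex step can be dropped, and as written it is circular, because the paper derives the first statement of Theorem \ref{thm:LContAndBound} from this very corollary; it is also unneeded, since $1/2\delta$ is real so both sides vanish there, and since $\int\sqrt{u_0}\,\dd x\le\sqrt{\pi}\,\bigl(\int(1+x^2)u_0\,\dd x\bigr)^{1/2}$ by Cauchy--Schwarz, \eqref{eq:FiniteRCond} already follows from \eqref{eq:DecayAssumInitCond} with no extra hypothesis (also, with the definitions \eqref{eq:TailIntegralsDef}, your side check on the arc actually gives $\theta_+(\zeta)-\theta_-(\zeta)$, not $\theta_+(\zeta)+\theta_-(\zeta)$).
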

This follows by plugging in $t = 0$ and taking the limit $x \to -\infty$ of the minimizing potential \eqref{eq:ConsMinL}.
This is because in this limit $\rho^\minrm = \rho^\wyl$ as seen in the third case of \eqref{eq:ConsMinRho} since $\beta(2 \delta u_0(x)) \to 1/2\delta$ as $x \to -\infty$.
Additionally, by taking the limit $x \to -\infty$ in the third case of \eqref{eq:MinFrechetT0} and rearranging according to \eqref{eq:TailIntegralsDef}, we have the proof of the first statement in Theorem \ref{thm:LContAndBound}.

At this point, we wish to upgrade the distributional limit of the semiclassical soliton ensemble \eqref{eq:ILWSemiclassSolEnsemb} which we have shown by way of Theorem \ref{thm:ConstLogPotentChargeDensSol} to a strong $L^2$ limit.
For this, we need the following lemma.
\begin{lemma} \label{lem:UNL2Lim}
    \begin{equation} 
        \lim_{N \to \infty} \norm{u^\sse_N(\diamond, t)}_{L^2} = \norm{u_0}_{L^2}
        \label{eq:UNL2Lim}
    \end{equation}
    uniformly for all $t \in \R$.
\end{lemma}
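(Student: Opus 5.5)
The plan is to exploit the exact conservation of $\int u^2\,\dd x$ under the ILW flow together with the asymptotic decoupling of an $N$-soliton into $N$ separated one-solitons. Since $u^\sse_N(\diamond,t)$ is an exact $N$-soliton solution of \eqref{eq:ILW} with $\epsilon=\epsN$, and it decays exponentially in $x$ with the same rate $\min_n \kappa_n/\epsN$ for every $t$, the quantity $\norm{u^\sse_N(\diamond,t)}_{L^2}^2$ is independent of $t$. This independence is what gives uniformity in $t$ for free. It therefore suffices to compute the norm in the limit $t\to+\infty$, where the eigenvalues are distinct and so the speeds $\im{(\zeta_n-1/2\delta)^2}$ are distinct. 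In that limit the determinant $Z_N$ factorizes locally near each soliton center, and the cross terms in $\int u^2$ vanish. This yields the exact identity
\begin{equation*}
    \norm{u^\sse_N(\diamond,t)}_{L^2}^2=\sum_{n=1}^N \norm{u_1(\diamond;\zeta_n,\epsN)}_{L^2}^2 ,
\end{equation*}
where $u_1$ is the one-soliton \eqref{eq:ILWNSoliton} with $N=1$. The norming constant only translates $u_1$, so it does not enter.

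Next I would compute the one-soliton norm explicitly. Write $u_1 = -\ii\epsilon\partial_x\log\big[(1+a\ee^{-2\kappa(x+\ii\delta\epsilon)/\epsilon})/(1+a\ee^{-2\kappa(x-\ii\delta\epsilon)/\epsilon})\big]$ and substitute $s=2\kappa x/\epsilon$. Then $u_1$ is a profile of amplitude $O(1)$ in $s$, and
\begin{equation*}
    \norm{u_1}_{L^2}^2=\epsilon\, h(\kappa)
\end{equation*}
for an explicit function $h$ obtained from an elementary integral of $\sin^2(2\delta\kappa)/(\cosh s+\cos 2\delta\kappa)^2$. Consequently
\begin{equation*}
    \norm{u^\sse_N}^2_{L^2}=\epsN\sum_n h(\kappa_n)\to\frac1\pi\int_0^{\kappa_\maxrm}h(\kappa)\rho^\wyl(\kappa)\,\dd\kappa
\end{equation*}
by Theorem~\ref{thm:BoundedVarWeakConv}, since $h$ is continuous on $[0,\kappa_\maxrm]$.

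The main obstacle is the trace-type identity
\begin{equation*}
    \frac1\pi\int h\,\rho^\wyl\,\dd\kappa=\int u_0^2\,\dd x .
\end{equation*}
To prove it I would insert the formula \eqref{eq:RhoWylDef} for $\rho^\wyl$ and exchange the order of integration. This is justified because $r\ge0$ has only inverse square-root singularities at the turning points. The identity then reduces to a pointwise statement: for each $x$, with $y=2\delta u_0(x)$, the $\kappa$-integral of $h(\kappa)\tfrac12\zeta'E'r(\zeta(\kappa);u_0(x))$ over $[0,\im{\beta(y)}]$ should equal $\pi u_0(x)^2$. I would handle this exactly as in the computation leading to \eqref{eq:DistLimValueFromAnsatz}. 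First, I would recognize the integrand as half the jump across $[\beta(y)^*,\beta(y)]_{\quadratrix{\delta}}$ of a function analytic off the cut, $H(\zeta)\,W/(1+W)$, where $H$ is an analytic continuation of $h$. Next, I would wrap the contour around the cut and integrate by parts using $\dd W/(1+W)$. Finally, I would push the contour to infinity, using the expansion \eqref{eq:AnalyticLambertWExp} through the $(y^2/2-y)/z^2$ term. The $y^2$ coefficient should produce $u_0^2$.

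Finding the analytic continuation $H$ so that no extra singularities appear off the cut is where I expect difficulty. If it fails, I would instead verify the identity by differentiating both sides in $y$ and comparing with the already-established first-moment computation.
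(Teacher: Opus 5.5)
Your plan is sound. Its second half (weak convergence via Theorem~\ref{thm:BoundedVarWeakConv}, Fubini, wrapping the contour around the cut, integrating by parts, and taking the residue at infinity with \eqref{eq:AnalyticLambertWExp}) is what the paper does. The difference is how you reach the exact finite-$N$ identity.

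The paper never sends $t\to\infty$. At fixed $t$ it integrates \eqref{eq:ILW} once in $x$ to get $\norm{u^\sse_N(\diamond,t)}_{L^2}^2=-\int_\R \partial_t D_{\ii\delta\epsN}F_N\,\dd x-\epsN\big[\mathcal{T}_{\delta\epsN}[u^\sse_N]\big]_{x\to-\infty}^{x\to+\infty}$. It evaluates the first term by a rectangular contour in $\strip{\delta\epsN}$, using only the $x\to\pm\infty$ limits of $\partial_t F_N$; there the Fredholm expansion is dominated by $S=\emptyset$ and $S=\Z_N$. It then shows the $\mathcal{T}$ boundary term cancels, obtaining $\norm{u^\sse_N(\diamond,t)}_{L^2}^2=-\frac{4\delta}{\pi}Q_{\Z_N}$ for every $t$.

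Your identity is the same one. The one-soliton is $u_1=2\kappa\sin(2\delta\kappa)/(\cosh s+\cos 2\delta\kappa)$, with $s$ a translate of $2\kappa x/\epsilon$. This gives $h(\kappa)=4\kappa\big(1-2\delta\kappa\cot(2\delta\kappa)\big)=-4\delta\,\im{(\zeta(\kappa)-\tfrac{1}{2\delta})^2}$, so $\epsN\sum_n h(\kappa_n)=-\frac{4\delta}{\pi}Q_{\Z_N}$.

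What each route buys:
\begin{itemize}
\item The paper's argument is self-contained at fixed $t$ and needs no soliton resolution.
\item Yours still has to make the $L^2$ decomposition as $t\to+\infty$ rigorous. That means uniform control of $\log Z_N$ in the gaps between separating solitons and in the tails. The phase shifts are harmless real translations, since the Cauchy factors $|\zeta_\ell-\zeta_n|^2/|\zeta_\ell-\zeta_n^*|^2$ are positive.
\item In exchange, yours replaces the identities for $\mathcal{T}_{\delta\epsN}$ acting on boundary-value differences with the elementary conservation of $\int u^2\,\dd x$, and it makes the additivity over solitons transparent.
\end{itemize}

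The obstacle you anticipate in finding $H$ disappears once $h$ is written in this form. Take the polynomial $H(\zeta)=-4\delta(\zeta-\frac{1}{2\delta})^2$, so that $h(\kappa)=\im{H(\zeta(\kappa))}$. Because $E$ is real on the quadratrix, $\zeta'(\kappa)E'(\zeta(\kappa))$ is real. The jump of $W/(1+W)$ across the cut is also real. So the imaginary part comes outside the contour integral, and no singularities other than the cut appear.

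Using $E'(\zeta)\frac{W}{1+W}=\frac{1}{2\delta}\partial_\zeta W$ and integrating by parts leaves $\int_C(\zeta-\frac{1}{2\delta})W\,\dd\zeta$. Put $z=2\delta\zeta$ in \eqref{eq:AnalyticLambertWExp}. The contributions linear in $y$ from the $z^{-1}$ and $z^{-2}$ terms cancel, and the coefficient of $\zeta^{-1}$ is $y^2/(8\delta^2)=u_0(x)^2/2$. This gives exactly your pointwise value $\pi u_0(x)^2$, so the fallback via differentiation in $y$ is unnecessary.
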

\begin{proof}
    Fix $t \in \R$ and $N \in \N$.
    Since $u^\sse_N$ \eqref{eq:ILWSemiclassSolEnsemb} solves exactly the small dispersion ILW equation \eqref{eq:ILW}, we may integrate in $x$ out to positive infinity of $u^\sse_N$ plugged into the small dispersion ILW equation. 
    This integral is well-defined due to the exponential decay of $u^\sse_N$ and its derivatives as $x \to +\infty$ (see \eqref{eq:ILWSemiclassSolEnsemb} and \eqref{eq:FreeEnergyFunctionDef}),
    \begin{equation}
        0 = \int^{+\infty}_x \partial_t u^\sse_N(x', t; \epsN) \, \dd x' - \left( u_N^\sse(x, t) \right)^2 - \epsN \partial_x \mathcal{T}_{\delta \epsN} \left[ u_N^\sse(\diamond, t) \right](x) \, .
        \label{eq:FirstIntegralILW}
    \end{equation}
    Writing the first term using \eqref{eq:ILWSemiclassSolEnsemb}, we can evaluate the integral by anti-derivative.
    Then rearranging \eqref{eq:FirstIntegralILW} allows us to compute the $L^2$ norm
    \begin{equation}
        \norm{u^\sse_N(\diamond, t)}_{L^2}^2 = -\int^{+\infty}_{-\infty} \partial_t D_{\ii \delta \epsN} F_N(x, t) \, \dd x - \epsN \Big[ \mathcal{T}_{\delta \epsN} \left[ u_N^\sse(\diamond, t) \Big](x) \right]_{x \to -\infty}^{x \to +\infty} \, .
        \label{eq:L2NormFromFirstIntegralILW}
    \end{equation}
    
    Focusing on the first term above, we exchange the differentiation and finite difference evaluation, as always writing $z = x + \ii y$, and use \eqref{eq:FreeEnergyFunctionDef} to write
    \begin{equation}
        \partial_t F_N(z, t) = -\frac{4 \delta}{\pi} \frac{\displaystyle \sum_{S \subset \Z_N} Q_S \exp \left( -\frac{2}{\epsN^2 \pi} E_{N,S}(z, t) \right)}{\displaystyle \sum_{S \subset \Z_N} \exp \left( -\frac{2}{\epsN^2 \pi} E_{N,S}(z, t) \right)} \, ,
    \end{equation}
    where recall that $E_{N,S}$, $P_S$ and $Q_S$ are defined in \eqref{eq:ElecEnergy}, \eqref{eq:ElecDipole} and \eqref{eq:ElecQuadpole}.
    Now as $x \to +\infty$, almost all of the exponentials above decay due to $E_{N,S}$ for each $S \subset \Z_N$ being linear in $x$ with a non-negative coefficients $P_S$.
    The only ones which do not decay are those with $S = \emptyset$ since $P_\emptyset = Q_\emptyset = E_{N,\emptyset}(z, t) = 0$,
    \begin{equation}
        \lim_{x \to +\infty} \partial_t F_N(z, t) = \lim_{x \to +\infty} -\frac{4 \delta}{\pi} \frac{\displaystyle 0 + \sum_{\substack{S \subset \Z_N \\ S \neq \emptyset}} Q_S \exp \left( -\frac{2}{\epsN^2 \pi} E_{N,S}(z, t) \right)}{\displaystyle 1 + \sum_{\substack{S \subset \Z_N \\ S \neq \emptyset}} \exp \left( -\frac{2}{\epsN^2 \pi} E_{N,S}(z, t) \right)} = 0 \, . \label{eq:TDerivFreeEnergyPosInfLim}
    \end{equation}
    Now looking at $x \to -\infty$ we factor out the exponential from the numerator with the largest $x$ coefficient $P_S$, that is $S = \Z_N$.
    The remaining exponentials then must decay,
    \begin{equation}
        \lim_{x \to -\infty} \partial_t F_N(z, t) = \lim_{x \to -\infty} -\frac{4 \delta}{\pi} \frac{\displaystyle Q_{\Z_N} + \sum_{\substack{S \subset \Z_N \\ S \neq \Z_N}} Q_S \exp \left( \frac{2}{\epsN^2 \pi} \left( E_{N,\Z_N}(x, t) - E_{N,S}(x, t) \right) \right)}{\displaystyle 1 + \sum_{\substack{S \subset \Z_N \\ S \neq \Z_N}} \exp \left( \frac{2}{\epsN^2 \pi} \left( E_{N,\Z_N}(z, t) - E_{N,S}(z, t) \right) \right)} = \frac{4 \delta}{\pi} Q_{\Z_N} \, . \label{eq:TDerivFreeEnergyNegInfLim}
    \end{equation}
    Both of these limits are uniform for $-\delta \epsN \leq y \leq \delta \epsN$ since we know from Theorem \ref{thm:BoundsOnDet} that the denominator is non-zero.
    
    From \eqref{eq:L2NormFromFirstIntegralILW}, recall we need the integral of the finite difference of $\partial_t F_N$.
    For this, consider the contour $C(a, b)$ which is the positively oriented boundary of the rectangular strip segment $R(a, b) = \{ \ z \in \strip{\delta \epsN} \ | \ \re{z} \in [a, b] \ \}$.
    Since $\partial_t F_N(\diamond, t)$ is analytic on $\strip{\delta \epsN}$, we have that
    \begin{align}
        0 &= \int_{C(a, b)} \partial_t F_N(z, t) \, \dd z \\
        &= -2 \ii \delta \epsN \int_a^b D_{\ii \delta \epsN} \partial_t F_N(x, t) \, \dd x + \ii \int_{-\delta \epsN}^{\delta \epsN} \Big( \partial_t F_N(b + \ii y, t) - \partial_t F_N(a + \ii y, t) \Big) \, \dd y \, . \label{eq:RectangleContourIntOfTDeriv}
    \end{align}
    Solving for the $x$-integral in \eqref{eq:RectangleContourIntOfTDeriv} and taking the limits as $a \to -\infty$ and $b \to +\infty$, from the uniform convergence of the terms in the other integral by \eqref{eq:TDerivFreeEnergyPosInfLim} and \eqref{eq:TDerivFreeEnergyNegInfLim}, we have
    \begin{equation}
        \int_{-\infty}^{+\infty} D_{\ii \delta \epsN} \partial_t F_N(x, t) \, \dd x = \frac{1}{2 \delta \epsN} \int_{-\delta \epsN}^{\delta \epsN} \left( 0 + \frac{4 \delta}{\pi} Q_{\Z_N} \right) \, \dd y = \frac{4 \delta}{\pi} Q_{\Z_N} \, . \label{eq:L2NormFirstTermFinalAns}
    \end{equation}
    
    Returning to \eqref{eq:L2NormFromFirstIntegralILW}, we now focus on the second term.
    Since $u_N^\sse$ is expressed as the difference of boundary values of an analytic function on $\strip{\delta \epsilon}$ by \eqref{eq:ILWSemiclassSolEnsemb}, the convolution with the cotangent kernel in $\mathcal{T}_{\delta \epsN}$ \eqref{eq:ILWTOp} operates according to \cite[Equation 5.144]{Matsuno_1984_BiTransMeth}, i.e. turning the difference of boundary values into the sum.
    The $\sgn$-convolution can be computed explicitly \cite[Equation 5.149]{Matsuno_1984_BiTransMeth},
    \begin{align}
        \epsN \Big[ \mathcal{T}_{\delta \epsN} \left[ D_{\ii \delta \epsN} \partial_x F_N(\diamond, t) \right](x) \Big]_{x \to -\infty}^{ x \to +\infty} &= \epsN \Bigg[ \partial_x F_N(x+\ii \delta \epsN, t) + \partial_x F_N(x-\ii \delta \epsN, t) \nonumber \\
        &\qquad \quad - \frac{F_N(x+\ii \delta \epsN, t) - F_N(x-\ii \delta \epsN, t)}{\ii \delta \epsN} \Bigg]_{x \to -\infty}^{x \to +\infty} \, .
        \label{eq:IntTTerm}
    \end{align}
    Using \eqref{eq:FreeEnergyFunctionDef} and the same logic as before for $x \to +\infty$, isolating the $S = \emptyset$ term and everything else decays:
    \begin{equation}
        \lim_{x \to +\infty} F_N(z, t) = \lim_{x \to +\infty} 2 \delta \epsN^2 \log \left( 1 + \sum_{\substack{S \subset \Z_N \\ S \neq \emptyset}} \exp \left( -\frac{2}{\epsN^2 \pi} E_{N,S}(z, t) \right) \right) = 0 \, . \label{eq:FreeEnergyPosInfLim}
    \end{equation}
    As $x \to - \infty$, we again factor out the largest exponential growth term $S = \Z_N$,
    \begin{align}
        F_N(z, t) &= 2 \delta \epsN^2 \log \left( \exp \left( -\frac{2}{\epsN^2 \pi} E_{N,\Z_N}(z, t) \right) \left( 1 + \sum_{\substack{S \subset \Z_N \\ S \neq \Z_N}} \exp \left( \frac{2}{\epsN^2 \pi} \Big[ E_{N,\Z_N}(z, t) - E_{N,S}(z, t) \Big] \right) \right) \right) \\
        &= -\frac{4 \delta}{\pi}  E_{N,\Z_N}(z, t) + o(1) \label{eq:FreeEnergyNegInfAsym}
    \end{align}
    with the error term uniform for all $-\delta \epsN \leq y \leq \delta \epsN$.
    While the limit of $F_N$ does not exist as $x \to -\infty$, the difference of strip $\strip{\delta \epsN}$ boundary values in \eqref{eq:IntTTerm}, using \eqref{eq:FreeEnergyNegInfAsym} and the linearity of $E_{N,S}$ \eqref{eq:ElecEnergy}, does have a finite limit:
    \begin{align}
        \lim_{x \to -\infty} \frac{F_N(x + \ii \delta \epsN, t) - F_N(x - \ii \delta \epsN, t)}{\ii \delta \epsN} &= \lim_{x \to -\infty} - \frac{4}{\ii \pi \epsN} \Big( E_{\Z_N}(x + \ii \delta \epsN, t) - E_{\Z_N}(x - \ii \delta \epsN, t) \Big) + o(1) \\
        &= - \frac{8 \delta}{\pi} P_{\Z_N} \, . \label{eq:FreeEnergyDiffNegInfLim}
    \end{align}
    
    Now for the limits of the $x$-derivative terms in \eqref{eq:IntTTerm}.
    Since $F_N(\diamond, t)$ is analytic on $\strip{\delta \epsN}$, the derivative in $x$ is equivalent to the analytic derivative.
    Using \eqref{eq:FreeEnergyFunctionDef}, we have 
    \begin{equation}
        \partial_z F_N(z, t) = -\frac{4 \delta}{\pi} \frac{\displaystyle \sum_{S \subset \Z_N} P_S \exp \left( -\frac{2}{\epsN^2 \pi} E_S(z, t) \right)}{\displaystyle \sum_{S \subset \Z_N} \exp \left( -\frac{2}{\epsN^2 \pi} E_S(x, t) \right)} \, .
    \end{equation}
    By identical logic to computing the limits \eqref{eq:TDerivFreeEnergyPosInfLim} and \eqref{eq:TDerivFreeEnergyNegInfLim}, it is clear that
    \begin{equation}
        \lim_{x \to +\infty} \partial_z F_N(z, t) = 0 \quad \text{and} \quad \lim_{x \to -\infty} \partial_z F_N(z, t) = -\frac{4 \delta}{\pi} P_{\Z_N} \, . \label{eq:ZDerivFreeEnergyNegPosInfLims}
    \end{equation}
    uniformly for all $-\delta \epsN \leq \im{z} \leq \delta \epsN$.
    Using results \eqref{eq:FreeEnergyPosInfLim}, \eqref{eq:FreeEnergyDiffNegInfLim} and \eqref{eq:ZDerivFreeEnergyNegPosInfLims} in \eqref{eq:IntTTerm} gives the miraculously cancellation
    \begin{align}
        \epsN \Big[ \mathcal{T}_{\delta \epsN} \left[ D_{\ii \delta \epsN} \partial_x F_N(\diamond, t) \right](x) \Big]_{x \to -\infty}^{ x \to +\infty} &= \epsN \left[ 0 - 0 - \left( -\frac{4 \delta}{\pi} P_{\Z_N} - \frac{4 \delta}{\pi} P_{\Z_N} - \left( -\frac{8 \delta}{\pi} P_{\Z_N} \right) \right) \right] = 0 \, . \label{eq:L2NormSecondTermFinalAns}
    \end{align}
    Backing up to \eqref{eq:L2NormFromFirstIntegralILW}, using our calculations \eqref{eq:L2NormFirstTermFinalAns} and \eqref{eq:L2NormSecondTermFinalAns}, we finally have
    \begin{equation}
        \norm{u_N^\sse(\diamond, t)}^2_{L^2} = -\frac{4 \delta}{\pi} Q_{\Z_N} \, .
        \label{eq:UNNormSquaredLim}
    \end{equation}
    
    Now we take the limit as $N \to +\infty$.
    Evoking Theorem \ref{thm:BoundedVarWeakConv},
    \begin{equation}
        \lim_{N \to \infty} Q_{\Z_N} = \lim_{N \to \infty} \int \im{\left( \zeta(\kappa) - \frac{1}{2\delta} \right)^2} \, \dd \mu_{\Z_N}(\kappa) = \int_0^{\kappa_\maxrm} \im{\left( \zeta(\kappa) - \frac{1}{2\delta} \right)^2} \rho^\wyl(\kappa) \, \dd \kappa = \mathpzc{Q}\{\rho^\wyl\} \, .
        \label{eq:QLim}
    \end{equation}
    We can conduct this final integral by using the formula for $\rho^\wyl$ \eqref{eq:RhoWylDef} and extending the $x$-integration to $\R$ using the fact that the real-part is zero for $x$ outside of $[x_-(\zeta(\kappa)), x_+(\zeta(\kappa))]$. 
    Then, since \eqref{eq:RhoWylDef} involves functions which are absolutely integrable with no worse than square-root singularities we can exchange the order of integration.
    Lastly, we write the real-part of the difference of branch evaluations in \eqref{eq:RhoWylDef} as a difference of boundary values across the quadratrix:
    \begin{equation}
        \mathpzc{Q}\{\rho^\wyl\} = \int_{-\infty}^{+\infty} \int_0^{\kappa_\maxrm} \im{\left( \zeta(\kappa) - \frac{1}{2\delta} \right)^2} \left[ \frac{\zeta'(\kappa) E'(\zeta)}{2} \frac{W \left( -2 \delta \zeta \ee^{-2 \delta (\zeta + u_0(x))} \right)}{1 + W \left( -2 \delta \zeta \ee^{-2 \delta (\zeta + u_0(x))} \right)} \right]^{\zeta = \zeta(\kappa)^-}_{\zeta = \zeta(\kappa)^+} \, \dd \kappa \, \dd x \, ,
        \label{eq:QRhoWylFirstAns}
    \end{equation}
    where the square brackets are to indicate that the difference is take between the two evaluations and the $\pm$ in the evaluations indicate the boundary, $+$ inside and $-$ outside the quadratrix.
    Using the even symmetry of the integrand, we can extend the inner integral to $[-\kappa_\maxrm, \kappa_\maxrm]$ and pull the imaginary part outside the inner integral.
    Then, this inner integral is exactly the parametrization of a complex contour integral where the contour $C$ traces the $-$ side of $[\zeta_\maxrm^*, \zeta_\maxrm]_{\quadratrix{\delta}}$ upwards followed by the $+$ side downwards.
    Since the integrand is an analytic function for $\zeta \in \C \setminus [\zeta_\maxrm^*, \zeta_\maxrm]_{\quadratrix{\delta}}$ with no worse that square-root singularities, we can use Cauchy's theorem and take $C$ to be any positively oriented loop around $[\zeta_\maxrm^*, \zeta_\maxrm]_{\quadratrix{\delta}}$.
    \begin{align}
        \mathpzc{Q}\{\rho^\wyl\} &= \int_{-\infty}^{+\infty} \im{ \int_C \left( \zeta - \frac{1}{2\delta} \right)^2 \frac{E'(\zeta)}{4} \frac{W \left( -2 \delta \zeta \ee^{-2 \delta (\zeta + u_0(x))} \right)}{1 + W \left( -2 \delta \zeta \ee^{-2 \delta (\zeta + u_0(x))} \right)} \, \dd \zeta } \, \dd x \\
        &= -\frac{1}{4 \delta} \int_{-\infty}^{+\infty} \im{ \int_C \left( \zeta - \frac{1}{2\delta} \right) W \left( -2 \delta \zeta \ee^{-2 \delta (\zeta + u_0(x))} \right) \, \dd \zeta } \, \dd x \, ,
        \label{eq:QRhoWylSecondAns}
    \end{align}
    where we used that the Lambert $W$ term is a total derivative and integrated by parts. 
    Pushing the contour $C$ off to infinity and evaluating by residue using the Laurent series \eqref{eq:AnalyticLambertWExp}, 
    \begin{equation}
        \mathpzc{Q}\{\rho^\wyl\} = -\frac{\pi}{4 \delta} \int_{-\infty}^{+\infty} \big(u_0(x)\big)^2 \, \dd x = - \frac{\pi}{4 \delta} \norm{u_0}_{L^2} \, .
        \label{eq:QRhoWylThirdAns}
    \end{equation}
    Combining \eqref{eq:QRhoWylThirdAns} with \eqref{eq:QLim} and \eqref{eq:UNNormSquaredLim}, and taking a square-root, we have the desired \eqref{eq:UNL2Lim}.
    Lastly, since nothing after \eqref{eq:UNNormSquaredLim} has any $t$-dependence, it follows trivially the limit \eqref{eq:UNL2Lim} as $N \to \infty$ is uniform in $t$.
\end{proof}

With Lemma \ref{lem:UNL2Lim} and Theorem \ref{thm:DistLimit}, following the logic of \cite[Theorems 2.12, 4.2, 4.3, and 4.5]{LaxLevermoreI_1983, LaxLevermoreII_1983}, it is straightforward to show the $L^2$ convergence of Theorem \ref{thm:L2Limit}.

\section{Conclusion}
\label{sec:Conclusion}

The present work leaves several questions open. In Section \ref{sec:Direct}, we presented a heuristic WKB analysis of the ILW scattering equation, culminating in Conjecture \ref{conj:WeylLaw}.
A natural question is can this work be made rigorous?
In 1937, R. E. Langer devised a rigorous method from studying the semiclassical Schr\"odinger operator's spectrum \cite{Langer_1937}.
The Langer transform uses a change of variables, valid for an $\bigO{1}$ interval around a turning point, to convert the eigenvalue problem exactly into the model turning point equation, Airy's equation in his case, with the addition of a $\epsilon$-sized forcing term.
Then, using variation of parameters, exact solutions can be analyzed, leading to rigorous error bounds on eigenvalues.

If one wants to conduct something of a Langer transform on the ILW scattering equation, attempting to find a transformation that takes \eqref{eq:CannonFormScatteringEq} to the model turning point equation \eqref{eq:ModelTuringPointEquation}, appears to require we break analyticity of the wavefunction.
For example, \eqref{eq:ModelTuringPointEquation} can be recast as a homogeneous $\overline{\partial}$-problem interior to the strip with a boundary condition:
\begin{align}
    0 &= \ii \epsilon \Phi_x^+(x) + u(x) \psi^+(x) - \zeta \ee^{-2 \delta \zeta} \psi^-(x) \ \mathrm{for} \ x \in \R \, , \\
    0 &= \overline{\partial}_{z} \Phi(z) \ \mathrm{for} \ z \in \strip{\epsilon \delta} \, ,
\end{align}
where $\overline{\partial}_{z} := (\partial_x + \ii \partial_y)/2$.
Let $x_0 \in \R$ satisfy the turning point condition \eqref{eq:TurningPointFromELevel}.
We introduce $\Phi(z) = \ee^{u(x_0) z / \epsilon} w(z)$. This yields:
\begin{align}
    0 &= \epsilon w_y^+(x) + \big( u(x) - u(x_0) \big) w^+(x) - \frac{\ee^{-1}}{2 \delta} w^-(x) \ \mathrm{for} \ x \in \R \, , \\
    0 &= \partial_{\overline{z}} w(z) \ \mathrm{for} \ z \in \strip{\epsilon \delta} \, ,
\end{align}
where we assumed that $w$ is analytic at the boundaries to exchange $\ii w_x^+(x) = w_y^+(x)$.
Introducing the coordinate transform $g$ and set $X = g(x)$, $Y = g'(x) y$ and $Z = X + \ii Y$, we define $W(Z) := w(z)$ and then by implicit differentiation
\begin{align}
    0 &= \epsilon W_Y^+(X) + \frac{u(x) - u(x_0)}{g'(x)} W^+(X) - \frac{\ee^{-1}}{2 \delta g'(x)} W^-(X) \ \mathrm{for} \ X \in \R \, , \label{eq:ILWLangerBoundary} \\
    -\frac{g''(x)}{g'(x)} W_X &= \overline{\partial}_{Z} W(Z) \ \mathrm{for} \ X \in \R \ \mathrm{and} \ |Y| \leq \delta \epsilon g(x) \, .
    \label{eq:ILWLangerBulk}
\end{align}
The idea behind the Langer transform is to require 
\begin{equation}
    \frac{u(x) - u(x_0)}{g'(x)} = g(x) = X \, ,
    \label{eq:LangerCond}
\end{equation}
so that the scattering equation takes the form of the model turning point equation \eqref{eq:ModelTuringPointEquation}. 
This condition is integrated easily
\begin{equation}
    (g(x))^2 = 2 \int_{x_0}^x u(x') - u(x_0) \ \dd x' \, ,
\end{equation}
where, since $u(x) - u(x_0)$ looks linear around $x = x_0$, the square-root can be taken to yield a differentiable $g$.\footnote{Note, $g$ will only be real when $u(x)$ is increasing through $x = x_0$.
If the derivative has the other sign, we need to set \eqref{eq:LangerCond} instead equal to $-X$ and use the conjugation symmetry, as explained in Section \ref{sec:TurningPointEq}.}

Interestingly, there is no external forcing term on the left of the boundary equation \eqref{eq:ILWLangerBoundary} like there is in the Langer transformed semiclassical time-independent Schr\"odinger equation.
However, the width of the strip is now spatially dependent, $2 \delta \epsilon g'(x)$, as seen in the domain of $Y$ in the bulk equation \eqref{eq:ILWLangerBulk} as well as the final term of \eqref{eq:ILWLangerBoundary}.
By our construction, the variable $X$ now only appears as a parameter in the boundary equation \eqref{eq:ILWLangerBoundary}, and thus for each fixed $X$, the solutions of the model turning point equation, $\psi_\nu(Z; \epsilon, 2 \delta g'(x))$ for $\nu \in \{ -\infty \}\cup(2 \Z - 1/2)$ as in \eqref{eq:PsiFundamentalDef}, solve this equation exactly. 
We might attempt a variation of parameters style argument by expressing the solution with $X$-dependent pre-factors
\begin{equation}
    W(Z) = \sum_{\nu \in \{ -\infty \}\cup(2 \Z - 1/2)} c_\nu(X) \ \psi_\nu \big( Z; \epsilon, 2 \delta g'(x) \big)
\end{equation}
and attempting to find functions $c_\nu$ that solve the bulk equation \eqref{eq:ILWLangerBulk}.
To continue this analysis, more must be understood about the basis of solutions to the model turning point equation. 

The main result of this paper, Theorem \ref{thm:L2Limit}, only provides a limit up until break time.
As such, we currently have no information on the generated DSW that appears in Figure \ref{fig:SmallDispEps05T15}.
P. D. Lax and C. D. Levermore in \cite{LaxLevermoreII_1983} were able to calculate the small-dispersion limit in a weak-$L^2$ sense beyond the catastrophe time $t_\mathrm{c}$ by constructing multi-band solutions to their minimizing density problem.
They showed that KdV DSW generated via semiclassical soliton ensembles coincide with H. Flaschka, M. G. Forest and D. W. McLaughlin's algebro-geometric description of modulated wave trains of the KdV equation in terms of differentials on a slowly varying hyperelliptic surface \cite{FlaschkaForestMcLaughlin_1980}. 
In analogy, we may attempt to construct multi-band solutions to the equilibrium problem \eqref{eq:MinEnergyInfDef}.
I. M. Krichever has an algebro-geometric description of the modulated wave trains of the ILW equation \cite{Krichever_1991} in terms of differentials on what he dubs a ``$\delta$-deformed hyperelliptic surface,'' identified as a Riemann surface of genus-$n$ with a function $\lambda$ analytic everywhere except one point $P_\infty$, identified as infinity in a local coordinate $\zeta$, where $\lambda(\zeta) = \zeta \ee^{2 \delta \zeta} (1 + \bigO{1/\zeta})$.
More work needs to be done on understanding the Riemann surfaces I. M. Krichever has defined and understanding their connection with multi-phase solutions to the ILW equation.

\vspace*{0.25cm}

\noindent\textbf{Funding:} This work was partially funded through the National Science Foundation grant DMS-2204896.

\bibliographystyle{plain}
\bibliography{references.bib}

\end{document}